\newtheorem{theorem}{Theorem}[section]
\newtheorem{proposition}[theorem]{Proposition}
\newtheorem{corollary}[theorem]{Corollary}
\newtheorem{lemma}[theorem]{Lemma}
\newtheorem{theoremintro}{Theorem}
\theoremstyle{definition}
\newtheorem{definition}[theorem]{Definition}
\newtheorem{remark}[theorem]{Remark}
\newcommand{\id}{\mathrm{id}}
\newcommand{\End}{\mathrm{End}}
\newcommand{\Hom}{\mathrm{Hom}}
\newcommand{\iso}{\xrightarrow{\,\smash{\raisebox{-0.5ex}{\ensuremath{\scriptstyle\sim}}}\,}}
\newcommand{\into}{\hookrightarrow}
\newcommand{\onto}{\twoheadrightarrow}
\newcommand{\sbullet}{%
  \hbox{\fontfamily{lmr}\fontsize{.4\dimexpr(\f@size pt)}{0}\selectfont\textbullet}}
\newcommand{\mfg}{\mathfrak{g}}
\newcommand{\mfh}{\mathfrak{h}}
\newcommand{\mfn}{\mathfrak{n}}
\newcommand{\mfsl}{\mathfrak{s}\mathfrak{l}}
\newcommand{\mcF}{\mathcal{F}}
\newcommand{\mcL}{\mathcal{L}}
\newcommand{\mcQ}{\mathcal{Q}}
\newcommand{\mcR}{\mathcal{R}}
\newcommand{\mcZ}{\mathcal{Z}}
\newcommand{\mbE}{\mathbf{E}}
\newcommand{\mbI}{\mathbf{I}}
\newcommand{\mbbV}{\mathbb{V}}
\newcommand{\C}{\mathbb{C}}
\newcommand{\Q}{\mathbb{Q}}
\newcommand{\Z}{\mathbb{Z}}
\newcommand{\msh}{\mathsf{h}}
\newcommand{\msq}{\mathsf{q}}
\newcommand{\msz}{\mathsf{z}}
\newcommand{\al}{\alpha}
\newcommand{\veps}{\varepsilon}
\title[Braid group actions and affine quantum groups]{
Braid group actions, Baxter polynomials,\break and affine quantum groups
}
\author[N. Friesen]{Noah Friesen}
\address{Department of Mathematics and Statistics, University of Saskatchewan}
\email{noah.friesen@usask.ca}
\author[A. Weekes]{Alex Weekes}
\address{Department of Mathematics and Statistics, University of Saskatchewan}
\email{weekes@math.usask.ca}
\author[C. Wendlandt]{Curtis Wendlandt}
\address{Department of Mathematics and Statistics, University of Saskatchewan}
\email{wendlandt@math.usask.ca}
\subjclass[2020]{Primary 17B37; Secondary 17B10} 
\numberwithin{equation}{section}
\newcommand {\curtiscomment}[1]{\footnote{\textcolor{magenta}{C:\,#1}}}
\newcommand {\noahcomment}[1]{\footnote{\textcolor{teal}{N:\,#1}}}
\newcommand{\g}{\mfg} 
\newcommand{\Yhg}[1][]{Y_\hbar^{#1}(\mfg)} 
\newcommand{\Root}{\Delta} 
\newcommand{\UqLg}[1][]{U_q^{#1}(L\g)}
\newcommand{\Bg}{\mathscr{B}_\mfg} 
\newcommand{\Wg}{\mathscr{W}_\mfg}     
\newcommand{\Monic}{\mathscr{M}} 
\newcommand{\qweight}{\Phi}
\newcommand{\braid}{\uptau}    				
\newcommand{\mbraid}{\mathsf{T}}         
\newcommand{\M}[1][]{#1^\Monic}     
\newcommand{\qshift}{\mathsf{q}}
\newcommand{\Poles}{\sigma}
\newcommand{\tminor}{\mathscr{T}} 
\newcommand{\bt}{\mathcal{B}} 
\newcommand{\sbinom}[2]{\begin{bmatrix}{#1}\\{#2}\end{bmatrix}} 
\begin{document}

\begin{abstract}
It is a classical result in representation theory that the braid group $\Bg$ of a simple Lie algebra $\g$ acts on any integrable representation of $\g$ via triple products of exponentials in its Chevalley generators. In this article, we show that a modification of this construction induces an action of $\Bg$ on the commutative subalgebra $\Yhg[0]\subset \Yhg$ of the Yangian by Hopf algebra automorphisms,
which gives rise to a representation of the Hecke algebra of type $\g$ on a flat deformation of the Cartan subalgebra $\mathfrak{h}[t]\subset \g[t]$. By dualizing, we recover a representation of $\Bg$ constructed in the works of Y.~Tan and V.~Chari, which was used to obtain sufficient conditions for the cyclicity of any tensor product of irreducible representations of $\Yhg$ and the quantum loop algebra $\UqLg$. We apply this dual action to prove that the cyclicity conditions from the work of Tan are identical to those obtained in the recent work of the third author and S. Gautam. Finally, we study the $\UqLg$-counterpart of the braid group action on $\Yhg[0]$, which arises from Lusztig's braid group operators and recovers the aforementioned $\Bg$-action defined by Chari. 
\end{abstract}

\maketitle

{\setlength{\parskip}{0pt}
\setcounter{tocdepth}{1} 
\tableofcontents
}

\section{Introduction}\label{sec:I}

\subsection{Background}\label{ssec:I-back}

Every simple Lie algebra $\g$ over $\C$ has an associated {braid group} $\Bg$, which in the case of $\g = \mathfrak{sl}_{n}$ recovers Artin's braid group $B_n$ on $n$ strands.  The group $\Bg$ surjects onto the Weyl group $\Wg$, and both groups play leading roles throughout representation theory and related fields. As an elementary but important example, it is well-known that   $\Bg$ acts on any integrable representation $(V, \phi)$ of $\g$, with the generators $\{ \braid_i\}_{i \in \mbI}$ of $\Bg$ acting via triple products of exponentials:
\begin{equation}
\label{eq:intro-Bg-triple-exp}
\braid_i \ \mapsto \  \exp\!\big(\phi(e_i)\big) \exp\!\big(\phi(-f_i)\big) \exp\!\big(\phi(e_i)\big) \in \mathrm{Aut}(V).
\end{equation}
Here $e_i, f_i$ are Chevalley generators for $\g$, and $i \in \mbI$ runs over the nodes of its Dynkin diagram. One standard application of this $\Bg$-action is to provide isomorphisms of weight spaces $V_\mu \cong V_{w (\mu)}$ for  elements $w \in \Wg$ of the Weyl group.
Many other important applications of $\Bg$ occur in the theory of quantum groups.  For example, Lusztig \cite{Lusztig} constructed an action of $\Bg$ on the Drinfeld--Jimbo quantum group $U_q(\g)$ and its representations, with these actions being essential in many constructions, such as PBW bases. We refer the reader to the many excellent standard references on quantum groups, such as \cite{Jantzen}*{\S8} or \cite{CPBook}*{\S8.1}, for further details. 

In the present paper, we study certain actions of $\Bg$ on \emph{affine} quantum groups and their subalgebras.  More precisely, we focus on two cases: the Yangian $\Yhg$ and the quantum loop algebra $\UqLg$, which are deformations of the enveloping algebras of the current algebra $\g[t]$ (resp.~the loop algebra $\g[s^{\pm 1}]$).  The algebras $\Yhg$ and $\UqLg$ are both important quantum groups originally arising in the theory of integrable systems, in particular through the groundbreaking work of Drinfeld \cites{Dr,DrNew}.  These algebras have subsequently been studied by many others, both for their interesting representation theory and their ties to geometry and mathematical physics. 

As in the main body of this paper, to describe our results in detail we first turn our attention to $\Yhg$, and defer the corresponding discussion for $\UqLg$ to Section \ref{ssec:I-qloop} at the end of this introduction.  As a first inkling of the appearance of $\Bg$ in the context of the Yangian, we note that there is a natural inclusion $U(\g) \subset \Yhg$, and the induced (adjoint) action of $\g$ on $\Yhg$ is integrable.  Thus we may define an action of $\Bg$ on $\Yhg$ via the operators $\braid_i$ from \eqref{eq:intro-Bg-triple-exp}. This action has appeared in a number of contexts \cites{GNW,Kod19b,WeekesThesis}, and is a primary building block for this paper; our main results relate to a certain modification of this braid group action.

\subsection{The modified braid group action}\label{ssec:I-mbraid}

The Yangian $\Yhg$ admits a large commutative subalgebra $\Yhg[0]$ which deforms the enveloping algebra of the current algebra $\mfh[t]$. It is generated by the coefficients of a family of series $\{\xi_i(u)\}_{i\in \mbI}\subset \Yhg[0][\![u^{-1}]\!]$, and admits a unique Hopf algebra structure for which each $\xi_i(u)$ is grouplike. By virtue of the triangular decomposition for $\Yhg$ (see Sections \ref{ssec:Yhg} and \ref{ssec:Bg-modified}), there is a natural linear projection 
\begin{equation*}
\Pi:\Yhg\to \Yhg[0]
\end{equation*}
and we may thus introduce $\{\mbraid_i\}_{i\in \mbI}\subset \End(\Yhg[0])$ by setting 
\begin{equation*}
\mbraid_i:=\Pi\circ \braid_i\Big|_{\Yhg[0]}: \Yhg[0]\to \Yhg[0].
\end{equation*}
These endomorphisms, which we call the \textit{modified braid group operators} on $\Yhg[0]$, are the main focus of this article. The following theorem summarizes some of their key properties, which collectively provide our first main result.
\begin{theoremintro}\label{T:I} The modified braid group operators $\mbraid_i$ have the following properties:
\begin{enumerate}[font=\upshape]\setlength{\itemsep}{3pt}
\item\label{I.1} They are Hopf algebra automorphisms of $\Yhg[0]$ satisfying the defining 
relations of $\Bg$, with inverses $\mbraid_i^{-1} = \Pi^{op} \circ \braid_i \big|_{\Yhg[0]}$ for $\Pi^{op} : \Yhg\rightarrow \Yhg[0]$ the ``opposite'' projection.
\item\label{I.2} They are uniquely determined by the following formulas, for each $j\in\mbI$:
\begin{equation*}
\mbraid_i(\xi_j(u))=\xi_j(u)\prod_{k=0}^{|a_{ij}|-1} \xi_i\!\left(u-\frac{\hbar d_i}{2}(|a_{ij}|-2k)\right)^{(-1)^{\delta_{ij}}},
\end{equation*}
where $(a_{ij})_{i\in \mbI}$ and  $(d_i)_{i\in \mbI}$ are the Cartan matrix and symmetrizing integers of $\g$, respectively.
\item\label{I.3} They restrict to automorphisms of the subspace $\mathbb{V}$ spanned by all coefficients of the series $\log\xi_j(u)$, for $j\in \mbI$, which satisfy the defining relations of the Hecke algebra of type $\g$.

\item\label{I.4} The diagonal factor $\mcR^0(z)$  of the universal $R$-matrix of $\Yhg$ is a $\Bg$-invariant element of $ \Yhg[0]^{\otimes 2}[\![z^{-1}]\!]$:
\begin{equation*}
(\mbraid_i\otimes \mbraid_i)(\mcR^0(z))=\mcR^0(z).
\end{equation*}
\item\label{I.5} By dualizing the action from \eqref{I.1}, one obtains an action of $\Bg$ on the group $(\C(\!(u^{-1})\!)^\times)^\mbI$ by automorphisms. Moreover, this restricts to an action on $( \C(u)^\times)^\mbI$ introduced by Tan in \cite{tan-braid}*{Prop.~3.1}.
\end{enumerate} 
\end{theoremintro}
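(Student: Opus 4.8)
The entire theorem is anchored by the explicit formula \eqref{I.2}, from which the Hopf-theoretic statements \eqref{I.1}, \eqref{I.3}, \eqref{I.4} follow by formal arguments and \eqref{I.5} by dualization; so the plan is to prove \eqref{I.2} first. Because $\braid_i$ is built from $e_i,f_i$ alone and the target of \eqref{I.2} involves only $\xi_i,\xi_j$, the computation reduces to the rank-two sub-picture governed by the $i$-th copy $\mfsl_2^{(i)}=\langle e_i,h_i,f_i\rangle\subset U(\g)\subset \Yhg$. The adjoint action of $e_i,f_i$ on $\Yhg$ is locally nilpotent (the $\g$-action is integrable), so $\braid_i(\xi_j(u))=\exp(\operatorname{ad} e_i)\exp(-\operatorname{ad} f_i)\exp(\operatorname{ad} e_i)(\xi_j(u))$ is a finite sum, computable from the Drinfeld-presentation relations between the $i$-th root currents $x_i^\pm(u)$ and the Cartan currents $\xi_j(u)$. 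Applying the projection $\Pi$ then discards every term with nontrivial positive- and negative-root content and retains the Cartan part. I expect the shape of the answer to be forced by two constraints: a weight count (the operator realizes the reflection $s_i$, forcing $\mbraid_i(\xi_j(u))$ to be $\xi_j(u)$ times a correction whose $\xi_i$-content equals $-a_{ij}$, i.e.\ $|a_{ij}|$ factors carrying the sign $(-1)^{\delta_{ij}}$) together with grouplikeness; the precise shifts $\tfrac{\hbar d_i}{2}(|a_{ij}|-2k)$ are then pinned down by matching the $\mfsl_2^{(i)}$-string decomposition and the low-order (equivalently, semiclassical $\hbar\to0$) limit, in which $\braid_i$ degenerates to the linear Weyl reflection on $\mfh$.

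With \eqref{I.2} in hand I would turn to \eqref{I.1}. Since $\Yhg[0]$ is a free commutative Hopf algebra on the coefficients of the $\xi_j(u)$ and the right-hand side of \eqref{I.2} is grouplike, there is a unique Hopf algebra endomorphism $\wt{\mbraid}_i$ of $\Yhg[0]$ taking the stated values; the content is the identification $\wt{\mbraid}_i=\mbraid_i=\Pi\circ\braid_i$. This is where the one genuinely nonformal point lies: $\Pi$ is only a linear projection, so the multiplicativity of $\Pi\circ\braid_i$ on $\Yhg[0]$ must be established rather than assumed. I would prove it by showing that $\Pi$ is multiplicative on the relevant image, using that $\braid_i$ is an algebra automorphism of $\Yhg$ and that, on the weight-zero elements produced by $\braid_i(\Yhg[0])$, the triangular (Harish--Chandra) structure makes the error terms in $\Ker\Pi$ multiplicatively closed modulo $\Ker\Pi$; agreement on the generators $\xi_j(u)$ then upgrades to equality of maps. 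For the inverse, I would run the identical derivation with the opposite triangular decomposition to obtain the $\Pi^{op}$-analogue of \eqref{I.2} (the same product with shifts of opposite sign); composing the two grouplike formulas, the shifted factors telescope to the identity, which simultaneously yields bijectivity and the inverse formula $\mbraid_i^{-1}=\Pi^{op}\circ\braid_i$. The braid relations I would then verify directly on the grouplike generators via \eqref{I.2}, or inherit from the braid relations satisfied by the $\braid_i$ on $\Yhg$ together with the inverse identification.

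Parts \eqref{I.3} and \eqref{I.4} follow by linearization. Applying $\log$ to \eqref{I.2} turns the multiplicative formula into an additive one on the space $\mathbb{V}$ spanned by the coefficients of the $\log\xi_j(u)$; since the logarithm of a grouplike is primitive and $\mbraid_i$ is a Hopf automorphism, it preserves primitives and hence restricts to an automorphism of $\mathbb{V}$. The linearized action is a shift-reflection, and I would check the defining relations of the type-$\g$ Hecke algebra (the quadratic relation, and the cross-relation between the reflection and the $\mathbb{V}$-part) coefficientwise, the braid relations being already recorded from \eqref{I.1}; this gives \eqref{I.3}. For \eqref{I.4}, I would use the known presentation of the diagonal factor $\mcR^0(z)$ as an exponential built from the symmetric bilinear pairing on $\mathbb{V}$ (equivalently, from the $\xi_i\otimes\xi_j$ Gamma-type factors); the invariance $(\mbraid_i\otimes\mbraid_i)(\mcR^0(z))=\mcR^0(z)$ then reduces, via \eqref{I.3}, to the $s_i$-invariance of the symmetrized Cartan datum $d_ia_{ij}$ together with the cancellation of the shifts introduced by \eqref{I.2}, which I would verify by direct substitution.

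Finally, \eqref{I.5} is obtained by dualizing the Hopf action of \eqref{I.1}. The algebra homomorphisms $\Yhg[0]\to\C$ are parametrized by $\mbI$-tuples of series in $\C(\!(u^{-1})\!)^\times$, and transposing $\mbraid_i$ gives an action of $\Bg$ on $(\C(\!(u^{-1})\!)^\times)^\mbI$ whose formula is read off from \eqref{I.2} as a multiplicative shift-and-reflect operation on tuples. I would then observe that this operation preserves rationality, so it restricts to $(\C(u)^\times)^\mbI$, and match it term-by-term with the operators of \cite{tan-braid}*{Prop.~3.1}. The main obstacle throughout is the projected computation \eqref{I.2}: because $\braid_i$ generates many root-vector terms and $\Pi$ is not an algebra map, both the determination of exactly which shifted factors survive and the multiplicativity step in \eqref{I.1} demand careful bookkeeping of the projection on weight-zero elements.
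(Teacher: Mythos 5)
Your overall architecture is inverted relative to the paper's (formulas first, structure second, versus the paper's structure first via Lemma \ref{L:mbraid-reduced}, formulas second via GKLO series), which would be a legitimate alternative route --- but the keystone, your derivation of \eqref{I.2}, has a genuine gap. The constraints you invoke do not pin down the shifts $\tfrac{\hbar d_i}{2}(|a_{ij}|-2k)$: the semiclassical limit (equivalently the associated graded, where $\mbraid_i$ acts on $U(\mfh[t])$ simply by $s_i$) erases every shift, so it cannot distinguish the correct pattern from any other with the same $\xi_i$-multiplicity; and grouplikeness with respect to the Drinfeld coproduct $\Delta^0$ is not available a priori --- in the paper it is a \emph{consequence} of the explicit formulas (see the proof of Lemma \ref{lemma: braid aut Yangian}), since $\braid_i$ is a Hopf automorphism only for the standard coproduct on $\Yhg$, and transporting compatibility through $\Pi$ to $\Delta^0$ is nontrivial (it involves $\mcR^-(z)$). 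Your fallback, the direct computation of $\exp(\mathrm{ad}\,e_i)\exp(-\mathrm{ad}\,f_i)\exp(\mathrm{ad}\,e_i)(\xi_j(u))$ from the Drinfeld relations, is exactly the ``technical argument based on the defining relations'' the paper credits to Tan and deliberately avoids: the whole point of Proposition \ref{P:tau-dot(A)} is that on the GKLO series the exponentials terminate after one step (e.g.\ $[e_i,B_i(u)]=0$ by Corollary \ref{C:[g,GKLO]}), giving $\braid_i(A_i(u))=D_i(u)$, $\Pi(D_i(u))=A_i(u)\xi_i(u)$, and then the shifts in \eqref{I.2} fall out mechanically from the fixed relation \eqref{GKLO-A} (in log form \eqref{t-vs-A}) between the $A$- and $\xi$-series. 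Separately, ``inherit the braid relations from the $\braid_i$'' is not formal, since $\Pi\circ\braid_i\braid_j\neq\mbraid_i\mbraid_j$ in general; making it work is precisely Lemma \ref{L:mbraid-reduced}, which needs that the roots $s_{i_1}\cdots s_{i_{b-1}}\alpha_{i_b}$ along a reduced word form the (positive) inversion set, so the error terms die under $\Pi$. Your other option --- direct verification on the generators $t_j(u)$ --- does work once \eqref{I.2} is in hand.

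Part \eqref{I.5} as you describe it also fails outside simply-laced type. Transposing \eqref{I.1} yields an action only on $\Hom_{Alg}(\Yhg[0],\C)\cong(1+u^{-1}\C[\![u^{-1}]\!])^\mbI$, with formulas carrying the data $(a_{ji},d_j)$ (Corollary \ref{C:T_1-rep}); this is \emph{not} Tan's action and does not act on $(\C(\!(u^{-1})\!)^\times)^\mbI$ or on tuples of Drinfeld polynomials at all. Reaching Tan's representation requires extending to $\Monic^\mbI$ by solving the additive difference equation of Proposition \ref{P:Bg-difference} (via Lemma \ref{L:factorization}), plus a torus factor as in Remark \ref{R:big-extend}, and the extended formulas (Corollary \ref{C:Monic-braid-compute}) have $(a_{ij},d_i)$ in place of $(a_{ji},d_j)$ --- the Langlands-duality swap flagged in Remark \ref{R:Tan} --- so a naive ``read off from \eqref{I.2}, restrict to rational tuples, match term-by-term'' produces the wrong operators whenever $\g$ is not simply laced. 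On the positive side: your multiplicativity argument for $\Pi$ on weight zero (the ideal property of $\Yhg_0\cap N^-\Yhg\cap\Yhg N^+$) is exactly the paper's; your telescoping derivation of $\mbraid_i^{-1}=\Pi^{op}\circ\braid_i|_{\Yhg[0]}$ is a valid alternative to the paper's abstract argument from $\braid_i^2|_{\Yhg_0}=\mathrm{id}$; and your Hecke check for \eqref{I.3} matches Proposition \ref{P:Hecke} (note the needed twist $T_i=\msz^{d_i}\circ\mbraid_i$). For \eqref{I.4} your substitution into the kernel $c_{ij}(\qshift)$ of $\mcL(z)$ would require a deformed-reflection invariance identity you have not supplied, whereas the paper's proof (Proposition \ref{P:tau(R)}) is purely structural: apply the algebra map $\Pi\otimes\Pi$ to the $\g$-invariant $\mcR(z)$ and use the Gauss decomposition to kill $\mcR^\pm(z)$.
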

The first four parts of this theorem are established in Section \ref{sec:Braid}, in chronological order.
Part \eqref{I.1} is precisely the statement of Theorem \ref{T:mbraid} --- this is the first crucial property of the operators $\mbraid_i$ proven after their introduction in Section \ref{ssec:Bg-modified}. The formulas for $\mbraid_i(\xi_j(u))$ stated in Part \eqref{I.2} are then derived in Section \ref{ssec:Bg-Y0-formulas}; see Corollary \ref{C:mbraid-t-xi}. That the modified braid group operators $\mbraid_i$ give rise to an action of the Hecke algebra of type $\g$ (see Definition \ref{D:Hecke}) on $\mathbb{V}$  is proven in Proposition \ref{P:Hecke}. Finally, the invariance of $\mcR^0(z)$ stated in Part \eqref{I.5} is established in the last subsection of Section \ref{sec:Braid} (Section \ref{ssec:action-R-matrix}); see Proposition \ref{P:tau(R)}.

The final assertion of Theorem \ref{T:I} is deduced in  Section \ref{sec:Weights}. In more detail, we first explain in Section \ref{ssec:wts-dual} how to dualize the representation of $\Bg$ from Part \eqref{I.1} of Theorem \ref{T:I} to obtain an action of $\Bg$ on the group of algebra homomorphisms
\begin{equation*}
\Hom_{Alg}(\Yhg[0],\C)\cong (1+u^{-1}\C[\![u^{-1}]\!])^\mbI.
\end{equation*}
We then use this action and the elementary theory of formal, additive difference equations to construct a $\Bg$-action on the larger space  $\Monic^\mbI$, where $\Monic$ is the group of monic Laurent series in $u^{-1}$; see Proposition \ref{P:Bg-difference}. The $\Bg$-action on $(\C(\!(u^{-1})\!)^\times)^\mbI$ referred to in the statement of \eqref{I.5} is then recovered as the product of $\Monic^\mbI$ and the torus $(\C^\times)^\mbI$, which admits a natural action of $\Wg$; see Remark \ref{R:big-extend}.

Let us now explain the connection between this representation and that constructed by Tan in Proposition 3.1 of \cite{tan-braid}. Therein, an action of $\Bg$ on the group $(\C(u)^\times)^\mbI$ was defined directly, drawing inspiration from the results of Chari \cite{Chari-braid} for quantum loop algebras. That this is a subrepresentation of $(\C(\!(u^{-1})\!)^\times)^\mbI$ with action as described in the previous paragraph is an immediate consequence of the formulas describing the action of the generators $\braid_j\in \Bg$ on $\Monic^\mbI$ obtained in Corollary \ref{C:Monic-braid-compute} using Part \eqref{I.2} of Theorem \ref{T:I}; see Remarks \ref{R:Tan} and \ref{R:big-extend}. In fact, obtaining this interpretation of Tan's action is one of the original inspirations behind this article --- we believe that the theory developed in Sections \ref{sec:Braid} and \ref{sec:Weights} provides a natural theoretical framework for understanding it. In Section \ref{ssec:wts-extemal}, we use this theory to provide a simple proof that Tan's action computes the eigenvalues of the series $\{\xi_i(u)\}_{i\in \mbI}$ on any extremal weight vector in a finite-dimensional highest weight module of $\Yhg$; see Proposition \ref{P:extremal}. As will be explained in Remark \ref{R:Tan-extremal}, this provides a different proof, and generalization, of \cite{tan-braid}*{Prop.~4.5}.

We now give a few auxiliary remarks to complete our discussion of Theorem \ref{T:I} and the results of Sections \ref{sec:Braid} and \ref{sec:Weights}. We first note that the generating series $(A_j(u))_{j\in \mbI}$ for $\Yhg[0]$ introduced by Gerasimov \textit{et al.} in \cite{GKLO} (see Section \ref{ssec:GKLO}) play a crucial role in our results. For instance, the formulas for $\mbraid_i(\xi_j(u))$ given in Part \eqref{I.2} are obtained by first computing the images of each $A_j(u)$ under the standard braid group operators $\braid_i$ on $\Yhg$, and then using the relation between the series $A_j(u)$ and $\xi_k(u)$; see \eqref{GKLO-A}. It is worth pointing out that, when $\g$ is simply laced, these formulas agree with those from \cite{DiKh00}*{\S8}. This suggests that there is an action of $\Bg$ on a completion of the Yangian double $\mathrm{D}\Yhg$ which agrees with that defined by \eqref{I.1} on the subalgebra $\Yhg[0]\subset \mathrm{D}\Yhg$.

Next, we note that Section \ref{ssec:action-R-matrix} also contains an analysis of how the operators $\braid_i$ and $\mbraid_i$ interact with the coefficients of the generating matrices $\tminor_V(u)$ arising in the $R$-matrix presentation of the Yangian \cites{Dr, WRTT}, and their $\Yhg[0]$-counterparts. Following \cite{KTWWY} and \cite{WeekesThesis}, we call these coefficients \textit{lifted minors}. In Corollary \ref{C:minors} we prove that $\Bg$ operates on any lifted minor by permuting its indices. We then use this result  to characterize each of the generating series $A_j(u), B_j(u), C_j(u)$ and $D_j(u)$ introduced in \cite{GKLO} as lifted minors; see Proposition \ref{P:t-vs-A}. In particular, this yields a fairly elementary proof of a result from \cite{Ilin-Rybnikov-19}; see Remark \ref{R:Ilin-Ryb}.

\subsection{Cyclicity criteria and Baxter polynomials}\label{ssec:I-baxter}
One of our main motivations for studying the action of the braid group at the heart of Theorem \ref{T:I} stems from a remarkable phenomenon arising from the finite-dimensional representation theory of the Yangian $\Yhg$: the tensor product $V\otimes W$ of two finite-dimensional irreducible representations will \textit{almost always} be a cyclic module (\textit{i.e.,} a highest weight module), and even an irreducible representation of $\Yhg$. This phenomenon is closely related to a number of interesting applications of Yangians and quantum loop algebras, and has been studied extensively; see \cites{chari-pressley-dorey, chari-pressley-RepsChar, guay-tan, MoBook, NaTa98, NaTa02,  tan-braid} and
\cites{AK-qAffine, Chari-braid, chari-pressley-qaffine, frenkel-mukhin, 
frenkel-reshetikhin-qchar, Hern-simple, Hern-cyclic, kashiwara-level-zero, vv-standard}, for instance. 

A connection between this cyclicity property and the $\Bg$-action from \eqref{I.5} of Theorem \ref{T:I} was established in \cite{tan-braid}, using a construction for the quantum loop algebra $U_q(L\g)$ which appeared earlier in \cite{Chari-braid}. To state this precisely, recall that the finite-dimensional irreducible representations of $\Yhg$ are parametrized by $\mbI$-tuples $\underline{\mathrm{P}}=(P_i(u))_{i\in \mbI}$ of monic polynomials --- called \textit{Drinfeld polynomials}. We will write $L(\underline{\mathrm{P}})$ for the representation associated to the tuple $\underline{\mathrm{P}}$. Next, let 
\begin{equation*}
w_0=s_{j_1}s_{j_2}\cdots s_{j_p}\in \Wg
\end{equation*}
be a reduced expression for the longest element in the Weyl group $\Wg$, and set $\braid_{w_r}=\braid_{j_{r+1}}\cdots \braid_{j_p}\in \Bg$ for each $0<r\leq p$. Then,   by Theorem 4.8 of \cite{tan-braid}, the tensor product $L(\underline{\mathrm{P}})\otimes L(\underline{\mathrm{Q}})$ will be cyclic provided one has 
\begin{equation*}
\mathsf{Z}(Q_{j_r}(u+\hbar d_{j_r}))\subset \C\setminus \mathsf{Z}(\M[\braid]_{w_r}(\underline{\mathrm{P}})_{j_r}) \quad \forall \quad 0<r\leq p,
\end{equation*}
where $\mathsf{Z}(P(u))$ is the set of zeroes of $P(u)$,  $\M[\braid]_{w_r}(\underline{\mathrm{P}})$ denotes the action of $\braid_{w_r}\in \Bg$ on $\underline{\mathrm{P}}$ from \eqref{I.5} of Theorem \ref{T:I}, and  $\M[\braid]_{w_r}(\underline{\mathrm{P}})_{j_r}$ is the $j_r$-th component of $\M[\braid]_{w_r}(\underline{\mathrm{P}})$, which is necessarily a polynomial (see Corollary \ref{C:sigma-poly} or Lemma 4.3 and Proposition 4.5 of \cite{tan-braid}). 

Another concrete sufficient condition for the cyclicity of $L(\underline{\mathrm{P}})\otimes L(\underline{\mathrm{Q}})$ was obtained in the third author's recent work \cite{GWPoles} with S. Gautam, which studied the sets of poles $\{\sigma_i(L(\underline{\mathrm{P}}))\}_{i\in \mbI}$ of the generating series of $\Yhg$, viewed as  $\End(L(\underline{\mathrm{P}}))$-valued rational functions of $u$; see Definition \ref{D:Poles}. In Theorem 7.2 of \cite{GWPoles}, it was shown that the module $L(\underline{\mathrm{P}})\otimes L(\underline{\mathrm{Q}})$ is cyclic provided one has 
\begin{equation*}
\mathsf{Z}(Q_i(u+\hbar d_i))\subset \C\setminus \sigma_i(L(\underline{\mathrm{P}})) \quad \forall \quad i\in \mbI
\end{equation*}
and, by \cite{GWPoles}*{Cor.~7.3}, it is irreducible if this condition  also holds with the roles of $\underline{\mathrm{P}}$ and $\underline{\mathrm{Q}}$ interchanged. Moreover, it was proven in  Theorem 4.4 of \cite{GWPoles} that  $\sigma_i(L(\underline{\mathrm{P}}))$ is exactly the set of roots of a distinguished polynomial $\mcQ_{i,L(\underline{\mathrm{P}})}^\g(u)$ called a specialized \textit{Baxter polynomial} \cite{FrHer-15}, which can be understood heuristically as arising as the eigenvalue of a certain abelian transfer operator on the lowest weight vector of $L(\underline{\mathrm{P}})$. The polynomials  $\mcQ_{i,L(\underline{\mathrm{P}})}^\g(u)$ were then computed in Theorem 5.2 of \cite{GWPoles} in terms of the Drinfeld polynomials $P_j(u)$ and the Cartan data of $\g$; we refer the reader to Sections \ref{ssec:transfer} and \ref{ssec:Baxter-poles} for a more detailed summary of these results.

In Section 7.4 of \cite{GWPoles} it was conjectured that the two cyclicity conditions for $L(\underline{\mathrm{P}})\otimes L(\underline{\mathrm{Q}})$ obtained in \cite{tan-braid} and \cite{GWPoles} are always identical. The following theorem, which is the second main result of our article, proves this conjecture. 

\begin{theoremintro} Let $\underline{\mathrm{P}}=(P_j(u))_{j\in \mbI}$ be any $\mbI$-tuple of monic polynomials. Then, for each $i\in \mbI$, the specialized Baxter polynomial $\mcQ_{i,L(\underline{\mathrm{P}})}^\mfg(u)$ admits the factorization 
\begin{equation*}
\mcQ_{i,L(\underline{\mathrm{P}})}^\mfg(u)=\prod_{r:j_r=i} \M[\braid]_{w_r}(\underline{\mathrm{P}})_i.
\end{equation*}
Consequently, the sufficient conditions for the cyclicity of any tensor product $L(\underline{\mathrm{P}})\otimes L(\underline{\mathrm{Q}})$ obtained in \cite{tan-braid} and \cite{GWPoles} are identical. 
\end{theoremintro}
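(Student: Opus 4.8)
The plan is to reduce the theorem to its factorization identity, since the equivalence of the two cyclicity criteria follows formally once that identity is established. Granting $\mcQ_{i,L(\underline{\mathrm{P}})}^\mfg(u)=\prod_{r:j_r=i}\M[\braid]_{w_r}(\underline{\mathrm{P}})_i$, the zero set of $\mcQ_{i,L(\underline{\mathrm{P}})}^\mfg$ equals $\bigcup_{r:j_r=i}\mathsf{Z}(\M[\braid]_{w_r}(\underline{\mathrm{P}})_i)$, as the zeros of a product of polynomials are the union of the zeros of its factors; and $\sigma_i(L(\underline{\mathrm{P}}))=\mathsf{Z}(\mcQ_{i,L(\underline{\mathrm{P}})}^\mfg)$ by Theorem 4.4 of \cite{GWPoles}. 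Regrouping Tan's conditions by the value of $j_r$, the conjunction over all $r$ with $j_r=i$ of $\mathsf{Z}(Q_i(u+\hbar d_i))\cap\mathsf{Z}(\M[\braid]_{w_r}(\underline{\mathrm{P}})_i)=\varnothing$ is equivalent to $\mathsf{Z}(Q_i(u+\hbar d_i))\cap\bigcup_{r:j_r=i}\mathsf{Z}(\M[\braid]_{w_r}(\underline{\mathrm{P}})_i)=\varnothing$, which by the factorization is exactly the condition $\mathsf{Z}(Q_i(u+\hbar d_i))\cap\sigma_i(L(\underline{\mathrm{P}}))=\varnothing$ of \cite{GWPoles}. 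Since $w_0$ involves every simple reflection, every $i\in\mbI$ occurs among the $j_r$, so taking the conjunction over $i\in\mbI$ identifies the two criteria.

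It remains to prove the factorization, which is the heart of the matter. I would set $\underline{\mathrm{P}}^{(r)}:=\M[\braid]_{w_r}(\underline{\mathrm{P}})$, so that $\underline{\mathrm{P}}^{(p)}=\underline{\mathrm{P}}$ and $\underline{\mathrm{P}}^{(r-1)}=\braid_{j_r}(\underline{\mathrm{P}}^{(r)})$, and unwind this recursion one reflection at a time using the explicit formulas for the dual braid action on $\Monic^\mbI$ from Corollary \ref{C:Monic-braid-compute}, which are the image of Part \eqref{I.2} of Theorem \ref{T:I} under the dualization of Section \ref{ssec:wts-dual}. The reduced word $w_0=s_{j_1}\cdots s_{j_p}$ orders the positive roots as $\beta_1,\dots,\beta_p$, and the aim is to identify the polynomial factor $\M[\braid]_{w_r}(\underline{\mathrm{P}})_{j_r}$ produced at the $r$-th step with the contribution of the root $\beta_r$ to the Baxter polynomial of the node $j_r$. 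Taking the product over the $r$ with $j_r=i$ should then recover the closed form of $\mcQ_{i,L(\underline{\mathrm{P}})}^\mfg(u)$ in terms of the $P_j(u)$ and the Cartan data of $\mfg$ computed in Theorem 5.2 of \cite{GWPoles}. Concretely, I would perform the comparison by induction along the word, tracking how the spectral shifts $\tfrac{\hbar d_i}{2}(|a_{ij}|-2k)$ of Part \eqref{I.2} of Theorem \ref{T:I} telescope as the operators $\braid_{j_r}$ are applied in turn.

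A conceptual guide that I would use to organize this computation and to fix normalizations is the interpretation of $\mcQ_{i,L(\underline{\mathrm{P}})}^\mfg$ as the eigenvalue of the abelian transfer operator on the lowest weight vector of $L(\underline{\mathrm{P}})$ underlying Theorem 4.4 of \cite{GWPoles}. Since $\braid_{w_0}$ carries the highest weight line to the lowest weight line and the intermediate vectors $\braid_{w_r}v_+$ are extremal weight vectors whose $\xi_i(u)$-eigenvalues are computed by the dual action in Proposition \ref{P:extremal}, the transfer eigenvalue accumulates along the path $v_+=\braid_{w_p}v_+,\dots,\braid_{w_0}v_+=v_-$, with the factor gained at the $r$-th step governed by the $j_r$-component of the braid action; this is the structural reason only the steps with $j_r=i$ feed the $i$-th Baxter polynomial, and it also predicts, in agreement with Corollary \ref{C:sigma-poly}, that each $\M[\braid]_{w_r}(\underline{\mathrm{P}})_{j_r}$ is an honest polynomial.

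The main obstacle will be the bookkeeping of the spectral shifts and multiplicities when $\mfg$ is not simply laced. For $|a_{ij}|>1$ a single application of $\braid_{j_r}$ introduces a staggered product of shifted copies of the $j_r$-component, and one must check that the pieces deposited into the $i$-component by the steps with $j_r\neq i$ recombine correctly across the word, so that after collecting the steps with $j_r=i$ the surviving polynomial is $\mcQ_{i,L(\underline{\mathrm{P}})}^\mfg(u)$ with the correct zeros \emph{and} multiplicities --- the assertion is an equality of polynomials, not merely of zero sets. Verifying that the internal shifts produced by the telescoping coincide with those in the closed form of Theorem 5.2 of \cite{GWPoles} is the delicate step; the agreement with \cite{DiKh00}*{\S8} in the simply-laced case noted in Section \ref{ssec:I-mbraid} provides a useful consistency check.
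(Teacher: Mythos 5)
Your reduction of the second assertion to the factorization identity is correct and is exactly how the paper proceeds: Corollary \ref{C:Cyclic} is the formal argument you describe, combining the factorization with the equality $\Poles_i(L(\underline{\mathrm{P}}))=\mathsf{Z}(\mcQ^\g_{i,L(\underline{\mathrm{P}})}(u))$ from \eqref{Poles=Z}. The gap is in your proof of the factorization itself. Your plan is to expand $\prod_{r:j_r=i}\M[\braid]_{w_r}(\underline{\mathrm{P}})_i$ by iterating the explicit formulas of Corollary \ref{C:Monic-braid-compute} along the reduced word and then match the result against the closed form of $\mcQ^\g_{i,L(\underline{\mathrm{P}})}(u)$ from Theorem 5.2 of \cite{GWPoles}, which is written in terms of the entries $v_{ij}^{(b)}$ of the inverse quantum Cartan matrix. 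In that approach the \emph{entire} content of the theorem is the combinatorial identity equating the word-product (with all its staggered shifts and multiplicities) to the $v_{ij}^{(b)}$-formula; you do not prove this identity, you only name it as ``the delicate step.'' That identity --- relating an arbitrary reduced expression for $w_0$ to the inverse of the quantum Cartan matrix --- is as deep as the theorem itself, and nothing in the proposed induction indicates how it closes, especially in non-simply-laced types. Note also that this route is intrinsically tied to $w=w_0$, since the closed form is only available for the lowest weight space, whereas the factorization is needed (and is proven in the paper, Theorem \ref{T:Baxter-braid}) for every $w\in\Wg$.

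The paper avoids the closed form entirely, and in fact your ``conceptual guide'' paragraph is the actual proof mechanism --- you just stopped short of making it rigorous. The Baxter polynomial on $V_{w(\lambda)}$ is \emph{characterized} by the difference equation $T_i(u+\hbar d_i)=A_i^V(u)T_i(u)$ from \eqref{transfer-op}; by Proposition \ref{P:extremal} the eigenvalue of $A_i(u)$ on $V_{w(\lambda)}$ is $\braid_w(\underline{\lambda})(A_i(u))=\underline{\lambda}(\mbraid_{w^{-1}}(A_i(u)))$; and the key algebraic identity
\begin{equation*}
\mbraid_{w^{-1}}(A_i(u))=A_i(u)\prod_{r:\,j_r=i}\mbraid_{w_r^{-1}}(\xi_i(u))
\end{equation*}
follows by a one-line induction along the word from $\mbraid_j(A_i(u))=A_i(u)\,\xi_i(u)^{\delta_{ij}}$ (Proposition \ref{P:tau-dot(A)}). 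The Kronecker delta here is the precise reason that only the steps with $j_r=i$ contribute --- the rigorous form of your ``accumulation along the path'' heuristic --- and the factorization then drops out from the uniqueness assertion of Lemma \ref{L:factorization} applied to the resulting difference equation. The ingredient missing from your proposal is the GKLO series $A_i(u)$ and the formula for how the modified braid operators act on it; without it, the transfer-operator picture stays heuristic and you are forced back onto the hard combinatorial comparison, which is left unproven.
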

This theorem is a combination of Theorem \ref{T:Baxter-braid}, which establishes the stated formula $ \mcQ_{i,L(\underline{\mathrm{P}})}^\mfg(u)$, and Corollary \ref{C:Cyclic}, which spells out explicitly the second assertion of the above theorem. In fact, Theorem \ref{T:Baxter-braid} provides a more general formula which factorizes the specialized Baxter polynomial associated to \textit{any} extremal weight space of $L(\underline{\mathrm{P}})$ as a product of polynomials arising from the $\Bg$-orbit of $\underline{\mathrm{P}}$.

\subsection{The quantum loop algebra $\UqLg$}\label{ssec:I-qloop}

We now turn to the parallel situation of the quantum loop algebra $\UqLg$, which is the topic of Section \ref{sec:qLoop}.  In this setting there is an action of the braid group $\Bg$ on $\UqLg$ via Lusztig's operators \cite{Lusztig}.  In fact, the algebra $\UqLg$ arises as a subquotient of the quantum affine algebra $U_q(\widehat{\g})$. This larger algebra $U_q(\widehat{\g})$ carries an action of the affine braid group $\mathscr{B}_{\widehat{\g}}$, which was utilized heavily by Beck \cites{Beck1, Beck2} in his works on alternate presentations of $U_q(\widehat{\g})$ and on its Poincar\'{e}--Birkhoff--Witt theorem.  It is not hard to see that the action of the subgroup $\Bg \subset \mathscr{B}_{\widehat{\g}}$ passes to the subquotient $\UqLg$ (see Remark \ref{rmk:Beck-UqLg} for more details), and this action of $\Bg$ is our starting point.

Similarly to the case of the Yangian, we use the action of $\Bg$ to define \emph{modified braid group operators} $\mbraid_i$ on the commutative subalgebra $\UqLg[0] \subset \UqLg$, see Definition \ref{T:mbraid2}.  We prove many analogous properties to those stated for the Yangian in Theorem \ref{T:I} above, which are summarized in Theorem \ref{thm:main-q-thm} below.  In particular, we show that these operators $\{\mbraid_i\}_{i\in\mbI}$ define an action of $\Bg$ on $\UqLg[0]$ over $\C(q)$, and that after dualizing to an action on
$$
\operatorname{Hom}_{Alg}( \UqLg[0], \C(q) )
$$
we recover a braid group action studied by Chari \cite{Chari-braid}.  We also obtain actions of the Hecke algebra of type $\g$ on certain subspaces of  $\UqLg[0]$; see Remark \ref{rmk:q-Hecke}.

In addition, we show that our modified braid group action on $\UqLg[0]$ is compatible with the restriction $\Phi$ of the Gautam--Toledano Laredo homomorphism from \cite{GTL1}*{Thm.~1.4} to a certain $\C[q,q^{-1}]$-form $\mathscr{U}_q^0(L\g) $ of $\UqLg[0]$. More precisely, $\Phi$ may be viewed as an embedding
$$
\Phi: \mathscr{U}_q^0(L\g) \ \into \ \Yhg[0][\![v]\!]
$$
and the compatibility asserts that it intertwines our modified braid group actions on both sides; we refer the reader to Proposition \ref{P:GTL} for the precise statement. 
In fact, our proof proceeds the other way around: we first show by direct calculation (Proposition \ref{P:GTL} and Corollary \ref{cor: braid group via GTL}) that the modified braid group action on $\Yhg[0]$ induces a braid group action on $\UqLg[0]$ by operators $\mbraid_i^\Phi$.  We then use an argument exploiting Drinfeld polynomials (Lemma \ref{lem: technical}) to show that $\mbraid_i = \mbraid_i^\Phi$.  

Finally, we note that Frenkel and Hernandez \cite{FrHer-22} recently defined an action of the Weyl group $\Wg$ on a completion of the space of $q$-characters for $\UqLg$.  In Proposition 6.10 therein, it is shown that this action, after an appropriate truncation, recovers Chari's \cite{Chari-braid} braid group action mentioned above. Both the original $\Bg$-action of \cite{Chari-braid} and this action of the Weyl group $\Wg$ play important roles in the sequel \cite{FrHer-23} to \cite{FrHer-22}, where the $q$-analogues of some of the constructions of Section \ref{sec:Weights} also feature. In particular,  the $q$-difference equation \cite{FrHer-23}*{(4.6)} satisfied by the elements $\Psi_{w(i),a}$ from Definition 3.5 therein is the counterpart of that used to define the action of $\Bg$ on $\Monic^\mbI$ in Proposition \ref{P:Bg-difference} for the special case where $\underline{\mu}$ is the highest weight of the $i$-th fundamental representation $L_{\varpi_i}(a)$ of $\Yhg$; see Section \ref{ssec:Rep}. 
Moreover, the subtle differences between the formulas of Corollaries \ref{C:T_1-rep} and \ref{C:Monic-braid-compute} encoding the action of $\Bg$ on $(1+u^{-1}\C[\![u^{-1}]\!])^\mbI$ and $\Monic^\mbI$, respectively, are reflected in the definition of the elements $\Psi_{w(i),a}$ and  further commented on in \cite{FrHer-23}*{Rem.~3.7} in connection with Langlands duality. We refer the reader to the discussions preceding and following Corollary \ref{C:Monic-braid-compute} for a few additional details in this direction.

\subsection{Acknowledgments}\label{ssec:Acknowledge}

The authors gratefully acknowledge the support of the Natural Sciences and Engineering Research Council
Canada, provided via the CGS M program (N.F.) and the Discovery Grants program (A.W.: Grant RGPIN-2022-03135 and DGECR-2022-00437, C.W: Grant RGPIN-2022-03298 and DGECR-2022-00440).
\section{Yangians}\label{sec:Prelim}

\subsection{The Lie algebra $\g$}\label{ssec:g}
Let $\g$ be a simple Lie algebra over $\C$, with Cartan matrix $(a_{ij})_{i,j \in \mbI}$.  Fix minimal symmetrizing integers $d_i \in \{1,2,3\}$ so that $d_i a_{ij} = d_j a_{ji}$.  Fix Chevalley generators $\{e_i, h_i, f_i\}_{i \in \mbI}$ for $\g$, with corresponding triangular decomposition $\g = \mathfrak{n}^- \oplus \mathfrak{h} \oplus \mathfrak{n}^+$. 

Let $\{\alpha_i\}_{i \in \mbI}$ be the simple roots for $\g$, and let $(\cdot,\cdot)$ be the non-degenerate symmetric bilinear form on $\mathfrak{h}^\ast$ defined by $(\alpha_i, \alpha_j) = d_i a_{ij}$.  Let $Q = \bigoplus_{i \in \mbI} \Z \alpha_i$ denote the root lattice of $\g$ and $Q^+ = \bigoplus_{i \in \mbI} \Z_{\geq 0} \alpha_i$ the positive cone in $Q$. Define the fundamental weights $\varpi_i \in \mfh^\ast$ by $\varpi_i(h_j) = \delta_{ij}$ for $i,j \in \mbI$, and let $\Lambda = \bigoplus_{i \in \mbI} \Z \varpi_i$ denote the weight lattice of $\g$.   Let $\Root \subset Q$ denote the set of roots of $\g$ with positive/negative roots $\Root^\pm$. We denote the root space corresponding to $\alpha\in \Root$ by $\g_\alpha\subset \g$ and, similarly,  we write $U(\g)_\beta\subset U(\g)$ for the homogeneous component of $U(\g)$ associated to any $\beta \in Q$ with respect to the root grading. 


\subsection{The Yangian $\Yhg$}\label{ssec:Yhg}

We now turn to recalling the definition of the Yangian of $\g$, together with some of its basic properties. 
\begin{definition}
Let $\hbar \in \C^\times$ be fixed.  The \textbf{Yangian} $\Yhg$ is the unital associative algebra over $\C$ generated by elements $\{\xi_{i,r}, x_{i,r}^\pm \}_{i \in \mbI, r \in \Z_{\geq 0}}$, subject to the following relations: 
$$
[\xi_{i,r} , \xi_{j,s}] = 0,
$$
$$
[\xi_{i,0}, x_{j,s}^\pm] = \pm d_i a_{ij} x_{j,s}^\pm,
$$
$$
[\xi_{i,r+1}, x_{j,s}^\pm] - [\xi_{i,r}, x_{j,s+1}^\pm] = \pm \hbar \frac{d_i a_{ij}}{2} ( \xi_{i,r} x_{j,s}^\pm + x_{j,s}^\pm \xi_{i,r}),
$$
$$
[x_{i,r+1}^\pm, x_{j,s}^\pm] - [x_{i,r}^\pm, x_{j,s+1}^\pm] = \pm \hbar \frac{d_i a_{ij}}{2} ( x_{i,r}^\pm x_{j,s}^\pm + x_{j,s}^\pm x_{i,r}^\pm),
$$
$$
[x_{i,r}^+, x_{j,s}^-] = \delta_{ij} \xi_{i, r+s},
$$
$$
\sum_{\pi \in S_m} [x_{i,r_{\pi(1)}}^\pm, [ x_{i,r_{\pi(2)}}^\pm, [ \cdots [x_{i,r_{\pi(m)}}^\pm, x_{j,s}^\pm]\cdots]]] = 0 \quad \text{if } i\neq j,
$$
where in the final relation $m = 1 - a_{ij}$ and $S_m$ denotes the symmetric group of degree $m$. 
\end{definition}
%
%
The Yangian admits the structure of a filtered algebra with filtration $\mcF_\bullet\Yhg$ defined by assigning  degree $r$ to $\xi_{i,r}$ and $x_{i,r}^\pm$ for each $i\in \mbI$ and $r\in \Z_{\geq 0}$. The \textit{Poincar\'{e}--Birkhoff--Witt Theorem} for $\Yhg$ asserts that the assignment 
\begin{equation*}
e_i t^r \mapsto d_i^{-1/2} \overline{x_{i,r}^+}, \quad f_i t^r \mapsto  d_i^{-1/2} \overline{x_{i,r}^-},\quad h_it^r \mapsto d_i^{-1} \overline{\xi_{i,r}}
\end{equation*}
extends uniquely to an isomorphism of graded algebras
\begin{equation}\label{Yhg-PBW}
U(\mfg[t])\iso \mathrm{gr}(\Yhg)=\bigoplus_{n\geq 0}\mcF_n\Yhg/\mcF_{n-1}\Yhg,
\end{equation} 
where, for each $y\in \{\xi_i,x_i^\pm\}$, $\overline{y}_r$ is the image of $y_r$ in $\mcF_r\Yhg/\mcF_{r-1}\Yhg$.
A particular consequence of this theorem is that there is an injective homomorphism of algebras $U(\g) \hookrightarrow \Yhg$ defined on Chevalley generators by
\begin{equation*}
e_i \mapsto d_i^{-1/2} x_{i,0}^+, \qquad h_i \mapsto d_i^{-1} \xi_{i,0}, \qquad  f_i \mapsto d_i^{-1/2} x_{i,0}^-.
\end{equation*}
We will henceforth view $U(\mfg)\subset \Yhg$, with the above identification understood. Note that the adjoint action of $\mfh$ on $\Yhg$ gives rise to a weight space decomposition $\Yhg=\bigoplus_{\beta\in Q}\Yhg_\beta$, where 
\begin{equation*}
\Yhg_\beta=\{y\in \Yhg:[h,y]=\beta(h)y \quad \forall\; h\in \mfh\} \quad \forall\quad \beta\in Q.
\end{equation*}
This decomposition equips $\Yhg$ with the structure of a $Q$-graded algebra. 

Consider now the subalgebras $\Yhg[0]$ and $\Yhg[\pm]$ generated by the elements $\{\xi_{i,r}\}_{i\in \mbI,r\in \Z_{\geq 0}}$ and  $\{x_{i,r}^\pm\}_{i\in \mbI,r\in \Z_{\geq 0}}$, respectively. Note that these are filtered, $Q$-graded subalgebras of $\Yhg$. Moreover, the product on $\Yhg$ induces an isomorphism of vector spaces
\begin{equation*}
\Yhg[-] \otimes \Yhg[0] \otimes \Yhg[+]\iso \Yhg,
\end{equation*}
while the graded algebra isomorphism \eqref{Yhg-PBW} restricts to yield isomorphisms
\begin{equation*}
U(\mfh[t])\iso \mathrm{gr}(\Yhg[0]) \quad \text{ and }\quad U(\mfn^\pm[t])\iso\mathrm{gr}(\Yhg[\pm]).
\end{equation*} 
%

\subsection{Alternate generators}\label{ssec:GKLO}

 Another important set of generators for $\Yhg$ was introduced in the work \cite{GKLO} of Gerasimov  \textit{et al.} To recall these, let us first introduce $\xi_i(u)\in \Yhg[0][\![u^{-1}]\!]$ and $x_i^\pm(u)\in \Yhg[\pm][\![u^{-1}]\!]$, for each $i\in \mbI$, by setting
\begin{gather*}
\xi_i(u):=1+\hbar\sum_{r\geq 0} \xi_{i,r}u^{-r-1} \quad \text{ and }\quad x_i^\pm(u):=\hbar\sum_{r\geq 0} x_{i,r}^\pm u^{-r-1}.
\end{gather*}
Then, by  Lemma 2.1 of \cite{GKLO}, there  is a unique tuple of formal series $(A_j(u))_{j\in \mbI}\in (1+u^{-1}\Yhg[0][\![u^{-1}]\!])^\mbI$ satisfying 
\begin{equation}\label{GKLO-A}
\xi_i(u) =\frac{\prod_{j\neq i}\prod_{r=1}^{-a_{ji}}A_j(u-\tfrac{\hbar d_j}{2}(2r+a_{ji}))}{A_i(u) A_i(u-\hbar d_i)}\quad \forall \quad i\in \mbI. 
\end{equation}
The coefficients $A_i(u) = 1 + \hbar \sum_{r \geq 0} A_{i,r} u^{-r-1}$ of these series generate the subalgebra $\Yhg[0]\subset \Yhg$. 
\begin{remark}\label{R:logs}
For each $i\in \mbI$, let $t_i(u)=\hbar\sum_{r\geq 0}t_{i,r}u^{-r-1}$ be the formal series logarithm of $\xi_i(u)$: 
\begin{equation*}
t_i(u):=\log\xi_i(u)=\sum_{n>0} \frac{(-1)^{n+1}}{n} (\xi_i(u)-1)^n.
\end{equation*}
Similarly, set $a_i(u):=\log A_i(u)$ for each $i\in \mbI$. 
Let $\qshift$ be the shift operator on $\Yhg[0][\![u^{-1}]\!]$ defined by $\qshift(f(u))=f(u+\hbar/2)$. Equivalently, one has $\qshift=e^{\frac{\hbar}{2}\partial_u}$. Then the defining relation \eqref{GKLO-A} for the tuple $(A_j(u))_{j\in \mbI}$ is equivalent to
\begin{equation}\label{t-vs-A}
t_i(u)=-\sum_{j\in \mbI} \qshift^{-d_j}[a_{ji}]_{\qshift^{d_j}}(a_j(u)) \quad \forall\quad i\in \mbI,
\end{equation}
where we follow the standard notation for $z$-numbers: if $m\in \Z$, then 
\begin{equation}\label{z-number}
[m]_z=\frac{z^m-z^{-m}}{z-z^{-1}}\in \Z[z,z^{-1}]\subset \Q(z). 
\end{equation}
\end{remark} 

Now let us introduce series $B_i(u)$, $C_i(u)$ and $D_i(u)$, for each $i\in \mbI$, by setting 
\begin{gather*}
B_i(u)=d_i^{1/2} A_i(u)x_i^+(u), \quad C_i(u)=d_i^{1/2} x_i^-(u) A_i(u),\\  D_i(u)=A_i(u)\xi_i(u)+C_i(u)A_i(u)^{-1} B_i(u).
\end{gather*}
Then the coefficients of $A_i(u), B_i(u)$ and $C_i(u)$ generate $\Yhg$ as an algebra. Some of the commutation relations satisfied by these series are spelled out in the following proposition, which is a  restatement of \cite{GKLO}*{Prop.~2.1}.
\begin{proposition}\label{prop: GKLO relations} The series $A_i(u),B_i(u),C_i(u)$ and $D_i(u)$ satisfy the following commutation relations:
\begin{enumerate}

\item For each $i,j \in \mbI$, we have 
\begin{gather*}
[A_i(u),A_j(v)]=0,\\
[B_i(u),B_i(v)]=[C_i(u),C_i(v)]=0,
\end{gather*}

\item For each $i,j\in \mbI$ with $i\neq j$, we have 
\begin{gather*}
[A_i(u),B_j(v)]=[A_i(u),C_j(v)]=[B_i(u),C_j(v)]=0.
\end{gather*}

\item For each $i\in \mbI$, we have 
\begin{equation*}
\begin{aligned}
(u-v)[A_i(u),B_i(v)]&=d_i\hbar\left( B_i(u)A_i(v)-B_i(v)A_i(u)\right),\\
(u-v)[A_i(u),C_i(v)]&=d_i\hbar\left( A_i(u)C_i(v)-A_i(v)C_i(u)\right),\\
(u-v)[B_i(u),C_i(v)]&= d_i \hbar \left( A_i(u)D_i(v)-A_i(v) D_i(u)\right),\\
(u-v)[C_i(u),D_i(v)]&= d_i \hbar \left( D_i(u)C_i(v)-D_i(v)C_i(u)\right),\\
(u-v)[A_i(u),D_i(v)]&=d_i \hbar \left( B_i(u)C_i(v)-B_i(v)C_i(u)\right).
\end{aligned}
\end{equation*}
\end{enumerate}
\end{proposition}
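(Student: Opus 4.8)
The plan is to recognize these identities as the generating-series form of the commutation relations among the Gaussian generators of the Yangian established in \cite{GKLO}*{Prop.~2.1}; the substance is then to match the present conventions to those of that reference (the normalizations by $d_i^{1/2}$, the factor of $\hbar$ built into $\xi_i(u)$ and $x_i^\pm(u)$, and the passage from $\mfgl_n$ to a general simple $\mfg$), and, should a self-contained derivation be preferred, to convert the defining Drinfeld relations of $\Yhg$ into identities among generating series. I would group the relations into three families: the commuting pairs, the single-node relations (all of part (3) together with $[B_i(u),B_i(v)]=[C_i(u),C_i(v)]=0$), and the cross-node relations of part (2). The commuting identity $[A_i(u),A_j(v)]=0$ is immediate, since the $\{\xi_{i,r}\}$ pairwise commute and hence $\Yhg[0]$ is commutative, while the coefficients of the $A_j(u)$ generate $\Yhg[0]$.

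For the single-node relations I would reduce to rank one. Fixing $i\in\mbI$, the subalgebra generated by the coefficients of $\xi_i(u)$ and $x_i^\pm(u)$ is a homomorphic image of $Y_{\hbar d_i}(\mfsl_2)$, as the $i=j$ instances of the defining relations are precisely the rank-one Drinfeld relations (recall $a_{ii}=2$). Specializing \eqref{GKLO-A} to this case gives $\xi_i(u)=\big(A_i(u)A_i(u-\hbar d_i)\big)^{-1}$, which identifies $A_i(u),B_i(u),C_i(u),D_i(u)$ with the Gaussian generators arising from the Gauss decomposition of the $2\times2$ generating matrix of $Y(\mfgl_2)$. Relations (3) are then the standard $\mfgl_2$ Gaussian commutation relations, obtained either by citing the known $\mfgl_2$ computation or by rewriting the $i=j$ Drinfeld relations in generating-series form and computing directly.

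There remain the cross-node relations. Using $B_j(v)=d_j^{1/2}A_j(v)x_j^+(v)$, $C_j(v)=d_j^{1/2}x_j^-(v)A_j(v)$, and $[A_i,A_j]=0$, both $[A_i(u),B_j(v)]$ and $[A_i(u),C_j(v)]$ reduce (as $A_j(v)$ is invertible) to the single assertion
\[
[A_i(u),x_j^\pm(v)]=0\qquad (i\neq j);
\]
combined with $[x_i^+(u),x_j^-(v)]=0$ for $i\neq j$ (the generating-series form of $[x_{i,r}^+,x_{j,s}^-]=\delta_{ij}\xi_{i,r+s}$), this same vanishing also gives $[B_i(u),C_j(v)]=0$. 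To establish the displayed identity I would pass to logarithms: by \eqref{t-vs-A} the tuples $(t_i(u))$ and $(a_j(u))$ are related by an invertible transformation over the ring of shift operators (a shifted analogue of the Cartan matrix), so each $a_j(u)=\log A_j(u)$ is a $\qshift$-linear combination of the $t_i(u)=\log\xi_i(u)$. Consequently $[A_j(u),x_k^\pm(v)]=0$ is equivalent to the vanishing of the corresponding combination of the commutators $[t_i(u),x_k^\pm(v)]$, each of which is read off from the $\xi$--$x$ Drinfeld relations.

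The main obstacle is exactly this cross-node vanishing. The difficulty is that the individual commutators $[\xi_i(u),x_j^\pm(v)]$ do \emph{not} vanish for $i\neq j$ (they are governed by $a_{ij}$), so the result is a genuine cancellation forced by the precise spectral shifts in \eqref{GKLO-A}; confirming it amounts to propagating the generating-series form of the $\xi$--$x$ relations through the inverse of the transformation \eqref{t-vs-A}. A secondary technical point is the final two relations of part (3): since $D_i(u)=A_i(u)\xi_i(u)+C_i(u)A_i(u)^{-1}B_i(u)$ involves the inverse $A_i(u)^{-1}$, one must first secure the $A_i$--$B_i$ and $A_i$--$C_i$ relations and then assemble them while carefully tracking the order of the noncommuting factors.
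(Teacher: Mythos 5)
The paper offers no proof of this proposition at all: it is stated as ``a restatement of \cite{GKLO}*{Prop.~2.1}'', so your primary route --- recognize the identities as that result and reconcile conventions --- is exactly what the paper does, and as a plan to quote the literature your proposal is fine.

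Your fallback self-contained sketch, however, contains a genuine error. You assert that for fixed $i$ the relation \eqref{GKLO-A} ``specializes'' to $\xi_i(u)=\big(A_i(u)A_i(u-\hbar d_i)\big)^{-1}$, and hence that $A_i(u),B_i(u),C_i(u),D_i(u)$ lie in, and are the Gaussian generators of, the rank-one subalgebra generated by the coefficients of $\xi_i(u)$ and $x_i^\pm(u)$. This is false unless $\g=\mfsl_2$: the defining relation \eqref{GKLO-A} carries the numerator $\prod_{j\neq i}\prod_{r=1}^{-a_{ji}}A_j(\cdots)$, and inverting \eqref{t-vs-A} expresses $a_i(u)=\log A_i(u)$ as a combination of \emph{all} the $t_j(u)$ (for a connected Dynkin diagram the inverse of the shifted Cartan matrix has no vanishing entries), so $A_i(u)$ does not belong to that rank-one subalgebra. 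Consequently part (3) cannot be verified by the rank-one computation as described: the true $A_i(u)$ differs from the rank-one series $\tilde{A}(u)$ defined by $\xi_i(u)=\big(\tilde{A}(u)\tilde{A}(u-\hbar d_i)\big)^{-1}$ by a factor built out of the $A_j(u)$ with $j\neq i$, and transferring the $\mathfrak{gl}_2$ relations to $A_i,B_i,C_i,D_i$ requires knowing that this factor commutes with $x_i^\pm(v)$ --- which is precisely the cross-node vanishing $[A_j(u),x_i^\pm(v)]=0$, $j\neq i$, that you correctly single out as ``the main obstacle'' but never establish. So the dependency structure of your plan is inverted (part (3) needs the cross-node statement, not the other way around), and the one step carrying the real content of the proposition is left open; as a self-contained argument the sketch is incomplete, even though as a citation-based proof it coincides with the paper's.
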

We note that this proposition does \emph{not} give a presentation of $\Yhg$, \textit{i.e.}~these are not a complete set of relations.  However, in the parallel quantum loop algebra setting a complete set of relations is given in \cite{FiTs19}*{Thm.~6.6}.  We will not need this presentation for our purposes.
Indeed, we shall be primarily interested in the following weaker set of relations, which follow readily from the above proposition and encode information about the adjoint action of $\mfg$ on $\Yhg$. 
\begin{corollary}\label{C:[g,GKLO]}
The series $A_i(u),B_i(u),C_i(u)$ and $D_i(u)$ satisfy the following commutation relations:
\begin{enumerate}
\item\label{[g,GKLO]:1} For each $i,j\in \mbI$ with $i\neq j$, we have 
\begin{equation*}
[e_i,A_j(u)]=[f_i,A_j(u)]=0
\end{equation*}
\item\label{[g,GKLO]:2} For each $i\in \mbI$, one has 
\begin{gather*}
[e_i,A_i(u)]=B_i(u), \quad  [e_i,B_i(u)]=0,\\
[e_i,C_i(u)]=D_i(u)-A_i(u),\quad [e_i,D_i(u)]=-B_i(v).
\end{gather*}
\item\label{[g,GKLO]:3} For each $i\in \mbI$, one has 
\begin{gather*}
[f_i,A_i(u)]=-C_i(u), \quad  [f_i,B_i(u)]=A_i(u)-D_i(u),\\
[f_i,C_i(u)]=0, \quad [f_i,D_i(u)]=C_i(u).
\end{gather*}
\end{enumerate}
\end{corollary}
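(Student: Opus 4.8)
The plan is to read the stated commutators off the relations of Proposition \ref{prop: GKLO relations} by extracting leading coefficients in the spectral parameter. The key observation is that $e_i$ and $f_i$ occur as the leading coefficients of $B_i(u)$ and $C_i(u)$: since $A_i(v)=1+O(v^{-1})$ while $x_i^\pm(v)=\hbar x_{i,0}^\pm v^{-1}+O(v^{-2})$ and $e_i=d_i^{-1/2}x_{i,0}^+$, $f_i=d_i^{-1/2}x_{i,0}^-$, one finds $B_i(v)=\hbar d_i e_i v^{-1}+O(v^{-2})$ and $C_i(v)=\hbar d_i f_i v^{-1}+O(v^{-2})$; likewise $A_i(v)=1+O(v^{-1})$ and $D_i(v)=1+O(v^{-1})$, the latter because $C_i(v)A_i(v)^{-1}B_i(v)=O(v^{-2})$. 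Consequently, for any identity of the shape $(u-v)[X(u),Y(v)]=d_i\hbar(\cdots)$, taking the coefficient of $v^0$ on both sides turns the left-hand side into $\pm\hbar d_i[X(u),g]$, where $g\in\{e_i,f_i\}$ is the leading coefficient of $Y$; dividing by $\hbar d_i$ yields a relation between $\mathrm{ad}(g)$ and the series $X(u)$. Where no prefactor $(u-v)$ is present, I instead extract the coefficient of $v^{-1}$.

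First I would dispatch part \eqref{[g,GKLO]:1}: from $[A_i(u),B_j(v)]=[A_i(u),C_j(v)]=0$ for $i\neq j$ in part (2) of Proposition \ref{prop: GKLO relations}, the coefficient of $v^{-1}$ gives $[e_j,A_i(u)]=[f_j,A_i(u)]=0$. Applying the coefficient-of-$v^0$ recipe to the relations of part (3) of that proposition, and to their images under $u\leftrightarrow v$ when the $v$-variable must sit on $B_i$ or $C_i$, I would then read off $[e_i,A_i(u)]=B_i(u)$ and $[f_i,A_i(u)]=-C_i(u)$ from the $[A_i,B_i]$ and $[A_i,C_i]$ relations; $[e_i,C_i(u)]=D_i(u)-A_i(u)$ and $[f_i,B_i(u)]=A_i(u)-D_i(u)$ from the $[B_i,C_i]$ relation taken in the two orders; and $[f_i,D_i(u)]=C_i(u)$ from the $[C_i,D_i]$ relation. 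Finally, the coefficient of $v^{-1}$ in the prefactor-free identities $[B_i(u),B_i(v)]=[C_i(u),C_i(v)]=0$ of part (1) gives $[e_i,B_i(u)]=0$ and $[f_i,C_i(u)]=0$.

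The single relation that resists this procedure is $[e_i,D_i(u)]=-B_i(u)$: Proposition \ref{prop: GKLO relations} records no commutator of $B_i$ with $D_i$, so there is no identity from which to extract $e_i$ against $D_i(u)$. This is the main obstacle, and I would resolve it by symmetry. The assignment $x_{i,r}^\pm\mapsto x_{i,r}^\mp$, $\xi_{i,r}\mapsto\xi_{i,r}$ extends to an \emph{anti}-automorphism $\omega$ of $\Yhg$; this is a standard verification, as reversing products and interchanging $+\leftrightarrow-$ carries each defining relation to its opposite-sign counterpart (and, with the factor $(-1)^{1-a_{ij}}$ from the nested brackets, each Serre relation to the other). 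Because $\omega$ fixes every generator $\xi_{i,r}$ of the commutative subalgebra $\Yhg[0]$, it restricts to the identity there, so $\omega(A_i(u))=A_i(u)$; being an anti-automorphism, it then sends $B_i(u)=d_i^{1/2}A_i(u)x_i^+(u)$ to $d_i^{1/2}x_i^-(u)A_i(u)=C_i(u)$ and conversely, and it fixes $D_i(u)$ since $A_i(u)$ and $\xi_i(u)$ commute in $\Yhg[0]$. As $\omega(e_i)=f_i$, applying $\omega$ to the already-established identity $[f_i,D_i(u)]=C_i(u)$ gives $-[e_i,D_i(u)]=B_i(u)$, which is exactly the remaining relation. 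The same symmetry would, if one prefers, deduce all of part \eqref{[g,GKLO]:3} from part \eqref{[g,GKLO]:2}, halving the number of extractions required.
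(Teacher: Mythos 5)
Your proof is correct, and its core technique --- extracting the coefficient of $v^0$ (or $v^{-1}$) in the relations of Proposition \ref{prop: GKLO relations}, using that $B_i(v)=\hbar d_i e_i v^{-1}+O(v^{-2})$ and $C_i(v)=\hbar d_i f_i v^{-1}+O(v^{-2})$ while $A_i(v)$ and $D_i(v)$ are $1+O(v^{-1})$ --- is exactly what the paper means when it says these relations ``follow readily from the above proposition''; the paper supplies no further detail. Where you go beyond the paper is in the treatment of $[e_i,D_i(u)]=-B_i(u)$: you are right that Proposition \ref{prop: GKLO relations} as stated records no $[B_i(u),D_i(v)]$ relation, so this single commutator genuinely cannot be read off by coefficient extraction, and your fix via the anti-involution $\omega$ ($x_{i,r}^\pm\mapsto x_{i,r}^\mp$, $\xi_{i,r}\mapsto\xi_{i,r}$) is sound: $\omega$ fixes $\Yhg[0]$ pointwise, hence fixes $A_i(u)$ and $D_i(u)$, swaps $B_i(u)\leftrightarrow C_i(u)$, sends $e_i\mapsto f_i$, and so carries $[f_i,D_i(u)]=C_i(u)$ to $[e_i,D_i(u)]=-B_i(u)$. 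For comparison, there is an alternative that stays entirely within the relations you have already extracted, with no new structural input: apply the Jacobi identity $[e_i,[f_i,B_i(u)]]=[[e_i,f_i],B_i(u)]+[f_i,[e_i,B_i(u)]]$. Since $[f_i,B_i(u)]=A_i(u)-D_i(u)$, $[e_i,B_i(u)]=0$, $[e_i,f_i]=h_i$, and $B_i(u)$ has weight $\alpha_i$ so that $[h_i,B_i(u)]=2B_i(u)$, this yields $B_i(u)-[e_i,D_i(u)]=2B_i(u)$, i.e.\ $[e_i,D_i(u)]=-B_i(u)$. Either route closes the gap that the paper's ``readily'' glosses over; your write-up has the merit of making that gap explicit.
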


\subsection{Hopf structure and universal R-matrix}\label{sec:Hopf}

The Yangian admits a filtered Hopf algebra  structure which deforms the standard graded Hopf algebra structure on the enveloping algebra $U(\mfg[t])$ (that is, \eqref{Yhg-PBW} becomes an isomorphism of graded Hopf algebras).  The counit $\veps$, coproduct $\Delta$ and antipode $S$ on $\Yhg$ are uniquely determined by the requirement that the embedding $U(\mfg)\subset \Yhg$ from Section \ref{ssec:Yhg} is a homomorphism of Hopf algebras and that, for each $i\in \mbI$, one has 
\begin{gather*}
\veps(t_{i,1})=0, \quad 
\Delta(t_{i,1})=t_{i,1}\otimes 1 + 1\otimes t_{i,1} -\hbar \sum_{\alpha\in \Root^+} (\alpha_i,\alpha) x_\alpha^-\otimes x_\alpha^+,\\
 S(t_{i,1})=-t_{i,1}-\hbar\sum_{\alpha\in \Root^+}(\alpha_i,\alpha) x_\alpha^-x_\alpha^+,
\end{gather*}
where $x_\alpha^\pm\in \mfg_{\pm \alpha}\subset \Yhg$ are any root vectors satisfying $(x_\alpha^+,x_\alpha^-)=1$ for all $\alpha\in \Root^+$, and  $\{t_{i,1}\}_{i\in \mbI}\subset \Yhg[0]$ are as in Remark \ref{R:logs}. Explicitly, one has 
\begin{equation*}
t_{i,1}=\xi_{i,1}-\frac{\hbar}{2}\xi_{i,0}^2 \quad \forall \quad i \in \mbI.
\end{equation*}
%
%

For each $a\in \C$, there is a Hopf algebra automorphism $\tau_a$ of $\Yhg$, called a shift automorphism, uniquely determined by the formulas
\begin{equation*}
\tau_a(\xi_i(u))=\xi_i(u-a) \quad \text{ and }\quad \tau_a(x_i^\pm(u))=x_i^\pm(u-a)\quad \forall \; i\in \mbI.
\end{equation*}
Replacing $a$ by a formal variable $z$  yields instead an algebra embedding 
\begin{equation*}
\tau_z:\Yhg\to \Yhg[][z].
\end{equation*}
This homomorphism plays a crucial role in defining the universal $R$-matrix of the Yangian, which was first constructed by Drinfeld in \cite{Dr}*{Thm.~3}. The following theorem is a restatement of Drinfeld's result.
\begin{theorem}\label{T:Uni-R}
There is a unique element $\mcR(z)\in
1+z^{-1}\Yhg^{\otimes 2}[\![z^{-1}]\!]
$, called the universal $R$-matrix of $\Yhg$, 
satisfying 
\begin{equation}\label{R-inter}
(\tau_z\otimes \mathrm{Id})\Delta^{\mathrm{op}}(x)= \mcR(z) \cdot (\tau_z\otimes \mathrm{Id} )(\Delta(x)) \cdot \mcR(z)^{-1} \quad \forall\; x\in \Yhg
\end{equation}
in $\Yhg^{\otimes 2}(\!(z^{-1})\!)$, in addition to the following identities in $\Yhg^{\otimes 3}[\![z^{-1}]\!]$:
\begin{align*}
(\Delta\otimes \mathrm{Id}) (\mcR(z))&= \mcR_{13}(z)\mcR_{23}(z),\\
(\mathrm{Id}\otimes \Delta) (\mcR(z))&= \mcR_{13}(z)\mcR_{12}(z).
\end{align*}
Moreover, $\mcR(z)$ satisfies $\mcR(z)^{-1}=\mcR_{21}(-z)$ in addition to
\begin{equation*}
(\tau_a\otimes \tau_b)\mcR(z)=\mcR(z+a-b) \quad \forall \quad a,b\in \C. 
\end{equation*}
\end{theorem}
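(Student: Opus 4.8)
The plan is to make everything flow from a single \emph{uniqueness principle}: any element of $1+z^{-1}\Yhg^{\otimes 2}[\![z^{-1}]\!]$ commuting with the image of $(\tau_z\otimes\id)\Delta$ equals $1$. Granting this, existence and uniqueness of the intertwiner $\mcR(z)$ satisfying \eqref{R-inter} is the one substantive point, and the coproduct identities, the unitarity $\mcR(z)^{-1}=\mcR_{21}(-z)$, and the shift formula all follow formally, by producing in each case a second solution of an appropriate intertwining relation and invoking uniqueness.

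To construct the intertwiner, I would first observe that both $x\mapsto(\tau_z\otimes\id)\Delta^{\mathrm{op}}(x)$ and $x\mapsto\mcR(z)\,(\tau_z\otimes\id)\Delta(x)\,\mcR(z)^{-1}$ are algebra homomorphisms $\Yhg\to\Yhg^{\otimes 2}(\!(z^{-1})\!)$, so \eqref{R-inter} need only be imposed on the generators $\g\cup\{t_{i,1}\}_{i\in\mbI}$. Since $\tau_z$ fixes $\g$ and $\Delta|_\g$ is cocommutative, the relation on $\g$ says precisely that $\mcR(z)$ is $\g$-invariant. For $x=t_{i,1}$ I would use the identity $\tau_z(t_{i,1})=t_{i,1}+z\,\xi_{i,0}$ and the explicit coproduct of Section~\ref{sec:Hopf}; writing $\mcR(z)=\sum_{n\geq 0}\mcR_n z^{-n}$ with $\mcR_0=1$, relation \eqref{R-inter} becomes the recursion
\begin{equation*}
[\,\xi_{i,0}\otimes 1,\ \mcR_{n+1}\,]=\mcR_n\,\Delta(t_{i,1})-\Delta^{\mathrm{op}}(t_{i,1})\,\mcR_n,\qquad i\in\mbI.
\end{equation*}
Because $\xi_{i,0}=d_i h_i$ acts on $\Yhg_\beta$ by the scalar $d_i\beta(h_i)$, the operator $\mathrm{ad}(\xi_{i,0}\otimes 1)$ is invertible on every weight component $\Yhg_\beta\otimes\Yhg_\gamma$ with $\beta\neq 0$; thus the recursion determines all such components of $\mcR_{n+1}$. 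The weight-$(0,0)$ component is forced to lie in $\Yhg_0\otimes\Yhg_0$ by $\g$-invariance and must commute with every $\Delta(t_{i,1})$, hence with all of $\Delta(\Yhg)$; uniqueness then reduces to the triviality of this relative commutant, which is the uniqueness principle above.

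The main obstacle is \textbf{existence}: one must check that the recursion is consistent at every order, i.e.\ that its right-hand side annihilates the weight-$(0,0)$ subspace and that the independent equations indexed by $i\in\mbI$ are mutually compatible. This compatibility is exactly the depth of Drinfeld's theorem; it is proved using coassociativity together with the precise form of $\Delta(t_{i,1})$, and at this step I would reproduce or directly invoke Drinfeld's construction \cite{Dr}.

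Granting existence and uniqueness, the remaining identities are formal. For the shift formula, applying $\tau_a\otimes\tau_b$ to \eqref{R-inter} and using that each $\tau_c$ is a coalgebra map, $\Delta\,\tau_c=(\tau_c\otimes\tau_c)\Delta$, together with $\tau_a\tau_z=\tau_{a+z}$, shows that $(\tau_a\otimes\tau_b)\mcR(z)$ solves \eqref{R-inter} with $z$ replaced by $z+a-b$; uniqueness gives $(\tau_a\otimes\tau_b)\mcR(z)=\mcR(z+a-b)$. For unitarity, conjugating the relation for $\mcR(-z)$ by the tensor flip and then applying $\tau_z\otimes\tau_z$ (using the shift formula just proved) shows that $\mcR_{21}(-z)$ solves the same intertwining relation as $\mcR(z)^{-1}$, whence $\mcR_{21}(-z)=\mcR(z)^{-1}$. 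Finally, for the coproduct identities I would apply $\Delta\otimes\id$ and $\id\otimes\Delta$ to \eqref{R-inter}, again using $\Delta\,\tau_z=(\tau_z\otimes\tau_z)\Delta$, and check that $(\Delta\otimes\id)\mcR(z)$ and $\mcR_{13}(z)\mcR_{23}(z)$ (resp.\ $(\id\otimes\Delta)\mcR(z)$ and $\mcR_{13}(z)\mcR_{12}(z)$) satisfy one and the same intertwining relation in $\Yhg^{\otimes 3}$ with leading term $1$; the three-fold analogue of the uniqueness principle identifies them.
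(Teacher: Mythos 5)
The first thing to note is that the paper does not actually prove Theorem \ref{T:Uni-R}: it is presented as a restatement of Drinfeld's theorem, with the proof deferred to \cite{Dr} and \cite{GTLW19}, and the proof in \cite{GTLW19} proceeds by constructing the Gauss components $\mcR^\pm(z)$ and $\mcR^0(z)$ separately (as the paper recounts immediately after the theorem), not by the coefficient recursion you propose. Your setup of that recursion, $[\xi_{i,0}\otimes 1,\mcR_{n+1}]=\mcR_n\Delta(t_{i,1})-\Delta^{\mathrm{op}}(t_{i,1})\mcR_n$, is correct (indeed $\tau_z(t_{i,1})=t_{i,1}+z\,\xi_{i,0}$ and $\Yhg$ is generated by $\g\cup\{t_{i,1}\}_{i\in\mbI}$), and deferring the consistency of the recursion to \cite{Dr} is defensible given that the paper defers the entire proof. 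One smaller inaccuracy: for a fixed $i$, $\mathrm{ad}(\xi_{i,0}\otimes 1)$ is invertible on $\Yhg_\beta\otimes\Yhg_\gamma$ only when $(\alpha_i,\beta)\neq 0$, not for all $\beta\neq 0$, so the nonzero-weight components of $\mcR_{n+1}$ are pinned down only by combining the equations over all $i\in\mbI$.

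The genuine gap is that your uniqueness principle is false, and everything else in the proposal rests on it. For any scalar series $c(z)\in 1+z^{-1}\C[\![z^{-1}]\!]$, the element $c(z)\cdot(1\otimes 1)$ lies in $1+z^{-1}\Yhg^{\otimes 2}[\![z^{-1}]\!]$ and commutes with the image of $(\tau_z\otimes\id)\Delta$; equivalently, if $\mcR(z)$ satisfies \eqref{R-inter}, then so does $c(z)\mcR(z)$. Hence the relative commutant is not trivial, and \eqref{R-inter} alone can never single out $\mcR(z)$. This is precisely why the cabling identities are part of the characterization in the theorem: applying $\Delta\otimes\id$ to $c(z)\mcR(z)$ shows that the first cabling identity forces $c(z)^2=c(z)$, hence $c(z)=1$. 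In particular, your plan to derive the cabling identities formally from ``intertwining plus uniqueness'' is circular: without the cabling identities there is no uniqueness to invoke, so they cannot be obtained as consequences of it. The same objection applies verbatim to your derivations of unitarity and of the shift formula, and to the ``three-fold analogue'' of the principle. The argument can be repaired along standard lines: show that the commutant of the image of $(\tau_z\otimes\id)\Delta$ consists of exactly the scalar series (a completed version of your weight-space analysis), so that any two solutions of \eqref{R-inter} differ by a scalar; conclude uniqueness for the full system of intertwining plus cabling, since cabling kills the scalar; and then, at each formal step, verify that the competing element --- $(\tau_a\otimes\tau_b)\mcR(z)$, $\mcR_{21}(-z)^{-1}$, $(\Delta\otimes\id)\mcR(z)$, $\mcR_{13}(z)\mcR_{23}(z)$ --- satisfies the appropriate cabling identities, not merely the intertwining relation, before invoking that uniqueness.
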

\begin{remark}
Here we have followed the notation and conventions from \cite{GTLW19}*{Thm.~7.4}, where a proof of Drinfeld's theorem was recently given. The element $\mathscr{R}(z)$ from \cite{Dr}*{Thm.~3} is related to $\mcR(z)$ by 
\begin{equation}\label{Drinfeld-uniR}
\mathscr{R}(z)=\mcR(-z)^{-1}=\mcR_{21}(z).
\end{equation}
\end{remark}
The proof of the above theorem given in \cite{GTLW19} reconstructed the universal $R$-matrix $\mcR(z)$ from the components in its Gauss decomposition 
\begin{equation}\label{R:Gauss}
\mcR(z)=\mcR^+(z)\mcR^0(z)\mcR^-(z),
\end{equation}
where $\mcR^0(z)\in 1+u^{-1}\Yhg[0]^{\otimes 2}[\![u^{-1}]\!]$,  $\mcR^+(z)=\mcR_{21}^-(-z)^{-1}$ and $\mcR^-(z)$ is an element of $(\Yhg[-]\otimes \Yhg[+])[\![z^{-1}]\!]$ of the form 
\begin{equation*}
\mcR^-(z)=\sum_{\beta\in Q^+}\mcR^-_\beta(z)\quad \text{ with }\quad \mcR^-_\beta(z)\in (\Yhg[-]_{-\beta}\otimes \Yhg[+]_{\beta})[\![z^{-1}]\!]
\end{equation*}
and $\mcR^-_0(z)=1$. The components $\mcR^-_\beta(z)$ for $\beta\neq 0$ were constructed recursively in the height of $\beta$ in the proof of Theorem 4.1 in  \cite{GTLW19}; see Section 4.2 and (4.5) therein.

The diagonal factor $\mcR^0(z)$ was defined in Section 6 of \cite{GTLW19} using the results of \cite{GTL3}. By definition, it is the unique series in $1+z^{-1}\Yhg[0]^{\otimes 2}[\![z^{-1}]\!]$  satisfying the formal difference equation 
\begin{equation}\label{diff:R^0}
(\qshift^{4\kappa}-1)\log(\mcR^0(z))=\mcL(z),
\end{equation}
where $\qshift$ is the shift operator $e^{\frac{\hbar}{2}\partial_z}$ on $\Yhg[0]^{\otimes 2}[\![z^{-1}]\!]$ (see Remark \ref{R:logs}) and $\mcL(z)\in z^{-2}\Yhg[0]^{\otimes 2}[\![z^{-1}]\!]$ is the element
\begin{equation*}
\mcL(z)=\qshift^{2\kappa}\sum_{i,j\in \mbI} c_{ij}(\qshift)  \bt_i(\partial_z) \otimes \bt_j(-\partial_z) \cdot  (-z^{-2}).
\end{equation*}
Here $\kappa\in \frac{1}{2}\Z$, $c_{ij}(z)\in \Z[z,z^{-1}]$ and $\bt_i(u)\in \Yhg[0][\![u]\!]$ are defined as follows:
\begin{enumerate}\setlength{\itemsep}{5pt}
\item $\kappa=(1/4)c_\g$, where $c_\mfg$ is the eigenvalue of the quadratic Casimir element $C\in S^2(\mfg)\subset U(\mfg)$ on the adjoint representation of $\g$.
\item  For each $i,j\in \mbI$, $c_{ij}(z)=\left[2\kappa/d_j\right]_{z^{d_j}} v_{ij}(z)$, where $v_{ij}(z)$ is the $(i,j)$-th entry of the matrix 
\begin{equation*}
\mathbf{E}(z)=([a_{ij}]_{z^{d_i}})_{i,j\in \mbI}^{-1}
\end{equation*}
It is known that $c_{ij}(z)$ is an element of $\Z[z,z^{-1}]$ satisfying $c_{ij}(z^{-1})=c_{ij}(z)$; see (5.1) and Appendix A of \cite{GTL3}, for instance.

\item For each $i\in \mbI$, $\bt_i(u)$ is the formal Borel transform of $t_i(u)$. That is, 
\begin{equation*}
\bt_i(u)=\bt(t_i(u))=\hbar \sum_{r\geq 0}t_{i,r}\frac{u^r}{r!}.
\end{equation*}
\end{enumerate}

To conclude this subsection, we recall that the \textit{deformed Drinfeld coproduct} $\Delta_z^D$ on the Yangian $\Yhg$ (see \cite{GTLW19}*{\S3.3--3.4}) can be recovered as the composite
\begin{equation*}
\Delta_z^D=\mathrm{Ad}(\mcR^-(z))\circ (\tau_z\otimes \id)\circ \Delta : \Yhg \to \Yhg^{\otimes 2}(\!(z^{-1})\!).
\end{equation*}
Its restriction to $\Yhg[0]$ has image in $\Yhg[0]^{\otimes 2}[z]$, and is given explicitly by 
\begin{equation*}
\Delta_z^D(\xi_i(u))=\xi_i(u-z)\otimes \xi_i(u) \quad \forall \quad i\in \mbI. 
\end{equation*}
We shall let $\Delta^0$ denote the evaluation of this homomorphism at $z=0$:  
\begin{equation}\label{def:Delta^0}
\Delta^0:=\mathrm{ev}_{z=0}\circ \Delta_z^D|_{\Yhg[0]}: \Yhg[0]\to \Yhg[0]\otimes \Yhg[0],
\end{equation}
where $\mathrm{ev}_{z=0}:\Yhg[0]^{\otimes 2}[z]\to \Yhg[0]^{\otimes 2}$ is given by $z\mapsto 0$. This defines a Hopf algebra structure on $\Yhg[0]$, provided it is equipped with counit $\veps^0$ and antipode $S^0$ uniquely determined by 
\begin{equation*}
\veps^0(\xi_i(u))=1 \quad \text{ and }\quad S^0(\xi_i(u))=\xi_i(u)^{-1} \quad \forall \quad i \in \mbI.
\end{equation*}
We shall call this the \textit{Drinfeld Hopf algebra structure} on $\Yhg[0]$. 

\subsection{Representation theory}\label{ssec:Rep}
We now turn to recalling some basic facts about finite-dimensional representations of $\Yhg$.

A $\Yhg$-module $V$ is called a \textit{highest weight module} of highest weight $\underline{\lambda}=(\lambda_i(u))_{i\in \mbI}\in (1+u^{-1}\C[\![u^{-1}]\!])^\mbI$ if it is generated by a nonzero vector $v\in V$ satisfying  
\begin{equation*}
x_i^+(u)v=0 \quad \text{ and }\quad \xi_i(u)v=\lambda_i(u)v \quad \forall \quad i\in \mbI.
\end{equation*}
The vector $v$ is then unique up to scalar multiplication and called the highest weight vector of $V$. 
There is a unique, up to isomorphism, highest weight module associated to any element $\underline{\lambda}\in (1+u^{-1}\C[\![u^{-1}]\!])^\mbI$. 

In \cite{DrNew}, Drinfeld used the language of highest weight modules to classify the finite-dimensional irreducible representations of $\Yhg$:
\begin{theorem}[\cite{DrNew}]
Let $V$ be a finite-dimensional irreducible representation of $\Yhg$. Then $V$ is a highest weight module, and there is a unique $\mbI$-tuple of monic polynomials $\underline{\mathrm{P}}=(P_i(u))_{i\in \mbI}\in \C[u]^\mbI$ satisfying
\begin{equation*}
\xi_i(u)v =\frac{P_i(u+\hbar d_i)}{P_i(u)}v \quad \forall \quad i\in \mbI,
\end{equation*}
where $v\in V$ is any highest weight vector. Moreover, every $\mbI$-tuple of monic polynomials $\underline{\mathrm{P}}$ arises in this way, and uniquely determines the underlying representation  up to isomorphism.
\end{theorem}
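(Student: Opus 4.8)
The plan is to follow the classical strategy for Drinfeld's classification, isolating the genuine analytic content in a rank-one computation and treating the remaining assertions by standard highest weight theory.

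\textbf{Reduction to highest weight.} First I would decompose $V=\bigoplus_\mu V_\mu$ into its finite-dimensional $\mfh$-weight spaces under the adjoint action of $U(\mfg)\subset\Yhg$; since $V$ is finite-dimensional only finitely many $\mu$ occur. Choosing $\mu$ maximal for the ordering in which each $\alpha_i$ is positive (with $V_\mu\neq 0$), every vector of $V_\mu$ is annihilated by all $x_{i,r}^+$, because $x_{i,r}^+V_\mu\subseteq V_{\mu+\alpha_i}=0$. As the commutative subalgebra $\Yhg[0]$ preserves $V_\mu$ and $\C$ is algebraically closed, $V_\mu$ contains a common eigenvector $v$ for all $\xi_{i,r}$, yielding series $\xi_i(u)v=\lambda_i(u)v$ with $\lambda_i(u)\in 1+u^{-1}\C[\![u^{-1}]\!]$. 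The triangular decomposition $\Yhg[-]\otimes\Yhg[0]\otimes\Yhg[+]\iso\Yhg$ then gives $\Yhg v=\Yhg[-]v$, so irreducibility forces $V=\Yhg[-]v$; that is, $V$ is the highest weight module of highest weight $\underline{\lambda}=(\lambda_i(u))_{i\in\mbI}$.

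\textbf{Rationality of the highest weight (main obstacle).} The crux is to show that each $\lambda_i(u)$ equals $P_i(u+\hbar d_i)/P_i(u)$ for a unique monic polynomial $P_i$. For fixed $i$ the generators $\{x_{i,r}^\pm,\xi_{i,r}\}_{r\geq 0}$ span a subalgebra $Y_i\subseteq\Yhg$ which is a homomorphic image of the rank-one Yangian $Y_{\hbar d_i}(\mfsl_2)$ (compare the defining relations in the case $i=j$, after the standard rescaling of generators, noting that no Serre relations intervene at a single node). Restricting, $Y_i\cdot v$ is a finite-dimensional highest weight $Y_{\hbar d_i}(\mfsl_2)$-module of highest weight $\lambda_i(u)$. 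I would then invoke the rank-one classification: a highest weight module over $Y_\hbar(\mfsl_2)$ is finite-dimensional exactly when its highest weight has the form $P(u+\hbar)/P(u)$ with $P$ monic. Establishing this necessity is the real work, and is where I expect the main difficulty. It is carried out by analyzing the string $v,\,x_{i,0}^-v,\,(x_{i,0}^-)^2v,\dots$, feeding finiteness of its length into the generating-series form of $[x_{i,r}^+,x_{i,s}^-]=\xi_{i,r+s}$ to produce a recursion that forces $\lambda_i(u)$ to be rational of the stated shape; for this technical input I would cite the rank-one analysis underlying \cite{DrNew} (see also \cite{CPBook}).

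\textbf{Existence.} Conversely, given any tuple $\underline{\mathrm{P}}$ of monic polynomials I would build $L(\underline{\mathrm{P}})$ from fundamental representations. For a single node $i$ and $a\in\C$ the fundamental module $L_{\varpi_i}(a)$ is finite-dimensional with Drinfeld polynomial $u-a$ at node $i$ and $1$ elsewhere. Writing each $P_i(u)=\prod_k(u-a_{i,k})$, I would take the tensor product $W$ of all the corresponding fundamental modules, which is finite-dimensional via the Hopf structure on $\Yhg$. The $\Yhg$-submodule generated by the tensor product of the highest weight vectors is a highest weight module with highest weight $\underline{\lambda}$ determined by $\underline{\mathrm{P}}$ through $\lambda_i(u)=P_i(u+\hbar d_i)/P_i(u)$, and its unique irreducible quotient is the desired $L(\underline{\mathrm{P}})$.

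\textbf{Uniqueness.} Finally, uniqueness is formal. A highest weight module possesses a unique maximal proper submodule, namely the sum of all submodules meeting the top weight space $V_\mu$ trivially, hence a unique irreducible quotient; thus two finite-dimensional irreducibles of equal highest weight are isomorphic. Moreover $\underline{\lambda}$, and therefore $\underline{\mathrm{P}}$, is recovered intrinsically from $V$ as the $\Yhg[0]$-eigenvalue on its one-dimensional extremal weight space, which completes the correspondence.
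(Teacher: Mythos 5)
This theorem is not proven in the paper at all: it is recalled as background and attributed to \cite{DrNew}, so there is no internal argument to compare against. Your outline follows the standard route in the literature (essentially Chari--Pressley, cf.\ \cite{CPBook}), and two of its four parts are sound: the reduction to highest weight via a maximal $\mfh$-weight, the triangular decomposition, and a common eigenvector for the commutative algebra $\Yhg[0]$ is correct, as is the uniqueness argument via the maximal proper submodule together with the observation that $\underline{\lambda}$, hence $\underline{\mathrm{P}}$, is read off from the top weight space. The necessity step (rationality of $\lambda_i(u)$) is also architecturally right: restricting to the rank-one subalgebra generated by $\{x_{i,r}^\pm,\xi_{i,r}\}_{r\geq 0}$, a quotient of $Y_{\hbar d_i}(\mfsl_2)$, and invoking the $\mfsl_2$ classification for the irreducible quotient of $Y_i\cdot v$ is legitimate, and deferring that self-contained rank-one result to a citation is defensible. (One imprecision: the rank-one classification is an ``if and only if'' for \emph{irreducible} highest weight modules; for arbitrary highest weight modules only the necessity direction survives, via passage to the irreducible quotient -- a Verma-type module with polynomial highest weight is still infinite-dimensional.)

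The genuine gap is in the existence direction, which as written is circular. You assert that the fundamental module $L_{\varpi_i}(a)$ ``is finite-dimensional with Drinfeld polynomial $u-a$ at node $i$,'' but in this paper (and in general) $L_{\varpi_i}(a)$ is \emph{defined} as the irreducible highest weight module $L(\underline{\mathrm{P}})$ with $P_j(u)=(u-a)^{\delta_{ij}}$; its finite-dimensionality is precisely the existence statement of the theorem specialized to fundamental tuples. The tensor-product argument only reduces general existence to this special case -- it does not prove it -- and, unlike the necessity step, this special case does \emph{not} follow from the rank-one classification: for $\mfg\neq\mfsl_2$ there is no evaluation homomorphism $\Yhg\to U(\mfg)$, and knowing the $\mfsl_2$ case does not bound the weight spaces of $L(\underline{\mathrm{P}})$. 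A non-circular argument must do real work here, e.g.\ show that $(x_{i,0}^-)^{\deg P_i+1}v$ is a singular vector (a rank-one computation) hence vanishes in the irreducible quotient, deduce integrability of $L(\underline{\mathrm{P}})$ as a $\mfg$-module, and then separately establish finite-dimensionality of its weight spaces -- this last point being where the published proofs (Drinfeld's sketch in \cite{DrNew}, the quantum affine treatment in \cite{CPBook}*{\S 12.2}) require additional ideas. Without this, your proposal proves necessity and uniqueness but not the surjectivity claim ``every $\mbI$-tuple of monic polynomials arises in this way.''
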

The monic polynomials $P_i(u)$ appearing in the above theorem are referred to as the \textit{Drinfeld polynomials} associated to $V$. Henceforth, we will write $L(\underline{\mathrm{P}})$ for the unique, up to isomorphism, finite-dimensional irreducible representation of $\Yhg$ with tuple of Drinfeld polynomials $\underline{\mathrm{P}}=(P_i(u))_{i\in \mbI}$. 
Given $j\in \mbI$ and $a\in \C$, the \textit{$j$-th fundamental representation} $L_{\varpi_j}(a)$ of $\Yhg$ is the module $L(\underline{\mathrm{P}})$ where 
\begin{equation*}
P_i(u)=(u-a)^{\delta_{ij}} \quad \forall \quad i\in \mbI.
\end{equation*}
In the special case where $a=0$, we simply write $L_{\varpi_j}$ for $L_{\varpi_j}(0)$.

Since $U(\mfg)\subset \Yhg$, any finite-dimensional $\Yhg$-module $V$ admits a $\mfg$-weight space decomposition 
\begin{equation*}
V=\bigoplus_{\mu\in \Lambda}V_\mu, \quad \text{ where }\quad V_\mu=\{v\in V: h v=\mu(h) v\quad \forall\; h\in \mfh\}.
\end{equation*}
If $V$ is a highest weight module with the highest weight $\underline{\lambda}=(\lambda_i(u))_{i\in \mbI}$, then the $\g$-weight $\lambda\in \Lambda$ of any highest weight vector is given by the formula $\lambda=\sum_{i\in \mbI} d_i^{-1}\lambda_{i,0}\varpi_i$, where $\lambda_i(u)=1+\hbar \sum_{r\geq 0}\lambda_{i,r} u^{-r-1}$ for each $i\in \mbI$, and the weight space $V_\lambda$ is one-dimensional.  In particular, if $V\cong L(\underline{\mathrm{P}})$, then 
\begin{equation*}
\lambda=\sum_{i\in \mbI} \deg(P_i(u))\varpi_i \in \Lambda. 
\end{equation*}
Observe that this implies that the $\g$-weight of any highest weight vector in $L_{\varpi_j}(a)$ is indeed equal to the $j$-th fundamental weight $\lambda=\varpi_j$ of $\mfg$. 

The generating series $\xi_i(u)$ and $x_i^\pm(u)$ introduced in Section \ref{ssec:GKLO} operate on any finite-dimensional representation $V$ of $\Yhg$ as the expansions at infinity of $\End(V)$-valued rational functions of $u$; see \cite{GTL2}*{Prop.~3.6}.  The joint sets of poles of these operators encode important information about the structure of $V$ and merit their own definition:
\begin{definition}\label{D:Poles}
Let $V$ be a finite-dimensional representation of $\Yhg$.
Then, for each $i\in \mbI$, the \textbf{$i$-th set of poles} of $V$ is the subset
\begin{equation*}\label{def:Poles}
\Poles_i(V):=\{\text{Poles of }\; \xi_i(u)|_V,x_i^\pm(u)|_V\in \End(V)(u)\}\subset \C.
\end{equation*}
\end{definition}
These sets were the main object of study in \cite{GWPoles}. They are determined by their values on the composition factors of $V$, and the sets $\Poles_i(L(\underline{\mathrm{P}}))$ were computed explicitly in terms of the the roots of the Drinfeld polynomials $P_i(u)$ and the inverse, normalized $q$-Cartan matrix $\mathbf{E}(z)$ of $\mfg$ in Theorem 5.2 of \cite{GWPoles}. These results, and their connection with the so-called Baxter polynomials of $L(\underline{\mathrm{P}})$, will be recalled in more detail in Section \ref{ssec:Baxter-poles}.

{ 
\newcommand{\ad}{\operatorname{ad}} 
\newcommand{\db}[1]{[\![#1]\!]} 

\section{Braid group actions}\label{sec:Braid}

In this section we will define an action of the braid group $\Bg$ corresponding to $\g$ on the algebra $\Yhg[0]$ by Hopf algebra automorphisms; see Sections \ref{ssec:Bg} and \ref{ssec:Bg-modified}.  We will then establish various basic properties of this action, including explicit formulas for the action on generators (Section  \ref{ssec:Bg-Y0-formulas}), a relation to Hecke algebras (Section \ref{ssec:Hecke}), and finally its interactions with the universal $R$-matrix (Section \ref{ssec:action-R-matrix}).

\subsection{The braid group}\label{ssec:Bg}
Denote the Weyl group of $\g$ by $\Wg$, generated by the simple reflections $s_i$ for $i\in\mbI$.  Recall that the \textbf{braid group} $\Bg$ associated to $\g$ is the group with generators $\braid_i$ for $i\in\mbI$, and the following defining relations: for all $i, j\in \mbI$ with $i\neq j$, 
\begin{equation}
\label{eq: braid rels}
\underbrace{\braid_i \braid_j \braid_i \cdots}_{m_{ij}\text{ factors}}  = \underbrace{\braid_j \braid_i \braid_j\cdots}_{m_{ij}\text{ factors}}
\end{equation}
Here $m_{ij} = m_{ji}$ is defined according to

\begin{center}
\begin{tabular}{c|cccc}
$a_{ij} a_{ji}$ & 0 & 1 & 2 & 3 \\
\hline
$m_{ij}$ & 2 & 3 & 4 & 6
\end{tabular}
\end{center}

There is a surjective map $\Bg \rightarrow \Wg$ onto the Weyl group defined by $\braid_i \mapsto s_i$, whose kernel is generated by the elements $\braid_i^2$. There is also a section $w\mapsto \braid_w$ of this surjection, defined by taking  any reduced expression $w = s_{i_1} \cdots s_{i_\ell}$ and setting
\begin{equation*}
\braid_w = \braid_{i_1} \cdots \braid_{i_\ell}.
\end{equation*}
This is independent of the choice of reduced expression, and these elements satisfy $\braid_{vw} = \braid_v \braid_w$ whenever the lengths $\ell(vw) = \ell(v) + \ell(w)$ add; see \cite{Lusztig}*{\S 2.1.2}, for instance.

Let $(V, \phi)$ be an integrable representation of $\g$.  Recall that this means that $V = \bigoplus_{\mu \in \Lambda} V_\mu$ breaks into weight spaces labelled by integral weights, and that $e_i, f_i$ act locally nilpotently on $V$.  Define operators $\braid_i^V$ on $V$ by:
\begin{equation*}
\braid_i^V \ = \ \exp\big(\phi(e_i)\big)\exp\big(\phi(-f_i)\big)\exp\big(\phi(e_i)\big).
\end{equation*}
These operators are functorial in $V$, in the natural sense.  Note that since $\phi(e_i)$ and $\phi(-f_i)$ are locally nilpotent endomorphisms of $V$, each $\braid^V_i$ is an automorphism of $V$.

The following proposition summarizes several well-known, notable properties of the operators $\braid_i^V$.  We refer the reader to \cite{Kumar}*{\S 1.3} or \cite{KacBook90}*{\S 3}, for example, for further details. 
\begin{proposition}\label{P:tau-wt-space}
\mbox{}
\begin{enumerate}[label=(\alph*)]\setlength{\itemsep}{3pt}
\item\label{braid:a} The map $\braid_i \mapsto \braid_i^V$ defines an action of $\Bg$ on $V$. In other words, the operators $\braid_i^V$ satisfy the braid relations (\ref{eq: braid rels}).

\item\label{braid:b} For any weight $\mu \in \Lambda$, $\braid_i^V$ intertwines the $\mu$ and $s_i(\mu)$ weight spaces of $V$:
\begin{equation*}
\braid^V_i(V_\mu) = V_{s_i(\mu)}.
\end{equation*}
Moreover, for any $v \in V_\mu$ we have $(\braid_i^V)^2(v) = (-1)^{\langle \mu, \alpha_i^\vee\rangle} v$.

\item\label{braid:c} If $V,W$ are both integrable representations of $\g$, then
\begin{equation*}
\braid_i^{V \otimes W} = \braid_i^V \otimes \braid_i^W.
\end{equation*}

\item\label{braid:d} If there is an algebra map $U(\g) \rightarrow A$, and the corresponding adjoint action of $\g$ on $A$ is integrable, then the maps $\braid_i^A$ are algebra automorphisms.

\item\label{braid:e} Let $A$ be as in part (d) above, and $V$ a module for $A$. If the $\g$ actions on $A$ and $V$ are both integrable, then
\begin{equation*}
\braid_i^V( a \cdot v) = \braid_i^A(a) \cdot \braid_i^V(v)
\end{equation*}
for any $a \in A$ and $v \in V$.
\end{enumerate}
\end{proposition}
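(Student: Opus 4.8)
The plan is to treat the five parts in order of increasing difficulty, reserving the braid relations of part (a) for last, as they carry the essential content. The common foundation is that integrability makes each $\phi(e_i),\phi(f_i)$ locally nilpotent, so every exponential appearing in $\braid_i^V$ restricts to a finite sum on each vector and is a well-defined operator, and $\braid_i^V$ is invertible. From this I would first record the functoriality already used implicitly in the text: any morphism of integrable $\g$-modules commutes with each $\phi(e_i)$ and $\phi(f_i)$, hence with their exponentials, and therefore with the triple products $\braid_i$. This functoriality feeds directly into parts (c)--(e). For part (d), I would invoke the standard fact that the exponential of a locally nilpotent derivation is an algebra automorphism: the Leibniz expansion $D^n(ab)=\sum_k\binom{n}{k}D^k(a)\,D^{n-k}(b)$ rearranges to $\exp(D)(ab)=\exp(D)(a)\exp(D)(b)$. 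Since $U(\g)\to A$ makes each $\ad\,e_i,\ad\,f_i$ a locally nilpotent derivation of $A$, each of the three factors of $\braid_i^A$ is an automorphism, and so is their composite.

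For part (c), the key point is that $\phi_{V\otimes W}(e_i)=\phi_V(e_i)\otimes 1+1\otimes\phi_W(e_i)$ is a sum of commuting operators, so $\exp(\phi_{V\otimes W}(e_i))=\exp(\phi_V(e_i))\otimes\exp(\phi_W(e_i))$, and likewise for $f_i$; multiplying the three resulting factors via $(X\otimes Y)(X'\otimes Y')=XX'\otimes YY'$ collapses the product to $\braid_i^V\otimes\braid_i^W$. Part (e) then follows formally: the action map $\rho\colon A\otimes V\to V$ is a homomorphism of integrable $\g$-modules (one checks $x\cdot(a\cdot v)=(\ad x)(a)\cdot v+a\cdot(x\cdot v)$ using associativity of the $A$-action and the definition of the adjoint action), so by functoriality $\braid_i^V\circ\rho=\rho\circ\braid_i^{A\otimes V}$, and rewriting the right-hand side through part (c) as $\rho\circ(\braid_i^A\otimes\braid_i^V)$ yields exactly $\braid_i^V(a\cdot v)=\braid_i^A(a)\cdot\braid_i^V(v)$.

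For part (b), I would reduce to the $i$-th triple $\{e_i,f_i,h_i\}$ spanning a copy of $\mfsl_2$. Integrability makes $V$ a direct sum of finite-dimensional $\mfsl_2$-modules, on each of which a computation in the defining representation identifies $\braid_i^V$ with the group element $\dot s_i=\left(\begin{smallmatrix}0&1\\-1&0\end{smallmatrix}\right)\in\mathrm{SL}_2$. This element sends an $h_i$-weight-$m$ vector to an $h_i$-weight-$(-m)$ vector and satisfies $\dot s_i^{\,2}=-\mathrm{Id}$, acting by $(-1)^m$ on the weight-$m$ space. Since the full $\g$-weight transforms as $s_i(\mu)=\mu-\langle\mu,\alpha_i^\vee\rangle\alpha_i$ with $m=\langle\mu,\alpha_i^\vee\rangle$, this simultaneously gives $\braid_i^V(V_\mu)=V_{s_i(\mu)}$ and the sign $(-1)^{\langle\mu,\alpha_i^\vee\rangle}$.

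Finally, part (a) is the main obstacle. Since the relation \eqref{eq: braid rels} involves only $e_i,f_i,e_j,f_j$, I would restrict to the rank-two semisimple subalgebra $\g_{ij}$ they generate; $V$ remains integrable over $\g_{ij}$, reducing the problem to the four rank-two types, with $m_{ij}=2,3,4,6$. The $A_1\times A_1$ case is immediate, as $\braid_i^V$ and $\braid_j^V$ then act through commuting $\mfsl_2$-triples. For the remaining types $A_2$, $B_2$, $G_2$, I would integrate the $\g_{ij}$-action on each finite-dimensional module to the corresponding connected simply-connected group and recognize $\braid_i^V,\braid_j^V$ as the Tits lifts $\dot s_i,\dot s_j$ of the simple reflections, whose braid relations in the normalizer of the maximal torus are classical; the relation then transfers to $V$. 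The delicate point to get right is precisely this transfer -- namely that the triple-exponential operators on the integrable module $V$ coincide with the group-theoretic lifts and that the verification localizes to finite-dimensional submodules -- which is carried out in \cite{Kumar}*{\S 1.3} and \cite{KacBook90}*{\S 3}; alternatively one may verify the four relations by direct manipulation of the exponential series.
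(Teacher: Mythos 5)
The paper does not actually prove this proposition: it is stated as a collection of well-known facts, and the reader is referred to \cite{Kumar}*{\S 1.3} and \cite{KacBook90}*{\S 3}. So there is no argument in the text to compare against line by line; your proposal essentially reconstructs the standard proofs contained in those references, and it is correct --- parts (c), (d), (e) via functoriality of the triple exponentials, the splitting $\exp(a\otimes 1+1\otimes b)=\exp(a)\otimes\exp(b)$ for commuting locally nilpotent operators, and the Leibniz argument for exponentials of locally nilpotent derivations are complete as written, and the reduction of (e) to (c) through $\g$-equivariance of the action map $A\otimes V\to V$ is exactly the standard device.

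Two places deserve slightly more care than you give them. In (b), the $\mfsl_2$-computation controls only the $h_i$-eigenvalue; to obtain the full $\g$-weight statement $\braid_i^V(V_\mu)=V_{s_i(\mu)}$ you should add that every $h\in\ker\alpha_i\subset\mfh$ commutes with $e_i,f_i$ and hence with $\braid_i^V$, so the $\ker\alpha_i$-eigenvalues are preserved while the $h_i$-eigenvalue is negated; since $\mfh=\C h_i\oplus\ker\alpha_i$, this pins down precisely the weight $s_i(\mu)$. In (a), integrating the $\g_{ij}$-action to the simply connected rank-two group presupposes that every vector of $V$ lies in a finite-dimensional $\g_{ij}$-submodule, and for this one needs local nilpotency of \emph{all} positive root vectors of $\g_{ij}$, not only $e_i,e_j$; this follows by conjugation, using that $\exp(X)\,Y\exp(-X)=\exp(\mathrm{ad}\,X)(Y)$ as operators when $X$ acts locally nilpotently, together with $\mathrm{Ad}(\braid_i)(\g_\alpha)=\g_{s_i(\alpha)}$. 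With that in hand, identifying $\braid_i^V,\braid_j^V$ with the Tits representatives in the normalizer of the maximal torus and citing Tits' braid relations is a legitimate classical citation --- but note that this step is the entire nontrivial content of part (a), and you defer it to the very references the paper cites for the whole proposition. That is acceptable here, since the paper itself offers nothing more, but a genuinely self-contained write-up would still require either Tits' theorem or the explicit rank-two verifications.
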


\subsection{Modified braid group operators}\label{ssec:Bg-modified}
Recall from Section \ref{ssec:Yhg} that there is an embedding $U(\g) \hookrightarrow \Yhg$, and in particular an adjoint action of $\g$ on $\Yhg$. It is well-known that this action defines an integrable representation of $\g$:
\begin{lemma}
    The action of $\g $ on $\Yhg$ is integrable.
\end{lemma}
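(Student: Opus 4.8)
The plan is to verify directly the two defining conditions of integrability recalled in Section~\ref{ssec:Bg}: that $\Yhg$ decomposes into integral weight spaces under $\ad\,\mfh$, and that $\ad\, e_i$ and $\ad\, f_i$ act locally nilpotently for each $i\in\mbI$. The first condition is already in hand, since Section~\ref{ssec:Yhg} records the weight space decomposition $\Yhg=\bigoplus_{\beta\in Q}\Yhg_\beta$, and all of these weights are integral because $Q\subseteq\Lambda$. Thus the entire content of the lemma is the local nilpotency of the raising and lowering operators, and this is what I would concentrate on.

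First I would exploit the fact that local nilpotency is an algebra-theoretic condition well adapted to the PBW filtration $\mcF_\bullet\Yhg$. Since $e_i,f_i$ have filtration degree $0$, the derivations $\ad\, e_i$ and $\ad\, f_i$ preserve each $\mcF_n\Yhg$, and hence descend to derivations of the associated graded. Under the PBW isomorphism \eqref{Yhg-PBW}, $\gr\Yhg\cong U(\mfg[t])$, and these descended derivations are precisely $\ad\, e_i$, $\ad\, f_i$ for the adjoint action of $\mfg=\mfg\otimes 1\subset\mfg[t]$. The key intermediate claim is that this adjoint action on $U(\mfg[t])$ is integrable, which is classical: as a $\mfg$-module $\mfg[t]=\mfg\otimes\C[t]$ is a direct sum of copies of the finite-dimensional adjoint representation, on which $e_i,f_i$ are nilpotent; and since $\ad\, e_i$ is a derivation, the identity $(\ad\, e_i)^{M+N}(ab)=\sum_k\binom{M+N}{k}(\ad\, e_i)^k(a)\,(\ad\, e_i)^{M+N-k}(b)$ shows that the locally $\ad\,e_i$-nilpotent elements form a subalgebra; as it contains the generators $\mfg[t]$, it is all of $U(\mfg[t])$.

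The remaining, and main, step is to transfer local nilpotency from $\gr\Yhg$ back to $\Yhg$, which I would carry out by induction on filtration degree. Given $y\in\mcF_n\Yhg$, integrability of $\gr\Yhg$ yields an $N$ with $(\ad\, e_i)^N(\bar y)=0$ in the degree-$n$ component $\mcF_n\Yhg/\mcF_{n-1}\Yhg$; since $\ad\, e_i$ has filtration degree $0$, this forces $(\ad\, e_i)^N(y)\in\mcF_{n-1}\Yhg$. With the base case $\mcF_{-1}\Yhg=0$, the inductive hypothesis supplies a further power of $\ad\, e_i$ annihilating $(\ad\, e_i)^N(y)$, so $y$ is itself $\ad\, e_i$-nilpotent; the identical argument applies to $\ad\, f_i$. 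I expect this lifting argument to be the only genuine subtlety, everything else being either recalled from the preceding sections or the standard integrability of the enveloping algebra of a current algebra. A cleaner but essentially equivalent alternative would be to phrase the transfer as the general principle that a filtration-preserving derivation is locally nilpotent as soon as its induced derivation on the associated graded is, thereby absorbing the explicit induction into a reusable lemma.
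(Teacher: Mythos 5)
The paper gives no proof of this lemma --- it is stated as well known immediately after recalling the embedding $U(\g)\subset\Yhg$ --- so there is nothing of the paper's to compare against line by line; what matters is whether your argument stands on its own, and it does. Integrality of weights is indeed immediate from the decomposition $\Yhg=\bigoplus_{\beta\in Q}\Yhg_\beta$ of Section \ref{ssec:Yhg}, and your nilpotency argument is sound: $e_i,f_i\in\mcF_0\Yhg$, so $\mathrm{ad}(e_i)$ and $\mathrm{ad}(f_i)$ preserve each $\mcF_n\Yhg$ and induce, via \eqref{Yhg-PBW}, the derivations $\mathrm{ad}(e_i)$, $\mathrm{ad}(f_i)$ on $U(\mfg[t])$ attached to $\mfg\subset\mfg[t]$; these are locally nilpotent because $\mfg[t]=\bigoplus_{r\geq 0}\mfg\,t^r$ is a direct sum of copies of the finite-dimensional adjoint representation and the locally nilpotent vectors form a subalgebra by the iterated Leibniz rule; and your induction on filtration degree correctly transfers this back to $\Yhg$, the key point (which you state accurately) being that $(\mathrm{ad}\,e_i)^N(\bar{y})=0$ in $\mcF_n\Yhg/\mcF_{n-1}\Yhg$ means exactly $(\mathrm{ad}\,e_i)^N(y)\in\mcF_{n-1}\Yhg$. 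For comparison, the folklore proof the paper is implicitly invoking is usually run inside $\Yhg$ itself: the elements contained in a finite-dimensional $\mfg$-submodule form a subalgebra (sums and products of finite-dimensional submodules are again finite-dimensional), so it suffices to check this on a generating set, e.g.\ $U(\mfg)$ together with $\{t_{i,1}\}_{i\in\mbI}$; that route avoids the associated graded but requires some relation-chasing to bound the $\mfg$-module generated by each $t_{i,1}$, which is precisely the work your passage to $U(\mfg[t])$ --- where each $\mfg\,t^r$ is visibly an $\mathrm{ad}(\mfg)$-stable finite-dimensional subspace --- does automatically. Your closing remark is also the right way to record this: a filtration-preserving derivation is locally nilpotent as soon as its induced derivation on the associated graded is, and that reusable principle plus the PBW theorem is the whole proof.
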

Consequently, we can consider the algebra automorphisms $\braid_i^{\Yhg}$ as in the previous section for $i \in \mbI$. For simplicity, we denote these elements by $\braid_i$. By Proposition \ref{P:tau-wt-space}, they define an action of the braid group $\Bg$ on $\Yhg$.

Consider the subsets $N^\pm = \{ x_{i,r}^\pm : i \in \mbI, r \in \Z_{\geq 0} \}$, and the corresponding left ideals $\Yhg N^\pm$ and right ideals $N^\pm \Yhg$.  The PBW Theorem implies that
\begin{equation*}
    \Yhg \ = \ \Yhg[0] \oplus( N^- \Yhg + \Yhg N^+).
\end{equation*}
Consider the corresponding projection $\Pi$ onto the direct summand $\Yhg[0]$:
\begin{equation}
\label{eq: Pi for Yangian}
    \Pi: \Yhg \longrightarrow \Yhg[0].
\end{equation}
If $\beta \in Q$ is any non-zero weight, then the corresponding weight space $\Yhg_\beta$ is in the kernel of $\Pi$.  Meanwhile, the restriction of $\Pi$ to the zero weight space $\Yhg_0$ coincides with the projection along the direct sum
\begin{equation*}
    \Yhg_0 \ = \ \Yhg[0] \oplus (\Yhg_0\cap N^- \Yhg \cap \Yhg N^+).
\end{equation*}
This restriction is an algebra homomorphism, since  $\Yhg_0\cap N^- \Yhg \cap \Yhg N^+$ is an ideal in the ring $\Yhg_0$.
\begin{definition}\label{D:mbraid}
    The \textbf{modified braid group operators} are the maps $$\mbraid_i : \Yhg[0] \longrightarrow \Yhg[0] \quad \text{ for } i \in \mbI$$ defined by the composition $\mbraid_i = \Pi \circ \braid_i\Big|_{\Yhg[0]}$. 
\end{definition}
\begin{remark}
The operators $\braid_i\big|_{\Yhg_0}$ have order two by Proposition \ref{P:tau-wt-space}(b), and in particular define an action of the Weyl group $\Wg$ on the zero weight space $\Yhg_0$.  However, the modified operators $\mbraid_i$ have \emph{infinite} order, as may be seen from the explicit formulas in Section \ref{ssec:Bg-Y0-formulas} below.  
\end{remark}
We will also consider the ``opposite'' linear projection defined similarly to (\ref{eq: Pi for Yangian}), but where we exchange the roles of $N^\pm$:
\begin{equation*}
\Pi^{op}: \Yhg = \Yhg[0]  \oplus ( N^+ \Yhg + \Yhg N^-) \longrightarrow \Yhg[0].
\end{equation*}
Our goal in the rest of this section will be to  prove the following theorem:

\begin{theorem}\label{T:mbraid}
    The operators $\mbraid_i$ define an action of the braid group $\Bg$ on $\Yhg[0]$ by Hopf algebra automorphisms.  The inverse of $\mbraid_i$ is given by the opposite modified braid group operator $ \mbraid_i^{-1} = \Pi^{op} \circ \braid_i \Big|_{\Yhg[0]}$.
\end{theorem}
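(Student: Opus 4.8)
The goal is to prove that the modified operators $\mbraid_i = \Pi \circ \braid_i|_{\Yhg[0]}$ are Hopf algebra automorphisms of $\Yhg[0]$ satisfying the braid relations, with inverse $\Pi^{op}\circ\braid_i|_{\Yhg[0]}$. My approach is to separate the three claims---that $\mbraid_i$ is an algebra homomorphism, that $\Pi^{op}\circ\braid_i|_{\Yhg[0]}$ is a two-sided inverse, and that the braid relations hold---and to reduce each to a statement about the behaviour of the genuine braid operators $\braid_i$ on the triangular decomposition of $\Yhg$, exploiting the fact that $\braid_i$ is already a bona fide algebra automorphism of $\Yhg$ by Proposition~\ref{P:tau-wt-space}\ref{braid:d}.

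\emph{Homomorphism property.} The key observation is that $\Yhg[0]$ lies entirely in the zero weight space $\Yhg_0$, and by Proposition~\ref{P:tau-wt-space}\ref{braid:b} each $\braid_i$ preserves $\Yhg_0$ (since $s_i$ fixes the weight $0$). Hence $\braid_i|_{\Yhg[0]}$ maps $\Yhg[0]$ into $\Yhg_0$, and $\mbraid_i$ is the composite of the algebra homomorphism $\braid_i:\Yhg_0\to\Yhg_0$ with the restriction $\Pi|_{\Yhg_0}$, which is an algebra homomorphism because $\Yhg_0\cap N^-\Yhg\cap\Yhg N^+$ is a two-sided ideal of $\Yhg_0$ (as already noted in the excerpt). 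Thus $\mbraid_i$ is a composite of algebra homomorphisms and is therefore an algebra homomorphism.

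\emph{Inverse and bijectivity.} To show $\mbraid_i^{-1}=\Pi^{op}\circ\braid_i|_{\Yhg[0]}$, I would use that $\braid_i:\Yhg_0\to\Yhg_0$ is invertible with $(\braid_i|_{\Yhg_0})^{-1}=\braid_i|_{\Yhg_0}$ up to the sign $(\braid_i^2)(v)=(-1)^{\langle\mu,\alpha_i^\vee\rangle}v$ from Proposition~\ref{P:tau-wt-space}\ref{braid:b}; on the zero weight space this sign is $+1$, so $\braid_i|_{\Yhg_0}$ is an involution. The main point will be to verify that the projections intertwine correctly: one wants $(\Pi^{op}\circ\braid_i)\circ(\Pi\circ\braid_i)=\id$ on $\Yhg[0]$, which should follow by checking that for $y\in\Yhg[0]$, the ``error term'' $\braid_i(y)-\Pi(\braid_i(y))$ lying in $\Yhg_0\cap N^-\Yhg\cap\Yhg N^+$ is sent by $\braid_i$ into $N^+\Yhg+\Yhg N^-$, hence killed by $\Pi^{op}$. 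The cleanest route is to examine how $\braid_i$ acts on the two ideals defining $\Pi$ and $\Pi^{op}$: since $\braid_i$ swaps the roles of $e_i,f_i$ (it conjugates $\phi(e_i)$ to essentially $\phi(f_i)$), it should interchange the relevant pieces of $N^+$ and $N^-$ at the node $i$, which is precisely what makes $\Pi$ and $\Pi^{op}$ dual under $\braid_i$.

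\emph{Braid relations and Hopf compatibility.} Once $\mbraid_i$ is shown to be an invertible algebra homomorphism, the braid relations for $\{\mbraid_i\}$ should be deduced from those for $\{\braid_i\}$ (Proposition~\ref{P:tau-wt-space}\ref{braid:a}) together with the inverse formula, perhaps most efficiently by reducing to an identity of operators on $\Yhg_0$ modulo the ideal, or by establishing uniqueness of the operators from their action on a generating set and invoking the relations there. For the Hopf structure, I would show that $\mbraid_i$ is a coalgebra morphism for the Drinfeld coproduct $\Delta^0$: since $\braid_i$ is a Hopf automorphism of the full Yangian and $\Delta^0$ on $\Yhg[0]$ is built from $\Delta$ and $\mcR^-(z)$, compatibility should follow from how $\braid_i$ interacts with $\mcR^-(z)$ (anticipating the $R$-matrix invariance of Part~\eqref{I.4}). \textbf{The main obstacle} I anticipate is precisely the inverse computation: controlling the interaction of $\braid_i$ with the two projections $\Pi,\Pi^{op}$ requires understanding how $\braid_i$ permutes the PBW-type summands, and the ideal $\Yhg_0\cap N^-\Yhg\cap\Yhg N^+$ is not itself $\braid_i$-stable, so the cancellation must be tracked carefully rather than asserted formally.
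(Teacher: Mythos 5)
Your algebra-homomorphism step coincides with the paper's argument, and your overall strategy for the inverse is also the paper's; but as written, two of your steps have genuine gaps, one of them fatal. Start with the inverse. Your claim that the error term $\braid_i(y)-\Pi(\braid_i(y))$, known only to lie in $\Yhg_0\cap N^-\Yhg\cap \Yhg N^+$, is sent by $\braid_i$ into $N^+\Yhg+\Yhg N^-$ is false at that level of generality: for any $j\neq i$, the element $x_{j,0}^-x_{j,0}^+=x_{j,0}^+x_{j,0}^--\xi_{j,0}$ lies in $\Yhg_0\cap N^-\Yhg\cap \Yhg N^+$, yet $\braid_i(x_{j,0}^-x_{j,0}^+)=\braid_i(x_{j,0}^+)\braid_i(x_{j,0}^-)-\braid_i(\xi_{j,0})$ has $\Pi^{op}$-image $-\braid_i(\xi_{j,0})\neq 0$, since $\braid_i(x_{j,0}^+)\in \Yhg_{s_i\alpha_j}\subset N^+\Yhg$ while $\braid_i(\xi_{j,0})\in\mfh$. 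What rescues the argument --- and what the paper isolates as its key estimate (\ref{eq: braid estimate}) --- is the finer fact that the error lies in $\sum_{n\geq 1}\Yhg_{-n\alpha_i}\Yhg_{n\alpha_i}$: only the node-$i$ root direction occurs, because $\braid_i$ is built from $\mathrm{ad}(x_{i,0}^\pm)$. Since $\braid_i(\Yhg_\beta)=\Yhg_{s_i\beta}$, this subspace is carried to $\sum_{n\geq 1}\Yhg_{n\alpha_i}\Yhg_{-n\alpha_i}$, which is killed by $\Pi^{op}$ and meets $\Yhg[0]$ trivially by the PBW theorem. You correctly flag that the cancellation "must be tracked carefully rather than asserted formally," but this precise statement is the missing ingredient; without it your inverse step is an assertion, not a proof.

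The more serious gap is the braid relations, which are the heart of the theorem. They do not follow from invertibility together with the braid relations for $\braid_i$: one has $\mbraid_i\mbraid_j\mbraid_i=\Pi\circ\braid_i\circ\Pi\circ\braid_j\circ\Pi\circ\braid_i\big|_{\Yhg[0]}$, with projections interleaved between the genuine braid operators, and there is no a priori way to collapse this to $\Pi\circ\braid_i\braid_j\braid_i\big|_{\Yhg[0]}$. Your suggestion to "reduce to an identity of operators on $\Yhg_0$ modulo the ideal" founders on exactly the non-stability of the ideal that you point out yourself, and your alternative (uniqueness from a generating set) would require first deriving the explicit formulas of Corollary \ref{C:mbraid-t-xi} and then verifying the rank-two braid relations for the resulting difference operators --- a genuine computation you do not carry out. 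The paper's solution is Lemma \ref{L:mbraid-reduced}: for any \emph{reduced} expression $w=s_{i_1}\cdots s_{i_\ell}$ one has $\Pi\circ\braid_w\big|_{\Yhg[0]}=\mbraid_{i_1}\cdots\mbraid_{i_\ell}$, proved by induction from the estimate (\ref{eq: braid estimate}); the crucial point is that the accumulated errors lie in $\sum_{n\geq 1}\Yhg_{-n\beta}\Yhg_{n\beta}$ with $\beta$ running over the inversion set of $w$, and these $\beta$ are \emph{positive} roots precisely because the expression is reduced, hence the errors are killed by $\Pi$. Since both sides of a braid relation are reduced expressions of the same Weyl group element, the relations for the $\mbraid_i$ follow at once. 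This reduced-expression/inversion-set argument is the essential idea of the proof and is absent from your proposal. (For the Hopf compatibility, the paper simply reads off the coalgebra property from the explicit formulas, using that $\xi_i(u)$ and $A_i(u)$ are grouplike; your proposed route via $\mcR^-(z)$ and the deformed Drinfeld coproduct is plausible but likewise undeveloped.)
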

Here it is understood that $\Yhg[0]$ is equipped with its Drinfeld Hopf algebra structure, as defined in Section \ref{sec:Hopf} just below equation \eqref{def:Delta^0}.
We will prove this theorem in a sequence of results. First we will prove in Lemma \ref{L:mbraid-reduced} that, for each $w\in \Wg$, the modified analogue $\mbraid_w$ of the operator $\braid_w$ (see \eqref{mbraid_w}) is independent of the choice of reduced expression for $w$. This will allow us to conclude in Corollary \ref{cor: braid rels} that the operators $\mbraid_i$ indeed satisfy the defining braid relations of $\Bg$.  Afterwards in Lemma \ref{lemma: braid aut Yangian}, we will show that each $\mbraid_i$ is a Hopf algebra automorphism of $\Yhg[0]$, and prove that its inverse is as claimed.  This will complete the proof of the theorem. 

Recall that every $w\in \Wg$ has a lift $\braid_w \in \Bg$, and therefore a corresponding automorphism of $\Yhg$ which we also denote by $\braid_w$.
\begin{lemma}\label{L:mbraid-reduced}
    Let $w = s_{i_1}\cdots s_{i_\ell}$ be a reduced expression.  Then
    \begin{equation*}
    \Pi \circ \braid_w\Big|_{\Yhg[0]} = \mbraid_{i_1} \cdots \mbraid_{i_\ell}.
    \end{equation*}
    In particular, the right-hand side of this equation is independent of the choice of reduced expression.
\end{lemma}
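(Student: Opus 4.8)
The plan is to induct on the length $\ell = \ell(w)$, the cases $\ell \leq 1$ being the definition of $\mbraid_i$. Write $w = s_{i_1}w'$ with $w' = s_{i_2}\cdots s_{i_\ell}$, so that $\braid_w = \braid_{i_1}\braid_{w'}$ as automorphisms of $\Yhg$ and $\ell(w) = \ell(w')+1$. Since $s_i(0)=0$, Proposition~\ref{P:tau-wt-space}\ref{braid:b} shows each $\braid_i$ preserves the zero weight space $\Yhg_0$; as $\Pi|_{\Yhg_0}$ is an algebra homomorphism and $\braid_i|_{\Yhg_0}$ an algebra automorphism, each $\mbraid_i$ is an algebra endomorphism of $\Yhg[0]$, so the right-hand composite makes sense. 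Set $\mfI := \Yhg_0 \cap N^-\Yhg \cap \Yhg N^+ = \ker(\Pi|_{\Yhg_0})$. For $x\in\Yhg[0]$ write $\braid_{w'}(x) = z + \iota$ with $z := \Pi\braid_{w'}(x)\in\Yhg[0]$ and $\iota\in\mfI$; the inductive hypothesis gives $z = \mbraid_{i_2}\cdots\mbraid_{i_\ell}(x)$. Applying $\braid_{i_1}$ and then $\Pi$, the target identity $\Pi\braid_w(x) = \mbraid_{i_1}\cdots\mbraid_{i_\ell}(x)$ becomes equivalent to the single-step vanishing
\[
\Pi\big(\braid_{i_1}(\iota)\big) = 0 .
\]
This is \emph{not} a consequence of $\braid_{i_1}$ preserving $\mfI$: already for $\mfg = \mfsl_2$ one has $\braid_1(fe) = ef = fe + \xi_{1,0}$, so $\braid_i$ genuinely produces diagonal terms out of $\mfI$. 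The point must be that the particular $\iota$ coming from $\braid_{w'}(\Yhg[0])$, together with the length condition $\ell(s_{i_1}w') > \ell(w')$, forces the cancellation.

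To exploit this I would pass to finite-dimensional representations, using that the characters $\chi_{\underline{\mathrm{P}}}$ attached to the irreducibles $L(\underline{\mathrm{P}})$ separate the points of $\Yhg[0]$. For $a\in\Yhg_0$ and a highest weight vector $v_\lambda$ of $V=L(\underline{\mathrm{P}})$, the inclusion $\mfI\subseteq\Yhg N^+$ gives $\mfI\cdot v_\lambda = 0$, whence $a\cdot v_\lambda = \chi_{\underline{\mathrm{P}}}(\Pi(a))\,v_\lambda$; thus $\Pi$ is detected by the action on highest weight vectors. Combining this with the functoriality identity of Proposition~\ref{P:tau-wt-space}\ref{braid:e}, which reads $\braid_{i_1}(a)|_V = \braid_{i_1}^V\circ a|_V\circ(\braid_{i_1}^V)^{-1}$ on $V$, the scalar $\chi_{\underline{\mathrm{P}}}(\Pi\braid_{i_1}(\iota))$ equals the coefficient of $v_\lambda$ in $\braid_{i_1}^V(\iota\cdot v_{s_{i_1}\lambda})$, where $v_{s_{i_1}\lambda}:=(\braid_{i_1}^V)^{-1}v_\lambda$ spans the one-dimensional extremal weight space $V_{s_{i_1}\lambda}$. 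Since $\iota$ preserves weights and $\braid_{i_1}^V(V_{s_{i_1}\lambda}) = V_\lambda$, the single-step claim reduces to showing that $\iota$ annihilates the extremal vector $v_{s_{i_1}\lambda}$ in every such $V$.

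Here the length hypothesis enters decisively through the braid relation $\braid_{i_1}^V\braid_{w'}^V = \braid_w^V$, valid precisely because $\ell(s_{i_1}w') = \ell(w')+1$. It identifies $(\braid_{w'}^V)^{-1}v_{s_{i_1}\lambda} = (\braid_w^V)^{-1}v_\lambda =: v_{w^{-1}\lambda}$, again an extremal vector. As the extremal weight spaces of a finite-dimensional irreducible are one-dimensional, they are eigenlines for the commutative algebra $\Yhg[0]$; writing $\nu_\theta$ for the eigenvalue character at $v_\theta$, the conjugation identity yields $\braid_{w'}(x)\cdot v_{s_{i_1}\lambda} = \braid_{w'}^V\big(x\cdot v_{w^{-1}\lambda}\big) = \nu_{w^{-1}\lambda}(x)\,v_{s_{i_1}\lambda}$, while $z\cdot v_{s_{i_1}\lambda} = \nu_{s_{i_1}\lambda}(z)\,v_{s_{i_1}\lambda}$. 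Hence $\iota\cdot v_{s_{i_1}\lambda} = \big(\nu_{w^{-1}\lambda}(x) - \nu_{s_{i_1}\lambda}(z)\big)v_{s_{i_1}\lambda}$, and everything comes down to the scalar identity $\nu_{s_{i_1}\lambda}(z) = \nu_{w^{-1}\lambda}(x)$: that applying $\braid_{w'}$ transports the extremal $\Yhg[0]$-eigenvalue from weight $w^{-1}\lambda$ to weight $s_{i_1}\lambda$ without the off-diagonal term $\iota$ contributing.

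This eigenvalue-transport identity is the crux and the main obstacle: it says exactly that $\braid_{i_1}$ cannot manufacture a diagonal contribution from $\iota$, and it fails without the length condition (the $\mfsl_2$ computation above is the failure when a leg lies along $\alpha_{i_1}$). The mechanism I expect to close it is that $\braid_{w'}(\Yhg[0])$ lies in $\Yhg[0]$ plus triangular components whose $\Yhg[\pm]$-``legs'' are supported on root vectors indexed by the inversion set $\mathrm{Inv}(w') = \{\beta\in\Root^+ : (w')^{-1}\beta\in\Root^-\}$, a claim one sets up by a parallel induction using $\mathrm{Inv}(s_{i_1}w') = \{\alpha_{i_1}\}\sqcup s_{i_1}\mathrm{Inv}(w')$. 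Because $\ell(s_{i_1}w')>\ell(w')$ forces $\alpha_{i_1}\notin\mathrm{Inv}(w')$, the reflection $s_{i_1}$ permutes those legs among positive root vectors, so $\braid_{i_1}$ keeps the image strictly off-diagonal and $\Pi\braid_{i_1}(\iota)=0$. Establishing this leg-support statement carefully is where the real work lies; the remaining reductions, and the final observation that independence of the reduced expression is automatic once the operator equals $\Pi\circ\braid_w|_{\Yhg[0]}$, are bookkeeping.
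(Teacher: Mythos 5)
Your reductions are all valid as far as they go: the induction setup, the (correct and important) observation that $\braid_{i_1}$ does \emph{not} preserve $\mfI=\Yhg_0\cap N^-\Yhg\cap\Yhg N^+$, the equivalence of the inductive step with $\Pi\bigl(\braid_{i_1}(\iota)\bigr)=0$, and the passage to extremal vectors of the modules $L(\underline{\mathrm{P}})$. But the argument is genuinely incomplete, at exactly the point you flag yourself: the eigenvalue-transport identity $\nu_{s_{i_1}\lambda}\bigl(\mbraid_{i_2}\cdots\mbraid_{i_\ell}(x)\bigr)=\nu_{w^{-1}\lambda}(x)$ is never proven, and it is not a side technicality --- it is the lemma itself in disguise. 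Indeed it is essentially Proposition \ref{P:extremal}, which the paper deduces \emph{from} this lemma, so it cannot be quoted; and the ``leg-support statement'' you propose as the mechanism for proving it is, almost verbatim, the paper's actual proof of the lemma. That proof is short and purely algebraic: from the rank-one estimate $\braid_i(a)\in\mbraid_i(a)+\sum_{n\geq1}\Yhg_{-n\alpha_i}\Yhg_{n\alpha_i}$ and the fact that $\braid_i(\Yhg_\beta)=\Yhg_{s_i\beta}$, induction on $\ell$ gives
\begin{equation*}
\braid_{i_1}\cdots\braid_{i_\ell}(a)\in\mbraid_{i_1}\cdots\mbraid_{i_\ell}(a)+\sum_{b=1}^{\ell}\sum_{n\geq 1}\Yhg_{-n\gamma_b}\,\Yhg_{n\gamma_b},\qquad \gamma_b:=s_{i_1}\cdots s_{i_{b-1}}\alpha_{i_b},
\end{equation*}
and reducedness forces each $\gamma_b$ to be a positive root, while $\Pi$ kills $\Yhg_{-\beta}\Yhg_\beta$ for $\beta\in Q^+$. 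Once this is in hand the lemma follows in one line, and your entire representation-theoretic superstructure is unnecessary. Your route also leans on a second unproven assertion: that the highest-weight characters of the finite-dimensional irreducibles separate points of $\Yhg[0]$. This is true, but it requires an argument of the kind the paper supplies (only for $\UqLg$) in Lemma \ref{lem: technical}; it is not available off the shelf.

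If you do wish to close your route rather than argue directly in the algebra, the efficient fix is an \emph{iterated single-step transport}, not a global structure statement about $\braid_{w'}(\Yhg[0])$: if $\mu$ is an extremal weight with $\langle\mu,\alpha_i^\vee\rangle\geq0$, then $\Yhg_{n\alpha_i}v_\mu=0$ for all $n\geq1$ (the weight $\mu+n\alpha_i$ has norm strictly larger than $|\lambda|$, hence is not a weight of $V$), so the rank-one estimate together with the conjugation identity of Proposition \ref{P:tau-wt-space}\ref{braid:e} gives $\nu_\mu\bigl(\mbraid_i(y)\bigr)=\nu_{s_i\mu}(y)$. Applying this successively for $i=i_2,\ldots,i_\ell$ along the weights $s_{i_1}\lambda,\ s_{i_2}s_{i_1}\lambda,\ldots$, with the hypothesis $\langle s_{i_{b-1}}\cdots s_{i_1}\lambda,\alpha_{i_b}^\vee\rangle\geq0$ supplied at each step by $s_{i_1}\cdots s_{i_{b-1}}\alpha_{i_b}\in\Root^+$ and dominance of $\lambda$, yields exactly your transport identity. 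Note, however, that this is the paper's induction transplanted into representations: the same inversion-set positivity does the same job, and one still has to prove the rank-one estimate. So the honest summary is that you have correctly located the crux, but your write-up postpones it rather than proves it.
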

For each $w\in\Wg$, we denote the element defined by the lemma by 
\begin{equation}\label{mbraid_w}
    \mbraid_w := \Pi \circ \braid_w\Big|_{\Yhg[0]} =  \mbraid_{i_1} \cdots \mbraid_{i_\ell}.
\end{equation}
The following proof is essentially taken from \cite{WeekesThesis}*{Thm.~5.3.19} in the second author's PhD thesis, though the proof there contains a mistake: it only includes $n=1$!
\begin{proof}
    We begin by noting that for any $a \in \Yhg[0]$ we have
    \begin{equation}
    \label{eq: braid estimate}
        \braid_i(a) = \mbraid_i(a) + \sum_{n \geq 1} \Yhg_{-n\alpha_i} \Yhg_{n \alpha_i}.
    \end{equation}
    This follows from the construction of $\braid_i$, via the adjoint action of the elements $x_{i,0}^\pm$. Next, recall that for $\beta \in Q$ we have $\braid_i(\Yhg_\beta) = \Yhg_{s_i \beta}$. Using this and (\ref{eq: braid estimate}) it follows by induction on $\ell$ that
    \begin{equation*}
    \braid_{i_1} \cdots \braid_{i_\ell}(a) = \mbraid_{i_1}\cdots \mbraid_{i_\ell}(a) + \sum_{b=1}^\ell \sum_{n \geq 1} \Yhg_{-n s_{i_1}\cdots s_{i_{b-1}}\alpha_{i_b}} \Yhg_{ns_{i_1}\cdots s_{i_{b-1}} \alpha_{i_b}}.
    \end{equation*}
    If  $w = s_{i_1} \cdots s_{i_\ell}$ is a reduced expression, then the elements $s_{i_1} \cdots s_{i_{b-1}} \alpha_{i_b}$ are positive roots: they are precisely the elements  of the inversion set of $w$  \cite{Kumar}*{Lem.~1.3.14}.  But for any positive element $\beta \in Q^+$, the product $\Yhg_{-\beta} \Yhg_\beta$  is  killed by $\Pi$. This proves the claim.
\end{proof}

Since both sides of any braid relation (\ref{eq: braid rels}) are reduced expressions, we conclude from the previous lemma that:
\begin{corollary}
\label{cor: braid rels}
    The operators $\mbraid_i$ satisfy the braid relations (\ref{eq: braid rels}).
\end{corollary}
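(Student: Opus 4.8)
The plan is to deduce the corollary directly from Lemma \ref{L:mbraid-reduced}, which has already done the essential work. The key observation is purely group-theoretic: the two sides of the braid relation \eqref{eq: braid rels} are \emph{reduced} words representing the \emph{same} element of the Weyl group $\Wg$, so the independence-of-reduced-expression assertion of the lemma forces them to act identically on $\Yhg[0]$.

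More precisely, I would first pass through the surjection $\Bg \to \Wg$, $\braid_i \mapsto s_i$. The two alternating products $\braid_i\braid_j\braid_i\cdots$ and $\braid_j\braid_i\braid_j\cdots$ (each with $m_{ij}$ factors) are sent to the alternating products $s_i s_j s_i \cdots$ and $s_j s_i s_j \cdots$ in $\Wg$. These are the two standard expressions for the longest element $w$ of the rank-two dihedral subgroup $\langle s_i, s_j\rangle \subset \Wg$, which has length exactly $m_{ij}$. Hence each of these words has length $m_{ij} = \ell(w)$ and is therefore reduced, and the braid relation \emph{in $\Wg$} gives that they represent the same $w$.

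Then I would apply Lemma \ref{L:mbraid-reduced} to each of these two reduced expressions for $w$. Since the lemma asserts that $\mbraid_w := \Pi \circ \braid_w|_{\Yhg[0]}$ equals $\mbraid_{i_1}\cdots \mbraid_{i_\ell}$ for \emph{any} reduced expression $w = s_{i_1}\cdots s_{i_\ell}$, both alternating products of the $\mbraid$'s compute the single well-defined operator $\mbraid_w$:
\begin{equation*}
\underbrace{\mbraid_i\mbraid_j\mbraid_i\cdots}_{m_{ij}\text{ factors}} \ = \ \mbraid_w \ = \ \underbrace{\mbraid_j\mbraid_i\mbraid_j\cdots}_{m_{ij}\text{ factors}},
\end{equation*}
which is precisely the braid relation \eqref{eq: braid rels} for the operators $\mbraid_i$.

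There is no genuine obstacle here: the independence-of-reduced-expression statement in Lemma \ref{L:mbraid-reduced} is exactly the content that makes the braid relations automatic, and that lemma is where the real work (the inversion-set computation and the vanishing of $\Pi$ on $\Yhg_{-\beta}\Yhg_{\beta}$ for $\beta\in Q^+$) takes place. The only point to verify for the corollary is the elementary and standard fact that the two sides of \eqref{eq: braid rels} are reduced expressions for a common element of $\Wg$.
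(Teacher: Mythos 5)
Your proposal is correct and is exactly the paper's argument: the paper's proof of this corollary is the one-line observation that both sides of any braid relation are reduced expressions (for the same Weyl group element), so Lemma \ref{L:mbraid-reduced} forces the corresponding products of the $\mbraid_i$ to coincide. Your write-up merely makes explicit the standard fact that these words are reduced expressions for the longest element of the rank-two dihedral subgroup $\langle s_i, s_j\rangle$, which the paper leaves implicit.
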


The next lemma will complete the proof of Theorem \ref{T:mbraid}:
\begin{lemma}
\label{lemma: braid aut Yangian}
     Each $\mbraid_i$ is a Hopf algebra automorphism of $\Yhg[0]$, with inverse given by $\mbraid^{-1}_i = \Pi^{op} \circ \braid_i \Big|_{\Yhg[0]}$.
\end{lemma}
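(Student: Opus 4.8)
The plan is to treat the three assertions of Lemma~\ref{lemma: braid aut Yangian}---that $\mbraid_i$ is an algebra homomorphism, that it is invertible with the stated inverse, and that it respects the Drinfeld Hopf structure---separately, with invertibility as the technical heart. First I would record that $\mbraid_i$ is an algebra homomorphism. Since the adjoint $\g$-action on $\Yhg$ is integrable, $\braid_i$ is an algebra automorphism of $\Yhg$ by Proposition~\ref{P:tau-wt-space}(d), and it sends $\Yhg_\beta$ to $\Yhg_{s_i\beta}$; as $\Yhg[0]\subseteq\Yhg_0$ and $s_i(0)=0$, its restriction $\braid_i\big|_{\Yhg[0]}\colon\Yhg[0]\to\Yhg_0$ is an algebra homomorphism. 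Composing with the algebra homomorphism $\Pi\big|_{\Yhg_0}\colon\Yhg_0\to\Yhg[0]$ (whose kernel $\Yhg_0\cap N^-\Yhg\cap\Yhg N^+$ is an ideal of $\Yhg_0$, as already noted) exhibits $\mbraid_i=\Pi\big|_{\Yhg_0}\circ\braid_i\big|_{\Yhg[0]}$ as an algebra homomorphism.

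The main step is to prove that $\mbraid_i^{-1}=\mbraid_i'$, where $\mbraid_i':=\Pi^{op}\circ\braid_i\big|_{\Yhg[0]}$. Two facts drive this: by Proposition~\ref{P:tau-wt-space}(b), $\braid_i^2$ acts as the identity on the zero-weight space $\Yhg_0$ (the sign $(-1)^{\langle 0,\alpha_i^\vee\rangle}$ is $+1$); and, by the PBW theorem, for $n\geq 1$ the weight space $\Yhg_{n\alpha_i}$ (resp.\ $\Yhg_{-n\alpha_i}$) lies in the right ideal $N^+\Yhg$ (resp.\ $N^-\Yhg$), so that every ``positive--negative'' product satisfies $\Yhg_{n\alpha_i}\Yhg_{-n\alpha_i}\subseteq N^+\Yhg\subseteq\ker\Pi^{op}$ and every ``negative--positive'' product satisfies $\Yhg_{-n\alpha_i}\Yhg_{n\alpha_i}\subseteq N^-\Yhg\subseteq\ker\Pi$. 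Given $a\in\Yhg[0]$, the estimate~\eqref{eq: braid estimate} gives $\braid_i(a)=\mbraid_i(a)+r$ with $r\in\sum_{n\geq 1}\Yhg_{-n\alpha_i}\Yhg_{n\alpha_i}$; applying $\braid_i$ and using $\braid_i^2=\id$ on $\Yhg_0$ yields $\braid_i(\mbraid_i(a))=a-\braid_i(r)$, and since $\braid_i$ interchanges $\Yhg_{\mp n\alpha_i}$ with $\Yhg_{\pm n\alpha_i}$ we have $\braid_i(r)\in\sum_{n\geq 1}\Yhg_{n\alpha_i}\Yhg_{-n\alpha_i}\subseteq\ker\Pi^{op}$. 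Hence $\mbraid_i'(\mbraid_i(a))=\Pi^{op}(a-\braid_i(r))=a$, i.e.\ $\mbraid_i'\circ\mbraid_i=\id$. For the reverse composition I would use the mirror of~\eqref{eq: braid estimate}, namely $\braid_i(a)=\mbraid_i'(a)+r'$ with $r'\in\sum_{n\geq 1}\Yhg_{n\alpha_i}\Yhg_{-n\alpha_i}$ (which follows from the same construction of $\braid_i$, or by re-expanding $r$ in the opposite normal ordering), and run the identical computation with $\Pi$ and $\Pi^{op}$ exchanged to obtain $\mbraid_i\circ\mbraid_i'=\id$. This gives invertibility with the claimed inverse; in particular $\mbraid_i$ is an algebra automorphism, $\mbraid_i'$ being an algebra homomorphism by the argument of the first paragraph.

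It remains to check compatibility with the Drinfeld Hopf structure. Because $(\Yhg[0],\Delta^0)$ is commutative and cocommutative and is generated as an algebra by the coefficients of the grouplike series $\xi_j(u)$, and because $\mbraid_i$ is already an algebra automorphism, it suffices to verify that $\mbraid_i(\xi_j(u))$ is again grouplike for every $j\in\mbI$: then $\Delta^0\circ\mbraid_i$ and $(\mbraid_i\otimes\mbraid_i)\circ\Delta^0$ are algebra maps agreeing on the generating series, compatibility with $\veps^0$ holds on generators, and compatibility with $S^0$ is automatic for a bialgebra endomorphism. To establish grouplikeness I would transport the standard Hopf-compatibility of $\braid_i$ through the definition of $\Delta^0$: by Proposition~\ref{P:tau-wt-space}(c) one has $\Delta\circ\braid_i=(\braid_i\otimes\braid_i)\circ\Delta$ for the standard coproduct, and $\braid_i$ commutes with the shift $\tau_z$ (as $\tau_z$ fixes the degree-zero generators $x_{i,0}^\pm$ from which $\braid_i$ is built); feeding these into $\Delta_z^D=\mathrm{Ad}(\mcR^-(z))\circ(\tau_z\otimes\id)\circ\Delta$, evaluating at $z=0$, and projecting onto $\Yhg[0]^{\otimes 2}$ should give $\Delta^0(\mbraid_i(\xi_j(u)))=\mbraid_i(\xi_j(u))\otimes\mbraid_i(\xi_j(u))$.

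The hard part is precisely this last compatibility. The twisting factor $\mcR^-(z)$ is \emph{not} itself $(\braid_i\otimes\braid_i)$-invariant---$\braid_i$ does not preserve the triangular pieces $\Yhg[\pm]$---so the transport argument requires showing that the contribution of $\mcR^-(z)$ is annihilated once one restricts to $\Yhg[0]$ and projects by $\Pi\otimes\Pi$. A clean way around this obstacle is to invoke the explicit image formula for $\mbraid_i(\xi_j(u))$ established in the next subsection (Corollary~\ref{C:mbraid-t-xi}), which exhibits it as a finite product of shifts of the grouplike series $\xi_\bullet(u)$ and hence as a manifestly grouplike element; since that computation uses only that $\mbraid_i$ is an algebra homomorphism (proven above), this reasoning is not circular. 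With grouplikeness in hand, the coalgebra, counit, and antipode compatibilities all follow at once, completing the proof that each $\mbraid_i$ is a Hopf algebra automorphism of $\Yhg[0]$.
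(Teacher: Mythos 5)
Your proposal is correct and follows essentially the same route as the paper: the algebra-homomorphism step is identical, the invertibility argument rests on the same two facts (the estimate \eqref{eq: braid estimate} together with $\braid_i(\Yhg_{\pm n\alpha_i})=\Yhg_{\mp n\alpha_i}$, and $\braid_i^2=\mathrm{id}$ on $\Yhg_0$), with your direct application of $\Pi^{op}$ being just a rephrasing of the paper's use of the opposite estimate plus the PBW theorem, and the Hopf compatibility is likewise deferred to the grouplikeness visible in the explicit formulas of Proposition \ref{P:tau-dot(A)} and Corollary \ref{C:mbraid-t-xi}. Your explicit note that this deferral is not circular (since those formulas use only the algebra-homomorphism property) is a point the paper leaves implicit.
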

\begin{proof}
Since $\Yhg_0$ is preserved by $\braid_i$, $\Yhg[0]$ is a subset of $\Yhg_0$,  and $\Pi|_{\Yhg_0}$ is an algebra homomorphism, we see that $\mbraid_i$ is an algebra homomorphism.  The fact that $\mbraid_i$ is a coalgebra homomorphism commuting with the antipode $S^0$ of $\Yhg[0]$ follows immediately from the formulas of Proposition \ref{P:tau-dot(A)} or Corollary \ref{C:mbraid-t-xi} established in the next section, using that $\xi_i(u)$ and $A_i(u)$ are grouplike series; see above  \eqref{def:Delta^0}. 

Next, we temporarily adopt the notation $\mbraid^{op}_i = \Pi^{op}\circ \braid_i\big|_{\Yhg[0]}$.   We will show that $\mbraid^{op}_i$ is the inverse of $\mbraid_i$, which will complete the proof of the lemma. First, we apply $\braid_i$ to both sides of equation (\ref{eq: braid estimate}), using the fact that $\braid_i( \Yhg_{\pm n \alpha_i}) = \Yhg_{\mp n \alpha_i}$:
$$
\braid^2_i(a) = \braid_i( \mbraid_i(a)) +  \sum_{n \geq 1} \Yhg_{n\alpha_i} \Yhg_{-n \alpha_i}
$$
Since $\mbraid_i(a) \in \Yhg[0]$, we also have the following ``opposite'' analogue of (\ref{eq: braid estimate}):
$$
\braid_i( \mbraid_i(a)) = \mbraid^{op}_i ( \mbraid_i(a)) + \sum_{n \geq 1} \Yhg_{n\alpha_i} \Yhg_{-n \alpha_i}
$$
Combined with the fact that $\braid_i^2(a) = a$ by Proposition \ref{P:tau-wt-space}(b), we thus obtain:
$$
a = \braid^2_i(a)  = \mbraid^{op}_i ( \mbraid_i(a)) + \sum_{n \geq 1} \Yhg_{n\alpha_i} \Yhg_{-n \alpha_i}
$$
Now, observe that $a$ and $\mbraid^{op}_i(\mbraid_i(a))$ are both elements of $\Yhg[0]$, and we have just shown that their difference lies in $\sum_{n \geq 1} \Yhg_{n\alpha_i} \Yhg_{-n \alpha_i}$.  By the PBW theorem, we conclude that $a =\mbraid^{op}_i(\mbraid_i(a))$, and therefore $\mbraid^{op}_i \circ \mbraid_i$ is the identity operator on $\Yhg[0]$.  A symmetric argument shows that $\mbraid_i \circ \mbraid^{op}_i$ is also the identity, and thus $\mbraid_i$ is invertible with inverse $\mbraid^{op}_i$,  as claimed. \qedhere

\end{proof}

\begin{remark}
One can show that $\mbraid_i = \Pi \circ \braid_i|_{\Yhg[0]}$ is a filtered map of degree 0, and that the associated graded map $\mathrm{gr}(\mbraid_i)$ acts on $\mathrm{gr}\Yhg[0]\cong U(\mfh[t])$ via the simple reflection $s_i \in \Wg$.  In other words, we have:
$$
\mathrm{gr}(\mbraid_i): h t^k \mapsto s_i(h)t^k
$$
for any  $h \in \mfh$ and integer $ k \geq 0$.  Since $\mathrm{gr}(\mbraid_i)$ is invertible, this provides another proof of the fact that $\mbraid_i$ is invertible.
\end{remark}

\subsection{Computing $\mbraid_i$ on generators}\label{ssec:Bg-Y0-formulas}

To compute the action of the modified braid group operators $\mbraid_i$ on $\Yhg[0]$ explicitly, we will use the generating series $(A_j(u))_{j\in \mbI}$ of $\Yhg[0]$ from \cite{GKLO}, whose definition is recalled in Section \ref{ssec:GKLO}. The first assertion of the following proposition was established for simply-laced types in  \cite{WeekesThesis}*{Lem.~5.3.16} using a different argument.
\begin{proposition} \label{P:tau-dot(A)}
For each $i\in \mbI$, we have  
\begin{equation*}
\braid_i(A_j(u))
=
\begin{cases} 
D_i(u) \quad \text{ if } \; j=i,\\
A_j(u) \quad \text{ if }\; j\neq i.
\end{cases}
\end{equation*}
Consequently, the action of $\mbraid_i$ on $\Yhg[0]$ is uniquely determined by the formula 
\begin{equation*}
\mbraid_i(A_j(u))=A_j(u)\xi_i(u)^{\delta_{ij}} \quad \forall \quad j\in \mbI. 
\end{equation*}
\end{proposition}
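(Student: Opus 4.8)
The plan is to establish the first claim $\braid_i(A_j(u)) = A_j(u)$ for $j \neq i$ and $\braid_i(A_i(u)) = D_i(u)$, and then to derive the formula for $\mbraid_i$ as a direct consequence. For the case $j \neq i$, I would note that $\braid_i = \exp(\mathrm{ad}\,\phi(e_i))\exp(\mathrm{ad}\,\phi(-f_i))\exp(\mathrm{ad}\,\phi(e_i))$ acting by the integrable adjoint action, and that by Corollary \ref{C:[g,GKLO]}\eqref{[g,GKLO]:1} both $e_i$ and $f_i$ commute with $A_j(u)$; hence every exponential fixes $A_j(u)$, giving $\braid_i(A_j(u)) = A_j(u)$ immediately.

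The substantive case is $j = i$. Here I would compute $\braid_i(A_i(u))$ by applying the three exponentials in turn and using the commutation relations of Corollary \ref{C:[g,GKLO]}\eqref{[g,GKLO]:2}--\eqref{[g,GKLO]:3}. The key observation is that the four series $A_i(u), B_i(u), C_i(u), D_i(u)$ span a finite-dimensional subspace (for each fixed power of $u$) that is stable under $\mathrm{ad}\,e_i$ and $\mathrm{ad}\,f_i$: indeed from the corollary one reads off $[e_i, A_i] = B_i$, $[e_i, B_i] = 0$, $[e_i, C_i] = D_i - A_i$, $[e_i, D_i] = -B_i$, and symmetrically $[f_i, A_i] = -C_i$, $[f_i, C_i] = 0$, $[f_i, B_i] = A_i - D_i$, $[f_i, D_i] = C_i$. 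This is exactly the adjoint action on a copy of the $\mathfrak{sl}_2$ (or $2\times 2$ matrix) picture, where $A_i, B_i, C_i, D_i$ behave like the entries of a $2\times2$ matrix on which $e_i, f_i$ act by left/right infinitesimal multiplication. I would therefore compute $\exp(\mathrm{ad}\,e_i)$, then $\exp(-\mathrm{ad}\,f_i)$, then $\exp(\mathrm{ad}\,e_i)$ applied to $A_i(u)$, tracking the four coefficients. Because the adjoint action is nilpotent on this four-dimensional space, each exponential is a finite sum, and the net effect of the triple product is precisely the Weyl group element of $SL_2$ sending the $(1,1)$-entry to the $(2,2)$-entry, yielding $\braid_i(A_i(u)) = D_i(u)$.

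With the first assertion in hand, the formula for $\mbraid_i$ follows by applying the projection $\Pi$. For $j \neq i$ we have $\mbraid_i(A_j(u)) = \Pi(A_j(u)) = A_j(u)$ since $A_j(u) \in \Yhg[0]$. For $j = i$ we get $\mbraid_i(A_i(u)) = \Pi(D_i(u))$, so I must show $\Pi(D_i(u)) = A_i(u)\xi_i(u)$. Recalling the definition $D_i(u) = A_i(u)\xi_i(u) + C_i(u)A_i(u)^{-1}B_i(u)$, and that $B_i(u) = d_i^{1/2}A_i(u)x_i^+(u) \in \Yhg N^+$ while $C_i(u) = d_i^{1/2}x_i^-(u)A_i(u) \in N^-\Yhg$, the second summand $C_i(u)A_i(u)^{-1}B_i(u)$ lies in $N^-\Yhg N^+ \subset N^-\Yhg + \Yhg N^+$, hence is killed by $\Pi$; meanwhile $A_i(u)\xi_i(u) \in \Yhg[0]$ is fixed. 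This gives $\mbraid_i(A_i(u)) = A_i(u)\xi_i(u)$, completing the formula. Finally, since the coefficients of the $A_j(u)$ generate $\Yhg[0]$ and $\mbraid_i$ is an algebra homomorphism (as shown in the previous subsection), specifying $\mbraid_i$ on these generators determines it uniquely.

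I expect the main obstacle to be the explicit triple-exponential computation in the $j = i$ case: one must organize the repeated adjoint actions carefully, verify that the relevant subspace is genuinely closed under $\mathrm{ad}\,e_i$ and $\mathrm{ad}\,f_i$, and confirm that the cross-terms combine correctly so that the output is exactly $D_i(u)$ with no residual $A_i, B_i, C_i$ contributions. The cleanest route is to exploit the $2\times2$-matrix interpretation so that the triple product is recognized as the standard $\mathfrak{sl}_2$ braid/Weyl reflection swapping the diagonal entries, which reduces the verification to a short computation in $2\times2$ matrices rather than an unstructured manipulation of four interrelated series.
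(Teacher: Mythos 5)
Your proposal is correct and follows essentially the same route as the paper's proof: the case $j\neq i$ via Part \eqref{[g,GKLO]:1} of Corollary \ref{C:[g,GKLO]}, the case $j=i$ by evaluating the triple exponential using the relations of Parts \eqref{[g,GKLO]:2}--\eqref{[g,GKLO]:3} (the paper carries this out directly, obtaining $B_i(u)+D_i(u)$ after the first two exponentials and then $D_i(u)$), and the final step by noting $\Pi(D_i(u))=A_i(u)\xi_i(u)$ because $C_i(u)A_i(u)^{-1}B_i(u)$ lies in $N^-\Yhg$ and is killed by $\Pi$. Your $2\times 2$-matrix/Weyl-reflection framing is simply a tidy way of organizing the same finite nilpotent exponential computation that the paper performs term by term.
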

\begin{proof}
By Part \eqref{[g,GKLO]:1} of Corollary \ref{C:[g,GKLO]}, $\braid_j(A_i(u))=A_i(u)$ if $j\neq i$. Moreover, using the 
relations of Parts \eqref{[g,GKLO]:2} and \eqref{[g,GKLO]:3} of that corollary, we obtain 
\begin{align*}
\braid_i(A_i(u))
&=\exp(\mathrm{ad}(e_i))\exp(-\mathrm{ad}(f_i))\exp(\mathrm{ad}(e_i))(A_i(u))\\
&=\exp(\mathrm{ad}(e_i))\exp(-\mathrm{ad}(f_i))(A_i(u)+B_i(u))\\
&
=\exp(\mathrm{ad}(e_i))(A_i(u)+B_i(u)-(A_i(u)-C_i(u)-D_i(u))-C_i(u))\\
&
=\exp(\mathrm{ad}(e_i))(B_i(u)+D_i(u))\\
&
=B_i(u)+D_i(u)-B_i(u)=D_i(u),
\end{align*}
which completes the proof of the first statement of the proposition. The stated formula for $\mbraid_i(A_j(u))$ now follows from the definition of $\mbraid_i$ (namely, $\mbraid_i=\Pi\circ \braid_i|_{\Yhg[0]}$) and that 
\begin{equation*}
\Pi(D_i(u)) = \Pi(A_i(u)\xi_i(u) + C_i(u)A_i^{-1}(u)B_i(u))=A_i(u)\xi_i(u). \qedhere
\end{equation*}
\end{proof}

Next, recall from Remark \ref{R:logs} that, for each $i\in \mbI$, $t_i(u)\in u^{-1}\Yhg[0][\![u^{-1}]\!]$ is the formal series logarithm of $\xi_i(u)$: $t_i(u)=\log\xi_i(u)$. We further recall that $\qshift=e^{\frac{\hbar}{2}\partial_u}$, viewed as an operator on $\Yhg[0][\![u^{-1}]\!]$. 
\begin{corollary}\label{C:mbraid-t-xi}
Let $i,j\in \mbI$. Then the action of $\mbraid_i$ on $t_j(u)$ and $\xi_j(u)$ is given by 
\begin{align*}
\mbraid_i(t_j(u))&=t_j(u)-\qshift^{-d_i}[a_{ij}]_{\qshift^{d_i}}(t_i(u)),
\\
\mbraid_i(\xi_j(u))
&=\xi_j(u)\prod_{k=0}^{|a_{ij}|-1} \xi_i\!\left(u-\frac{\hbar d_i}{2}(|a_{ij}|-2k)\right)^{(-1)^{\delta_{ij}}}.
\end{align*}
Moreover, the inverse of $\mbraid_i$ is determined by the three equivalent formulas
\begin{align*}
\mbraid^{-1}_i( A_j(u)) & = A_j(u) \xi_i(u+\hbar d_i)^{\delta_{ij}}, \\
\mbraid^{-1}_i( t_j(u) )& = t_j(u) - \qshift^{d_i} [a_{ij}]_{\qshift^{d_i}}( t_i(u)), \\
\mbraid^{-1}_i( \xi_j(u) ) & = \xi_j(u) \prod_{k=1}^{|a_{ij}|} \xi_i\left( u -\frac{\hbar d_i}{2}(|a_{ij}|-2k) \right)^{(-1)^{\delta_{ij}}}.
\end{align*}
\end{corollary}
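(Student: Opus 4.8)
The plan is to reduce everything to the formula $\mbraid_i(A_j(u)) = A_j(u)\xi_i(u)^{\delta_{ij}}$ of Proposition \ref{P:tau-dot(A)} and then propagate it through the logarithmic change of generators. First I would observe that, since $\mbraid_i$ is an algebra automorphism of the \emph{commutative} algebra $\Yhg[0]$ acting coefficientwise on $\Yhg[0][\![u^{-1}]\!]$, it commutes with the formal operations $\log$ and $\exp$ on $1 + u^{-1}\Yhg[0][\![u^{-1}]\!]$, and it also commutes with the shift operator $\qshift = e^{\frac{\hbar}{2}\partial_u}$ (which acts on the variable $u$ while $\mbraid_i$ acts on coefficients). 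Applying $\log$ to Proposition \ref{P:tau-dot(A)} therefore yields the additive identity $\mbraid_i(a_j(u)) = a_j(u) + \delta_{ij} t_i(u)$, where $a_j(u) = \log A_j(u)$.

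Next I would apply $\mbraid_i$ to the linear relation \eqref{t-vs-A} expressing $t_j(u)$ in terms of the $a_k(u)$. Because $\mbraid_i$ commutes with each $\qshift^{-d_k}[a_{kj}]_{\qshift^{d_k}}$ (a Laurent polynomial in $\qshift$) and is additive, the single correction term $\delta_{ik} t_i(u)$ survives only for $k = i$, giving at once
\begin{equation*}
\mbraid_i(t_j(u)) = t_j(u) - \qshift^{-d_i}[a_{ij}]_{\qshift^{d_i}}(t_i(u)),
\end{equation*}
which is the first formula. Exponentiating and using that $\exp$ turns the action of a Laurent polynomial $\sum_k c_k \qshift^{n_k}$ on $t_i(u)$ into the product $\prod_k \xi_i(u + \tfrac{\hbar}{2} n_k)^{c_k}$, the $\xi_j(u)$ formula reduces to expanding the $z$-number $[a_{ij}]_{\qshift^{d_i}}$ as an explicit sum of shift operators. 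Here I would split into two cases: for $j = i$ one has $[2]_{\qshift^{d_i}} = \qshift^{d_i} + \qshift^{-d_i}$, producing the exponent $-1$ and the two factors at $u$ and $u - \hbar d_i$; for $j \neq i$ one has $[a_{ij}]_{\qshift^{d_i}} = -[\,|a_{ij}|\,]_{\qshift^{d_i}} = -\sum_{k=0}^{|a_{ij}|-1}\qshift^{d_i(|a_{ij}|-1-2k)}$, producing exponent $+1$ and, after the reindexing $k \mapsto |a_{ij}| - 1 - k$, exactly $\prod_{k=0}^{|a_{ij}|-1}\xi_i(u - \tfrac{\hbar d_i}{2}(|a_{ij}|-2k))$. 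The sign $(-1)^{\delta_{ij}}$ is thus seen to originate from $a_{ii} = 2 > 0$ versus $a_{ij} \leq 0$ for $i \neq j$.

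For the three inverse formulas I would first establish $\mbraid_i^{-1}(A_j(u)) = A_j(u)\xi_i(u + \hbar d_i)^{\delta_{ij}}$ by verifying directly, using the forward $\xi$-formula just proved together with $\qshift$-equivariance, that $\mbraid_i$ sends the right-hand side to $A_j(u)$; the only nontrivial case is $j = i$, where $\mbraid_i(\xi_i(u + \hbar d_i)) = \xi_i(u)^{-1}$ cancels the factor $\xi_i(u)$ coming from $\mbraid_i(A_i(u)) = A_i(u)\xi_i(u)$. Once this is in hand, the $t_j$- and $\xi_j$-formulas for $\mbraid_i^{-1}$ follow by repeating the two steps above verbatim: passing to logarithms gives $\mbraid_i^{-1}(a_j(u)) = a_j(u) + \delta_{ij}\qshift^{2d_i} t_i(u)$, and feeding this into \eqref{t-vs-A} produces $\mbraid_i^{-1}(t_j(u)) = t_j(u) - \qshift^{d_i}[a_{ij}]_{\qshift^{d_i}}(t_i(u))$, the extra factor $\qshift^{2d_i}$ accounting for the shift of the argument by $\hbar d_i$ and hence for the index range $k = 1, \dots, |a_{ij}|$ in the final product.

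I expect the main obstacle to be purely combinatorial: the careful expansion of $[a_{ij}]_{\qshift^{d_i}}$ into shift operators and the attendant reindexing needed to match the symmetric product $\prod_k \xi_i(u - \tfrac{\hbar d_i}{2}(|a_{ij}| - 2k))$, keeping the sign $(-1)^{\delta_{ij}}$ and the two argument ranges straight between the forward and inverse statements. The conceptual input --- that all three pairs of formulas are equivalent via $\log$/$\exp$ and that $\mbraid_i$ intertwines these with $\qshift$ --- is immediate from Proposition \ref{P:tau-dot(A)}.
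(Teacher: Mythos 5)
Your proposal is correct and follows essentially the same route as the paper: both derive $\mbraid_i(a_j(u)) = a_j(u) + \delta_{ij}\,t_i(u)$ from Proposition \ref{P:tau-dot(A)}, push it through the linear relation \eqref{t-vs-A} to get the $t_j(u)$-formula, and exponentiate using the expansion \eqref{qshift-poly} of $-\qshift^{-d_i}[a_{ij}]_{\qshift^{d_i}}$ into shift operators. The only (immaterial) difference is in the inverse formulas, where the paper first solves $\mbraid_i^{-1}(t_i(u)) = -\qshift^{2d_i}(t_i(u))$ using that $\mbraid_i^{\pm 1}$ commutes with $\qshift$ and then rearranges, while you first verify $\mbraid_i\bigl(A_j(u)\xi_i(u+\hbar d_i)^{\delta_{ij}}\bigr) = A_j(u)$ directly (your key identity $\mbraid_i(\xi_i(u+\hbar d_i)) = \xi_i(u)^{-1}$ is correct) and then feed the logarithm back into \eqref{t-vs-A} --- both derivations rest on exactly the same ingredients.
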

\begin{proof}
 By Proposition \ref{P:tau-dot(A)}, one has $\mbraid_i(a_j(u))=a_j(u)+\delta_{ij}t_i(u)$ for each $i,j \in \mbI$, where we recall from Remark \ref{R:logs} that $a_j(u)=\log A_j(u)$. The stated formula for $\mbraid_i(t_j(u))$ is a consequence of this identity and the relation \eqref{t-vs-A}. One then obtains the given formula for $\mbraid_i(\xi_j(u))$ by exponentiating and using that 
 \begin{equation}\label{qshift-poly}
-\qshift^{-d_i}[a_{ij}]_{\qshift^{d_i}}=(-1)^{\delta_{ij}}\sum_{k=0}^{|a_{ij}|-1}\qshift^{-d_i(|a_{ij}|-2k)}. 
 \end{equation}
 
We next solve for $\mbraid_i^{-1}$. Start with the formula above for $\mbraid_i( t_i(u))$:
 $$
 \mbraid_i( t_i(u)) = t_i(u) - \qshift^{-d_i}[2]_{\qshift^{d_i}} (t_i(u)) = - \qshift^{-2d_i}( t_i(u)).
 $$
 Note that $\mbraid_i^{\pm 1}$ commute with $\qshift$.  Thus, applying $\mbraid_i^{-1}$ to both sides gives
 $$\mbraid_i^{-1}( t_i(u)) = - \qshift^{2d_i} ( t_i(u)) = t_i(u) - \qshift^{d_i} [2]_{\qshift^{d_i}}( t_i(u)).$$
Next, applying $\mbraid_i^{-1}$ to both sides of our formula for $\mbraid_i( t_j(u))$, we obtain
\begin{align*}
t_j(u) &= \mbraid_i^{-1}( t_j(u)) - \qshift^{-d_i} [a_{ij}]_{\qshift^{d_i}} ( \mbraid_i^{-1}( t_i(u))) \\
& = \mbraid_i^{-1} (t_j(u)) + \qshift^{d_i} [a_{ij}]_{\qshift^{d_i}}( t_i(u)).
\end{align*}
Rearranging, we obtain the claimed formula for $\mbraid_i^{-1}(t_j(u))$.  Once again, the formulas for $\mbraid_i^{-1}( \xi_j(u))$ and $\mbraid_i^{-1}( A_j(u))$ follow by exponentiating.
\end{proof}
\begin{remark}
Using the relations in Proposition \ref{prop: GKLO relations}, one can show that 
$$
D_i(u) = A_i(u) \xi_i(u+\hbar d_i) + B_i(u) A_i(u)^{-1} C_i(u)
$$
This gives an alternative way to calculate $\mbraid_i^{-1}( A_j(u))$ via the technique in the proof of Proposition \ref{P:tau-dot(A)}, since $\mbraid_i^{-1} = \Pi^{op} \circ \braid_i \big|_{\Yhg[0]}$ and
$$
\Pi^{op} (D_i(u)) = \Pi^{op}\big( A_i(u) \xi_i(u+\hbar d_i) + B_i(u) A_i(u)^{-1} C_i(u) \big) = A_i(u) \xi_i(u+\hbar d_i)
$$
\end{remark}

\subsection{Action of the Hecke algebra of type $\g$}\label{ssec:Hecke}
For the remainder of Section \ref{sec:Braid}, we shift our focus towards establishing auxiliary properties of the braid group operators $\braid_i$ and $\mbraid_i$ which, in particular, yield proofs of Parts \eqref{I.3} and \eqref{I.4} of Theorem \ref{T:I}.  
In this section, we show that the $\Bg$-action on $\Yhg[0]$ from Theorem \ref{T:mbraid} gives rise to an action of the Hecke algebra associated to $\g$, with parameters encoded by the symmetrizing integers  $(d_j)_{j\in \mbI}$, on a deformation of the space $\mfh[t]$. We begin by recalling the definition and basic properties of the relevant Hecke algebra, following \cite{Lusztig-Hecke}.
\begin{definition}\label{D:Hecke}
The \textbf{Iwahori--Hecke algebra} $\mathscr{H}_z(\mfg)$  is the unital associative $\C[z,z^{-1}]$-algebra generated by elements $\{T_i\}_{i\in \mbI}$, subject to the relations
\begin{equation*}
\underbrace{T_i T_j T_i \cdots}_{m_{ij}\text{ factors}} = \underbrace{T_j T_iT_j \cdots}_{m_{ij} \text{ factors}} \qquad \text{ and }\qquad (T_i-z_i)(T_i+z_i^{-1})=0,
\end{equation*}
for all $i,j\in \mbI$ with $j\neq i$, where $z_i=z^{d_i}$ and $m_{ij}$ is as in \eqref{eq: braid rels}.
\end{definition}
The second set of relations appearing in the definition of $\mathscr{H}_z(\mfg)$ are called the \textit{Hecke relations}; note that they immediately imply that each generator $T_i$ is invertible. Thus, there is a surjective $\C[z,z^{-1}]$-algebra homomorphism 
\begin{equation*}
\pi_\mathscr{H}:\C[\Bg]\otimes \C[z,z^{-1}]\onto \mathscr{H}_z(\mfg), \quad \braid_i\mapsto T_i \quad \forall \quad i\in \mbI, 
\end{equation*}
with kernel generated by the Hecke relations as an ideal. 

The Hecke algebra $\mathscr{H}_z(\mfg)$ is a flat deformation of the group algebra $\C[\Wg]$ over the ring $\C[z,z^{-1}]$: the quotient of $\mathscr{H}_z(\mfg)$ by the ideal $(z-1)\mathscr{H}_z(\mfg)$ coincides with the group algebra $\C[\Wg]$. The flatness is due to the well-known fact that $\mathscr{H}_z(\mfg)$ is a free module over $\C[z,z^{-1}]$ with basis given by $\{T_w\}_{  w\in \Wg}$, where $T_w:=\pi_\mathscr{H}(\braid_w)$. We refer the reader to \cite{Lusztig-Hecke}*{\S3}, for instance, for further details. 

Let us now introduce the vector space $\mbbV=\bigcup_{r\geq 0}\mbbV_r \subset \Yhg[0]$, where 
\begin{equation*}
\mbbV_r= \bigoplus_{k=0}^r\mathrm{span}\{t_{j,k}:j\in \mbI\} \quad \forall \quad r\geq 0.
\end{equation*}
Let $\msz:=\tau_{-\frac{\hbar}{2}}|_{\mbbV}\in \End(\mbbV)$, where we recall that, for each $a\in \C$, $\tau_a$ is the shift automorphism of $\Yhg$ determined by $\tau_a(y(u))=y(u-a)$ for $y\in \{x_i^\pm,\xi_i\}$. Note that $\msz$ is invertible with inverse $\tau_{\frac{\hbar}{2}}|_{\mbbV}$. In what follows, we shall view $\mbbV$ as a $\C[z,z^{-1}]$-module via the inclusion  $\C[z,z^{-1}]\into \mathrm{GL}(\mbbV)$ sending $z$ to $\msz$. 
\begin{proposition}\label{P:Hecke}
The assignment $T_i\mapsto \msz^{d_i}\circ \mbraid_i\Big|_{\mbbV}$ for each $i\in \mbI$ defines a $\mathscr{H}_z(\g)$-module structure on $\mbbV$ for which $\mbbV_r$ is a submodule for any $r\geq 0$. 
\end{proposition}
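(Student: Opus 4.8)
The plan is to check that the operators $T_i:=\msz^{d_i}\circ\mbraid_i|_{\mbbV}$, together with the action of $z$ by $\msz$, respect the defining relations of $\mathscr{H}_z(\g)$ and preserve each $\mbbV_r$. Two inputs from Corollary \ref{C:mbraid-t-xi} will drive the argument: the formula $\mbraid_i(t_j(u))=t_j(u)-\qshift^{-d_i}[a_{ij}]_{\qshift^{d_i}}(t_i(u))$, and its specialization $\mbraid_i(t_i(u))=-\qshift^{-2d_i}(t_i(u))$. I will also use that $\msz$ agrees with $\qshift$ on $\mbbV$: since $\tau_{-\hbar/2}(\xi_i(u))=\xi_i(u+\hbar/2)$ and $t_i(u)=\log\xi_i(u)$, we have $\msz(t_i(u))=t_i(u+\hbar/2)=\qshift(t_i(u))$, whence $\msz^k=\qshift^k$ on $\mbbV$ for all $k\in\Z$; in particular $\msz$ commutes with every $\mbraid_j|_{\mbbV}$, as the latter commute with $\qshift$.

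First I would verify that all of these operators preserve the filtration. Expanding $t_j(u+c)$ shows $\tau_c(t_{j,N})=t_{j,N}+(\text{a combination of }t_{j,n},\ n<N)$, so every power $\qshift^{k}=\tau_{-k\hbar/2}$ maps $\mbbV_r$ into itself. Since $\qshift^{-d_i}[a_{ij}]_{\qshift^{d_i}}$ is a Laurent polynomial in $\qshift$ by \eqref{qshift-poly}, the displayed formula then gives $\mbraid_i(\mbbV_r)\subseteq\mbbV_r$, and hence $T_i(\mbbV_r)\subseteq\mbbV_r$ and $\msz^{\pm 1}(\mbbV_r)\subseteq\mbbV_r$.

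Next I would establish the braid relations for the $T_i$. As $\msz$ is central among the operators in play, both sides of a length-$m_{ij}$ braid word in the $T_i$ equal a power of $\msz$ times the corresponding braid word in the $\mbraid_i$, and the latter two agree by Theorem \ref{T:mbraid}. It remains to match the exponents of $\msz$: when $m_{ij}$ is even each of $i,j$ occurs $m_{ij}/2$ times on both sides, so both exponents equal $\tfrac{1}{2}m_{ij}(d_i+d_j)$; the only odd value is $m_{ij}=3$, which forces $a_{ij}=a_{ji}=-1$ and hence $d_i=d_j$, so the two exponents $2d_i+d_j$ and $d_i+2d_j$ again coincide. Thus the $T_i$ satisfy the braid relations.

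The crux is the Hecke relation. Writing $S=\msz^{d_i}$ and $M=\mbraid_i|_{\mbbV}$ and using $[S,M]=0$, a short expansion yields
\begin{equation*}
(T_i-z_i)(T_i+z_i^{-1})=(SM-S)(SM+S^{-1})=(S^2M+1)(M-1).
\end{equation*}
The formula for $\mbraid_i(t_j(u))$ shows that $M-1$ has image in the $\C[\qshift^{\pm1}]$-span of $t_i$, while $\mbraid_i(t_i(u))=-\qshift^{-2d_i}(t_i(u))$ says exactly that $(S^2M+1)(t_i)=0$; since $S^2M+1$ commutes with $\qshift$, it annihilates that entire span, so $(S^2M+1)(M-1)=0$ on $\mbbV$. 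Combining the three steps, the assignment $T_i\mapsto\msz^{d_i}\circ\mbraid_i|_{\mbbV}$ and $z\mapsto\msz$ respects every relation of $\mathscr{H}_z(\g)$, hence defines a module structure; and since each $\mbbV_r$ is stable under the $T_i$ and under $\msz^{\pm1}$, while $T_i^{-1}=T_i-(z_i-z_i^{-1})\in\mathscr{H}_z(\g)$, each $\mbbV_r$ is a submodule. I expect the Hecke relation to be the main obstacle, specifically spotting the operator identity above and reducing its vanishing to the single identity $\mbraid_i(t_i)=-\qshift^{-2d_i}(t_i)$.
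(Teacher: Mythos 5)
Your proof is correct and rests on the same inputs as the paper's: the two formulas of Corollary \ref{C:mbraid-t-xi} (rewritten via the identification of $\msz$ with $\qshift$ on $\mbbV$), the commutation of $\mbraid_i$ with the shift operators, and the braid relations imported from Theorem \ref{T:mbraid}. The genuine difference is in how the quadratic relation is verified. The paper expands $\mbraid_i^2+(\msz^{-2d_i}-\mathrm{Id})\circ\mbraid_i$ directly on the series $t_j(u)$ and cancels terms using $[2]_{\msz^{d_i}}=\msz^{d_i}+\msz^{-d_i}$, whereas you factor
\begin{equation*}
(T_i-z_i)(T_i+z_i^{-1})=\bigl(\msz^{2d_i}\mbraid_i+1\bigr)\bigl(\mbraid_i-1\bigr)
\end{equation*}
and kill the product structurally: the image of $\mbraid_i-1$ lies in the $\C[\qshift^{\pm 1}]$-stable span of the coefficients $t_{i,k}$, on which $\msz^{2d_i}\mbraid_i+1$ vanishes because $\mbraid_i(t_i(u))=-\qshift^{-2d_i}(t_i(u))$. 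This is the same computation in cleaner packaging, and it makes transparent why the Hecke relation reduces to that single identity for $\mbraid_i(t_i(u))$. You also spell out two points the paper treats as immediate: that each $\mbbV_r$ is preserved (via the unipotent action of the shifts on the coefficient filtration), and the bookkeeping of $\msz$-exponents needed for the twisted operators $T_i=\msz^{d_i}\circ\mbraid_i|_{\mbbV}$ to inherit the braid relations from the $\mbraid_i$ (even $m_{ij}$ gives equal exponents $\tfrac{1}{2}m_{ij}(d_i+d_j)$, and $m_{ij}=3$ forces $d_i=d_j$); both checks are right and worth recording.
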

\begin{proof}
As  $\qshift^d(t_i(u))=t_i(u+d\hbar/2)=\msz^{d}(t_i(u))$ for any $d\in \Z$ and $i\in \mbI$, we can rewrite the first formula of Corollary \ref{C:mbraid-t-xi} as
\begin{equation}\label{mbraid-tjk}
\mbraid_i(t_{j,k})=t_{j,k}-\msz^{-d_i}[a_{ij}]_{\msz^{d_i}}(t_{i,k}) \quad \forall \quad i,j\in \mbI, \; k\geq 0,
\end{equation}
where we recall that $\msz^{-d_i}[a_{ij}]_{\msz^{d_i}}$ is a Laurent polynomial in $\msz$; see \eqref{qshift-poly}. It follows that the operators $\mbraid_i$ restrict to linear automorphisms of  $\mbbV$ which satisfy the defining braid relations of $\Bg$. 

The formulas of Corollary \ref{C:mbraid-t-xi} also imply that $\mbraid_i\circ \tau_a=\tau_a\circ \mbraid_i$ for each $i\in \mbI$ and $a\in \C$. Thus, $\mbraid_i$ and $\msz$ commute, and the assignment $\braid_i\mapsto \msz^{d_i}\circ \mbraid_i|_{\mbbV}$ defines an algebra homomorphism
\begin{equation*}
\C[\Bg]\otimes \C[z,z^{-1}]\to \End_{\C[z,z^{-1}]}(\mbbV).
\end{equation*}

Hence, to see that that $\mbbV$ is a $\mathscr{H}_z(\g)$-module, we are left to show that the operators $\msz^{d_i}\circ \mbraid_i|_{\mbbV}$ satisfy the Hecke relations. Equivalently, we must prove that the following identity holds on $\mbbV$,
for each $i\in \mbI$:
\begin{equation}\label{Hecke-rels}
\mbraid_i^2|_{\mbbV}+(\msz^{-2d_i}-\mathrm{Id}_\mbbV)\circ \mbraid_i|_{\mbbV}=\msz^{-2d_i}\circ \mathrm{Id}_\mbbV.
\end{equation}
To see this, first note that we have
\begin{gather*}
\begin{aligned}
\mbraid_i^2(t_{j}(u))
&=\mbraid_i(t_{j}(u))-\msz^{-d_i}[a_{ij}]_{\msz^{d_i}}(\mbraid_i(t_{i}(u)))\\
&
=t_{j}(u)+(\msz^{-d_i}[2]_{\msz^{d_i}}\msz^{-d_i}[a_{ij}]_{\msz^{d_i}}-2\msz^{-d_i}[a_{ij}]_{\msz^{d_i}})(t_{i}(u)),
\end{aligned}
\\[5pt]
\begin{aligned}
(\msz^{-2d_i}-\mathrm{Id}_\mbbV)(\mbraid_i(t_j(u)))
&
=(\msz^{-2d_i}-\mathrm{Id}_\mbbV)(t_{j}(u)-\msz^{-d_i}[a_{ij}]_{\msz^{d_i}}(t_{i}(u)))\\
&
=(\msz^{-2d_i}-\mathrm{Id}_\mbbV)(t_{j}(u))-(\msz^{-2d_i}-1)\msz^{-d_i}[a_{ij}]_{\msz^{d_i}}(t_{i}(u))).
\end{aligned}
\end{gather*}
Adding together these two expressions yields 
\begin{align*}
(\mbraid_i^2|_{\mbbV}+&(\msz^{-2d_i}-\mathrm{Id}_\mbbV)\circ \mbraid_i|_{\mbbV})(t_j(u))\\
&=\msz^{-2d_i}(t_{j}(u))+(\msz^{-d_i}[2]_{\msz^{d_i}}-\msz^{-2d_i}-1)\msz^{-d_i}[a_{ij}]_{\msz^{d_i}}(t_{i}(u))\\
&=\msz^{-2d_i}(t_{j}(u)),
\end{align*}
where in the last equality we have used that $[2]_{\msz^{d_i}}=\msz^{d_i}+\msz^{-d_i}$. This completes the proof of \eqref{Hecke-rels}, and thus establishes the first assertion of the proposition. That $\mbbV_r$ is a $\mathscr{H}_z(\g)$-submodule of $\mbbV$ for any $r\geq 0$ now follows immediately from the fact that it is preserved by $\msz$ and $\mbraid_i$ for any $i\in \mbI$; see \eqref{mbraid-tjk}. \qedhere
\end{proof}

\subsection{The universal $R$-matrix and lifted minors}\label{ssec:action-R-matrix}
We now turn to studying how the operators $\braid_i$ and $\mbraid_i$ interact with the universal $R$-matrix of the Yangian as well as the so-called lifted minors (or matrix coefficients) it defines on any finite-dimensional representation. 
 
Since the shift homomorphism $\tau_z:\Yhg\to \Yhg[][z]$ restricts to the identity map on $U(\g)$, the  intertwiner equation \eqref{R-inter} implies that $\mcR(z)$ is a $\g$-invariant element of $\Yhg^{\otimes 2}[\![z^{-1}]\!]$. That is, one has
\begin{equation*}
[\Delta(x), \mcR(z)]
    = 0 \quad \forall \quad x\in U(\g).
\end{equation*} 
It follows that $\mcR(z)$ is fixed by $\exp(\ad(x))\otimes \exp(\ad(x))$ for any $x\in \g$, and thus by $\braid_i\otimes\braid_i$ for each $i\in \mbI$. This observation proves the first part of the below proposition. 
\begin{proposition}\label{P:tau(R)}
For each $i\in \mbI$, we have 
\begin{equation*}
(\braid_i\otimes\braid_i)(\mcR(z))=\mcR(z) \quad \text{ and }\quad (\mbraid_i\otimes\mbraid_i)(\mcR^0(z))=\mcR^0(z).
\end{equation*}
\end{proposition}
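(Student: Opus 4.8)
The goal is to show $(\mbraid_i \otimes \mbraid_i)(\mcR^0(z)) = \mcR^0(z)$, given that the first part $(\braid_i \otimes \braid_i)(\mcR(z)) = \mcR(z)$ has just been established. The natural strategy is to \emph{transfer} the invariance from $\mcR(z)$ to its diagonal factor $\mcR^0(z)$ using the Gauss decomposition \eqref{R:Gauss} and a weight/grading argument. The point is that $\braid_i$ permutes weight spaces according to $s_i$, so applying $\braid_i \otimes \braid_i$ to the triangular factors of $\mcR(z)$ scrambles weights, but the zero-weight diagonal part interacts cleanly with the projection $\Pi$ that defines $\mbraid_i$.

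The plan is as follows. First I would recall that $\mcR^0(z) \in 1 + z^{-1}\Yhg[0]^{\otimes 2}[\![z^{-1}]\!]$ lives entirely in the zero weight space of each tensor factor, while $\mcR^-(z) = \sum_{\beta \in Q^+} \mcR^-_\beta(z)$ with $\mcR^-_\beta(z) \in (\Yhg[-]_{-\beta} \otimes \Yhg[+]_\beta)[\![z^{-1}]\!]$ and $\mcR^+(z) = \mcR^-_{21}(-z)^{-1}$ is built from strictly-nonzero-weight components (apart from the leading $1$). The key observation is that the projection $\Pi \otimes \Pi: \Yhg^{\otimes 2} \to \Yhg[0]^{\otimes 2}$ kills every $\mcR^-_\beta(z)$ and $\mcR^+_\beta(z)$ with $\beta \neq 0$, since these have nonzero weights in each factor (indeed $\Pi$ annihilates $\Yhg_\gamma$ for $\gamma \neq 0$, and even on $\Yhg_0$ it kills products in $N^-\Yhg \cap \Yhg N^+$). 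Therefore applying $\Pi \otimes \Pi$ to the Gauss decomposition \eqref{R:Gauss} picks out precisely the diagonal factor:
\begin{equation*}
(\Pi \otimes \Pi)(\mcR(z)) = \mcR^0(z).
\end{equation*}
I would verify this carefully, noting that the cross terms $\mcR^+ \mcR^0 \mcR^-$ expand into products whose nonzero-weight pieces all lie in the kernel of $\Pi \otimes \Pi$.

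Next, I would exploit the compatibility of $\braid_i$ with $\Pi$. Concretely, the defining relation $\mbraid_i = \Pi \circ \braid_i|_{\Yhg[0]}$ together with the weight-space bookkeeping underlying Lemma \ref{L:mbraid-reduced} (namely that $\braid_i(\Yhg_\beta) = \Yhg_{s_i\beta}$ and that nonzero-weight products are killed by $\Pi$) should yield the intertwining identity
\begin{equation*}
(\Pi \otimes \Pi) \circ (\braid_i \otimes \braid_i) = (\mbraid_i \otimes \mbraid_i) \circ (\Pi \otimes \Pi)
\end{equation*}
when restricted to the relevant subspace $\Yhg[0]^{\otimes 2}$ of the Gauss factors; the mechanism is exactly the estimate \eqref{eq: braid estimate}, which says $\braid_i$ agrees with $\mbraid_i$ on $\Yhg[0]$ up to products in $\Yhg_{-n\alpha_i}\Yhg_{n\alpha_i}$ that $\Pi$ annihilates. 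Applying $\Pi \otimes \Pi$ to the first (already-proven) invariance $(\braid_i \otimes \braid_i)(\mcR(z)) = \mcR(z)$ and using both displayed identities then gives
\begin{equation*}
(\mbraid_i \otimes \mbraid_i)(\mcR^0(z)) = (\mbraid_i \otimes \mbraid_i)(\Pi \otimes \Pi)(\mcR(z)) = (\Pi \otimes \Pi)(\braid_i \otimes \braid_i)(\mcR(z)) = (\Pi \otimes \Pi)(\mcR(z)) = \mcR^0(z),
\end{equation*}
which is the claim.

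The main obstacle I anticipate is making the intertwining identity $(\Pi \otimes \Pi)(\braid_i \otimes \braid_i) = (\mbraid_i \otimes \mbraid_i)(\Pi \otimes \Pi)$ rigorous on all of $\Yhg^{\otimes 2}$ rather than merely on $\Yhg[0]^{\otimes 2}$, since $\braid_i$ genuinely moves the nonzero-weight Gauss factors around before $\Pi$ projects. The cleanest way to sidestep this is to argue entirely \emph{after} projecting: rather than commuting $\Pi \otimes \Pi$ past $\braid_i \otimes \braid_i$ on general elements, I would apply $\Pi \otimes \Pi$ directly to the equation $(\braid_i \otimes \braid_i)(\mcR(z)) = \mcR(z)$ and expand the left-hand side using the Gauss decomposition of $(\braid_i\otimes\braid_i)(\mcR(z))$, tracking which weight components survive $\Pi\otimes\Pi$. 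Because $\braid_i$ only negates $\alpha_i$-weights and permutes the rest, the surviving zero-weight contribution is controlled by \eqref{eq: braid estimate} applied to the factor $\mcR^0(z)\in\Yhg[0]^{\otimes 2}$, reducing the whole computation to the already-understood action of $\mbraid_i$ on $\Yhg[0]$. Alternatively, an even softer route avoids $R$-matrix internals entirely: since $\mbraid_i$ is determined by its formula on $\xi_i(u)$ (Corollary \ref{C:mbraid-t-xi}) and $\mcR^0(z)$ is the unique solution of the difference equation \eqref{diff:R^0} whose input $\mcL(z)$ is built from the Borel transforms $\bt_i(u)$ of $t_i(u)$, one could instead check directly that $(\mbraid_i\otimes\mbraid_i)$ preserves \eqref{diff:R^0}; this requires verifying that $\sum_{i,j} c_{ij}(\qshift)\,\bt_i(\partial_z)\otimes\bt_j(-\partial_z)$ is invariant under the induced action on the $t_{i,r}$, an $\mathbf{E}(z)$-symmetry computation that I expect to be the genuinely delicate step but which is self-contained.
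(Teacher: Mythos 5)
Your proposal is correct and takes essentially the same approach as the paper: there, too, the second identity is obtained by applying $\Pi\otimes\Pi$ to $(\braid_i\otimes\braid_i)(\mcR(z))=\mcR(z)$, using the Gauss decomposition \eqref{R:Gauss} to see that $(\Pi\otimes\Pi)(\mcR(z))=\mcR^0(z)$, killing the images of the factors $\mcR^\pm(z)$ because the coefficients of $\braid_i^{\otimes 2}(\mcR^\pm_\beta(z))$ lie in $\Yhg_{\pm s_i(\beta)}\otimes\Yhg_{\mp s_i(\beta)}\subset\Ker(\Pi\otimes\Pi)$ for $\beta\neq 0$, and identifying the surviving middle factor with $(\mbraid_i\otimes\mbraid_i)(\mcR^0(z))$. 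The multiplicativity issue you flag (and propose to settle by weight-tracking) is resolved in the paper by the cleaner packaging that $\Pi\otimes\Pi$ restricted to the weight-zero component of $\Yhg^{\otimes 2}$ is an algebra homomorphism, which follows from the observation above Definition \ref{D:mbraid} applied to $Y_\hbar(\g\oplus\g)\cong\Yhg\otimes\Yhg$; this is precisely the rigorous form of your surviving-components argument.
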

\begin{proof}
 It remains to prove that $\mcR^0(z)$ is fixed by $\mbraid_i\otimes \mbraid_i$ for each $i\in \mbI$. To this end, observe that the restriction of  $\Pi\otimes \Pi$ to the weight zero component $(\Yhg\otimes \Yhg)_0$ is an algebra homomorphism
 \begin{equation}\label{Pi(x)Pi}
 (\Pi\otimes \Pi)\Big|_{(\Yhg\otimes \Yhg)_0}:(\Yhg\otimes \Yhg)_0\to \Yhg[0]\otimes \Yhg[0].
 \end{equation}
Indeed, this follows from the observation above Definition \ref{D:mbraid} applied to the Yangian $Y_\hbar(\mfg\oplus\mfg)\cong \Yhg\otimes \Yhg$ associated to the semisimple Lie algebra $\mfg\oplus \mfg$.

Since the universal $R$-matrix $\mcR(z)$ and its components $\mcR^\pm(z)$ and $\mcR^0(z)$ have weight zero with $\Pi^{\otimes 2}(\mcR^\pm(z))=1$ and $\Pi^{\otimes 2}(\mcR^0(z))=\mcR^0(z)$, the above observation implies that 
\begin{align*}
(\Pi\otimes \Pi)(\mcR(z))
&=\Pi^{\otimes 2}(\mcR^+(z))\cdot \Pi^{\otimes 2}(\mcR^0(z))\cdot \Pi^{\otimes 2}(\mcR^-(z))=\mcR^0(z). 
\end{align*}
 Therefore, applying $\Pi\otimes \Pi$ to both sides of the identity $(\braid_i\otimes \braid_i)(\mcR(z))=\mcR(z)$ yields 
 \begin{equation*}
 (\Pi \circ \braid_i)^{\otimes 2} (\mcR(z))=\mcR^0(z).
 \end{equation*}
 We claim that the left-hand side of this relation  coincides with $(\mbraid_i\otimes\mbraid_i)(\mcR^0(z))$. To see this, note that the observation \eqref{Pi(x)Pi} implies that 
 \begin{equation*}
 (\Pi \circ \braid_i)^{\otimes 2} (\mcR(z))=(\Pi \circ \braid_i)^{\otimes 2} (\mcR^+(z))\cdot \mbraid_i^{\otimes 2} (\mcR^0(z))\cdot (\Pi \circ \braid_i)^{\otimes 2} (\mcR^-(z)).
 \end{equation*}
 Thus, we are left to see that $(\Pi \circ \braid_i)^{\otimes 2} (\mcR^\pm(z))=1$. This follows from the fact that $\Yhg_{\alpha}$ is in the kernel of $\Pi$ for any nonzero $\alpha\in Q$, and that
 \begin{equation*}
 \braid_i^{\otimes 2}(\mcR^\pm(z)-1)=\sum_{\beta>0}\braid_i^{\otimes 2}(\mcR^\pm_\beta(z))\subset \Yhg^{\otimes 2}[\![z^{-1}]\!],
 \end{equation*}
where the sum is taken over all nonzero $\beta\in Q^+$ and the coefficients of $ \braid_i^{\otimes 2}(\mcR^\pm_\beta(z))$ belong to $\Yhg_{\pm s_i(\beta)}\otimes \Yhg_{\mp s_i(\beta)}\subset \mathrm{Ker}(\Pi\otimes \Pi)$ for each $\beta$. 
\end{proof}

Given a finite-dimensional representation $V$ of $\Yhg$ with action homomorphism $\pi_V:\Yhg\to \End(V)$, we define $\mathscr{T}_V(u)$ and $\mathscr{T}_V^0(u)$ in $\End(V)\otimes \Yhg[][\![u^{-1}]\!]$ by 
\begin{equation*}
\mathscr{T}_V(u):=(\pi_V\otimes \mathrm{Id})(\mathscr{R}(-u)) \quad \text{ and }\quad 
\mathscr{T}_V^0(u):=(\pi_V\otimes \mathrm{Id})(\mathscr{R}^0(-u))
\end{equation*}
where we recall that $\mathscr{R}(u)=\mcR_{21}(u)$ (\textit{i.e.,} it is the universal $R$-matrix of $\Yhg$ as defined in \cite{Dr}*{Thm.~3}), and we have set $\mathscr{R}^0(u)=\mcR_{21}^0(u)$. 

\begin{remark}
Provided $V$ has a non-trivial composition factor, the coefficients of $\mathscr{T}_V(u)$ generate $\Yhg$ as an algebra, and $\Yhg$ can be rebuilt from $\mathscr{T}_V(u)$ via generators and relations using the so-called $R$-matrix formalism for constructing quantum groups; see \cite{Dr}*{Thm.~6} and \cite{WRTT}*{Thm.~6.2}.
\end{remark}
To each pair $(v,f)\in V\times V^\ast$, we may associate coefficients $\tminor_{f,v}(u)\in\Yhg[][\![u^{-1}]\!]$ of $\mathscr{T}_V(u)$ and $\tminor_{f,v}^0(u)\in \Yhg[0][\![u^{-1}]\!]$ of $\mathscr{T}_{V}^0(u)$ by setting
\begin{equation*}
\tminor_{f,v}(u):=(f\otimes \mathrm{Id})(\mathscr{T}_{V}(u)v) \quad \text{ and }\quad \tminor_{f,v}^0(u):=(f\otimes \mathrm{Id})(\mathscr{T}_{V}^0(u)v).
\end{equation*}
Following \cite{KTWWY}*{\S5.3} and \cite{WeekesThesis}*{\S4.3.2}, we call these elements \textit{lifted minors}. 
\begin{corollary}\label{C:minors}
Let $v\in V$ and $f\in V^\ast$. Then for each $i\in \mbI$, one has
\begin{equation*}
\braid_i(\tminor_{f,v}(u))=\tminor_{\braid_i(f),\braid_i(v)}(u).
\end{equation*}
Moreover, the adjoint action of any $x\in \g$ on $\tminor_{f,v}(u)$ is given by
\begin{equation*}
[x,\tminor_{f,v}(u)]=\tminor_{x\cdot f, v}(u)+\tminor_{f,x\cdot v}(u).
\end{equation*}
\end{corollary}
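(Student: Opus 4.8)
The plan is to work with a formal expansion $\mathscr{R}(-u)=\sum_k a_k\otimes b_k$ with $a_k,b_k\in\Yhg$, so that $\mathscr{T}_V(u)=\sum_k\pi_V(a_k)\otimes b_k$ and hence $\tminor_{f,v}(u)=\sum_k f(\pi_V(a_k)v)\,b_k$. Two inputs drive both identities. The first is the dictionary relating the algebra automorphism $\braid_i$ of $\Yhg$ to the operator $\braid_i^V$ on $V$: Proposition \ref{P:tau-wt-space}(e), applied to $A=\Yhg$ acting on the module $V$, gives $\pi_V(\braid_i(a))=\braid_i^V\,\pi_V(a)\,(\braid_i^V)^{-1}$ for all $a\in\Yhg$. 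The second is the invariance of $\mathscr{R}(-u)$: from Proposition \ref{P:tau(R)} we have $(\braid_i\otimes\braid_i)(\mcR(z))=\mcR(z)$, and since the flip of tensor factors commutes with $\braid_i\otimes\braid_i$ this passes to $\mathscr{R}(-u)=\mcR_{21}(-u)$; likewise the $\g$-invariance $[\Delta(x),\mcR(z)]=0$ established above Proposition \ref{P:tau(R)} passes to $\mathscr{R}(-u)$ because $\Delta(x)=\Delta^{\mathrm{op}}(x)$ for $x\in\g$.

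For the first identity I would apply $\pi_V\otimes\mathrm{Id}$ to $(\braid_i\otimes\braid_i)(\mathscr{R}(-u))=\mathscr{R}(-u)$ and insert the intertwining relation, obtaining $(\mathrm{Ad}(\braid_i^V)\otimes\braid_i)(\mathscr{T}_V(u))=\mathscr{T}_V(u)$, and therefore $(\mathrm{Id}\otimes\braid_i)(\mathscr{T}_V(u))=\sum_k (\braid_i^V)^{-1}\pi_V(a_k)\braid_i^V\otimes b_k$. Since $\braid_i(\tminor_{f,v}(u))=(f\otimes\mathrm{Id})\big((\mathrm{Id}\otimes\braid_i)(\mathscr{T}_V(u))\cdot v\big)$, evaluating the first tensor factor on $v$ and then applying $f$ yields $\braid_i(\tminor_{f,v}(u))=\sum_k f\big((\braid_i^V)^{-1}\pi_V(a_k)\braid_i^V v\big)\,b_k$. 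Recognizing that $\braid_i$ acts on $V^\ast$ by the contragredient action $\braid_i(f)=f\circ(\braid_i^V)^{-1}$ — which follows from the $\Bg$-equivariance of the evaluation pairing $V^\ast\otimes V\to\C$ — each summand equals $(\braid_i f)\big(\pi_V(a_k)\,\braid_i v\big)$, so the sum is precisely $\tminor_{\braid_i(f),\braid_i(v)}(u)$.

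For the second identity I would apply $\pi_V\otimes\mathrm{Id}$ to $[\Delta(x),\mathscr{R}(-u)]=0$ to get $\sum_k[\pi_V(x),\pi_V(a_k)]\otimes b_k+\sum_k\pi_V(a_k)\otimes[x,b_k]=0$ in $\End(V)\otimes\Yhg$. Because the adjoint action of $\g$ on $\Yhg$ is a derivation, $[x,\tminor_{f,v}(u)]=\sum_k f(\pi_V(a_k)v)\,[x,b_k]$; pairing the displayed relation against $f$ and $v$ in the first factor rewrites this as $-\sum_k f\big([\pi_V(x),\pi_V(a_k)]v\big)\,b_k$. Expanding the commutator and using the contragredient identity $(x\cdot f)(w)=-f(\pi_V(x)w)$ splits the two resulting terms as $\tminor_{x\cdot f,v}(u)+\tminor_{f,x\cdot v}(u)$, as claimed.

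The computations are pure bookkeeping once the two inputs are fixed, so the real work is conceptual rather than technical. The main point to get right is exactly the dictionary linking the automorphism $\braid_i$ of $\Yhg$ with the module operator $\braid_i^V$ and its contragredient on $V^\ast$ — in particular the inverse appearing in $\braid_i(f)=f\circ(\braid_i^V)^{-1}$ and the sign in the dual $\g$-action — together with the verification that passing from $\mcR$ to $\mathscr{R}=\mcR_{21}$ preserves both the $\braid_i\otimes\braid_i$-invariance and the $\g$-invariance. These are the only places where a careless calculation could go wrong.
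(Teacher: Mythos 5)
Your proposal is correct and follows essentially the same route as the paper: both identities are derived from the two inputs the paper cites — the intertwining property of Proposition \ref{P:tau-wt-space}\ref{braid:e} (equivalently, the contragredient formula $\braid_i(f)=f\circ(\braid_i^V)^{-1}$) and the invariance statements for $\mcR(z)$ from Proposition \ref{P:tau(R)} together with the $\g$-invariance of $\mathscr{R}(u)$. The paper's proof merely states these ingredients and leaves the bookkeeping implicit, whereas you have written out the same computation explicitly via the expansion $\mathscr{R}(-u)=\sum_k a_k\otimes b_k$; the details, including the passage of invariance from $\mcR$ to $\mcR_{21}$ and the sign in the dual $\g$-action, all check out.
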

\begin{proof}
Since $\braid_i(f)=\braid_i^{V^\ast}(f)=f\circ (\braid_i^V)^{-1}$, the first assertion of the corollary is a consequence of the definition of $\tminor_{f,v}(u)$ and the two relations
\begin{gather*}
(\braid_i^V)^{-1} \left( \pi_V(x)\braid_i^V(v)\right)=\pi_V(\braid_i^{-1}(x))v,\\
(\braid_i^{-1}\otimes \mathrm{Id})(\mathscr{R}(u))=(\mathrm{Id}\otimes \braid_i)(\mathscr{R}(u)),
\end{gather*}
which are due to the last part of Proposition \ref{P:tau-wt-space} and the first equality of Proposition \ref{P:tau(R)}, 
respectively. 
Similarly, the relation $[x,\tminor_{f,v}(u)]=\tminor_{x\cdot f, v}(u)+\tminor_{f,x\cdot v}(u)$ is a consequence of the definition of $\tminor_{f,v}(u)$ and the $\g$-invariance of  $\mathscr{R}(u)$. \qedhere
\end{proof}
\begin{remark}\label{R:tminor-0}
Let $V$ be a finite-dimensional representation of $\Yhg$, let $v\in V$ be a highest weight vector, and let $f\in V^\ast$. Then, for each $w\in \Wg$, we have 
\begin{equation*}
\mbraid_w^{-1}\left(\tminor_{f,v}^0(u)\right)=\tminor_{\braid_w^{-1}(f),\braid_w^{-1}(v)}^0(u).
\end{equation*} 
This is proven similarly to the first assertion of the previous corollary, using the second identity of Proposition \ref{P:tau(R)}.
\end{remark}

One application of Corollary \ref{C:minors} is that it can be used to identify each of the generating series $A_j(u),B_j(u),C_j(u)$ and $D_j(u)$ of $\Yhg$ (see Section \ref{ssec:GKLO}) with lifted minors associated to the $j$-th fundamental representation $L_{\varpi_j}$, thus yielding an alternative proof of \cite{Ilin-Rybnikov-19}*{Prop.~2.29}. To make this precise, for each $j\in \mbI$, let $v_j\in L_{\varpi_j}$ be a highest weight vector. Define $v_j^\ast\in (L_{\varpi_j})_{-\varpi_j}^\ast\subset (L_{\varpi_j})^\ast$ by $v_j^\ast(v_j)=1$. 
\begin{proposition}\label{P:t-vs-A}
For each $j\in \mbI$, we have 
\begin{equation*}
\tminor_{v_j^\ast,v_j}(u)=\tminor_{v_j^\ast,v_j}^0(u)=A_j(u-\hbar d_j).
\end{equation*}
Consequently, the series $B_j(u),C_j(u)$ and $D_j(u)$ are recovered as the lifted minors
\begin{gather*}
B_j(u)=\tminor_{e_j\cdot v_j^\ast, v_j}(u+\hbar d_j),\\
C_j(u)=-\tminor_{v_j^\ast, f_j\cdot v_j}(u+\hbar d_j) \quad \text{ and }\quad
D_j(u)=-\tminor_{e_j\cdot v_j^\ast, f_j \cdot v_j}(u+\hbar d_j).
\end{gather*}
\end{proposition}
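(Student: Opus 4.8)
The plan is to push the computation onto the diagonal factor $\mcR^0(z)$ of the universal $R$-matrix, identify the resulting series with $A_j(u-\hbar d_j)$ through the difference equation \eqref{diff:R^0}, and then deduce the formulas for $B_j,C_j,D_j$ by differentiating the first identity along the adjoint $\g$-action via Corollary \ref{C:minors}.

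First I would show $\tminor_{v_j^\ast,v_j}(u)=\tminor_{v_j^\ast,v_j}^0(u)$. Let $v_j$ be the highest weight vector of $L_{\varpi_j}$; then $\Yhg[+]_\beta v_j=0$ for every $\beta\in Q^+\setminus\{0\}$, and $v_j$ is a common $\Yhg[0]$-eigenvector whose eigenvalues define the character $\chi_j\colon\Yhg[0]\to\C$ with $\chi_j(\xi_i(u))=(u+\hbar d_i)^{\delta_{ij}}u^{-\delta_{ij}}$. Applying the flip to \eqref{R:Gauss} gives $\mathscr{R}(-u)=\mcR^+_{21}(-u)\,\mcR^0_{21}(-u)\,\mcR^-_{21}(-u)$, where $\mcR^-_{21}(-u)\in\sum_{\beta}\Yhg[+]_\beta\otimes\Yhg[-]_{-\beta}$ and $\mcR^+_{21}(-u)=\mcR^-(u)^{-1}\in\sum_{\beta}\Yhg[-]_{-\beta}\otimes\Yhg[+]_\beta$. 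Acting on $v_j\otimes 1$: the rightmost factor collapses to its $\beta=0$ term since $\Yhg[+]_{>0}$ kills $v_j$; the middle factor multiplies by eigenvalues to produce $v_j\otimes(\mathrm{Id}\otimes\chi_j)(\mcR^0(-u))$; and pairing the output of the leftmost factor against $v_j^\ast$, which is supported on $V_{\varpi_j}$, again selects only $\beta=0$. Thus both minors equal $\rho_j(u):=(\mathrm{Id}\otimes\chi_j)(\mcR^0(-u))$.

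The crux is to prove $\rho_j(u)=A_j(u-\hbar d_j)$. Since $\Yhg[0]$ is commutative, $\log\rho_j(u)=(\mathrm{Id}\otimes\chi_j)(\log\mcR^0(-u))$, so I would apply $\mathrm{Id}\otimes\chi_j$ to the difference equation \eqref{diff:R^0}. As $\chi_j$ kills $t_{i,r}$ for $i\neq j$ and sends $t_j(u)$ to $\log(1+\hbar d_j u^{-1})$, only the $i'=j$ terms of $\mcL(z)$ survive; a short computation gives $\chi_j(\bt_j(w))=(1-e^{-\hbar d_j w})w^{-1}$, so the second leg becomes the scalar operator $(\qshift^{2d_j}-1)\partial_z^{-1}$, which sends $-z^{-2}$ to $(z+\hbar d_j)^{-1}-z^{-1}$. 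Using $\bt_i(\partial_z)(z+c)^{-1}=-t_i(-z-c)$ then turns the right-hand side into an explicit combination of the $t_i$ with coefficients $c_{ij}(\qshift)=[2\kappa/d_j]_{\qshift^{d_j}}v_{ij}(\qshift)$. The main obstacle is the bookkeeping showing that the unique solution of this reduced difference equation equals $a_j(u-\hbar d_j)=\log A_j(u-\hbar d_j)$: this requires inverting the linear relation \eqref{t-vs-A} between the $t_i$ and the $a_k=\log A_k$, which is exactly where the entries $v_{ij}$ of $\mathbf{E}(z)=([a_{ij}]_{z^{d_i}})^{-1}$ enter, matching those in $c_{ij}$. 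I expect the operators $\qshift^{2\kappa}$ and $\qshift^{4\kappa}-1$ to combine with the shift $(z+\hbar d_j)^{-1}-z^{-1}$ so as to reproduce both the argument shift $u\mapsto u-\hbar d_j$ and the precise form of \eqref{t-vs-A}.

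Finally, the expressions for $B_j,C_j,D_j$ follow formally from $\tminor_{v_j^\ast,v_j}(u)=A_j(u-\hbar d_j)$. Applying the derivation formula of Corollary \ref{C:minors} with $x=e_j$ and using $e_j\cdot v_j=0$ together with $[e_j,A_j(u)]=B_j(u)$ from Corollary \ref{C:[g,GKLO]} gives $\tminor_{e_j\cdot v_j^\ast,v_j}(u)=B_j(u-\hbar d_j)$. Since $\mathscr{T}_V(u)v_j\in\bigoplus_{\beta\in Q^+}V_{\varpi_j-\beta}\otimes\Yhg[][\![u^{-1}]\!]$ while $f_j\cdot v_j^\ast$ is supported on $V_{\varpi_j+\alpha_j}$, the minor $\tminor_{f_j\cdot v_j^\ast,v_j}(u)$ vanishes, so $x=f_j$ together with $[f_j,A_j(u)]=-C_j(u)$ yields $\tminor_{v_j^\ast,f_j\cdot v_j}(u)=-C_j(u-\hbar d_j)$. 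Applying $\mathrm{ad}(e_j)$ once more to this $C_j$-identity, and using $e_j f_j\cdot v_j=h_j v_j=v_j$ with $[e_j,C_j(u)]=D_j(u)-A_j(u)$, produces $\tminor_{e_j\cdot v_j^\ast,f_j\cdot v_j}(u)=-D_j(u-\hbar d_j)$. Shifting $u\mapsto u+\hbar d_j$ in these three identities gives the stated formulas.
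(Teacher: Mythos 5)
Your proposal is correct and follows essentially the same route as the paper: both reduce the two minors to the character evaluation $(\mathrm{Id}\otimes\chi_j)(\mcR^0(-u))$ via the Gauss decomposition, identify that series with $A_j(u-\hbar d_j)$ through uniqueness of solutions to the difference equation \eqref{diff:R^0}, and then derive the $B_j,C_j,D_j$ formulas from the derivation rule of Corollary \ref{C:minors} together with Corollary \ref{C:[g,GKLO]}. The bookkeeping you defer does close exactly as you expect, and is precisely the paper's ``Claim'': contracting the entries $v_{ij}(\qshift)$ inside $c_{ij}(\qshift)$ against $[a_{ki}]_{\qshift^{d_k}}$ coming from \eqref{t-vs-A} leaves $\qshift^{d_j}\left[2\kappa/d_j\right]_{\qshift^{d_j}}(a_j(-u))$, and then $\qshift^{2\kappa}(\qshift^{2d_j}-1)\,\qshift^{d_j}\left[2\kappa/d_j\right]_{\qshift^{d_j}}=\qshift^{4\kappa+2d_j}-\qshift^{2d_j}$, so $A_j(-u-\hbar d_j)$ solves the same difference equation and uniqueness finishes the argument.
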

\begin{proof}
The equality  $\tminor_{v_j^\ast,v_j}(u)=\tminor_{v_j^\ast,v_j}^0(u)$ follows easily from the Gauss decomposition \eqref{R:Gauss}. Hence, to prove the first assertion of the proposition it suffices to establish that $\tminor_{v_j^\ast,v_j}^0(u)=A_j(u-\hbar d_j)$. Let $\underline{\lambda}\in \Hom_{Alg}(\Yhg[0],\C)$ be the algebra homomorphism uniquely determined by 
\begin{equation*}
\underline{\lambda}(\xi_i(u))=1+\delta_{ij} \hbar d_j u^{-1} \quad \forall \; j\in \mbI.
\end{equation*}
Then the identity $\tminor_{v_j^\ast,v_j}^0(u)=A_j(u-\hbar d_j)$ is equivalent to 
\begin{equation*}
\mathscr{R}^0_j(u):=(\underline{\lambda}\otimes \mathrm{id})(\mathscr{R}^0(u))=A_j(-u-\hbar d_j). 
\end{equation*}
By  \eqref{diff:R^0}, the left-hand side is the unique series in $1+u^{-1}\Yhg[0][\![u^{-1}]\!]$  satisfying
\begin{equation}\label{diff:R_j^0}
(\qshift^{4\kappa}-1)\log\mathscr{R}^0_j(u)=\mcL_j(u),
\end{equation}
where $\mcL_j(u)$ is given by 
\begin{equation*}
\mcL_j(u):=(\underline{\lambda}\otimes \mathrm{Id})(\mcL_{21}(u))=\qshift^{2\kappa}\sum_{i,j\in \mbI} c_{ij}(\qshift) \underline{\lambda}\left( \bt_j(-\partial_u) \right) \bt_i(\partial_u)\cdot  (-u^{-2})
\end{equation*}
with all notation as specified below \eqref{diff:R^0}. It thus suffices to show that $A_j(-u-\hbar d_j)$ also satisfies the difference equation \eqref{diff:R_j^0}. This is the content of the following claim.

\noindent\textbf{Claim}. The series $a_j(u)=\log A_j(u)$ satisfies the relation  
\begin{equation*}
(\qshift^{4\kappa+2d_j}-\qshift^{2d_j})(a_j(-u))=\mcL_j(u).
\end{equation*}
\begin{proof}[Proof of claim] 
Since  $\underline{\lambda}(t_{j,r})=(-1)^r \frac{\hbar^r d_j^{r+1}}{r+1}$, we have 
\begin{equation*}
\underline{\lambda}(\bt_j(u))=-\sum_{r\geq 0}(-1)^{r+1} \hbar^{r+1} d_j^{r+1}\frac{u^{r+1}}{(r+1)!}=\frac{1-e^{-\hbar d_j u}}{u}.
\end{equation*}
As $-u^{-2}=-\partial_u(-u^{-1})$, this gives 
\begin{equation*}
\mcL_j(u)
=\qshift^{2\kappa}(\qshift^{2d_j}-1)\sum_{i\in \mbI} c_{ij}(\qshift) \bt_i(\partial_u) \cdot (u^{-1})
=\qshift^{2\kappa}(\qshift^{2d_j}-1) \sum_{i\in \mbI} c_{ij}(\qshift) ( t_i(-u)).
\end{equation*}
Since $t_i(u)=-\sum_{k\in \mbI}\qshift^{-d_k}[a_{ki}]_{\qshift^{d_k}}(a_k(u))$ and $e^{\hbar \partial_z} f(z)|_{z=-u}= e^{-\hbar \partial_u}f(-u)$ , we obtain 
\begin{equation*}
\mcL_j(u)=\qshift^{2\kappa}(\qshift^{2d_j}-1) \sum_{i,k\in \mbI} c_{ij}(\qshift) \qshift^{d_k}[a_{ki}]_{\qshift^{d_k}}(a_k(-u)).
\end{equation*}
Since $c_{ij}(\qshift)=\left[2\kappa/d_j\right]_{\qshift^{d_j}} v_{ij}(\qshift)$ where $v_{ij}(\qshift)$ is the $(i,j)$-th entry of $([a_{ij}]_{\qshift^{d_i}})_{i,j\in \mbI}^{-1}$, we have 
\begin{align*}
\sum_{i,k\in \mbI} \qshift^{d_k}c_{ij}(\qshift)[a_{ki}]_{\qshift^{d_k}}(a_k(-u))
&=
\sum_{i,k\in \mbI} \qshift^{d_k}\left[2\kappa/d_j\right]_{\qshift^{d_j}}[a_{ki}]_{\qshift^{d_k}}v_{ij}(\qshift)(a_k(-u))
\\
&
=\qshift^{d_j}\left[2\kappa/d_j\right]_{\qshift^{d_j}}(a_j(-u)).
\end{align*}
Thus, we obtain 
\begin{equation*}
\mcL_j(z)=\qshift^{2\kappa}\qshift^{2d_j}(\qshift^{d_j}-\qshift^{-d_j})\left[2\kappa/d_j\right]_{\qshift^{d_j}}(a_j(-u))
=
(\qshift^{4\kappa+2d_j}-\qshift^{2d_j})(a_j(-u)). 
\end{equation*}
This completes the proof of the claim, and thus the proof that the lifted minors $\tminor_{v_j^\ast,v_j}(u)$ and $\tminor_{v_j^\ast,v_j}^0(u)$ coincide with $A_j(u-\hbar d_j)$ for each $j\in \mbI$.  \qedhere
\end{proof}
The stated formulas for $B_j(u)$, $C_j(u)$ and $D_j(u)$ now follow from Corollary
\ref{C:[g,GKLO]} and the commutator relations for $[x,\tminor_{f,v}(u)]$ given in the second statement of Corollary \ref{C:minors}. For example, 
\begin{equation*}
B_j(u) = [e_j, A_j(u)] = [e_j, \tminor_{v_j^\ast, v_j}(u+\hbar d_j)] = \tminor_{e_j v_j^\ast, v_j}(u+\hbar d_j) + \tminor_{v_j^\ast, e_j  v_j}(u+\hbar d_j).
\end{equation*}
Since $e_j  v_j = 0$, this gives $B_j(u) = \tminor_{e_j  v_j^\ast, v_j}(u+\hbar d_j)$, as desired.\qedhere
\end{proof}
\begin{remark}\label{R:Ilin-Ryb}
As mentioned above, this gives a new proof of Proposition 2.29 from \cite{Ilin-Rybnikov-19}.  We note, however, that the statement of \cite{Ilin-Rybnikov-19} seems to be slightly incorrect: it is missing the shifts by $\hbar d_j$ which appear in the statement of Proposition \ref{P:t-vs-A}.  
\end{remark}
}


\section{The dual braid group action and weights}\label{sec:Weights}

In this section, we dualize the action of the braid group $\Bg$ on $\Yhg[0]$ constructed in the previous section to obtain a $\Bg$-action on the group  $(\C(\!(u^{-1})\!)^\times)^\mbI$ by automorphisms, which preserves the subgroups 
\begin{equation*}
(1+u^{-1}\C[\![u^{-1}]\!])^\mbI\quad\text{ and }\quad (\C(u)^\times)^\mbI.
\end{equation*}
In particular, this recovers the representation of $\Bg$ studied in the work \cite{tan-braid} of Tan, which was inspired by its $q$-analogue obtained in the earlier work \cite{Chari-braid} of Chari. We then show that this action naturally computes the eigenvalues of the series $\{\xi_i(u)\}_{i\in \mbI}$ on any extremal vector of any finite-dimensional highest weight representation  of $\Yhg$ (see Proposition \ref{P:extremal}), thus generalizing a result from \cite{tan-braid}.

\subsection{The braid group action on  $(1+u^{-1}\C[\![u^{-1}]\!])^\mbI$}\label{ssec:wts-dual}
We begin by explaining how the representation $\Yhg[0]$ of $\Bg$ from Theorem \ref{T:mbraid} can be dualized to obtain an action of $\Bg$ on $(1+u^{-1}\C[\![u^{-1}]\!])^\mbI$ by group automorphisms. 

Let $\upvartheta$ be the group anti-automorphism of $\Bg$ uniquely determined by $\upvartheta(\braid_i)=\braid_i$ for all $i\in \mbI$. Note that $\upvartheta(\braid_w) = \braid_{w^{-1}}$ for $w \in \Wg$ an element of the Weyl group. Using $\upvartheta$, we may define an action of $\Bg$ on the linear dual $\Yhg[0]^\ast$ by 
\begin{equation}\label{Bg-dual}
\upsigma(f)(y)=f(\upvartheta(\upsigma)\cdot y)
\end{equation}
for all $\upsigma\in \Bg$, $y\in \Yhg[0]$ and $f\in \Yhg[0]^\ast$. Note that this action is uniquely determined by the requirement that $\braid_i$ operates as the transpose/adjoint of $\mbraid_i$: 
\begin{equation*}
\braid_i(f)=\mbraid_i^*(f)=f\circ \mbraid_i\quad \forall \quad i\in \mbI\; \text{ and }\; f\in \Yhg[0]^\ast.
\end{equation*}

The Drinfeld coalgebra structure on $Y_\hbar^0(\mfg)$ induces a commutative algebra structure  on $Y_\hbar^0(\mfg)^\ast$ with unit given by the counit on $Y_\hbar^0(\mfg)$ and product given by the transpose of the Drinfeld coproduct $\Delta^0$ (see \eqref{def:Delta^0}). Since each modified braid group operator $\mbraid_i$ is a coalgebra homomorphism, $\Bg$ acts on $\Yhg[0]^\ast$ by algebra automorphisms. Moreover, as each $\mbraid_i$ is an algebra automorphism, the space of algebra homomorphisms
\begin{equation*}
\mathrm{Hom}_{Alg}(\Yhg[0],\C)\subset \Yhg[0]^\ast
\end{equation*}
is a subrepresentation of $\Yhg[0]^\ast$. This is a subgroup of the group of units in $\Yhg[0]^\ast$; in fact, one has an isomorphism of groups
\begin{equation*}
\mathrm{Hom}_{Alg}(\Yhg[0],\C)\iso (1+u^{-1}\C[\![u^{-1}]\!])^\mbI , \quad \gamma \mapsto (\gamma(\xi_i(u)))_{i\in \mbI}.
\end{equation*}
 From this point on, we will not distinguish between these two spaces, and the above identification will always be assumed. The first part of the following corollary summarizes the above discussion.
\begin{corollary}\label{C:T_1-rep}
The formula \eqref{Bg-dual} defines an action of $\Bg$ on $(1+u^{-1}\C[\![u^{-1}]\!])^\mbI$ by group automorphisms. Moreover, if $\underline{\lambda}=(\lambda_i(u))_{i\in \mbI}\in (1+u^{-1}\C[\![u^{-1}]\!])^\mbI$, then the $i$-th component $\braid_j(\underline{\lambda})_i$ of $\braid_j(\underline{\lambda})$ is given by
\begin{equation*}
\braid_j(\underline{\lambda})_i=
\lambda_i(u)\prod_{k=0}^{|a_{ji}|-1} \lambda_j\!\left(u-\frac{\hbar d_j}{2}(|a_{ji}|-2k)\right)^{(-1)^{\delta_{ij}}}.
 \end{equation*}
\end{corollary}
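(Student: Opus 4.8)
The plan is to handle the two assertions separately, since the first is essentially a restatement of the discussion immediately preceding the corollary. For it, I would observe that each $\mbraid_i$ is a coalgebra homomorphism of $\Yhg[0]$ by Theorem \ref{T:mbraid}, so that the dual operators $\braid_i=\mbraid_i^*$ are algebra endomorphisms of $\Yhg[0]^\ast$ for the transpose (commutative) algebra structure induced by the Drinfeld coproduct; since each $\mbraid_i$ is moreover an algebra automorphism, these dual operators preserve the subset $\mathrm{Hom}_{Alg}(\Yhg[0],\C)$. Transporting along the group isomorphism $\gamma\mapsto(\gamma(\xi_i(u)))_{i\in\mbI}$ then yields the asserted $\Bg$-action on $(1+u^{-1}\C[\![u^{-1}]\!])^\mbI$ by group automorphisms. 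That \eqref{Bg-dual} defines a genuine left action, rather than a right action, is precisely what the anti-automorphism $\upvartheta$ is designed to guarantee, as a short check with $\upvartheta(\braid_i\braid_j)=\braid_j\braid_i$ confirms.

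For the second assertion I would unwind the definitions. Writing $\gamma\in\mathrm{Hom}_{Alg}(\Yhg[0],\C)$ for the homomorphism corresponding to $\underline{\lambda}$, so that $\gamma(\xi_i(u))=\lambda_i(u)$ for all $i\in\mbI$, the defining relation $\braid_j(f)=f\circ\mbraid_j$ gives
\[
\braid_j(\underline{\lambda})_i = \big(\gamma\circ\mbraid_j\big)(\xi_i(u)) = \gamma\big(\mbraid_j(\xi_i(u))\big).
\]
The next step is to substitute the explicit formula for $\mbraid_j(\xi_i(u))$ provided by Corollary \ref{C:mbraid-t-xi} — applied with the roles of $i$ and $j$ interchanged, using $\delta_{ji}=\delta_{ij}$ — and then to push $\gamma$ through the resulting product. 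Because $\Yhg[0]$ is commutative and $\gamma$ is an algebra homomorphism, it sends the product and the inverse factors (encoded by the exponent $(-1)^{\delta_{ij}}$) of $\mbraid_j(\xi_i(u))$ to the corresponding product and inverse of the values $\gamma(\xi_j(u-c))$, each of which lies in $1+u^{-1}\C[\![u^{-1}]\!]$ and is therefore invertible.

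The one point requiring care — and the only genuine obstacle, though a mild one — is to justify that $\gamma(\xi_j(u-c))=\lambda_j(u-c)$ for each scalar $c$ appearing in the formula. Here I would extend $\gamma$ coefficientwise to a homomorphism $\Yhg[0][\![u^{-1}]\!]\to\C[\![u^{-1}]\!]$ and note that the shift $u\mapsto u-c$ acts only on the formal variable $u$ and thus commutes with this coefficientwise application of $\gamma$; expanding $(u-c)^{-r-1}$ as a power series in $u^{-1}$ and comparing the coefficients of $u^{-n-1}$ on both sides makes the identity explicit. Assembling this with the previous step produces exactly
\[
\braid_j(\underline{\lambda})_i=\lambda_i(u)\prod_{k=0}^{|a_{ji}|-1}\lambda_j\!\left(u-\tfrac{\hbar d_j}{2}(|a_{ji}|-2k)\right)^{(-1)^{\delta_{ij}}},
\]
which completes the proof.
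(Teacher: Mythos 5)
Your proposal is correct and follows essentially the same route as the paper: the first assertion is the dualization discussion preceding the corollary, and the formula is obtained by computing $\braid_j(\underline{\lambda})_i=\underline{\lambda}(\mbraid_j(\xi_i(u)))$ and substituting the expression from Corollary \ref{C:mbraid-t-xi} with $i$ and $j$ interchanged. Your extra care in verifying that the coefficientwise extension of $\gamma$ commutes with the shifts $u\mapsto u-c$ is a point the paper leaves implicit, but it is the same argument.
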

\begin{proof}
It remains to show that the $i$-th component $\braid_j(\underline{\lambda})_i=(\braid_j(\underline{\lambda}))(\xi_i(u))$ of $\braid_j(\underline{\lambda})$ is given by the claimed formula. By definition of $\braid_j$, we have
\begin{align*}
(\braid_j(\underline{\lambda}))(\xi_i(u))
=\underline{\lambda}(\mbraid_j(\xi_i(u)))
 =\lambda_i(u)\prod_{k=0}^{|a_{ji}|-1} \lambda_j\!\left(u-\frac{\hbar d_j}{2}(|a_{ji}|-2k)\right)^{(-1)^{\delta_{ij}}},
 \end{align*}
 where we have used the formula for $\mbraid_j(\xi_i(u))$ obtained in Corollary \ref{C:mbraid-t-xi} in the last equality. \qedhere
\end{proof}
\begin{remark}\label{R:Tan}
The action of $\Bg$ on $(1+u^{-1}\C[\![u^{-1}]\!])^\mbI$ constructed above is given by the same formulas as the action of  $\Bg$ on  $(\C(u)^\times)^\mbI$ defined by Tan in Proposition 3.1 of \cite{tan-braid}, except that $(a_{ji},d_j)$ and $(a_{ij},d_i)$ are interchanged; see also \cite{FrHer-23}*{\S3.2} and the discussion following Proposition \ref{P:Bg-difference} below. In the next subsection, we will explain how to recover the $\Bg$-action of \cite{tan-braid} from that given by Corollary \ref{C:T_1-rep} using formal, additive difference equations. 
We note this relation is also precisely why we have used the group anti-automorphism $\upvartheta$ of $\Bg$ in \eqref{Bg-dual} as opposed to the standard antipode on $\Bg$, given by inversion. 
\end{remark}
%
%

\subsection{Extending the representation space}\label{ssec:wts-diff}

Let $\Monic\subset \C(\!(u^{-1})\!)^\times$ be the subgroup consisting of monic Laurent series:
\begin{equation*}
\Monic=\bigcup_{k\in \Z} u^k (1+u^{-1}\C[\![u^{-1}]\!]).
\end{equation*}
 We will need the following elementary lemma. 
\begin{lemma}\label{L:factorization}
Let $\lambda(u)=1+\hbar\sum_{r\geq 0} \lambda_r u^{-r-1}$ be an arbitrary series in $1+u^{-1}\C[\![u^{-1}]\!]$ and suppose that $d\in \C^\times$. Then the formal difference equation 
\begin{equation*}
\frac{\mu(u+\hbar d)}{\mu(u)}=\lambda(u)
\end{equation*}
has a solution $\mu(u)\in \Monic$ if and only if $\lambda_0\in d\Z$. In this case, $\mu(u)$ is unique and belongs to $u^{\lambda_0/d}(1+u^{-1}\C[\![u^{-1}]\!])$. 
\end{lemma}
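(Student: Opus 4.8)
The plan is to linearize the multiplicative difference equation by taking logarithms, after first isolating the monomial part of the unknown $\mu(u)$. To this end, I would write a general element of $\Monic$ as $\mu(u)=u^k\nu(u)$ with $k\in\Z$ and $\nu(u)\in 1+u^{-1}\C[\![u^{-1}]\!]$. Since the shift $u\mapsto u+\hbar d$ sends this to $(u+\hbar d)^k\nu(u+\hbar d)$, the equation $\mu(u+\hbar d)/\mu(u)=\lambda(u)$ becomes
\begin{equation*}
\left(1+\frac{\hbar d}{u}\right)^{\!k}\frac{\nu(u+\hbar d)}{\nu(u)}=\lambda(u),
\end{equation*}
where $(1+\hbar d/u)^k=\sum_{m\geq 0}\binom{k}{m}(\hbar d)^m u^{-m}$ is expanded as a formal power series in $u^{-1}$.

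The first step is a residue computation that settles necessity and pins down $k$. Writing $\nu(u)=1+\nu_1u^{-1}+\cdots$, one checks that $\nu(u+\hbar d)=1+\nu_1u^{-1}+O(u^{-2})$, so the $u^{-1}$-coefficients of numerator and denominator cancel and $\nu(u+\hbar d)/\nu(u)\in 1+u^{-2}\C[\![u^{-1}]\!]$. Hence the coefficient of $u^{-1}$ on the left is exactly $k\hbar d$, while on the right it is $\hbar\lambda_0$. As $\hbar\in\C^\times$, matching forces $k=\lambda_0/d$; since $k$ must be an integer, this proves that a solution can exist only when $\lambda_0\in d\Z$, and it identifies the leading monomial $u^{\lambda_0/d}$ of any solution.

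Conversely, assuming $\lambda_0\in d\Z$ and setting $k=\lambda_0/d$, I would reduce to solving $\nu(u+\hbar d)/\nu(u)=\widetilde\lambda(u)$ for $\nu\in 1+u^{-1}\C[\![u^{-1}]\!]$, where $\widetilde\lambda(u):=\lambda(u)(1+\hbar d/u)^{-k}\in 1+u^{-2}\C[\![u^{-1}]\!]$ by the coefficient computation just made. Taking logarithms turns this into the additive equation
\begin{equation*}
f(u+\hbar d)-f(u)=g(u),\qquad f:=\log\nu\in u^{-1}\C[\![u^{-1}]\!],\quad g:=\log\widetilde\lambda\in u^{-2}\C[\![u^{-1}]\!].
\end{equation*}
The crux, and the only genuine content, is to show that the difference operator $\nabla_d\colon f(u)\mapsto f(u+\hbar d)-f(u)$ restricts to a bijection from $u^{-1}\C[\![u^{-1}]\!]$ onto $u^{-2}\C[\![u^{-1}]\!]$. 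Expanding $(u+\hbar d)^{-n}$ binomially, the coefficient of $u^{-N}$ in $\nabla_d f$ equals $-(N-1)\hbar d\,f_{N-1}$ plus a $\C$-linear combination of $f_1,\dots,f_{N-2}$; thus $\nabla_d$ is triangular with respect to the degree filtration with nonzero diagonal entries $-(N-1)\hbar d$ (here $\hbar d\neq 0$ is the essential input). Solving the resulting recursion determines each $f_{N-1}$ uniquely from $g_N$ and the previously computed coefficients, yielding a unique $f$, hence a unique $\nu=\exp(f)$, and finally a unique $\mu=u^{\lambda_0/d}\nu\in u^{\lambda_0/d}(1+u^{-1}\C[\![u^{-1}]\!])$. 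This establishes existence, uniqueness, and the stated form of $\mu$ simultaneously; everything outside the bijectivity of $\nabla_d$ is routine bookkeeping with formal series.
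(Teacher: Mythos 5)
Your proof is correct and follows essentially the same route as the paper's: write $\mu(u)=u^k\nu(u)$, take formal logarithms, read off $k\hbar d=\hbar\lambda_0$ from the $u^{-1}$-coefficient to get necessity and the value of $k$, then solve the resulting additive difference equation recursively coefficient by coefficient. Your explicit verification that the operator $f(u)\mapsto f(u+\hbar d)-f(u)$ is triangular with nonzero diagonal entries $-(N-1)\hbar d$ is a slightly more careful write-up of the recursion the paper only sketches, but it is the same argument.
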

\begin{proof}
Write $\mu(u)=u^k \dot{\mu}(u)$, where $\dot{\mu}(u)\in 1+u^{-1}\C[\![u^{-1}]\!]$. Then the difference equation becomes 
\begin{equation*}
\left(1+\hbar \frac{d}{u} \right)^k\frac{\dot\mu(u+\hbar d)}{\dot\mu(u)}=\lambda(u).
\end{equation*}
Taking the formal logarithm of both sides, this gives the equivalent relation
\begin{equation*}
k\log\left(1+\hbar \tfrac{d}{u} \right) + b(u+\hbar d)-b(u)=\log(\lambda(u)),
\end{equation*}
where $b(u)=\sum_{r\geq 0} b_r u^{-r-1}=\log(\dot\mu(u))$. The coefficient of $u^{-1}$ on both sides is $k\hbar d = \hbar\lambda_0$. This shows that if a solution exists, we must have $\lambda_0\in d\Z$ and $\mu(u)\in u^{\lambda_0/d}(1+u^{-1}\C[\![u^{-1}]\!])$.

  Conversely, if $\lambda_0\in d\Z$ holds then we take $k=\lambda_0/d$, and solve the above equation for $b(u)$ recursively in its coefficients; taking the coefficients of $u^{-r-1}$ for $r>0$ will allow one to express $b_{r-1}$ in terms of $\{b_\ell\}_{0\leq \ell <r-2}$ and the coefficients of $k\log\left(1+\hbar \tfrac{d}{u} \right)$ and $\log(\lambda(u))$. \qedhere
\end{proof}

Recall that $\Lambda=\bigoplus_{i\in \mbI} \Z \varpi_i$ denotes the weight lattice of $\g$. We view $\Lambda$ as a representation of $\Bg$ via the canonical action of the Weyl group on $\mfh^\ast$.
Let us introduce the group homomorphism 
\begin{equation*}
\deg: \Monic^\mbI\to \Lambda, \quad (\lambda_i(u))_{i\in \mbI}\mapsto \sum_{i\in \mbI} \deg \lambda_i(u) \varpi_i,
\end{equation*}
where $\deg \lambda_i(u)\in \Z$ is the unique integer $k$ for which $u^{-k}\lambda_i(u)\in 1+u^{-1}\C[\![u^{-1}]\!]$. 

Next, following the notation of Remark \ref{R:logs}, let $\qshift$ be the group automorphism of $\Monic$ given by the translation $\msq=e^{\frac{\hbar}{2}\partial_u}$. For each $a\in \C$, we write $\qshift^a$ for $e^{\frac{a\hbar}{2}\partial_u}$, so that $\qshift^a(f(u))=f(u+a\frac{\hbar}{2})$.
 Given a diagonal matrix $A=\mathrm{Diag}(a_i)_{i\in \mbI}$, we write $\qshift^{A}$ for the group automorphism of $\Monic^\mbI$ given by 
\begin{equation*}
\qshift^{A} (\lambda_i(u))_{i\in \mbI}=(\qshift^{a_i}\lambda_i(u))_{i\in \mbI}=\left(\lambda_i\left(u+a_i\tfrac{\hbar}{2}\right)\right)_{i\in \mbI}.
\end{equation*}
In what follows, we will only be interested in the case where $A=2D$, where $D$ is the diagonal matrix $D=(d_i)_{i\in \mbI}$ of symmetrizing integers.  In addition, we note that if $\underline{\mu}\in \Monic^\mbI$, then the element $\left(\qshift^{2D}\underline{\mu}\right)\underline{\mu}^{-1}$ lies  in $(1+u^{-1}\C[\![u^{-1}]\!])^\mbI$, and hence we may apply $\upsigma$ to it for any $\upsigma\in \Bg$.
\begin{proposition}\label{P:Bg-difference}
Let $\upsigma\in \Bg$. Then for each $\underline{\mu}=(\mu_i(u))_{i\in \mbI}\in \Monic^\mbI$, there exists a unique element $\M[\upsigma](\underline{\mu})\in \Monic^\mbI$ satisfying
\begin{equation*}
\upsigma\!\left( \frac{\qshift^{2D}\underline{\mu}}{\underline{\mu}}\right)= \frac{\qshift^{2D}\M[\upsigma](\underline{\mu})}{\M[\upsigma](\underline{\mu})}
\end{equation*}
Moreover,  the formula $\upsigma\cdot\underline{\mu}=\M[\upsigma](\underline{\mu})$ determines an action of $\Bg$ on $\Monic^\mbI$ by group automorphisms for which $\deg:\Monic^\mbI\to \Lambda$ is a $\Bg$-equivariant map:
\begin{equation*}
\deg \M[\upsigma](\underline{\mu})= \upsigma\cdot \deg(\underline{\mu}).
\end{equation*}
\end{proposition}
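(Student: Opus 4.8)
The plan is to realize $\M[\upsigma]$ as transport of structure along a single isomorphism. I first introduce the group homomorphism
\[
\Theta:\Monic^\mbI\to (1+u^{-1}\C[\![u^{-1}]\!])^\mbI,\qquad \Theta(\underline{\mu})=\frac{\qshift^{2D}\underline{\mu}}{\underline{\mu}},
\]
whose $i$-th component is $\mu_i(u+\hbar d_i)/\mu_i(u)$. Writing $\mu_i(u)=u^{k_i}\dot\mu_i(u)$ with $\dot\mu_i\in 1+u^{-1}\C[\![u^{-1}]\!]$ shows each component lies in $1+u^{-1}\C[\![u^{-1}]\!]$, so $\Theta$ is well defined and plainly multiplicative. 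It is injective: if $\Theta(\underline\mu)=\Theta(\underline\nu)$, then each $\mu_i/\nu_i$ is $\qshift^{2d_i}$-invariant, and a $\qshift^{2d_i}$-periodic element of $\C(\!(u^{-1})\!)$ must be constant, hence (being monic) equal to $1$. By Lemma \ref{L:factorization} applied componentwise with $d=d_i$, the image of $\Theta$ is exactly the subgroup
\[
G_D=\{\underline{\lambda}\in(1+u^{-1}\C[\![u^{-1}]\!])^\mbI:(\lambda_i)_0\in d_i\Z \text{ for all } i\in\mbI\},
\]
where $\lambda_i(u)=1+\hbar\sum_{r\ge 0}(\lambda_i)_r u^{-r-1}$: indeed $(\Theta\underline\mu)_i$ has leading coefficient $(\lambda_i)_0=d_i k_i$, and conversely solvability of the difference equation is governed precisely by this integrality. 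Thus $\Theta:\Monic^\mbI\iso G_D$.

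Next I carry out the key step: identifying the $\Bg$-action of Corollary \ref{C:T_1-rep} on leading coefficients with the Weyl action on weights. Write $\ell_i(\underline\lambda)=(\lambda_i)_0$, which equals $\hbar^{-1}$ times the coefficient of $u^{-1}$ in $\log\lambda_i$; thus $\ell_i$ is additive under multiplication and shift-invariant. Let $\Xi(\underline\lambda)=\sum_{i}\tfrac{\ell_i(\underline\lambda)}{d_i}\varpi_i\in\mfh^\ast$, a homomorphism from the ambient group to $(\mfh^\ast,+)$. Reading off the $u^{-1}$-coefficient of the logarithm in the formula of Corollary \ref{C:T_1-rep} makes every shift drop out and yields
\[
\ell_i(\braid_j\underline\lambda)=\ell_i(\underline\lambda)+(-1)^{\delta_{ij}}|a_{ji}|\,\ell_j(\underline\lambda).
\]
Setting $m_i=\ell_i/d_i$ and invoking the symmetrizability identity $d_i a_{ij}=d_j a_{ji}$, this recursion becomes $m_j\mapsto -m_j$ and $m_i\mapsto m_i-a_{ij}m_j$ for $i\ne j$, which is precisely $s_j$ acting on $\sum_i m_i\varpi_i$ (using $\alpha_j=\sum_i a_{ij}\varpi_i$). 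Hence $\Xi$ intertwines each generator $\braid_j$ with $s_j$; as both sides are group actions, $\Xi$ intertwines the full $\Bg$-action with the action of $\Wg$ on $\mfh^\ast$ via the surjection $\Bg\onto\Wg$. Since $\Wg$ preserves the weight lattice $\Lambda$ and $G_D=\Xi^{-1}(\Lambda)$, the subgroup $G_D$ is $\Bg$-stable. Finally, $\deg=\Xi\circ\Theta$, because $(\Theta\underline\mu)_i$ has leading coefficient $d_i\deg\mu_i$.

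It remains to assemble these facts. Because $G_D$ is $\Bg$-stable, each $\upsigma\in\Bg$ restricts to an automorphism of $G_D$, and I set $\M[\upsigma]:=\Theta^{-1}\circ\upsigma\circ\Theta$ on $\Monic^\mbI$. Unwinding this is exactly the assertion $\upsigma(\Theta(\underline\mu))=\Theta(\M[\upsigma](\underline\mu))$, i.e.\ the displayed difference equation of the proposition, and the uniqueness of $\M[\upsigma](\underline\mu)$ within $\Monic^\mbI$ is the injectivity of $\Theta$. Since conjugating a group action by an isomorphism again gives a group action by automorphisms, $\upsigma\cdot\underline\mu=\M[\upsigma](\underline\mu)$ defines an action of $\Bg$ on $\Monic^\mbI$ by group automorphisms. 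Equivariance of $\deg$ then follows formally:
\[
\deg\M[\upsigma](\underline\mu)=\Xi(\Theta\,\M[\upsigma](\underline\mu))=\Xi(\upsigma\,\Theta\underline\mu)=\upsigma\cdot\Xi(\Theta\underline\mu)=\upsigma\cdot\deg(\underline\mu),
\]
the third equality using the $\Bg$-equivariance of $\Xi$.

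The main obstacle is the key step: verifying that the linear action of the generators $\braid_j$ on the leading logarithmic coefficients reproduces the simple reflections $s_j$ on $\Lambda$, which rests entirely on $d_ia_{ij}=d_ja_{ji}$. Everything else is formal transport along $\Theta$; the only other point requiring care is the periodicity argument for injectivity of $\Theta$.
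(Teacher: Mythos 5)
Your proof is correct and follows essentially the same route as the paper's: both arguments reduce to Lemma \ref{L:factorization} by checking that the dual $\Bg$-action of Corollary \ref{C:T_1-rep} transforms the leading coefficients $(\lambda_{i,0})_{i\in\mbI}$ by the Weyl group action on $\mfh^\ast$, which preserves $\Lambda$ and hence the integrality condition $\lambda_{i,0}\in d_i\Z$. The only difference is organizational: you package the paper's pointwise existence/uniqueness argument as transport of structure along the isomorphism $\Theta:\Monic^\mbI\iso G_D$, which renders the final group-action and $\deg$-equivariance assertions formal.
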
 
\begin{proof}
Let $\underline{\lambda}=(\lambda_i(u))_{i\in \mbI}$ denote the element $\left(\qshift^{2D}\underline{\mu}\right)\underline{\mu}^{-1}\in (1+u^{-1}\C[\![u^{-1}]\!])^\mbI$, and write $\lambda_i(u)=1+\hbar\sum_{r\geq 0} \lambda_{i,r}u^{-r-1}$. Note that
\begin{equation*}
\upsigma(\underline{\lambda})=1 + \hbar \upsigma\cdot (\lambda_{i,0})_{i\in \mbI} u^{-1} +O(u^{-2}),
\end{equation*}
where the action of $\upsigma$ on $\C^\mbI$ is inherited via the identification $\mfh^\ast \iso \C^\mbI$, $\lambda\mapsto (\lambda(\xi_{i,0}))_{i\in \mbI}=(\lambda,\alpha_i)_{i\in \mbI}$. Equivalently, one has
\begin{equation*}
\upsigma\cdot (\lambda_{i,0})_{i\in \mbI}:= \left(\sum_{j\in \mbI} \frac{1}{d_j}\lambda_{j,0}(\alpha_i,\upsigma(\varpi_j))\right)_{i\in \mbI} = (\alpha_i,\upsigma\cdot \deg(\underline{\mu}))_{i\in \mbI},
\end{equation*}
where we have used that, by the previous lemma, $\lambda_{j,0}=d_j \deg \mu_j(u)$ for each $j\in \mbI$. As the action of $\Bg$ on $\mfh^\ast$ preserves $\Lambda$, one has $(\alpha_i,\upsigma\cdot \deg(\underline{\mu}))\in d_i \Z$ for each $i\in \mbI$. The previous lemma therefore implies that $\M[\upsigma](\underline{\mu})$ exists, is unique, and has degree $\upsigma\cdot \deg(\underline{\mu})$. 

The rest of the  proposition now follows easily using the uniqueness assertion of the previous lemma.\qedhere
\end{proof}

Since the subgroup $(1+u^{-1}\C[\![u^{-1}]\!])^\mbI\subset \Monic^\mbI$ consists precisely of those $\underline{\mu}\in \Monic^\mbI$ for which $\deg(\underline{\mu})=0$, the last assertion of the proposition implies that it is an invariant subset. When $\g$ is simply laced, this recovers the action of $\Bg$ on $\mathrm{Hom}_{Alg}(\Yhg[0],\C)\cong (1+u^{-1}\C[\![u^{-1}]\!])^\mbI$ from Corollary \ref{C:T_1-rep}. Indeed, in this case $D$ is the identity matrix and so, for each  $\upsigma\in \Bg$ and $\underline{\mu}\in (1+u^{-1}\C[\![u^{-1}]\!])^\mbI$, one has
\begin{equation*}
\upsigma\!\left( \frac{\qshift^{2D}\underline{\mu}}{\underline{\mu}}\right)
=
\frac{\upsigma(\qshift^{2D}\underline{\mu})}{\upsigma(\underline{\mu})}
=
\frac{\qshift^{2D}\upsigma(\underline{\mu})}{\upsigma(\underline{\mu})}
\end{equation*}
and thus $\upsigma(\underline{\mu})=\M[\upsigma](\underline{\mu})$. 
When $\g$ is not simply laced, $\upsigma$ and $\qshift^D$ need not commute, and the equality $\upsigma(\underline{\mu})=\M[\upsigma](\underline{\mu})$ no longer holds in general. This is illustrated concretely by the following corollary, which computes $\M[\braid]_j$ explicitly for each $j\in \mbI$.
\begin{corollary}\label{C:Monic-braid-compute}
Let $\underline{\mu}\in \Monic^\mbI$, and fix an index $j\in \mbI$. Then the $i$-th component of the element $\M[\braid]_j(\underline{\mu})\in \Monic^\mbI$ is given by
\begin{equation*}
\M[\braid]_j(\underline{\mu})_i=
\mu_i(u)\prod_{k=0}^{|a_{ij}|-1} \mu_j\!\left(u-\frac{\hbar d_i}{2}(|a_{ij}|-2k)\right)^{(-1)^{\delta_{ij}}}.
 \end{equation*}
\end{corollary}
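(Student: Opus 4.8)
The plan is to exploit the uniqueness clause of Proposition~\ref{P:Bg-difference} and simply verify that the asserted formula provides a solution of the governing difference equation. Write $\nu=(\nu_i(u))_{i\in\mbI}$ for the tuple defined by the right-hand side of the claimed identity; it manifestly lies in $\Monic^\mbI$. By the uniqueness asserted in Proposition~\ref{P:Bg-difference}, it therefore suffices to check that $\nu$ satisfies the defining relation for $\M[\braid]_j(\underline{\mu})$, namely
\begin{equation*}
\braid_j\!\left(\frac{\qshift^{2D}\underline{\mu}}{\underline{\mu}}\right)=\frac{\qshift^{2D}\nu}{\nu},
\end{equation*}
where $\braid_j$ on the left denotes the dual action of Corollary~\ref{C:T_1-rep} on $(1+u^{-1}\C[\![u^{-1}]\!])^\mbI$. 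I would set $\underline{\lambda}=\tfrac{\qshift^{2D}\underline{\mu}}{\underline{\mu}}$, so that $\lambda_i(u)=\mu_i(u+d_i\hbar)/\mu_i(u)$, and then compute both sides componentwise.

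For the left-hand side, Corollary~\ref{C:T_1-rep} gives
\begin{equation*}
\braid_j(\underline{\lambda})_i=\lambda_i(u)\prod_{k=0}^{|a_{ji}|-1}\lambda_j\!\left(u-\tfrac{\hbar d_j}{2}(|a_{ji}|-2k)\right)^{(-1)^{\delta_{ij}}}.
\end{equation*}
Substituting $\lambda_j(u)=\mu_j(u+d_j\hbar)/\mu_j(u)$, each factor becomes a ratio $\mu_j(u-t_{k+1})/\mu_j(u-t_k)$ with $t_k=\tfrac{\hbar d_j}{2}(|a_{ji}|-2k)$, since $d_j\hbar-t_k=-t_{k+1}$. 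The product telescopes to $\mu_j\!\big(u+\tfrac{\hbar d_j}{2}|a_{ji}|\big)/\mu_j\!\big(u-\tfrac{\hbar d_j}{2}|a_{ji}|\big)$, raised to $(-1)^{\delta_{ij}}$, times $\lambda_i(u)$. On the right-hand side, the analogous computation of $\nu_i(u+d_i\hbar)/\nu_i(u)$ applies the same telescoping mechanism to the defining product of $\nu_i$, now with $s_k=\tfrac{\hbar d_i}{2}(|a_{ij}|-2k)$ (using $d_i\hbar-s_k=-s_{k+1}$), yielding the factor $\mu_j\!\big(u+\tfrac{\hbar d_i}{2}|a_{ij}|\big)/\mu_j\!\big(u-\tfrac{\hbar d_i}{2}|a_{ij}|\big)$, again times the contribution $\mu_i(u+d_i\hbar)/\mu_i(u)=\lambda_i(u)$ from the $\mu_i$-part.

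Comparing the two outputs, they coincide precisely when $d_i|a_{ij}|=d_j|a_{ji}|$, which holds by symmetrizability $d_ia_{ij}=d_ja_{ji}$ together with $a_{ij},a_{ji}\le 0$ for $i\neq j$; the diagonal case $i=j$, where $|a_{ij}|=2$, is checked directly and both sides collapse to $\mu_j(u-\hbar d_j)/\mu_j(u)$. The only genuinely delicate point — and the substance of the statement — is exactly this reconciliation: the dual action of Corollary~\ref{C:T_1-rep} is phrased through the pair $(a_{ji},d_j)$, whereas the target formula is phrased through $(a_{ij},d_i)$, and it is the passage through the difference equation, combined with symmetrizability, that interchanges these data. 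This is the concrete incarnation of the Langlands-duality phenomenon flagged in Remark~\ref{R:Tan}; every remaining step is a routine manipulation of telescoping products.
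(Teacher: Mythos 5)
Your proposal is correct and follows essentially the same route as the paper's own proof: both compute the dual action via Corollary \ref{C:T_1-rep}, collapse the resulting products by telescoping, reconcile the $(a_{ji},d_j)$ and $(a_{ij},d_i)$ data through the symmetrizability relation $d_ia_{ij}=d_ja_{ji}$, and conclude by the uniqueness clause of Proposition \ref{P:Bg-difference}. The only cosmetic difference is organizational — you verify that the candidate tuple solves the difference equation, while the paper rewrites $\braid_j(\underline{\lambda})_i$ directly as $\qshift^{2d_i}\nu_i/\nu_i$ — but the substance is identical.
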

\begin{proof}
Set $\underline{\lambda}:=(\qshift^{2D}\underline{\mu})\underline{\mu}^{-1}$, so that $\lambda_i(u)=\mu_i(u+\hbar d_i)/\mu_i(u)$ for each $i\in \mbI$. Then, by Corollary \ref{C:T_1-rep}, we have 
\begin{align*}
\lambda_i(u)^{-1}\braid_j(\underline{\lambda})_i=
\prod_{k=0}^{|a_{ji}|-1} \lambda_j\!\left(u-\frac{\hbar d_j}{2}(|a_{ji}|-2k)\right)^{(-1)^{\delta_{ij}}}
=\frac{\mu_j(u+\frac{\hbar}{2} |d_j a_{ji}|)^{(-1)^{\delta_{ij}}}}{\mu_j(u-\frac{\hbar}{2} |d_j a_{ji}|)^{(-1)^{\delta_{ij}}}},
\end{align*}
where to obtain the last expression we have substituted $\mu_j(u+\hbar d_j)/\mu_j(u)$ for $\lambda_j(u)$ in the second expression and observed that the product is telescoping. On the other hand, since $d_j a_{ji}=d_i a_{ij}$, the same telescoping property with $i$ and $j$ interchanged implies that  
\begin{equation*}
\braid_j(\underline{\lambda})_i=\lambda_i(u)\frac{\mu_j(u+\frac{\hbar}{2} |d_j a_{ji}|)^{(-1)^{\delta_{ij}}}}{\mu_j(u-\frac{\hbar}{2} |d_j a_{ji}|)^{(-1)^{\delta_{ij}}}}=\frac{\qshift^{2d_i}\M[\braid]_j(\underline{\mu})_i^\prime}{\M[\braid]_j(\underline{\mu})_i^\prime},
\end{equation*}
where $\braid_j(\underline{\lambda})_i^\prime$ is the claimed expression for $\braid_j(\underline{\lambda})_i$ from the statement of the corollary.
Hence, by the uniqueness assertion of Proposition \ref{P:Bg-difference}, we have $\braid_j(\underline{\lambda})_i=\braid_j(\underline{\lambda})_i^\prime$ for each $i\in \mbI$, as desired. 
\qedhere
\end{proof}
Note that the above formula for $\M[\braid]_j(\underline{\mu})_i$ can be recovered from the formula for $\braid_j(\underline{\lambda})_i$ given in Corollary \ref{C:T_1-rep} by replacing $a_{ji}$ by  $a_{ij}$ and $d_j$ by $d_i$. This phenomenon has also been observed in the recent work \cite{FrHer-23} of E. Frenkel and D. Hernandez in the course of studying a $q$-analogue of the $\Bg$-action from Proposition \ref{P:Bg-difference}, where it has been attributed to a non-trivial manifestation of Langlands duality; see Definition 3.5 and Remark 3.7 therein.
\begin{remark}\label{R:big-extend}
Let us now explain how to further extend the underlying representation space from $\Monic^\mbI$ to $(\C(\!(u^{-1})\!)^\times)^\mbI$ in order to recover the action from \cite{tan-braid}*{Prop.~3.1}. First recall that the Langlands dual Lie algebra $\g^\vee$ is defined to have Cartan matrix given by the transpose $(a_{ji})_{i,j\in \mbI}$ of the Cartan matrix $(a_{ij})_{i,j\in \mbI}$ of $\g$.  Consider the corresponding algebraic group $G^\vee$ of adjoint type.  Then its maximal torus $T^\vee $ can be identified with $(\C^\times)^\mbI$.  More precisely, each fundamental weight $\varpi_i$ can be viewed as a homomorphism $\C^\times \rightarrow T^\vee$, and the product of these homomorphisms gives $(\C^\times)^\mbI \cong T^\vee$.   Note also that we may naturally identify $\Wg \cong \mathscr{W}_{\g^\vee}$ and $\Bg \cong \mathscr{B}_{\g^\vee}$. Thus $\Wg$ acts on $T^\vee \cong (\C^\times)^\mbI$ by the formulas
\begin{equation*}
s_j (\lambda_i)_{i\in \mbI}=(\lambda_i\lambda_j^{-a_{ij}})_{i\in \mbI}
\end{equation*}
for all $i,j\in \mbI$ and $(\lambda_i)_{i\in \mbI}\in(\C^\times)^\mbI$. It follows by Proposition \ref{P:Bg-difference} that $\Bg$ acts on the group $(\C^\times)^\mbI\times \Monic^\mbI$ by automorphisms. However, the groups $(\C^\times)^\mbI\times \Monic^\mbI$ and $(\C(\!(u^{-1})\!)^\times)^\mbI$ are isomorphic, with an isomorphism $(\C^\times)^\mbI\times \Monic^\mbI\iso (\C(\!(u^{-1})\!)^\times)^\mbI$  given by componentwise multiplication: 
\begin{equation*}
((\lambda_i)_{i\in \mbI},(\mu_i(u))_{i\in \mbI})\mapsto (\lambda_i\mu_i(u))_{i\in \mbI}.
\end{equation*}
Hence, we can conclude that the braid group $\Bg$ acts on $(\C(\!(u^{-1})\!)^\times)^\mbI$ by group automorphisms, and that this action is given explicitly by the formulas of Corollary \ref{C:Monic-braid-compute}. Moreover, the space $(\C(u)^\times)^\mbI$ is a subrepresentation, and coincides with the representation of $\Bg$ obtained in Proposition 3.1 of \cite{tan-braid}; see Remark \ref{R:Tan} above. 
\end{remark}

\begin{remark}
Considering the $\hbar = 0$ limit of the $\Bg$--action on $( \C(\!(u^{-1})\!)^\times)^\mbI$ from the previous remark, we simply recover the natural action of the Weyl group $\Bg\twoheadrightarrow \Wg$ on the $\C(\!(u^{-1})\!)$-points of the torus $T^\vee \cong (\C^\times)^\mbI$.   Namely, the group $\Wg$ naturally acts on the $A$-points $T^\vee(A)$ for any commutative $\C$-algebra $A$, since it acts on the scheme $T^\vee$ itself: for $(x_i)_{i\in\mbI} \in T^\vee(A) \cong (A^\times)^\mbI$ this action is given by $s_j (x_i)_{i \in \mbI} \mapsto (x_i x_j^{-a_{ij}})_{i \in \mbI}$.  Considering the case of $A = \C(\!(u^{-1})\!)$, this is easily seen to agree with the $\hbar=0$ limit of the formulas from Corollary \ref{C:Monic-braid-compute}.  
\end{remark}

\subsection{Weights of extremal vectors}\label{ssec:wts-extemal}

Recall from Section \ref{ssec:Bg} that the homomorphism $\Bg\to \Wg$ admits a canonical set-theoretic section $w\mapsto \braid_w$,  where $\braid_w$ is defined by taking any reduced expression $w=s_{i_1}\cdots s_{i_\ell}$ and setting
\begin{equation*}
\braid_w=\braid_{i_1}\cdots \braid_{i_\ell}\in \Bg.
\end{equation*} 
The following is the main result of this subsection; it provides a strengthening of \cite{tan-braid}*{Prop.~4.5}, and is the Yangian analogue of \cite{Chari-braid}*{Prop.~4.1}.
\begin{proposition}\label{P:extremal}
 Let $\underline{\lambda}$ be the highest weight of an irreducible finite-dimensional $Y_\hbar(\mfg)$ module $V$, and let $\lambda\in \mfh^\ast$ be the corresponding $\mfg$-weight. Then, for each $w\in \Wg$, we have 
 \begin{equation*}
 \xi_i(u)|_{V_{w(\lambda)}} = \braid_{w}(\underline{\lambda})_i \cdot \mathrm{Id}_{V_{w(\lambda)}} \quad \forall\quad i\in \mbI.
 \end{equation*}
 In particular, if $V\cong L(\underline{\mathrm{P}})$, then for each $i\in \mbI$ one has
 \begin{equation}\label{ext-DP}
  \xi_i(u)|_{V_{w(\lambda)}} = \frac{\qshift^{2d_i}\M[\braid]_w(\underline{\mathrm{P}})_i}{\M[\braid]_w(\underline{\mathrm{P}})_i} \cdot \mathrm{Id}_{V_{w(\lambda)}}.
  \end{equation}
\end{proposition}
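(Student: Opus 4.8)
The plan is to reduce the statement to a computation on a single extremal weight vector, and then transport the highest-weight eigenvalue through a braid operator using the equivariance property of Proposition \ref{P:tau-wt-space}. First I would record that $V_{w(\lambda)}$ is one-dimensional: the highest weight space $V_\lambda$ is one-dimensional, and by Proposition \ref{P:tau-wt-space}\ref{braid:b} the operator $\braid_w^V$ restricts to a linear isomorphism $V_\lambda \iso V_{w(\lambda)}$. Hence $\xi_i(u)$ acts on $V_{w(\lambda)}$ as a scalar series, which I may compute on any nonzero vector of my choosing. The crucial choice is the extremal vector $v_w := (\braid_{w^{-1}}^V)^{-1}(v) \in V_{w(\lambda)}$, where $v$ is the highest weight vector; this particular lift is engineered so that the group element $\braid_{w^{-1}}\in\Bg$, rather than the $\Bg$-inverse $\braid_w^{-1}$, governs the computation below.

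Next I would isolate the single most important input: if $a\in \Yhg_0$ is any weight-zero element, then $a\cdot v = \underline{\lambda}(\Pi(a))\,v$. This follows from the PBW theorem together with the highest weight condition $x_i^+(u)v = 0$: writing $a$ in PBW order, every term with a nontrivial raising factor annihilates $v$, while a weight-zero term with trivial raising factor must also have trivial lowering factor and so lies in $\Yhg[0]$; these surviving terms are exactly $\Pi(a)$, which acts by the highest weight character $\underline{\lambda}$. With this in hand I would apply the equivariance relation $a\cdot \upsigma^V(v) = \upsigma^V(\upsigma^{-1}(a)\cdot v)$ (the iterate of Proposition \ref{P:tau-wt-space}\ref{braid:e}) with $\upsigma = \braid_{w^{-1}}^{-1}$, giving
\begin{equation*}
\xi_i(u)\, v_w = (\braid_{w^{-1}}^V)^{-1}\big(\braid_{w^{-1}}(\xi_i(u))\cdot v\big).
\end{equation*}
Since $\braid_{w^{-1}}(\xi_i(u))$ has weight zero, the inner expression equals $\underline{\lambda}(\Pi(\braid_{w^{-1}}(\xi_i(u))))\,v$, and Lemma \ref{L:mbraid-reduced}, applied to a reduced expression for $w^{-1}$, identifies $\Pi\circ \braid_{w^{-1}}|_{\Yhg[0]}$ with $\mbraid_{w^{-1}}$. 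Pulling the resulting scalar series out and using $(\braid_{w^{-1}}^V)^{-1}(v) = v_w$ yields $\xi_i(u)\,v_w = \underline{\lambda}(\mbraid_{w^{-1}}(\xi_i(u)))\,v_w$. Finally, the definition \eqref{Bg-dual} of the dual action together with $\upvartheta(\braid_w) = \braid_{w^{-1}}$ gives $\underline{\lambda}(\mbraid_{w^{-1}}(\xi_i(u))) = \braid_w(\underline{\lambda})_i$, which is the first formula.

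For the identity \eqref{ext-DP} I would specialize to $V\cong L(\underline{\mathrm{P}})$. By the Drinfeld classification the highest weight is $\lambda_i(u) = P_i(u+\hbar d_i)/P_i(u)$, that is, $\underline{\lambda} = (\qshift^{2D}\underline{\mathrm{P}})\,\underline{\mathrm{P}}^{-1}$. The asserted formula is then immediate from the defining difference equation for $\M[\braid]_w$ in Proposition \ref{P:Bg-difference}: applying $\braid_w$ to $\underline{\lambda} = (\qshift^{2D}\underline{\mathrm{P}})\,\underline{\mathrm{P}}^{-1}$ produces $(\qshift^{2D}\M[\braid]_w(\underline{\mathrm{P}}))\,\M[\braid]_w(\underline{\mathrm{P}})^{-1}$, whose $i$-th component is exactly $\qshift^{2d_i}\M[\braid]_w(\underline{\mathrm{P}})_i/\M[\braid]_w(\underline{\mathrm{P}})_i$.

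I expect the main obstacle to be the bookkeeping over which element of $\Bg$ governs the computation. The naive choice of extremal vector $\braid_w^V(v)$ leads instead to $\Pi\circ\braid_w^{-1}|_{\Yhg[0]}$, and since $\braid_w^{-1}\neq \braid_{w^{-1}}$ in $\Bg$, Lemma \ref{L:mbraid-reduced} does not apply as stated; one would then be forced to prove an inverse-analogue of that lemma, using an estimate of the form $\braid_i^{-1}(a) = \mbraid_i(a) + \sum_{n\geq 1}\Yhg_{-n\alpha_i}\Yhg_{n\alpha_i}$ for $a\in\Yhg[0]$ together with the inversion-set argument. Choosing $v_w = (\braid_{w^{-1}}^V)^{-1}(v)$ circumvents this entirely and lets Lemma \ref{L:mbraid-reduced} be invoked directly; the legitimacy of this substitution—namely that the eigenvalue is independent of the chosen extremal vector—is immediate from one-dimensionality of $V_{w(\lambda)}$.
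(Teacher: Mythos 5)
Your proof is correct and takes essentially the same approach as the paper's: both arguments rest on the equivariance property of Proposition \ref{P:tau-wt-space}\ref{braid:e}, the fact that weight-zero elements act on a highest weight vector through the projection $\Pi$, Lemma \ref{L:mbraid-reduced} (giving $\Pi\circ\braid_{w^{-1}}\big|_{\Yhg[0]}=\mbraid_{w^{-1}}$), the definition \eqref{Bg-dual} of the dual action via $\upvartheta$, and Proposition \ref{P:Bg-difference} for \eqref{ext-DP}. The only difference is cosmetic: you pull the operator back to the highest weight space via $(\braid_{w^{-1}}^V)^{-1}$, whereas the paper pushes the extremal vector $\braid_{w^{-1}}^V(v_\lambda)$ forward with $\braid_w^V$ and then swaps $w\leftrightarrow w^{-1}$; both yield the same identity $\xi_i(u)|_{V_{w(\lambda)}}=\underline{\lambda}\big(\mbraid_{w^{-1}}(\xi_i(u))\big)\cdot\mathrm{Id}_{V_{w(\lambda)}}$.
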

 \begin{proof}
The second assertion is a consequence of the first and that, by Proposition \ref{P:Bg-difference}, if $\underline{\lambda}=(\qshift^{2D}\underline{\mathrm{P}})\cdot \underline{\mathrm{P}}^{-1}$, then 
$
\braid_w(\underline{\lambda})=(\qshift^{2D}\M[\braid]_w(\underline{\mathrm{P}})) \cdot \M[\braid]_w(\underline{\mathrm{P}})^{-1}$.

To establish the first assertion,  note that  we have 
$
 \braid_w(\underline{\lambda})_i = \underline{\lambda}(\mbraid_{w^{-1}}(\xi_i(u)))$ for all $i\in \mbI$. Hence, it is sufficient to prove that 
\begin{equation*}
\xi_i(u)|_{V_{w(\lambda)}} =\underline{\lambda}(\mbraid_{w^{-1}}(\xi_i(u)))\cdot \mathrm{Id}_{V_{w(\lambda)}} \quad \forall \; i \in \mbI.
\end{equation*}
This is equivalent to establishing that, for each $w\in \Wg$, the eigenvalue of $y\in \Yhg[0]$ on the one-dimensional weight space $V_{w^{-1}(\lambda)}$ coincides with the eigenvalue of $\mbraid_w(y)$ on the highest weight space  $V_\lambda$.  

Fix $y\in \Yhg[0]$, and let $\mu_w(y)\in \C$ be the eigenvalue of $y$ on  $V_{w (\lambda)}$, and let $v_\lambda\in V_\lambda$ be a highest weight vector.   Set $v:=\braid_w^V(\braid_{w^{-1}}^V(v_\lambda))$, where $\braid_w^V$ is the image of $\braid_w$ under the homomorphism $\braid_i\mapsto \braid_i^V$ from Proposition \ref{P:tau-wt-space}. Note that $\braid_{w^{-1}}^V(v_\lambda) \in V_{w^{-1}(\lambda)}$, while $v \in V_\lambda$ is itself a highest weight vector. Therefore $y \cdot \braid_{w^{-1}}^V(v_\lambda) = \mu_w(y) \braid_{w^{-1}}^V (v_\lambda)$ and $y \cdot v=\Pi(y) \cdot v$ for all $y\in \Yhg_0$. We thus have
\begin{equation*}
\mu_w(y)v= \braid_w^V( y \cdot \braid_{w^{-1}}^V(v_\lambda) )
=
\braid_w(y) \cdot v =\Pi(\braid_w(y))\cdot v = \mbraid_w(y)\cdot v,
\end{equation*}
where in the second equality  we have applied the last statement of Proposition \ref{P:tau-wt-space}, and in the fourth equality we have applied \eqref{mbraid_w}. This completes the proof of the proposition.
\end{proof}
\begin{corollary}\label{C:sigma-poly}
Let $\underline{\mathrm{P}}=(P_j(u))_{j\in \mbI}$ be any tuple of Drinfeld polynomials, and let $w\in \Wg$ and  $i\in \mbI$. Define $P_{i,w}(u)\in \Monic$ by 
\begin{equation*}
P_{i,w}(u)
:=
\begin{cases}
\M[\braid]_{w}(\underline{\mathrm{P}})_i & \text{ if }\; w^{-1}(\alpha_i)\in \Root^+\\[5pt]
{\displaystyle \frac{1}{\M[\braid]_{w}(\underline{\mathrm{P}})_i}}& \text{ if }\; w^{-1}(\alpha_i)\notin \Root^+
\end{cases}.
\end{equation*}
Then $P_{i,w}(u)$ is a monic polynomial in $u$.
\end{corollary}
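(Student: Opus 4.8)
The plan is to identify the action of $\xi_i(u)$ on the extremal weight space $V_{w(\lambda)}$ of $V=L(\underline{\mathrm{P}})$ with the eigenvalue of a highest– or lowest–weight vector for the rank-one subalgebra attached to the node $i$, and then to deduce polynomiality from a rigidity property of additive difference equations. Concretely, let $v_\lambda\in V_\lambda$ be a highest weight vector and set $v_w:=\braid_w^V(v_\lambda)$, which spans the one–dimensional space $V_{w(\lambda)}$. Writing $g(u):=\M[\braid]_w(\underline{\mathrm{P}})_i\in\Monic$, the formula \eqref{ext-DP} of Proposition \ref{P:extremal} says that $\xi_i(u)$ acts on $V_{w(\lambda)}$ by the scalar series $g(u+\hbar d_i)/g(u)$. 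The goal is thus to show that $g(u)$ is a monic polynomial when $w^{-1}(\alpha_i)\in\Root^+$, and that $g(u)^{-1}$ is a monic polynomial otherwise.

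The key step is to determine the behaviour of $v_w$ under the node-$i$ raising and lowering series. Using the last part of Proposition \ref{P:tau-wt-space}, for each $r\geq 0$ one has $x_{i,r}^{\pm}\cdot v_w=\braid_w^V\big(\braid_w^{-1}(x_{i,r}^{\pm})\cdot v_\lambda\big)$, and since $x_{i,r}^{\pm}\in\Yhg_{\pm\alpha_i}$ this places $\braid_w^{-1}(x_{i,r}^{\pm})$ in $\Yhg_{\pm w^{-1}(\alpha_i)}$. The PBW theorem, together with the highest weight condition, gives $\Yhg_\beta\cdot v_\lambda=0$ for all $\beta\in Q^+\setminus\{0\}$; hence $x_i^+(u)v_w=0$ when $w^{-1}(\alpha_i)\in\Root^+$, and $x_i^-(u)v_w=0$ when $w^{-1}(\alpha_i)\in\Root^-$. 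In other words, $v_w$ is a highest (resp.\ lowest) weight vector for the subalgebra $Y_i\subset\Yhg$ generated by $\{\xi_{i,r},x_{i,r}^{\pm}\}_{r\geq 0}$, a copy of the Yangian of $\mfsl_2$ carrying the shift $\hbar d_i$. As $V$ is finite–dimensional, so is the cyclic $Y_i$-module generated by $v_w$, and Drinfeld's classification for $Y_i$ then forces $\xi_i(u)v_w=\pi(u+\hbar d_i)/\pi(u)\,v_w$ for a monic polynomial $\pi$ in the highest weight case, and $\xi_i(u)v_w=\tilde\pi(u)/\tilde\pi(u+\hbar d_i)\,v_w$ for a monic polynomial $\tilde\pi$ in the lowest weight case (the latter normalization being precisely the rank-one instance of Proposition \ref{P:extremal}, where Corollary \ref{C:Monic-braid-compute} gives $\M[\braid]_{s}(\pi)=\pi(u-\hbar d_i)^{-1}$).

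To finish I would compare these two descriptions of the eigenvalue. In the first case $g(u+\hbar d_i)/g(u)=\pi(u+\hbar d_i)/\pi(u)$, so $h(u):=g(u)/\pi(u)$ satisfies $h(u+\hbar d_i)=h(u)$; since $\hbar d_i\neq 0$, a term-by-term comparison (the top degree forces $\deg h=0$, and the successive coefficients of $u^{-r-1}$ then force all lower coefficients of $h$ to vanish) shows that any such $h\in\C(\!(u^{-1})\!)^\times$ is a nonzero constant, and as $g$ and $\pi$ are both monic we conclude $g=\pi$, a monic polynomial. In the second case the same manipulation shows that $g(u)\tilde\pi(u)$ is invariant under $u\mapsto u+\hbar d_i$ and hence equals $1$, so that $g(u)^{-1}=\tilde\pi(u)$ is a monic polynomial. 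In either case $P_{i,w}(u)$ is monic and polynomial, as claimed.

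I expect the main obstacle to be the key step of the second paragraph: establishing that $v_w$ is genuinely a highest or lowest weight vector for $Y_i$, and then pinning down the exact shape of the $\xi_i(u)$-eigenvalue — in particular the correct shift normalization $\tilde\pi(u)/\tilde\pi(u+\hbar d_i)$ in the lowest weight case, which is where the distinction $w^{-1}(\alpha_i)\in\Root^+$ versus $w^{-1}(\alpha_i)\notin\Root^+$ enters. Once those two eigenvalue forms are in hand, the concluding rigidity argument is entirely elementary.
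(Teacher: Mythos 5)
Your proof is correct, and its concluding step is exactly the paper's: both arguments equate the eigenvalue formula \eqref{ext-DP} of Proposition \ref{P:extremal} with the statement that $\xi_i(u)$ acts on $V_{w(\lambda)}$ by $\pi(u+\hbar d_i)^{\pm 1}/\pi(u)^{\pm 1}$ for some \emph{monic polynomial} $\pi$, with the sign governed by whether $w^{-1}(\alpha_i)\in\Root^+$, and then conclude via rigidity of the additive difference equation --- your term-by-term argument that $h(u+\hbar d_i)=h(u)$ forces $h=1$ is precisely the uniqueness assertion of Lemma \ref{L:factorization}, which the paper cites instead. Where you genuinely diverge is in how that polynomiality input is obtained: the paper simply cites Remark 2.2 of \cite{ChPr1}, whereas you reconstruct it from scratch, using Proposition \ref{P:tau-wt-space}\ref{braid:e} to write $x_{i,r}^{\pm}\cdot v_w=\braid_w^V\big(\braid_w^{-1}(x_{i,r}^{\pm})\cdot v_\lambda\big)$, the weight/PBW argument to see that $v_w$ is a highest (resp.\ lowest) weight vector for the rank-one subalgebra $Y_i$ (a quotient of $Y_{\hbar d_i}(\mathfrak{sl}_2)$), and Drinfeld's classification, with the lowest-weight normalization $\tilde\pi(u)/\tilde\pi(u+\hbar d_i)$ pinned down by the rank-one instance of Proposition \ref{P:extremal} together with Corollary \ref{C:Monic-braid-compute} (this use is not circular, since those results precede the corollary and do not depend on it). In substance you have supplied a proof of the cited Chari--Pressley remark, so your route buys self-containedness at the cost of length. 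The one step you gloss over: Drinfeld's classification applies to \emph{irreducible} modules, while the cyclic module $Y_i\cdot v_w$ need not be irreducible; one should pass to its (finite-dimensional) irreducible quotient, in which the image of $v_w$ is again a highest (resp.\ lowest) weight vector with the same $\xi_i(u)$-eigenvalue, and in the lowest-weight case note that this image spans the extremal weight space of the quotient so that Proposition \ref{P:extremal} applies. With that standard insertion, your argument is complete.
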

\begin{proof}
By Remark 2.2 of \cite{ChPr1}, there is a monic polynomial $P_{i,w}^+(u)$ such that 
\begin{equation*}
\xi_i(u)|_{V_{w(\lambda)}} = \frac{P_{i,w}^+(u+\hbar d_i)^{f_i(w)}}{P_{i,w}^+(u)^{f_i(w)}} \cdot \mathrm{Id}_{V_{w(\lambda)}},
\end{equation*}
where $V=L(\underline{\mathrm{P}})$ and $f_i(w)$ is $1$ if $w^{-1}(\alpha_i)\in \Root^+$ and $-1$ otherwise. By Proposition \ref{P:extremal} and the uniqueness assertion of Lemma \ref{L:factorization}, we can conclude that  $P_{i,w}^+(u)^{f_i(w)}=\M[\braid]_w(\underline{\mathrm{P}})_i=P_{i,w}(u)^{f_i(w)}$. Hence, $P_{i,w}(u)$ coincides with the polynomial $P_{i,w}^+(u)$. \qedhere 
\end{proof}
\begin{remark}\label{R:Tan-extremal}
The statement of \cite{tan-braid}*{Prop.~4.5} is that if $i\in \mbI$ is such that $\ell(s_i w)=\ell(w)+1$, then  the relation \eqref{ext-DP} of Proposition \ref{P:extremal} is satisfied. This was proven by induction on the length of $w$ using a technical argument based on the defining relations of $\Yhg$. Moreover, in this case one always has $w^{-1}(\alpha_i)\in \Root^+$, and the polynomiality of $P_{i,w}(u)$ from Corollary \ref{C:sigma-poly} can be deduced from Proposition 4.5 and Lemma 4.3 of \cite{tan-braid} (see also \cite{guay-tan}*{Lem.~5.1}) using the uniqueness assertion of Lemma \ref{L:factorization}. 
\end{remark}

\section{Baxter polynomials and cyclicity}\label{sec:Baxter}

Our goal in this section is to prove a conjecture from \cite{GWPoles}*{\S7.4} which asserts that the two sufficient conditions for the cyclicity and irreducibility of any tensor product 
$
L(\underline{\mathrm{P}})\otimes L(\underline{\mathrm{Q}})$ obtained in \cite{GWPoles} and \cite{tan-braid} are identical; see Corollary \ref{C:Cyclic}. 

We will do this by applying the results of the previous two sections to factorize the so-called specialized Baxter polynomials associated to the extremal vectors of  $L(\underline{\mathrm{P}})$ (see Section \ref{ssec:Baxter-extremal}) as products of polynomials arising from the $\Bg$-orbit of $\underline{\mathrm{P}}$. This will be achieved in Theorem \ref{T:Baxter-braid} after recalling some preliminaries on Baxter polynomials and their relation to the sets of poles of a representation in Sections \ref{ssec:transfer} and \ref{ssec:Baxter-poles}.

\subsection{The transfer operator $T_i(u)$}\label{ssec:transfer}
Suppose that $V$ is any finite-dimensional highest-weight module of $\Yhg$. Let $\lambda\in \mfh^\ast$ denote the $\mfg$-weight of any highest-weight vector in $V$ and, for each $i\in \mbI$,  let $\lambda_i^A(u)\in 1+u^{-1}\C[\![u^{-1}]\!]$ denote the eigenvalue of the series $A_i(u)$ on $V_\lambda$.
We then introduce the normalized operator 
\begin{equation*}
A_i^V(u):=\lambda_i^A(u)^{-1}A_i(u)|_V\in \End(V)[\![u^{-1}]\!].
\end{equation*}
Then, by Theorem 4.4 of \cite{GWPoles} (see also Corollary 4.7 of \cite{GWPoles} together with Propositions 5.7 and 5.8 of \cite{HerZhang-shifted}), there is a unique monic polynomial $T_i(u)\in \End(V)[u]$ satisfying
\begin{equation}\label{transfer-op}
T_{i}(u+\hbar d_i)=A_i^V(u)T_{i}(u).
\end{equation}
\begin{remark}\label{R:transfer}
 Let us explain how this relates to the \textit{$i$-th abelianized transfer operator} $\mathscr{T}_i(u)$ introduced in \cite{GWPoles}*{\S4.3}.  For each $i\in \mbI$, let $\mathscr{A}_i(u)$ denote the product 
\begin{equation*}
\mathscr{A}_i(u):= A_i(u)A_i(u+\hbar d_i)\cdots A_i(u+(\tfrac{2\kappa}{d_i}-1)\hbar d_i)\in \Yhg[0][\![u^{-1}]\!],
\end{equation*}
where we recall that $\kappa\in \frac{1}{2}\Z$ has been defined below \eqref{diff:R^0}.  
The element $\mathscr{A}_i(u)$ coincides with the series introduced in (4.2) of \cite{GWPoles} (see  Remark 4.7 therein) and operates on $V$ as the expansion at infinity of a rational function of $u$. In Section 4.3 of \cite{GWPoles}, an $\End(V)$-valued meromorphic function $\mathscr{T}_i(u)$ was defined as one of the two canonical fundamental solutions of the difference equation 
\begin{equation*}
\mathscr{T}_i(u+2\kappa\hbar)=\mathscr{A}_i(u)|_V\mathscr{T}_i(u).
\end{equation*}
As explained in Remarks 4.3 and 7.5 of \cite{GWPoles}, one may recover $\mathscr{T}_i(u)$ heuristically by applying the formal substitution $\xi_j(u)\mapsto u^{\delta_{ij}}$ to the first tensor factor of the diagonal component $\mcR^0(u)$ of the universal $R$-matrix of $\Yhg$. 

It was proven in Theorem 4.4 of \cite{GWPoles} that if $\lambda_i^T(u)$ denotes the eigenvalue of $\mathscr{T}_i(u)$ on $V_\lambda$, then  $\lambda_i^T(u)^{-1} \mathscr{T}_i(u)$ is a monic, $\End(V)$-valued polynomial of $u$. This can be seen as a weak, rational counterpart of a polynomiality result established for quantum loop algebras in the work \cite{FrHer-15} of Frenkel and Hernandez. Finally, we note that by Corollary 4.5 and Remark 4.5 of \cite{GWPoles}, the polynomial $\lambda_i^T(u)^{-1} \mathscr{T}_i(u)$ is precisely $T_i(u)$ introduced above. 
\end{remark}
%
%

\subsection{Baxter polynomials and poles}\label{ssec:Baxter-poles}

The eigenvalues of the polynomial operator $T_i(u)$ defined by \eqref{transfer-op}  are called the specialized \textit{Baxter polynomials} associated to $V$. In \cite{GWPoles}, they were applied to compute the sets of poles $\sigma_i(V)\subset \C$ of $V$, as defined in Definition \ref{D:Poles}.

In order to describe the relation between Baxter polynomials and the sets $\Poles_i(V)$, let $\mathcal{Z}_i(V)$ denote the set of zeroes of all eigenvalues of $T_i(u)$, and let $\mcQ_{i,V}^\mfg(u)$ denote the eigenvalue of $T_i(u)$ on the lowest weight space $V_{w_0(\lambda)}$, where $w_0\in \Wg$ is the longest element. Then, by Theorem 4.4 of \cite{GWPoles}, one has 
\begin{equation}\label{Poles=Z}
\mcZ_i(V)=\Poles_i(V)=\mathsf{Z}(\mcQ_{i,V}^\mfg(u)) \quad \forall \quad i\in \mbI. 
\end{equation}

In the case where $V$ is irreducible, the polynomials $\mcQ_{i,V}^\mfg(u)$ (and thus the sets $\Poles_i(V)$) were computed explicitly in Theorem 5.2 of \cite{GWPoles} in terms of the entries of the quantum Cartan matrix $\mbE(z)=(v_{ij}(z))_{i,j\in \mbI}$ (see below \eqref{diff:R^0}).
More precisely, one has $v_{ij}(z)=\sum_{r\geq d_i}v_{ij}^{(r)}z^r\in \Z[\![z]\!]$ with $v_{ij}^{(r)}\geq 0$ for all $d_i\leq r\leq 2\kappa-d_i$, and $\mcQ_{i,V}^\mfg(u)$ is given by
\begin{equation*}\label{Q-explicit}
\mcQ_{i,V}^\mfg(u)=\prod_{j\in \mbI} \prod_{b=d_i}^{2\kappa-d_i} P_j\!\left( u-(b-d_j)\frac{\hbar}{2}\right)^{v_{ij}^{(b)}},
\end{equation*}
where $\underline{\mathrm{P}}=(P_j(u))_{j\in \mbI}$ is the tuple of Drinfeld polynomials associated to $V\cong L(\underline{\mathrm{P}})$. Consequently, one has 
\begin{gather*}
\Poles_i(V)=\bigcup_{j\in \mbI}\left( \mathsf{Z}(P_j(u))+\Poles_i(L_{\varpi_j})\right),\\
\Poles_i(L_{\varpi_j})=\left\{\frac{b\hbar}{2}: d_i-d_j\leq b\leq 2\kappa-d_i-d_j \; \text{ and }\; v_{ij}^{(b+d_j)}>0\right\}.
\end{gather*} 
 
\subsection{Baxter polynomials associated to extremal weights}\label{ssec:Baxter-extremal}

Let us now fix $V=L(\underline{\mathrm{P}})$, where $\underline{\mathrm{P}}=(P_j(u))_{j\in \mbI}$ is any tuple of Drinfeld polynomials. Let $\lambda=\sum_{i\in \mbI}\deg(P_i)\varpi_i\in \mfh^\ast$; this is the $\mfg$-weight of any highest weight vector in $V$. Given $w\in \Wg$, let $\mcQ_{i,\underline{\mathrm{P}}}^w(u)\in \C[u]$ denote the eigenvalue of $T_i(u)$ on $V_{w(\lambda)}$: 
\begin{equation*}
T_i(u)|_{V_{w(\lambda)}}=\mcQ_{i,\underline{\mathrm{P}}}^w(u)\cdot \mathrm{Id}_{V_{w(\lambda)}}.
\end{equation*}
In particular, if $w$ is the longest element $w_0$, then  $\mcQ_{i,\underline{\mathrm{P}}}^w(u)=\mcQ_{i,V}^\mfg(u)$. 

The following theorem provides the main result of this section. It gives a factorization of  $\mcQ_{i,\underline{\mathrm{P}}}^w(u)$ as a product of polynomials arising from the $\Bg$-orbit of $\underline{\mathrm{P}}$, with respect to the action of $\Bg$ on $\Monic^\mbI$ defined by Proposition \ref{P:Bg-difference}, starting from any reduced expression of $w$.
\begin{theorem}\label{T:Baxter-braid}
Let $\underline{\mathrm{P}}=(P_j(u))_{j\in \mbI}$ be any tuple of Drinfeld polynomials, and fix $w\in \Wg$. 
Let $w=s_{j_1}s_{j_2}\cdots s_{j_p}$ be any reduced expression for $w$. For each $0< r\leq p$, set $w_r:=s_{j_{r+1}}\cdots s_{j_p}$, where $w_p=\mathrm{Id}$. Then one has
\begin{equation*}
\mcQ_{i,\underline{\mathrm{P}}}^w(u)
=
\prod_{r: j_r=i}\M[\braid]_{w_r}(\underline{\mathrm{P}})_{i}.
\end{equation*}
Moreover, $\M[\braid]_{w_r}(\underline{\mathrm{P}})_{j_{r}}$ is a monic polynomial in $u$ for each $r$.
\end{theorem}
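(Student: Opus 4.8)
The plan is to prove the ``moreover'' claim first, and then the factorization by an induction on $\ell(w)=p$ that records how the eigenvalue of $A_i(u)$ on an extremal weight space changes under a single simple reflection. For the polynomiality statement, I would use that a consecutive subword of a reduced word is again reduced: since $s_{j_r}w_r=s_{j_r}s_{j_{r+1}}\cdots s_{j_p}$ is reduced we have $\ell(s_{j_r}w_r)=\ell(w_r)+1$, and the standard criterion $\ell(s_i v)>\ell(v)\iff v^{-1}(\alpha_i)\in\Root^+$ (applied with $v=w_r$, $i=j_r$; see \cite{Kumar}*{Lem.~1.3.14}) gives $w_r^{-1}(\alpha_{j_r})\in\Root^+$. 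Corollary \ref{C:sigma-poly} then shows $\M[\braid]_{w_r}(\underline{\mathrm{P}})_{j_r}$ is a monic polynomial. In particular every factor indexed by $j_r=i$ in the asserted product is a monic polynomial, so $\widehat{Q}_i(u):=\prod_{r:\,j_r=i}\M[\braid]_{w_r}(\underline{\mathrm{P}})_i$ is a monic polynomial.

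The heart of the argument is a recursion for the scalar eigenvalue $a_{i,w}(u)$ of $A_i(u)$ on the one-dimensional space $V_{w(\lambda)}$: whenever $\ell(s_jw')=\ell(w')+1$,
$$
a_{i,s_jw'}(u)=a_{i,w'}(u)\bigl(\xi_i(u)|_{V_{w'(\lambda)}}\bigr)^{\delta_{ij}}.
$$
For $i\neq j$ I would combine $\braid_j(A_i(u))=A_i(u)$ (Proposition \ref{P:tau-dot(A)}) with the intertwining identity Proposition \ref{P:tau-wt-space}(e): applying $\braid_j^V$ to $A_i(u)v$, with $v$ spanning $V_{w'(\lambda)}$ and $V_{s_jw'(\lambda)}=\braid_j^V(V_{w'(\lambda)})$, shows the eigenvalue is unchanged. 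For $i=j$, set $\nu=w'(\lambda)$; since $\ell(s_jw')>\ell(w')$ gives $\langle\nu,\alpha_j^\vee\rangle=\langle\lambda,(w'^{-1}\alpha_j)^\vee\rangle\geq 0$, the weight $\nu$ sits at the top of its $\alpha_j$-string, so $V_{\nu+\alpha_j}=0$ and hence $x_j^+(u)v_\nu=0$, whence $B_j(u)v_\nu=0$. Using $D_j(u)=A_j(u)\xi_j(u)+C_j(u)A_j(u)^{-1}B_j(u)$ this yields $D_j(u)v_\nu=a_{j,w'}(u)\,\xi_j(u)|_{V_\nu}\,v_\nu$. Finally, invoking $\braid_j(A_j(u))=D_j(u)$ together with $\braid_j(D_j(u))=\braid_j^2(A_j(u))=A_j(u)$ (the order-two property on $\Yhg_0$, Proposition \ref{P:tau-wt-space}(b)), I apply $\braid_j^V$ to $D_j(u)v_\nu$ via Proposition \ref{P:tau-wt-space}(e) and read off that the eigenvalue of $A_j(u)$ on $V_{s_j\nu}$ equals $a_{j,w'}(u)\,\xi_j(u)|_{V_\nu}$. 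Telescoping this recursion from $w_p=\mathrm{Id}$ (where $a_{i,\mathrm{Id}}(u)=\lambda_i^A(u)$) up the reduced word produces the closed form $a_{i,w}(u)=\lambda_i^A(u)\prod_{r:\,j_r=i}\xi_i(u)|_{V_{w_r(\lambda)}}$.

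To assemble, I would feed Proposition \ref{P:extremal}, which gives $\xi_i(u)|_{V_{w_r(\lambda)}}=\M[\braid]_{w_r}(\underline{\mathrm{P}})_i(u+\hbar d_i)/\M[\braid]_{w_r}(\underline{\mathrm{P}})_i(u)$, into the closed form above to obtain $a_{i,w}(u)/\lambda_i^A(u)=\widehat{Q}_i(u+\hbar d_i)/\widehat{Q}_i(u)$. Restricting the defining relation \eqref{transfer-op} of the transfer operator to $V_{w(\lambda)}$ shows that $\mcQ_{i,\underline{\mathrm{P}}}^w$ satisfies the same first-order difference equation $\mcQ(u+\hbar d_i)=(a_{i,w}(u)/\lambda_i^A(u))\,\mcQ(u)$. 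Such an equation admits at most one monic polynomial solution, since the ratio of two would be an $\hbar d_i$-periodic rational function, hence constant, and monicity forces the constant to be $1$. As both $\mcQ_{i,\underline{\mathrm{P}}}^w$ and $\widehat{Q}_i$ are monic polynomial solutions, they coincide, which is the desired factorization.

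The main obstacle is the $i=j$ case of the recursion: one must get the orientation of the $\alpha_j$-string right (so that $\nu$ is genuinely the top and $B_j(u)v_\nu=0$), and then deploy the pair of identities $\braid_j(A_j)=D_j$ and $\braid_j(D_j)=A_j$ in tandem with Proposition \ref{P:tau-wt-space}(e). Everything else reduces to the reduced-word combinatorics of the first paragraph, Proposition \ref{P:extremal}, and the uniqueness of monic solutions to a scalar difference equation.
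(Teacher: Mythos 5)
Your proof is correct, and its overall skeleton matches the paper's: polynomiality via the reduced-subword observation $w_r^{-1}(\alpha_{j_r})\in\Root^+$ plus Corollary \ref{C:sigma-poly}, a telescoping product formula for the normalized eigenvalue of $A_i(u)$ on $V_{w(\lambda)}$, and a conclusion by uniqueness of monic solutions of the first-order difference equation (the paper invokes Lemma \ref{L:factorization}; your periodic-rational-function argument is an equivalent substitute). Where you genuinely diverge is in how the telescoping product is obtained. The paper stays inside $\Yhg[0]$: it proves, by induction on $\ell(w)$ using $\mbraid_j(A_i(u))=A_i(u)\xi_i(u)^{\delta_{ij}}$ from Proposition \ref{P:tau-dot(A)}, the purely algebraic identity \eqref{tau_w(A)}, namely $\mbraid_{w^{-1}}(A_i(u))=A_i(u)\prod_{r:j_r=i}\mbraid_{w_r^{-1}}(\xi_i(u))$, and then evaluates against $\underline{\lambda}$ using Proposition \ref{P:extremal}, whose proof applies to arbitrary elements of $\Yhg[0]$ (in particular to $A_i(u)$). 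You instead run the induction on the module itself: your recursion $a_{i,s_jw'}(u)=a_{i,w'}(u)\bigl(\xi_i(u)|_{V_{w'(\lambda)}}\bigr)^{\delta_{ij}}$ is established with the operators $\braid_j^V$, the formulas $\braid_j(A_i(u))=A_i(u)$, $\braid_j(A_j(u))=D_j(u)$, $\braid_j(D_j(u))=\braid_j^2(A_j(u))=A_j(u)$, and the vanishing $B_j(u)v_\nu=0$. This buys you a proof that uses Proposition \ref{P:extremal} only in its stated form \eqref{ext-DP} (for $\xi_i(u)$), at the cost of redoing module-level work that the paper's identity \eqref{tau_w(A)} packages once and for all as a representation-independent statement about the $\Bg$-action on $\Yhg[0]$. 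One point you should make precise: the vanishing $V_{\nu+\alpha_j}=0$ does not follow from $\langle\nu,\alpha_j^\vee\rangle\geq 0$ alone (consider the zero weight of the adjoint representation of $\mathfrak{sl}_2$); it uses that $\nu=w'(\lambda)$ is extremal, e.g. via $\|\nu+\alpha_j\|^2=\|\lambda\|^2+(\alpha_j,\alpha_j)\bigl(\langle\nu,\alpha_j^\vee\rangle+1\bigr)>\|\lambda\|^2$ while every weight $\mu$ of $V$ satisfies $\|\mu\|\leq\|\lambda\|$. This is standard, but as written your string argument is incomplete without it.
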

\begin{proof}
We first note that the polynomiality of  $\M[\braid]_{w_r}(\underline{\mathrm{P}})_{j_{r}}$ follows from Corollary \ref{C:sigma-poly} and that $w_r^{-1}(\alpha_{j_r})=s_{j_p}\cdots s_{j_{r+1}}(\alpha_{j_r})\in \Root^+$ for each $r$, as $s_{j_p}\cdots s_{j_{r+1}}s_{j_r}$ is a reduced expression. 

Let us now establish the decomposition of $\mcQ_{i,\underline{\mathrm{P}}}^w(u)$ given in the statement of the theorem. By definition, the polynomial $\mcQ_{i,\underline{\mathrm{P}}}^w(u)$ satisfies 
\begin{equation}\label{Baxter-w-form}
\lambda_i^A(u)^{-1}A_i(u)|_{V_{w(\lambda)}}=\frac{\mcQ_{i,\underline{\mathrm{P}}}^w(u+\hbar d_i)}{\mcQ_{i,\underline{\mathrm{P}}}^w(u)}\cdot \mathrm{Id}_{V_{w(\lambda)}},
\end{equation}
where we recall that  $\lambda_i^A(u)\in 1+u^{-1}\C[\![u^{-1}]\!]$ denotes the eigenvalue of $A_i(u)$ on the highest weight space $V_\lambda$. Moreover, by Proposition \ref{P:extremal}, one has 
\begin{equation*}
A_i(u)|_{V_{w(\lambda)}}=\braid_{w}(\underline{\lambda})\left(A_i(u)\right )\cdot \mathrm{Id}_{V_{w(\lambda)}},
\end{equation*}
where $\underline{\lambda}=(\qshift^{2D}\underline{\mathrm{P}})\cdot \underline{\mathrm{P}}^{-1}$ is the $\Yhg$-highest weight of $L(\underline{\mathrm{P}})$, and we recall that $\braid_{w}(\underline{\lambda})\in (1+u^{-1}\C[\![u^{-1}]\!])^\mbI=\Hom_{Alg}(\Yhg[0],\C)$. 

Therefore, by \eqref{Baxter-w-form} and the uniqueness assertion of Lemma \ref{L:factorization}, it suffices to prove the following claim.

\noindent \textbf{Claim}. The series $\braid_{w}(\underline{\lambda})\left(A_i(u)\right)$ satisfies 
\begin{equation*}
\braid_{w}(\underline{\lambda})\left(A_i(u)\right)
=
\lambda_i^A(u)\prod_{r: j_r=i}\braid_{w_r}(\underline{\lambda})_{i}=\lambda_i^A(u)\prod_{r: j_r=i}\frac{\qshift^{2d_i}\M[\braid]_{w_r}(\underline{\mathrm{P}})_{i}}{\M[\braid]_{w_r}(\underline{\mathrm{P}})_{i}}.
\end{equation*}
\begin{proof}[Proof of claim]
The second equality is an immediate consequence of Proposition \ref{P:Bg-difference}. By \eqref{Bg-dual}, the first equality is equivalent to 
\begin{equation}\label{tau_w(A)}
\mbraid_{w^{-1}}(A_i(u))=A_i(u)\prod_{r: j_r=i}\mbraid_{w_r^{-1}}(\xi_i(u)).
\end{equation}
This follows by a simple induction on the length $p$ of $w$ using that, by Proposition \ref{P:tau-dot(A)}, one has $\mbraid_j(A_i(u))=A_i(u) \xi_i(u)^{\delta_{ij}}$. Indeed, if $p=1$, then \eqref{tau_w(A)} reduces to exactly this identity. Suppose now that \eqref{tau_w(A)} holds whenever $w$ has length $p$, and let $w'=w s_{j_{p+1}}\in \Wg$ be an element of length $p+1$. Then
\begin{align*}
\mbraid_{(w')^{-1}}(A_i(u))
&=\mbraid_{j_{p+1}}\left(\mbraid_{w}(A_i(u))\right)
\\
&=
A_i(u)\xi_i(u)^{\delta_{j_{p+1},i}}\prod_{\substack{1\leq r\leq p\\ j_r=i}}\mbraid_{j_{p+1}}\left(\mbraid_{w_r^{-1}}(\xi_i(u))\right).
\end{align*}
Since $\mbraid_{j_{p+1}}\circ \mbraid_{w_r^{-1}}=\mbraid_{s_{j_{p+1}}w_r^{-1}}
=\mbraid_{(w_r')^{-1}}$, the right-hand side of the above can be rewritten as 
\begin{equation*}
A_i(u)\xi_i(u)^{\delta_{j_{p+1},i}}\prod_{\substack{1\leq r\leq p\\ j_r=i}}\mbraid_{(w_r')^{-1}}(\xi_i(u))
=A_i(u)\prod_{\substack{1\leq r\leq p+1\\ j_r=i}}\mbraid_{(w_r')^{-1}}(\xi_i(u)),
\end{equation*}
which gives the desired result. \qedhere
\end{proof}
\let\qed\relax
\end{proof}

\subsection{Cyclicity criteria for tensor products}\label{ssec:cyclic}
One important property of the discrete invariants $\sigma_i(V)$ is that they encode information about when the tensor product of two irreducible $\Yhg$-modules is cyclic (\textit{i.e.,} highest weight) or irreducible. Namely, by Theorem 7.2 of \cite{GWPoles}, $L(\underline{\mathrm{P}})\otimes L(\underline{\mathrm{Q}})$ is cyclic provided 
\begin{equation*}
\mathsf{Z}(Q_i(u+\hbar d_i))\subset \C\setminus\Poles_i(L(\underline{\mathrm{P}})) \quad \forall\; i\in \mbI
\end{equation*}
and, by Corollary 7.3 of \cite{GWPoles}, it is irreducible provided this condition holds in addition to $\mathsf{Z}(P_i(u+\hbar d_i))\subset \C\setminus\Poles_i(L(\underline{\mathrm{Q}}))$ for all $i\in \mbI$. 

As noted in Section \ref{ssec:I-baxter}, another set of sufficient conditions for the cyclicity of any such tensor product was obtained earlier in \cite{tan-braid}*{Thm.~4.8}, and is given in terms of the action of $\Bg$ on $(\C(u)^\times)^\mbI$, following \cite{Chari-braid} closely. More precisely, let 
\begin{equation*}
w_0=s_{j_1}s_{j_2}\cdots s_{j_p}
\end{equation*}
be any reduced expression for the longest element $w_0\in \Wg$ and, as in the previous section, set $w_r= s_{j_{r+1}}\cdots s_{j_p}$ for each $0<r\leq p$. Then by \cite{tan-braid}*{Thm.~4.8} and Remark \ref{R:big-extend}, $L(\underline{\mathrm{P}})\otimes L(\underline{\mathrm{Q}})$ is cyclic provided 
\begin{equation*}
\mathsf{Z}(Q_{j_r}(u+\hbar d_{j_r}))\subset \C\setminus \mathsf{Z}(\M[\braid]_{w_r}(\underline{\mathrm{P}})_{j_r}) \quad \forall \quad 0<r\leq p.
\end{equation*}

 It was conjectured in 	\cite{GWPoles}*{\S7.5} that the two sets of conditions highlighted above are identical. In this section, we apply Theorem \ref{T:Baxter-braid} to prove this. 
 
The key result is the following corollary, which is an immediate consequence of Theorem 4.4 of \cite{GWPoles} (see \eqref{Poles=Z}) and Theorem \ref{T:Baxter-braid}.
\begin{corollary}\label{C:Poles} Let $\underline{\mathrm{P}}=(P_j(u))_{j\in \mbI}$ be any tuple of Drinfeld polynomials. Then, for each $i\in \mbI$, the $i$-th set of poles of $V=L(\underline{\mathrm{P}})$ is given by
\begin{equation*}
\Poles_i(V)
=
\bigcup_{r:j_r=i}\mathsf{Z}(\M[\braid]_{w_r}(\underline{\mathrm{P}})_i),
\end{equation*}
where the union is taken over all $0<r\leq p$ for which $j_r=i$. 
\end{corollary}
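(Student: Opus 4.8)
The plan is to chain together the two inputs flagged just before the statement: the identity \eqref{Poles=Z} coming from Theorem 4.4 of \cite{GWPoles}, and the factorization of the specialized Baxter polynomial established in Theorem \ref{T:Baxter-braid}. Since both of these are already in hand, the argument is essentially a bookkeeping exercise, and I expect it to be genuinely short.

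First I would invoke \eqref{Poles=Z} to rewrite the $i$-th set of poles as the zero set of the specialized Baxter polynomial of $V$, namely $\Poles_i(V)=\mathsf{Z}(\mcQ_{i,V}^\mfg(u))$. The next step is to produce a product formula for $\mcQ_{i,V}^\mfg(u)$. To that end I would fix the reduced expression $w_0=s_{j_1}\cdots s_{j_p}$ for the longest element and apply Theorem \ref{T:Baxter-braid} with $w=w_0$. As recalled in Section \ref{ssec:Baxter-extremal}, the eigenvalue $\mcQ_{i,\underline{\mathrm{P}}}^{w_0}(u)$ of $T_i(u)$ on the lowest weight space $V_{w_0(\lambda)}$ is exactly $\mcQ_{i,V}^\mfg(u)$; hence the theorem supplies the factorization $\mcQ_{i,V}^\mfg(u)=\prod_{r:j_r=i}\M[\braid]_{w_r}(\underline{\mathrm{P}})_i$, where each factor $\M[\braid]_{w_r}(\underline{\mathrm{P}})_i$ is a monic polynomial by the polynomiality clause of that theorem.

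Finally I would pass to zero sets on both sides. Because the zero set of a finite product of polynomials is the union of the individual zero sets, and because each factor is a bona fide polynomial by Theorem \ref{T:Baxter-braid}, this yields $\mathsf{Z}(\mcQ_{i,V}^\mfg(u))=\bigcup_{r:j_r=i}\mathsf{Z}(\M[\braid]_{w_r}(\underline{\mathrm{P}})_i)$, which combined with the first step gives the asserted formula. There is no real obstacle here, since the substance has been front-loaded into Theorem \ref{T:Baxter-braid} and Theorem 4.4 of \cite{GWPoles}; the only points deserving a word are (i) that the factors are honest polynomials, so that taking zero sets is legitimate, which is exactly guaranteed by Theorem \ref{T:Baxter-braid}, and (ii) that the resulting union is independent of the chosen reduced expression for $w_0$, which is automatic because the left-hand side $\Poles_i(V)$ is manifestly independent of that choice.
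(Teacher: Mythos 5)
Your proposal is correct and follows exactly the route the paper takes: the paper presents Corollary \ref{C:Poles} as an immediate consequence of \eqref{Poles=Z} (Theorem 4.4 of \cite{GWPoles}) together with Theorem \ref{T:Baxter-braid} applied to the longest element $w_0$, which is precisely the chain $\Poles_i(V)=\mathsf{Z}(\mcQ_{i,V}^\mfg(u))=\mathsf{Z}\big(\prod_{r:j_r=i}\M[\braid]_{w_r}(\underline{\mathrm{P}})_i\big)=\bigcup_{r:j_r=i}\mathsf{Z}(\M[\braid]_{w_r}(\underline{\mathrm{P}})_i)$ that you spell out. Your two closing remarks (polynomiality of the factors, and independence of the reduced expression) are accurate and consistent with the paper's statements.
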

%
%
%
%
By combining Corollary \ref{C:Poles} with Theorem 7.2 of \cite{GWPoles} and Theorem 4.8 of \cite{tan-braid}, we obtain the following corollary, which in particular establishes that the cyclicity criteria obtained in \cite{GWPoles} and \cite{tan-braid} are identical.  
\begin{corollary}\label{C:Cyclic}
Let $\underline{\mathrm{P}}=(P_j(u))_{j\in \mbI}$ and $\underline{\mathrm{Q}}=(Q_j(u))_{j\in \mbI}$ be any two tuples of Drinfeld polynomials. Then the following two conditions are equivalent:
\begin{enumerate}[font=\upshape]\setlength{\itemsep}{3pt}
\item $\mathsf{Z}(Q_i(u+\hbar d_i))\subset \C\setminus\Poles_i(L(\underline{\mathrm{P}}))$ for all $i\in \mbI$.
\item  For each $0<r\leq p$, the polynomials $\M[\braid]_{w_r}(\underline{\mathrm{P}})_{j_r}$ and $Q_{j_r}(u+\hbar d_{j_r})$ have no common roots:
\begin{equation*}
\mathsf{Z}(Q_{j_r}(u+\hbar d_{j_r}))\subset \C\setminus \mathsf{Z}(\M[\braid]_{w_r}(\underline{\mathrm{P}})_{j_r}).
\end{equation*}
\end{enumerate}
Moreover, if either of these two conditions hold, then the $\Yhg$-module $L(\underline{\mathrm{P}})\otimes L(\underline{\mathrm{Q}})$ is cyclic. 
\end{corollary}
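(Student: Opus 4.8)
The plan is to obtain the equivalence of conditions (1) and (2) directly from the explicit pole description furnished by Corollary \ref{C:Poles}, and then to deduce cyclicity by quoting either of the two known sufficient criteria. The genuine mathematical content has already been packaged into Corollary \ref{C:Poles} (and, beneath it, the Baxter-polynomial factorization of Theorem \ref{T:Baxter-braid}), so the remaining task is essentially a translation between a ``union of pole sets'' formulation and a ``term-by-term'' formulation.

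First I would rewrite condition (1) pointwise in $i$. For fixed $i\in\mbI$, the containment $\mathsf{Z}(Q_i(u+\hbar d_i))\subset\C\setminus\Poles_i(L(\underline{\mathrm{P}}))$ is simply the disjointness $\mathsf{Z}(Q_i(u+\hbar d_i))\cap\Poles_i(L(\underline{\mathrm{P}}))=\emptyset$. Substituting the factorization
\[
\Poles_i(L(\underline{\mathrm{P}}))=\bigcup_{r:j_r=i}\mathsf{Z}(\M[\braid]_{w_r}(\underline{\mathrm{P}})_i)
\]
from Corollary \ref{C:Poles}, and using that a set is disjoint from a union exactly when it is disjoint from each member, condition (1) at the index $i$ becomes the requirement that $\mathsf{Z}(Q_i(u+\hbar d_i))\cap\mathsf{Z}(\M[\braid]_{w_r}(\underline{\mathrm{P}})_i)=\emptyset$ for every $r$ with $j_r=i$.

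Next I would perform the reindexing that produces the equivalence. Running over all $i\in\mbI$ and, for each, over all $r$ with $j_r=i$, is the same as running over all $0<r\leq p$ with $i$ set equal to $j_r$, since each $r$ determines its index $j_r$ uniquely. Thus the full collection of the disjointness conditions above is precisely the statement that for every $0<r\leq p$,
\[
\mathsf{Z}(Q_{j_r}(u+\hbar d_{j_r}))\cap\mathsf{Z}(\M[\braid]_{w_r}(\underline{\mathrm{P}})_{j_r})=\emptyset,
\]
which is condition (2). (For $w_0$ every simple reflection occurs in the reduced word, so each $i$ arises as some $j_r$; but this is not needed, since any absent index contributes an empty pole set, making both conditions vacuous there.) This yields (1)$\iff$(2).

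Finally, for the cyclicity assertion I would observe that condition (1) is exactly the hypothesis of Theorem 7.2 of \cite{GWPoles}, which forces $L(\underline{\mathrm{P}})\otimes L(\underline{\mathrm{Q}})$ to be cyclic; equivalently, condition (2) is the hypothesis of Theorem 4.8 of \cite{tan-braid}, transported to the present setting through Remark \ref{R:big-extend}, with the same conclusion. Since the two conditions coincide, either suffices. I expect no serious obstacle here: the only point demanding care is the bookkeeping in the reindexing step, where one must check that the indexing of the simple reflections $s_{j_r}$ in the chosen reduced expression for $w_0$ matches the union over poles term by term.
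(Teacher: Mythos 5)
Your proposal is correct and follows exactly the paper's route: the equivalence of (1) and (2) is the set-theoretic reindexing of the pole decomposition in Corollary \ref{C:Poles}, and cyclicity then follows by invoking Theorem 7.2 of \cite{GWPoles} (equivalently, Theorem 4.8 of \cite{tan-braid} together with Remark \ref{R:big-extend}). Your parenthetical handling of indices absent from the reduced word is a correct, if unnecessary, precaution, since every simple reflection occurs in any reduced expression for $w_0$.
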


\section{Quantum loop algebras}\label{sec:qLoop}

Our goal in this section is to switch from Yangians to the parallel situation of quantum loop algebras $\UqLg$. By Lusztig's general construction of braid group actions, there is an action of $\Bg$ on $\UqLg$.  We define a modification of this action on the subalgebra $\UqLg[0]$, whose dual action on weights recovers an action of the braid group previously studied by Chari \cite{Chari-braid}.  Finally, we compare these results to the case of Yangians via the homomorphism constructed by Gautam and Toledano Laredo in \cite{GTL1}.

\subsection{Quantum loop algebras}

Consider the field $\C(q)$.  For each $i\in \mbI$ we denote $q_i = q^{d_i}$, and use the following conventions for $q$-numbers (\textit{cf.} \eqref{z-number}):
$$
[n]_q = \frac{q^n - q^{-n}}{q-q^{-1}}, \quad [n]_q! = [n]_q [n-1]_q \cdots [1]_q, \quad \sbinom{n}{k}_q= \frac{[n]_q}{[k]_q [n-k]_q}.
$$
Denote by $U_q(\g)$ the (simply-connected) Drinfeld--Jimbo quantum group corresponding to $\g$: the $\C(q)$-algebra with generators $E_i, K_i^{\pm 1}, F_i$ for $i\in \mbI$, with relations as in \cite{Lusztig} or \cite{Jantzen}*{\S 4.3}.
\begin{definition}
\label{D:quantum-loop}
    The  \textbf{quantum loop algebra} $\UqLg$ is the unital associative algebra over $\C(q)$ with generators $K_i^{\pm 1}, H_{i,r}$ and $X_{i,s}^\pm$ for $i\in \mbI$, $r \in \Z \setminus \{0\}$ and $s \in \Z$, satisfying relations:
    \begin{gather*}
    K_i K_i^{-1} = K_i^{-1} K_i = 1, \quad K_i K_j = K_j K_i,
    \\
    [K_i, H_{j,r}] = [H_{i,r}, H_{j,s}] = 0,
    \\
    K_i X_{j,r}^\pm K_i^{-1}  = q^{\pm (\alpha_i, \alpha_j)} X_{j,r}^\pm, 
    \\
    [H_{i,r}, X_{j,s}^\pm] = \pm \frac{[r a_{ij}]_{q_i}}{r} X_{j, r+s}^\pm, 
	\\
    [X_{i,r}^+, X_{j,s}^-]  = \delta_{i,j} \frac{\varphi^+_{i,r+s} - \varphi^-_{i,r+s}}{q_i-q_i^{-1}}, 
	\\
    \sum_{\pi\in S_m} \sum_{k=0}^m (-1)^k \sbinom{m}{k}_{q_i} X_{i, r_{\pi(1)}}^\pm \cdots X_{i, r_{\pi(k)}}^\pm X_{j,s}^\pm X_{i,r_{\pi(k+1)}}^\pm \cdots X_{i,r_{\pi(m)}} = 0,
    \end{gather*}
    where in the final relation $m = 1 -a_{ij}$ and $S_m$ denotes the symmetric group of degree $m$. The elements $\varphi_{i,r}^\pm$ for $i \in \mbI$ and $r \in \Z$ are defined by
    \begin{equation*}
        \varphi_i^\pm(z) = \sum_{r \geq 0} \varphi_{i,\pm r}^\pm z^{\mp r} = K_i^{\pm 1} \exp \Bigg( \pm(q_i - q_i^{-1}) \sum_{s >0} H_{i,\pm s} z^{\mp s} \Bigg),
    \end{equation*}
    and $\varphi_{i,-r}^+ = \varphi_{i,r}^- = 0$ for $r \geq 1$.
\end{definition}
The algebra $\UqLg$ has a Poincar\'{e}--Birkhoff--Witt theorem due to Beck \cite{Beck2}, and in particular admits a triangular decomposition
\begin{equation*}
\label{eq: triangular quantum loop}
    \UqLg \ = \  \UqLg[-] \otimes_{\C(q)} \UqLg[0] \otimes_{\C(q)} \UqLg[+],
\end{equation*}
where $\UqLg[0]$ is the $\C(q)$-subalgebra generated by the elements $K_i^{\pm 1}$ and $H_{i,r}^\pm$, and $\UqLg[\pm]$ are the $\C(q)$-subalgebras generated by the elements $X_{i,r}^\pm$.

We may identify the Drinfeld--Jimbo quantum group $U_q(\g)$ as the $\C(q)$-subalgebra 
\begin{equation}
\label{eq: Uq into loop}
U_q(\g) \subset \UqLg
\end{equation}
generated by the elements $E_i = X_{i,0}^+, K_i^{\pm 1},F_i = X_{i,0}^- $ for $i\in\mbI$.
\begin{remark}
\label{rmk:Beck-UqLg}
The presentation  of $\UqLg$  given in Definition \ref{D:quantum-loop} is due to Drinfeld \cite{DrNew},  and was proven by Beck \cite{Beck1}.  More precisely, consider the Drinfeld--Jimbo quantum group $U_q(\widehat{\g})$ associated to the untwisted affine Kac--Moody algebra $\widehat{\g}$, generated over $\C(q)$ by $E_i, K_i^{\pm 1}, F_i$ for $i\in \widehat{\mbI} = \mbI \cup \{0 \}$ as well as $C^{\pm 1/2}, D^{\pm 1}$, satisfying the relations from \cite{Beck1}*{\S 1.3}.  Take the $\C(q)$-subalgebra $U_q(\widetilde{\g}) \subset U_q(\widehat{\g})$ generated by $E_i, K_i^{\pm 1}, F_i$ for $i\in \widehat{\mbI}$ and the (central) elements $C^{\pm 1/2}$. Then \cite{Beck1}*{Thm.~4.7} provides an explicit isomorphism
\begin{equation*}
    \UqLg \ \cong \ U_q(\widetilde{\g}) / \langle C^{\pm 1/2}- 1\rangle.
\end{equation*}
In particular, $\UqLg$ also has Drinfeld--Jimbo generators $E_i, K_i^{\pm 1}, F_i$ for $i\in \widehat{\mbI}$, and this description is compatible with the embedding (\ref{eq: Uq into loop}) in the obvious way.
\end{remark}

Finally, we note that the algebra $\UqLg$ admits a Hopf structure, as a subquotient of the Drinfeld--Jimbo quantum group $U_q(\widehat{\g})$. But as in the Yangian case, we will more interested in its \emph{deformed Drinfeld coproduct} $\Delta_\zeta^D$ defined by Hernandez \cites{Her05,Her07}.  Following the conventions of \cite{GTL3}*{\S4.1}, this coproduct restricts to a homomorphism
$$
\Delta_\zeta^D : \UqLg[0] \longrightarrow \UqLg[0]^{\otimes 2}[\zeta,\zeta^{-1}], \quad \varphi_i^\pm(z) \mapsto \varphi_i^\pm (\zeta^{-1} z)  \otimes \varphi_i^\pm(z).
$$
Evaluating at $\zeta = 1$, we obtain a homomorphism
\begin{equation}
\label{def:Delta^1}
\Delta^1 : \UqLg[0] \longrightarrow \UqLg[0] \otimes_{\C(q)} \UqLg[0].
\end{equation}
This defines a \emph{Drinfeld Hopf algebra structure} on $\UqLg[0]$, provided we equip it with the counit $\veps^1$ and antipode $S^1$ uniquely determined by 
$$
\veps^1\big( \varphi_i^\pm(z) \big) = 1 \ \ \ \text{ and } \ \ \ S^1\big( \varphi_i^\pm(z) \big) = \varphi_i^\pm(z)^{-1} \quad \forall \quad i\in \mbI.
$$
This is the analogue of the Drinfeld Hopf structure on $\Yhg[0]$ from \eqref{def:Delta^0}.

\subsubsection{Representation theory}

We will consider representations $V$ of $\UqLg$ over $\C(q)$  which are type 1, in the sense that $V = \bigoplus_{\mu\in \Lambda} V_\mu$ where for $\mu = \sum_i \mu_i \varpi_i \in \Lambda$ an integral weight we define
\begin{equation*}
\label{eq: weight spaces quantum loop}
V_\mu = \left\{ v \in V \ : \ K_i v = q_i^{\mu_i} v \quad \forall \; i \in \mbI\right\}.
\end{equation*}
We say that $V$ is an $\ell$--highest weight module if it is generated by a vector $v$ with all $X_{i,r}^+ v = 0$, and such that
\begin{equation*}
\varphi_{i,r}^\pm v = \qweight_{i,r}^\pm v
\end{equation*}
for some elements $\qweight_{i,r}^\pm \in \C(q)$. Note that necessarily we must have $\qweight_{i,0}^+ \qweight_{i,0}^- = 1$. For any such elements we define series
\begin{equation*}
\label{eq:ell-weights}
\qweight_i^\pm(z) = \sum_{r \geq 0} \qweight^\pm_{i,\pm r} z^{\mp r} \ \in \ \C(q)[\![z^{\mp 1}]\!]
\end{equation*}
and encode these as the tuple $\underline{\qweight} = ( \qweight_i^\pm(z) )_{i \in \mbI}$. We call $v$ the $\ell$--highest weight vector, and call $\underline{\qweight}$ its $\ell$--weight. There is a unique irreducible $\ell$--highest weight module with this highest $\ell$--weight $\underline{\qweight}$.
\begin{theorem}[\mbox{\cite{ChPr2}*{Theorem 3.3}}]
\label{thm:CP Drinfeld polys}
Let $V$ be the irreducible $\ell$--highest weight representation corresponding to $\underline{\qweight}$.  Then $V$ is finite-dimensional over $\C(q)$ if and only if there exist monic polynomials $P_i(z) \in \C(q)[z]$ with $P_i(0) \neq 0$ such that
$$
\qweight^+_i(z) = q_i^{-\deg(P_i)} \frac{P_i(q_i^2 z)}{P_i(z)} = \qweight_i^-(z),
$$
in the sense that the left- and right-hand sides are the Laurent expansion of the middle rational function at $z=\infty$ and $z = 0$, respectively.
\end{theorem}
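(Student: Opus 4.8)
The plan is to prove the two implications separately, following the approach of Chari and Pressley, with the rank-one case as the technical core.

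For \textbf{sufficiency}, I would realize the prescribed $\ell$-weight on an explicit finite-dimensional module and then pass to its irreducible quotient. The basic building blocks are the finite-dimensional fundamental evaluation representations of $U_{q_i}(L\mfsl_2)$, the rank-one subalgebra of $\UqLg$ generated by $\{X_{i,r}^\pm, H_{i,r}, K_i^{\pm 1}\}_{r}$; for a scalar $a$, such a module carries Drinfeld polynomial the linear factor $(1-az)$. Writing each $P_i(z) = \prod_j (1 - a_{ij} z)$ and using the deformed Drinfeld coproduct \eqref{def:Delta^1}, I would form the tensor product of the corresponding modules and let $V$ be the irreducible subquotient generated by the tensor product $v$ of the $\ell$-highest weight vectors. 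A direct computation with $\Delta_\zeta^D$ shows that $v$ is $\ell$-highest weight with $\ell$-weight the product of the individual series, which multiplies precisely to $q_i^{-\deg P_i} P_i(q_i^2 z)/P_i(z)$; hence the irreducible module attached to $\underline{\qweight}$ is finite-dimensional.

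For \textbf{necessity}, I would reduce to rank one. If $V$ is finite-dimensional with $\ell$-highest weight vector $v$ and $\ell$-weight $\underline{\qweight}$, then for each $i$ the cyclic $U_{q_i}(L\mfsl_2)$-submodule generated by $v$ is again a finite-dimensional $\ell$-highest weight module, so it suffices to treat $\mfsl_2$. There, with $v \in V_\mu$ and $\mu = \sum_j \mu_j \varpi_j$, the integer $N := \mu_i$ is nonnegative, and finite-dimensionality forces $(X_{i,0}^-)^{N+1} v = 0$ on weight grounds. The crux is to convert this vanishing into the rational form of the generating series: using the Drinfeld relation $[X_{i,r}^+, X_{i,s}^-] = (\varphi_{i,r+s}^+ - \varphi_{i,r+s}^-)/(q_i - q_i^{-1})$ together with $X_{i,r}^+ v = 0$, applying raising generators to the vectors $(X_{i,0}^-)^k v$ yields a finite system of linear recursions among the coefficients $\qweight_{i,r}^\pm$. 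Solvability of this system is equivalent to the existence of a monic polynomial $P_i$ of degree $N$ with $P_i(0) \neq 0$ for which $\qweight_i^+(z)$ is the expansion at $z = \infty$ of $q_i^{-N} P_i(q_i^2 z)/P_i(z)$; expanding the same rational function at $z = 0$ then gives $\qweight_i^-(z)$, which yields the equality of the two series in the stated sense.

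The \textbf{main obstacle} is precisely this rank-one extraction: deducing the exact numerator--denominator cancellation pattern from the nilpotency constraint requires careful bookkeeping with the Drinfeld commutation relations, amounting to a termination (quantum Casorati-determinant) condition on the series $\qweight_i^\pm(z)$. Once the rank-one statement is secured, the general case follows formally, since each $P_i$ is recovered independently from the $i$-th rank-one restriction, and the constraint $\mu \in \Lambda$ guarantees that every $N = \mu_i$ is a nonnegative integer.
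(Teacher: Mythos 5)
The paper does not prove this statement at all: it is quoted directly from Chari--Pressley \cite{ChPr2}*{Theorem 3.3}, so your attempt can only be compared with the standard argument given there. Your \emph{necessity} direction is essentially that argument and is a reasonable sketch: for each $i$, restrict to the rank-one subalgebra generated by $\{X_{i,r}^{\pm},\varphi_{i,r}^{\pm}\}$, note that the cyclic submodule generated by the $\ell$--highest weight vector is a finite-dimensional highest $\ell$--weight module for $U_{q_i}(L\mathfrak{sl}_2)$, and convert the nilpotency $(X_{i,0}^-)^{N+1}v=0$ into the rational form of $\varphi_i^{\pm}(z)$ via the Drinfeld relations; this is exactly the Chari--Pressley reduction, and the index-by-index recovery of each $P_i$ is correct.

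The \emph{sufficiency} direction, however, contains a genuine gap. Your proposed building blocks are modules over the rank-one subalgebras $U_{q_i}(L\mathfrak{sl}_2)$, but tensor products of modules over different subalgebras never carry an action of the full algebra $U_q(L\mathfrak{g})$: the generators $X_{j,r}^{\pm}$ for $j\neq i$ have no prescribed action on a $U_{q_i}(L\mathfrak{sl}_2)$-module, and no choice of coproduct can supply one. Evaluation homomorphisms $U_q(L\mathfrak{g})\to U_q(\mathfrak{g})$ exist only in type $A$, so for general $\mathfrak{g}$ the correct building blocks are the fundamental representations of $U_q(L\mathfrak{g})$ itself, and proving that \emph{these} are finite-dimensional is precisely the nontrivial content of the existence half of the theorem --- it is the step your sketch assumes away. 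There is also a secondary problem with the tool you invoke: the coproduct $\Delta^1$ of \eqref{def:Delta^1} is defined in this paper only on the commutative subalgebra $U_q^0(L\mathfrak{g})$ (Hernandez's deformed Drinfeld coproduct on the full algebra requires completions and a fusion renormalization), so it cannot be used to put a $U_q(L\mathfrak{g})$-module structure on a tensor product. The standard proof instead uses the genuine Hopf coproduct that $U_q(L\mathfrak{g})$ inherits from $U_q(\widehat{\mathfrak{g}})$, together with the (nontrivial) computation that the tensor product of highest $\ell$--weight vectors is again a highest $\ell$--weight vector whose $\ell$--weight is the product of the factors' $\ell$--weights.
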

In particular, in the above theorem the action of $K_i^{\pm 1} = \varphi_{i,0}^\pm$ on the $\ell$--highest weight vector $v\in V$ is given by $\qweight_{i,0}^\pm = q_i^{\pm \deg(P_i)}$, \textit{i.e.}~the highest weight of $V$ as a $U_q(\g)$--module is $\lambda = \sum_i \deg(P_i) \varpi_i \in \Lambda$.  We will refer to $V_\lambda$ as the $\ell$--highest weight space of $V$.

\subsection{Braid group actions}
The general results of Lusztig \cite{Lusztig} provide actions of braid groups on quantum groups and on their integrable representations. Two cases will be most relevant to us:

\begin{enumerate}\setlength{\itemsep}{3pt}
    \item The affine braid group $\mathscr{B}_{\widehat{\g}}$ acts on the quantum affine algebra $U_q(\widehat{\g})$, via the operators $T''_{i, 1}$ for $i\in \widehat{\mbI}$ from \cite{Lusztig}*{\S 37}. Their action on the Drinfeld--Jimbo generators of $U_q(\widehat{\g})$ (\textit{cf}.~Remark \ref{rmk:Beck-UqLg}) can be written explicitly,  see  \cite{Beck1}*{\S 1.3}.

    Consider the subgroup $\Bg \subset \mathscr{B}_{\widehat{\g}}$.  Using the explicit formulas for the action of $\mathscr{B}_{\widehat{\g}}$ on $U_q(\widehat{\g})$, it is easy to see that the action of $\Bg$ preserves the subalgebra $U_q(\widetilde{\g}) \subset U_q(\widehat{\g})$ and passes to the quotient $\UqLg$.  We will denote the action of its generators on $\UqLg$ by $\braid_i$ for $i\in\mbI$,  following our notation from \S \ref{ssec:Bg-modified}. For any $i,j \in\mbI$, these operators satisfy
    $$
    \braid_i( K_j) = K_j K_i^{-a_{ij}}.
    $$

    \item Consider an $\ell$--highest weight representation $V$ of $\UqLg$ as in the previous section, which is finite-dimensional over $\C(q)$.  Then $V$ is an integrable representation of $U_q(\g) \subset \UqLg$, so the operators $T''_{i,1}$ for $i\in \mbI$ from \cite{Lusztig}*{\S 5} define  an action of $\Bg$ on $V$.  We will denote the action of the generators by $\braid_i^V$ for $i \in \mbI$, following our notation from \S \ref{ssec:Bg}.  On weight spaces, these operators satisfy 
    $$ \braid_i^V( V_\mu) = V_{s_i \mu}.$$
    Moreover, the identities proven in  \cite{Lusztig}*{\S 37.2} show that 
\begin{equation}
\label{eq: braid quantum loop compatible}
    \braid^V_i(a \cdot v) = \braid_i(a) \cdot \braid^V_i(v)
\end{equation}
for all $a \in \UqLg$ and $v\in V$, analogous to Proposition \ref{P:tau-wt-space}\ref{braid:e}.

\end{enumerate}





Similarly to the Yangian case considered in Section \ref{ssec:Bg-modified}, consider the subsets $N^\pm_q = \left\{ X_{i,r}^\pm : i \in \mbI, r\in \Z\right\}$, and the projection
$$
\Pi : \UqLg = \UqLg[0] \oplus ( N^-_q \UqLg + \UqLg N^+_q) \ \longrightarrow \ \UqLg[0].$$
Mimicking Definition \ref{T:mbraid}, we make the following definition:
\begin{definition}
\label{T:mbraid2}
The \textbf{modified braid group operators} on $\UqLg$ are the maps
$$
\mbraid_i : \UqLg[0] \longrightarrow \UqLg[0] \quad \text{for } i \in \mbI
$$
defined by the composition $\mbraid_i = \Pi \circ \braid_i \big|_{U_q^0(L\g)}$.
\end{definition}
Each $\mbraid_i$ is a $\C(q)$-linear endomorphism of $\UqLg[0]$, and by the same arguments as used in Corollary \ref{cor: braid rels} and Lemma \ref{lemma: braid aut Yangian},  one can see that they are algebra automorphisms which satisfy the braid relations of $\Bg$, with inverses given by $\mbraid_i^{-1} = \Pi^{op} \circ \braid_i^{-1}\big|_{\UqLg[0]}$ where $\Pi^{op}$ is defined analogously to $\Pi$ with $N^+_q$ and $N^-_q$ interchanged.  


To state the next theorem precisely, let $\mathscr{U}_q^0(L\g)$ denote the $\C[q,q^{-1}]$-subalgebra of $U_q(L\g)$ generated by the elements $K_i^{\pm 1}$ and $[r]_{q_i}^{-1}H_{i,r}$, for each $i\in \mbI$ and $r\neq 0$. The following result establishes several key properties of the operators $\mbraid_i$, and provides the main result of this section. 
\begin{theorem} \label{thm:main-q-thm}
The operators $\{\mbraid_i\}_{i \in\mbI}$ define an action of $\Bg$ on $\UqLg[0]$ by $\C(q)$-Hopf algebra automorphisms, with inverses given by $\mbraid_i^{-1} = \Pi^{op} \circ \braid_i^{-1}\big|_{\UqLg[0]}$. 
Moreover, we have:
\begin{enumerate}[label=(\alph*)] \setlength{\itemsep}{3pt}

    \item\label{main-q:a} The action of  $\mbraid_j$ on $\UqLg[0]$ is uniquely determined by the formula:
    $$
    \mbraid_j(\varphi_i^\pm(z))=\varphi_i^\pm(z)\prod_{\ell=0}^{|a_{ji}|-1}\varphi_j^\pm(zq^{- d_j(|a_{ji}|-2\ell)})^{(-1)^{\delta_{ij}}} \quad \forall \quad i\in \mbI.
    $$
    In particular, we have $\mbraid_j(K_i) = K_i K_j^{-a_{ji}}$.
    
    \item\label{main-q:b} The dual action of $\Bg$ on $\mathrm{Hom}_{Alg}(\UqLg[0], \C(q))$ generalizes an action previously studied by Chari \cite{Chari-braid}.
    
    \item\label{main-q:c} The homomorphism constructed by Gautam--Toledano Laredo in \cite{GTL1}*{Thm.~1.4} restricts to an embedding
   \begin{equation*}
    \Phi: \mathscr{U}_q^0(L\g) \into 
    \Yhg[0][\![v]\!]
    \end{equation*}
    which intertwines the modified braid group actions from Definitions \ref{T:mbraid2} and \ref{T:mbraid},  respectively. 
\end{enumerate}
\end{theorem}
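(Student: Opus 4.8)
The statements that the $\mbraid_i$ are algebra automorphisms, satisfy the braid relations, and have the asserted inverses have already been reduced to the Yangian arguments of Corollary \ref{cor: braid rels} and Lemma \ref{lemma: braid aut Yangian}. So the substance of the theorem lies in the explicit formula \ref{main-q:a}, the coalgebra compatibility, the comparison \ref{main-q:b} with Chari's action, and the intertwining \ref{main-q:c} with $\Phi$. The plan is to establish all of these by routing through the Gautam--Toledano Laredo embedding $\Phi$, rather than by attempting to compute Lusztig's operators $\braid_i$ on the Drinfeld--Cartan series $\varphi_j^\pm(z)$ directly.

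First I would extend the Yangian operators $\mbraid_i$ of Theorem \ref{T:mbraid} $\C[\![v]\!]$-linearly to $\Yhg[0][\![v]\!]$ and compute the image $\mbraid_i\big(\Phi(\varphi_j^\pm(z))\big)$, using the explicit description of $\Phi$ on the commutative subalgebra from \cite{GTL3}*{\S4.1} together with the formula for $\mbraid_i(\xi_j(u))$ (equivalently $\mbraid_i(t_j(u))$) from Corollary \ref{C:mbraid-t-xi}. The point is that under $\Phi$ the additive shifts $u \mapsto u - \tfrac{\hbar d_i}{2}(|a_{ij}|-2k)$ in the Yangian formula become the multiplicative shifts $z \mapsto z\,q^{-d_i(|a_{ij}|-2k)}$ in \ref{main-q:a}; a direct calculation then shows $\mbraid_i(\Phi(\varphi_j^\pm(z)))$ equals $\Phi$ applied to the right-hand side of \ref{main-q:a}. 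In particular $\mbraid_i$ preserves the image $\Phi(\mathscr{U}_q^0(L\g))$, so transporting along the injective map $\Phi$ yields operators $\mbraid_i^\Phi$ on $\UqLg[0]$ which (i) are given by the formula of \ref{main-q:a}, (ii) satisfy the braid relations and are Hopf automorphisms for $\Delta^1$ --- inherited from the corresponding Yangian properties via injectivity of $\Phi$ --- and (iii) are intertwined with the Yangian $\mbraid_i$ by $\Phi$, which is precisely \ref{main-q:c} for $\mbraid^\Phi$.

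The crux is then the identification $\mbraid_i = \mbraid_i^\Phi$, where $\mbraid_i = \Pi \circ \braid_i|_{\UqLg[0]}$ is the operator of Definition \ref{T:mbraid2} built from Lusztig's operators. Since both are algebra endomorphisms of $\UqLg[0]$, it suffices to check equality after composing with every $\ell$-weight $\underline{\Psi} \in \Hom_{Alg}(\UqLg[0],\C(q))$ arising from a finite-dimensional irreducible module $L(\underline{\mathrm P})$ classified by Theorem \ref{thm:CP Drinfeld polys}, provided these $\underline{\Psi}$ form a separating family. For $V = L(\underline{\mathrm P})$ the quantum analogue of Proposition \ref{P:extremal} --- proven in the same way using the compatibility \eqref{eq: braid quantum loop compatible} of $\braid_i^V$ with $\braid_i$ and the fact that $\braid_w^V$ carries the $\ell$-highest weight space $V_\lambda$ onto the extremal space $V_{w(\lambda)}$ --- shows that $\underline{\Psi}\big(\mbraid_{w^{-1}}(\varphi_i^\pm(z))\big)$ is exactly the eigenvalue of $\varphi_i^\pm(z)$ on $V_{w(\lambda)}$. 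The same eigenvalue is produced by $\mbraid_i^\Phi$ through its explicit formula (equivalently, transported from the Yangian extremal-weight computation). As these eigenvalues agree and the $\ell$-weights of the $L(\underline{\mathrm P})$ separate the points of $\UqLg[0]$, we conclude $\mbraid_i = \mbraid_i^\Phi$. This identification is the technical Drinfeld-polynomial lemma, and is the step I expect to demand the most care: one must justify the separation (really a Zariski-density) property of the Drinfeld-polynomial $\ell$-weights, possibly after extending scalars to $\overline{\C(q)}$, and make the extremal-weight argument work uniformly in $w$.

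Finally, parts \ref{main-q:a} and \ref{main-q:c} for $\mbraid_i$ follow immediately from this identification, and the Hopf-automorphism claim follows since \ref{main-q:a} expresses $\mbraid_j(\varphi_i^\pm(z))$ as a product of the grouplike series $\varphi_k^\pm$, making each $\mbraid_j$ compatible with $\Delta^1$, $\veps^1$ and $S^1$; specializing as $z\to\infty$ recovers $\mbraid_j(K_i)=K_iK_j^{-a_{ji}}$. For \ref{main-q:b} I would dualize exactly as in Section \ref{ssec:wts-dual}, obtaining a $\Bg$-action on $\Hom_{Alg}(\UqLg[0],\C(q))$ for which $\braid_j$ acts as the transpose of $\mbraid_j$; evaluating \ref{main-q:a} on an $\ell$-weight then gives an explicit formula for the dual action which, after identifying $\Hom_{Alg}(\UqLg[0],\C(q))$ with tuples of rational functions via Theorem \ref{thm:CP Drinfeld polys}, matches the braid group action on $\ell$-weights defined by Chari in \cite{Chari-braid}, establishing that the latter is generalized by ours.
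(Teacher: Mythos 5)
Your proposal follows essentially the same route as the paper's proof: transporting the Yangian action through the Gautam--Toledano Laredo embedding to obtain the operators $\mbraid_i^\Phi$ (Proposition \ref{P:GTL} and Corollary \ref{cor: braid group via GTL}), comparing $\mbraid_i$ with $\mbraid_i^\Phi$ via their common effect on $\ell$--highest weights of finite-dimensional irreducibles (Lemma \ref{lemma: Lusztig-braid-extremal} and Proposition \ref{prop: summary of dual UqLg action}), and concluding equality from a separation property of Drinfeld-polynomial $\ell$--weights. The ``technical Drinfeld-polynomial lemma'' you flag as the delicate step is precisely the paper's Lemma \ref{lem: technical}, which is proved there not by a Zariski-density argument but by reducing to algebraic independence of positive and negative power sums in rings of symmetric Laurent polynomials.
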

We will prove this result in several steps, which occupy the remainder of this paper.      More precisely, in Section \ref{ssec: GTL} we will show that under the Gautam--Toledano Laredo map, the $\Bg$ action of $\Yhg[0]$ from Definition \ref{T:mbraid} \emph{induces} an action of $\Bg$ on $\UqLg[0]$, by operators we denote $\mbraid_i^\Phi$. By its very definition, this action satisfies the analogue of Theorem \ref{thm:main-q-thm}\ref{main-q:c}; see Corollary \ref{cor: braid group via GTL}. We will then show explicitly that this braid group action dualizes Chari's action on weights, and thus satisfies Part \ref{main-q:b} of Theorem \ref{thm:main-q-thm}; see Remark \ref{rmk:Chari-braid-1} and Proposition \ref{prop: summary of dual UqLg action}.  Finally, in Section \ref{ssec:completing the proof} we  will compare these operators and show that $\mbraid_i = \mbraid_i^\Phi$, which will complete the proof of the above theorem.

\begin{remark}
    In \cite{FiTs19}*{\S 6}, Finkelberg and Tsymbaliuk introduced $\UqLg$ versions of the GKLO generators from Section \ref{ssec:GKLO}. It seems plausible that one could prove Part \ref{main-q:a} of the above theorem by a similar calculation to Proposition \ref{P:tau-dot(A)}, though we will not study this question here.
\end{remark}

\subsection{From Yangians to quantum loop algebras}
\label{ssec: GTL}

In this section, we let $v$ be a formal variable and define $\mathbb{Y}_{v}(\mfg)$ to be the Rees algebra of $\Yhg$ over $\C[v]$.  That is, it is the graded $\C[v]$-algebra 
\begin{equation*}
\mathbb{Y}_{v}(\mfg)=\bigoplus_{n\geq 0} v^n\mathcal{F}_n\Yhg \subset \Yhg[][v],
\end{equation*}
where $\deg(v)=1$. 
Similarly, we let $\mathbb{Y}_{v}^0(\mfg)$ denote the Rees algebra of $\Yhg[0]$ --- this is precisely the subalgebra of $\mathbb{Y}_{v}(\mfg)$ generated by $\{v^r \xi_{i,r}\}_{i\in \mbI,r\geq 0}$. 

Note that, for each $j\in \mbI$, the operator $\mbraid_j$ extends naturally to a $\C[v]$-algebra automorphism of $\Yhg[0][v]$ which preserves each component $\mathbb{Y}_{v}^0(\mfg)_n=v^n\mathcal{F}_n\Yhg$ of $\mathbb{Y}_{v}^0(\mfg)$ as $\mbraid_j$ is a filtered map of degree zero. These operators define an action of $\Bg$ on $\mathbb{Y}_{v}^0(\mfg)$ by graded algebra automorphisms, and thus an action of $\Bg$ on the formal completion 
\begin{equation*}
\widehat{\mathbb{Y}_{v}^0(\mfg)}=\prod_{n\geq 0}\mathbb{Y}_{v}^0(\mfg)_n \subset \Yhg[0][\![v]\!]
\end{equation*}
by $\C[\![v]\!]$-algebra automorphisms. In what follows, we shall also view $\widehat{\mathbb{Y}_{v}^0(\mfg)}$ as a $\C[q,q^{-1}]$-algebra via the embedding 
$
\C[q,q^{-1}]\into \C[\![v]\!]$ given by $q\mapsto e^{\frac{v}{2}}$.

Recall from above the statement of Theorem \ref{thm:main-q-thm} that $\mathscr{U}_q^0(L\g)$ is the $\C[q,q^{-1}]$-subalgebra of $U_q(L\g)$ generated by $K_i^{\pm 1}$ for each $i\in \mbI$, together with the elements 
\begin{equation*}
\msh_{i,r}:=\frac{H_{i,r}}{[r]_{q_i}}
\end{equation*}
for all $i\in \mbI$ and nonzero $r\in \Z$. By Theorem 1.4 of \cite{GTL1}, there is an embedding of $\C[q,q^{-1}]$-algebras 
\begin{equation*}
\Phi:\mathscr{U}_q^0(L\mfg)\into \widehat{\mathbb{Y}_{v}^0(\mfg)}
\end{equation*}
uniquely determined by the formulas
\begin{equation}
\label{eq:Phi-explicitly}
\Phi(K_i)=\exp\big((v/2)\xi_{i,0}\big) \quad \text{ and }\quad \Phi(\msh_{i,r})=\frac{v}{q_i^r-q_i^{-r}}\bt_i\big(rv/\hbar\big)
\end{equation}
for all $i\in \mbI$ and nonzero $r\in \Z$, where we recall that $\bt_i(u)$ is the Borel transform of $t_i(u)$; see below \eqref{diff:R^0}. More precisely, $\Phi$ is obtained by restricting the embedding $\mathbb{U}_v(L\mfg)\into \widehat{\mathbb{Y}_v(\mfg)}$ constructed in \cite{GTL1}*{Thm.~1.4} to $\mathscr{U}_q^0(L\mfg)\subset \mathbb{U}_v(L\mfg)$, where $\mathbb{U}_v(L\mfg)$ is the $v$-adic analogue of $U_q(L\mfg)$; see \cite{GTL1}*{\S2.3}.

The homomorphism $\Phi$ is easily seen to intertwine Drinfeld Hopf algebra structures on both sides.  Indeed, on the one hand, the subalgebra $\mathscr{U}_q^0(L\g) \subset \UqLg[0]$ inherits a Hopf algebra structure over $\C[q,q^{-1}]$ by restricting the Drinfeld Hopf structure (\ref{def:Delta^1}) from $\UqLg[0]$.  On the other hand, $\widehat{\mathbb{Y}_{v}^0(\mfg)}$ is naturally a topological Hopf algebra over $\C[v]$ under the Drinfeld Hopf algebra structure determined by (\ref{def:Delta^0}).  These two structures are intertwined by $\Phi$ since this is true on all generators in equation (\ref{eq:Phi-explicitly}): $K_i$ is grouplike and $\xi_{i,0}$ is primitive, while the elements $\msh_{i,r}$ and $t_{i,s}$ are all primitive. (Similarly, the counits and antipodes are intertwined.)

\begin{proposition}\label{P:GTL}
Let $j\in \mbI$. Then there is a unique $\C[q,q^{-1}]$-Hopf algebra automorphism $\mbraid_j^\Phi$ of $\mathscr{U}_q^0(L\g)$ satisfying $\Phi\circ\mbraid_j^\Phi=\mbraid_j \circ \Phi$. It is given explicitly by the formulas
\begin{equation*}
\mbraid_j^\Phi(K_i)=K_{s_j(\alpha_i)}=K_iK_j^{-a_{ji}}\quad \text{ and }\quad \mbraid_j^\Phi(\msh_{i,r})=\msh_{i,r}-q_j^r\frac{[ra_{ij}]_{q_i}}{[r]_{q_i}} \msh_{j,r}
\end{equation*}
for all $i\in \mbI$ and nonzero $r\in \Z$. 

\end{proposition}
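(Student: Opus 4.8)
The plan is to exploit the injectivity of $\Phi$ in order to transport the automorphism $\mbraid_j$ of $\widehat{\mathbb{Y}_{v}^{0}(\mfg)}$ back across $\Phi$. Uniqueness is immediate: if $\psi_1,\psi_2$ are two $\C[q,q^{-1}]$-algebra maps satisfying $\Phi\circ\psi_k=\mbraid_j\circ\Phi$, then $\Phi\circ\psi_1=\Phi\circ\psi_2$, and injectivity of $\Phi$ forces $\psi_1=\psi_2$. For existence, I will verify by direct computation that $\mbraid_j$ sends each algebra generator $\Phi(K_i)$ and $\Phi(\msh_{i,r})$ of the subalgebra $\Phi(\mathscr{U}_q^0(L\g))$ back into the image of $\Phi$, with images equal to $\Phi$ applied to the two claimed formulas. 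Since these elements generate $\Phi(\mathscr{U}_q^0(L\g))$ and $\mbraid_j$ is an algebra homomorphism, this yields $\mbraid_j(\Phi(\mathscr{U}_q^0(L\g)))\subseteq\Phi(\mathscr{U}_q^0(L\g))$, so that $\mbraid_j^\Phi:=\Phi^{-1}\circ\mbraid_j\circ\Phi$ is a well-defined $\C[q,q^{-1}]$-algebra endomorphism; it is a Hopf algebra homomorphism because $\Phi$ intertwines the Drinfeld Hopf structures and $\mbraid_j$ is a Hopf automorphism (Theorem \ref{T:mbraid}), and the explicit formulas then follow from injectivity of $\Phi$. Running the identical argument for $\mbraid_j^{-1}$, using the inverse formulas of Corollary \ref{C:mbraid-t-xi}, shows $\mbraid_j^{-1}$ also preserves $\Phi(\mathscr{U}_q^0(L\g))$, whence $\mbraid_j$ restricts to an automorphism of the image and $\mbraid_j^\Phi$ is invertible.

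The computation on $K_i$ is short. Extracting the coefficient of $u^{-1}$ from the first formula of Corollary \ref{C:mbraid-t-xi}, using $t_{i,0}=\xi_{i,0}$ and that the Laurent polynomial $\qshift^{-d_j}[a_{ji}]_{\qshift^{d_j}}$ takes the value $a_{ji}$ at $\qshift=1$, gives $\mbraid_j(\xi_{i,0})=\xi_{i,0}-a_{ji}\xi_{j,0}$. Since $\mbraid_j$ is an algebra automorphism of the completion, applying it to $\Phi(K_i)=\exp((v/2)\xi_{i,0})$ and using that the $\xi_{i,0}$ commute yields $\exp((v/2)(\xi_{i,0}-a_{ji}\xi_{j,0}))=\Phi(K_i)\Phi(K_j)^{-a_{ji}}=\Phi(K_iK_j^{-a_{ji}})$. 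As $s_j(\alpha_i)=\alpha_i-a_{ji}\alpha_j$, this is precisely $\Phi(\mbraid_j^\Phi(K_i))$ for the claimed formula $\mbraid_j^\Phi(K_i)=K_{s_j(\alpha_i)}=K_iK_j^{-a_{ji}}$.

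The heart of the argument is the computation on $\msh_{i,r}$, where the key device is the interaction between the shift operator $\qshift=e^{\frac{\hbar}{2}\partial_u}$ and the Borel transform $\bt$. A direct check on the basis $u^{-r-1}$ gives $\bt\circ\partial_u=-u\cdot\bt$, hence $\bt\circ\qshift^{m}=e^{-m\frac{\hbar}{2}u}\cdot\bt$ for every $m$. Applying $\bt$ to the first formula of Corollary \ref{C:mbraid-t-xi}, namely $\mbraid_j(t_i(u))=t_i(u)-\qshift^{-d_j}[a_{ji}]_{\qshift^{d_j}}(t_j(u))$, therefore converts the additive difference operator into a multiplication operator and gives
\begin{equation*}
\mbraid_j(\bt_i(u))=\bt_i(u)-e^{d_j\frac{\hbar}{2}u}\,[a_{ji}]_{e^{-d_j\frac{\hbar}{2}u}}\,\bt_j(u).
\end{equation*}
I then substitute $u=rv/\hbar$, recall from \eqref{eq:Phi-explicitly} that $\Phi(\msh_{i,r})=\frac{v}{q_i^r-q_i^{-r}}\bt_i(rv/\hbar)$, and use the embedding $q=e^{v/2}$, under which $e^{\pm d_j\frac{\hbar}{2}u}$ becomes $q_j^{\pm r}$; after clearing the normalizing factors this produces
\begin{equation*}
\mbraid_j(\Phi(\msh_{i,r}))=\Phi(\msh_{i,r})-\frac{q_j^r\big(q_j^{ra_{ji}}-q_j^{-ra_{ji}}\big)}{q_i^r-q_i^{-r}}\,\Phi(\msh_{j,r}).
\end{equation*}
The final step is to recognize this coefficient as the one in the statement: since $d_ja_{ji}=d_ia_{ij}$ one has $q_j^{ra_{ji}}=q_i^{ra_{ij}}$, so the fraction equals $[ra_{ij}]_{q_i}/[r]_{q_i}$, and injectivity of $\Phi$ gives $\mbraid_j^\Phi(\msh_{i,r})=\msh_{i,r}-q_j^r\frac{[ra_{ij}]_{q_i}}{[r]_{q_i}}\msh_{j,r}$.

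I expect the main obstacle to be this Borel-transform computation for $\msh_{i,r}$: one must track the identity $\bt\circ\qshift^m=e^{-m\frac{\hbar}{2}u}\bt$ carefully, carry out the substitutions $u=rv/\hbar$ and $q=e^{v/2}$ without sign errors, and finally invoke the symmetrization $d_ia_{ij}=d_ja_{ji}$ to pass from the $q_j$-number naturally produced by the Yangian formula to the $q_i$-number appearing in the statement. By contrast, the computation on $K_i$ and the structural claims — uniqueness, well-definedness of $\mbraid_j^\Phi=\Phi^{-1}\circ\mbraid_j\circ\Phi$, and its Hopf and automorphism properties — are comparatively routine.
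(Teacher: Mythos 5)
Your proposal is correct and follows essentially the same route as the paper: uniqueness from injectivity of $\Phi$, and existence by verifying that $\mbraid_j$ sends $\Phi(K_i)$ and $\Phi(\msh_{i,r})$ to $\Phi$ of the two claimed expressions; the $K_i$ computation is identical to the paper's. The only genuine difference lies in how the $\msh_{i,r}$ computation is organized: the paper expands $\mbraid_j(t_{i,k})$ coefficient-by-coefficient into binomial sums, substitutes into the power series defining $\Phi(\msh_{i,r})$, and then resums a triple sum into exponentials, whereas you package that resummation once and for all into the operator identity $\bt\circ\partial_u=-u\cdot\bt$, hence $\bt\circ\qshift^{m}=e^{-m\frac{\hbar}{2}u}\cdot\bt$, which converts the additive difference operator of Corollary \ref{C:mbraid-t-xi} directly into a multiplication operator before the substitutions $u=rv/\hbar$ and $q=e^{v/2}$. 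This is the same calculation in substance, but your packaging is cleaner and less error-prone, and both arguments finish identically by invoking $d_i a_{ij}=d_j a_{ji}$ to convert the $q_j$-number naturally produced by the Yangian formula into the $q_i$-number of the statement. Your explicit verification that $\mbraid_j^{-1}$ also preserves the image of $\Phi$ (via the inverse formulas of Corollary \ref{C:mbraid-t-xi}) is in fact slightly more careful than the paper, where the invertibility of $\mbraid_j^\Phi$ is left implicit.
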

\begin{proof}
Let $j\in \mbI$. Since $\Phi$ is injective, to establish the existence of a (necessarily unique) $\C[q,q^{-1}]$-Hopf algebra automorphism $\mbraid_j^\Phi$ of $\mathscr{U}_q^0(L\g)$ satisfying $\Phi\circ\mbraid_j^\Phi=\mbraid_j \circ \Phi$, it is sufficient to show that, for each $i\in \mbI$ and $r\in \Z\setminus\{0\}$, one has
\begin{equation}\label{GTL-suff}
\mbraid_j(\Phi(K_i))=\Phi(K_iK_j^{-a_{ji}})\quad \text{ and }\quad
\mbraid_j(\Phi(\msh_{i,r}))=\Phi\left(\msh_{i,r}-q_j^r\frac{[ra_{ij}]_{q_i}}{[r]_{q_i}} \msh_{j,r}\right).
\end{equation}
Note that a proof of these relations will simultaneously prove the second assertion of the proposition.
The first relation of \eqref{GTL-suff} follows from the fact that, for each $i\in \mbI$, we have 
\begin{equation*}
\mbraid_j(\Phi(K_i))=\exp\Big(\frac{v}{2}s_i(\xi_{i,0})\Big)=\exp\Big(\frac{v}{2}(\xi_{i,0}-a_{ji}\xi_{j,0})\Big)=\Phi(K_iK_j^{-a_{ji}}).
\end{equation*}
Hence, we are left to verify the second relation of \eqref{GTL-suff}. 
To this end, recall from Corollary \ref{C:mbraid-t-xi} that $\mbraid_j(t_{i,k})$ is uniquely determined by the formula 
\begin{equation*}
\mbraid_j(t_i(u))=t_i(u)-\qshift^{-d_j}[a_{ji}]_{\qshift^{d_j}}(t_j(u)),
\end{equation*}
 where $\qshift=e^{\frac{\hbar}{2}\partial_u}$, viewed as an operator on $\Yhg[0][\![u^{-1}]\!]$.  From this and the formula \eqref{qshift-poly}, 
we deduce that $\mbraid_j(t_{i,k})$ is given explicitly by 
 \begin{equation*}
 \mbraid_j(t_{i,k})=t_{i,k}+(-1)^{\delta_{ij}}\sum_{\ell=0}^{|a_{ji}|-1}\sum_{b=0}^k \binom{k}{b}t_{j,b} (a_{ji;\ell})^{k-b},
 \end{equation*}
 where we have set $a_{ji;\ell}:=\frac{\hbar d_j }{2}(|a_{ji}|-2\ell)$. Using the definition of $\Phi$, we then obtain
\begin{align*}
(\mbraid_j-\id)\Phi(\msh_{i,r})&=\frac{v (-1)^{\delta_{ij}}}{q_i^r-q_i^{-r}}\sum_{k\geq 0} \frac{(rv/\hbar)^k}{k!}\sum_{\ell=0}^{|a_{ji}|-1}\sum_{b=0}^k \binom{k}{b}t_{j,b} (a_{ji;\ell})^{k-b}\\
&
=\frac{v (-1)^{\delta_{ij}}}{q_i^r-q_i^{-r}} \sum_{b\geq 0} t_{j,b}\frac{(rv/\hbar)^b}{b!} \sum_{\ell=0}^{|a_{ji}|-1} \sum_{k\geq b}\frac{((rv/\hbar)a_{ji;\ell})^{k-b}}{(k-b)!}
\\
&
=\left( (-1)^{\delta_{ij}} \frac{q_j^r-q_j^{-r}}{q_i^r-q_i^{-r}} \sum_{b=0}^{|a_{ji}|-1} q^{d_j r(|a_{ji}|-2\ell)} \right)\cdot  \Phi(\msh_{j,r})
\end{align*}
where in the last equality we have used the definition of $a_{ji;\ell}$ and $q=e^{\frac{v}{2}}$. Next, observe that since
$\sum_{b=0}^{|a_{ji}|-1} q^{r d_j (|a_{ji}|-2\ell)}=q_j^r [ |a_{ji}|]_{q^r_j}$, we have 
\begin{align*}
(-1)^{\delta_{ij}}\frac{q_j^r-q_j^{-r}}{q_i^r-q_i^{-r}} \sum_{b=0}^{|a_{ji}|-1} q^{d_j r(|a_{ji}|-2\ell)} 
&=
-\frac{q_j^r-q_j^{-r}}{q_i^r-q_i^{-r}}q_j^r [ a_{ji}]_{q^r_j}=-q_j^r \frac{[ra_{ij}]_{q_i}}{[r]_{q_i}}.
\end{align*}
Combining this with the above expression for $(\mbraid_j-\id)\Phi(\msh_{i,r})$ yields the second relation of \eqref{GTL-suff} and thus completes the proof of the proposition. \qedhere
\end{proof}

\begin{corollary}
\label{cor: braid group via GTL}
 For each $j\in \mbI$, the operator
$\mbraid_j^\Phi$ uniquely extends to a $\C(q)$-Hopf algebra automorphism of $U_q^0(L\g)$ satisfying 
\begin{equation*}
\mbraid_j^\Phi(\varphi_i^\pm(z))=\varphi_i^\pm(z)\prod_{\ell=0}^{|a_{ji}|-1}\varphi_j^\pm(zq^{- d_j(|a_{ji}|-2\ell)})^{(-1)^{\delta_{ij}}} \quad \forall \quad i\in \mbI.
\end{equation*}
Moreover, the assignment $\braid_j\mapsto \mbraid_j^\Phi$ defines an action of $\Bg$ on $U_q^0(L\g)$. 
\end{corollary}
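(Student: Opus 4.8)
The plan is to derive all three assertions from Proposition~\ref{P:GTL} and the injectivity of $\Phi$, reducing each to a statement about the integral form $\mathscr{U}_q^0(L\g)$ on which $\mbraid_j^\Phi$ is already available.

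First I would check that $\UqLg[0]$ is obtained from $\mathscr{U}_q^0(L\g)$ by extension of scalars along $\C[q,q^{-1}]\into\C(q)$. Since $\msh_{i,r}=H_{i,r}/[r]_{q_i}$ and $[r]_{q_i}$ is a unit of $\C(q)$, the generators $K_i^{\pm1},\msh_{i,r}$ already generate $\UqLg[0]$ over $\C(q)$, so $\C(q)\cdot\mathscr{U}_q^0(L\g)=\UqLg[0]$. As $\mathscr{U}_q^0(L\g)$ embeds in the $\C(q)$-vector space $\UqLg[0]$ it is torsion-free, hence flat, over the PID $\C[q,q^{-1}]$, and the resulting natural map $\C(q)\otimes_{\C[q,q^{-1}]}\mathscr{U}_q^0(L\g)\iso\UqLg[0]$ is an isomorphism of $\C(q)$-algebras. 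Consequently $\mbraid_j^\Phi$ extends uniquely by $\C(q)$-linearity to a $\C(q)$-algebra automorphism of $\UqLg[0]$, with inverse the extension of $(\mbraid_j^\Phi)^{-1}$; uniqueness is forced because $\mathscr{U}_q^0(L\g)$ spans $\UqLg[0]$. Since the Drinfeld Hopf structure \eqref{def:Delta^1} on $\UqLg[0]$ restricts to the one on $\mathscr{U}_q^0(L\g)$ that $\mbraid_j^\Phi$ already intertwines, the extension is again a Hopf algebra automorphism.

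Next I would verify the displayed formula on $\varphi_i^\pm(z)$; this simultaneously shows that these formulas determine $\mbraid_j^\Phi$, since the coefficients of the $\varphi_i^\pm(z)$ generate $\UqLg[0]$. Using $H_{i,\pm s}=\pm[s]_{q_i}\msh_{i,\pm s}$ and $(q_i-q_i^{-1})[s]_{q_i}=q_i^s-q_i^{-s}$, the series rewrites cleanly as
\begin{equation*}
\varphi_i^{\pm}(z)=K_i^{\pm1}\exp\!\Bigg(\sum_{s>0}(q_i^s-q_i^{-s})\,\msh_{i,\pm s}\,z^{\mp s}\Bigg).
\end{equation*}
Applying $\mbraid_j^\Phi$ (extended coefficientwise to power series, so that it commutes with $\exp$) and substituting $\mbraid_j^\Phi(K_i)=K_iK_j^{-a_{ji}}$ and $\mbraid_j^\Phi(\msh_{i,r})=\msh_{i,r}-q_j^r\tfrac{[ra_{ij}]_{q_i}}{[r]_{q_i}}\msh_{j,r}$ from Proposition~\ref{P:GTL}, the $K$-part matches after the bookkeeping $|a_{ji}|=-a_{ji}$ for $i\neq j$ and $|a_{ji}|=2$ for $i=j$, while the $\msh_{j,\pm s}$-coefficients match once the common factor $q_j^{\pm s}$ is cancelled and one verifies the elementary identity
\begin{equation*}
-(q_i-q_i^{-1})\,[sa_{ij}]_{q_i}=(-1)^{\delta_{ij}}\,(q_j^s-q_j^{-s})\,[\,|a_{ji}|\,]_{q_j^s}.
\end{equation*}
This in turn follows from the geometric-sum evaluation $\sum_{\ell=0}^{m-1}x^{m-2\ell}=x\,[m]_x$ (with $x=q_j^{\pm s}$ and $m=|a_{ji}|$) together with $d_ia_{ij}=d_ja_{ji}$, equivalently $q_i^{a_{ij}}=q_j^{a_{ji}}$.

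Finally, the braid relations follow formally from the intertwining $\Phi\circ\mbraid_j^\Phi=\mbraid_j\circ\Phi$ of Proposition~\ref{P:GTL}: composing these relations shows that the two words in the $\mbraid_j^\Phi$ making up each braid relation \eqref{eq: braid rels} agree after post-composition with $\Phi$, because the operators $\mbraid_j$ themselves satisfy the same relations on $\widehat{\mathbb{Y}_{v}^0(\mfg)}$ (Theorem~\ref{T:mbraid}). Injectivity of $\Phi$ then forces the braid relations for $\{\mbraid_j^\Phi\}$ on $\mathscr{U}_q^0(L\g)$, and the uniqueness of the $\C(q)$-linear extension propagates them to $\UqLg[0]$. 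I expect the only genuinely delicate point to be the scalar-extension step — confirming that $\mathscr{U}_q^0(L\g)$ is a true $\C[q,q^{-1}]$-form of $\UqLg[0]$ and not a proper subalgebra after inverting the $[r]_{q_i}$ — whereas the $\varphi_i^\pm$ identity, though the lengthiest computation, is entirely routine once the series are written in the normalized form above.
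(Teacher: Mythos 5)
Your proposal is correct and follows essentially the same route as the paper: extend $\mbraid_j^\Phi$ to $\UqLg[0]$ via the tensor decomposition $\UqLg[0]\cong\mathscr{U}_q^0(L\g)\otimes_{\C[q,q^{-1}]}\C(q)$, deduce the braid relations from the intertwining relation $\Phi\circ\mbraid_j^\Phi=\mbraid_j\circ\Phi$ together with the injectivity of $\Phi$ and the $\Bg$-action on $\widehat{\mathbb{Y}_v^0(\mfg)}$, and verify the formula for $\mbraid_j^\Phi(\varphi_i^\pm(z))$ by applying Proposition \ref{P:GTL} inside the exponential. Your $q$-number identity $-(q_i-q_i^{-1})[sa_{ij}]_{q_i}=(-1)^{\delta_{ij}}(q_j^s-q_j^{-s})[\,|a_{ji}|\,]_{q_j^s}$ is exactly equivalent to the geometric-sum identity $-q_j^{\pm r}[a_{ji}]_{q_j^{\pm r}}=(-1)^{\delta_{ij}}\sum_{\ell=0}^{|a_{ji}|-1}q^{\pm r d_j(|a_{ji}|-2\ell)}$ used in the paper, the only cosmetic difference being that you work with the rescaled generators $\msh_{i,r}$ throughout rather than converting back to $H_{i,r}$.
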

\begin{proof}
That the $\mbraid_j^\Phi$ uniquely extend to $\C(q)$-Hopf algebra automorphisms of $U_q^0(L\g)$ satisfying the defining braid relations of $\Bg$ follows from the tensor decomposition $U_q^0(L\g)\cong\mathscr{U}_q^0(L\g)\otimes_{\C[q,q^{-1}]}\C(q)$, Proposition \ref{P:GTL}, and that the operators $\{\mbraid_j\}_{j\in \mbI}$ define an action of $\Bg$ on $\widehat{\mathbb{Y}_v^0(\mfg)}$. Hence, we are left to check that $\mbraid_j^\Phi(\varphi_i^\pm(z))$ is given as claimed in the statement of the corollary. To this end, observe that by Proposition \ref{P:GTL}, we have 
\begin{equation*}
\mbraid_j^\Phi(H_{i,r})=H_{i,r}-q_j^r\frac{[ra_{ij}]_{q_i}}{[r]_{q_j}} H_{j,r}=H_{i,r}- \frac{q_j-q_j^{-1}}{q_i-q_i^{-1}} q_j^r[a_{ji}]_{q_j^r}H_{j,r}.
\end{equation*}
It follows that 
\begin{align*}
\mbraid_j^\Phi(\varphi_i^\pm(z))
&
=\mbraid_j^\Phi(K_i)^{\pm 1} \exp\!\left(\pm(q_i-q_i^{-1})\sum_{r>0} \mbraid_j^\Phi(H_{i,\pm r})z^{\mp r}\right)\\
&
=\varphi_i^\pm(z)  K_j^{\mp a_{ji}} \exp\!\left(\mp(q_j-q_j^{-1})\sum_{r>0} q_j^{\pm r}[a_{ji}]_{q_j^{\pm r}}H_{j,\pm r}z^{\mp r}\right).
\end{align*}
Substituting the equality $-q^{\pm r}_j[a_{ji}]_{q^{\pm r }_j}=(-1)^{\delta_{ij}}\sum_{\ell=0}^{|a_{ji}|-1}q^{\pm r d_j(|a_{ji}|-2\ell)}$ into this formula outputs the claimed expression for $\mbraid_j^\Phi(\varphi_i^\pm(z))$.
\end{proof}

\begin{remark}
\label{rmk:Chari-braid-1}
Following \cite{Chari-braid}, define $\msh_i(u)=\sum_{r>0}\msh_{i,r}u^r$ for each $i\in \mbI$. Then the formulas of Proposition \ref{P:GTL} yield 
\begin{equation*}
\mbraid^\Phi_j(\msh_i(u))
=\msh_i(u)+(-1)^{\delta_{ij}}\sum_{b=0}^{|a_{ij}|-1} \msh_j(q_i^{(|a_{ij}|-1-2b)}q_ju) \quad \forall\quad i,j\in \mbI.
\end{equation*}
Equivalently, one has $\mbraid^\Phi_j(\msh_i(u))=\msh_i(u)$ if $a_{ij}=0$, $\mbraid^\Phi_j(\msh_j(u))=-\msh_j(q_j^2u)$, while 
\begin{equation*}
\mbraid^\Phi_j(\msh_i(u))
=
\begin{cases}
\msh_i(u)+\msh_j(q_j u) &\; \text{ if }\; a_{ij}=-1,\\
\msh_i(u)+\msh_j(q^3u)+\msh_j(qu)  &\; \text{ if }\; a_{ij}=-2,\\
\msh_i(u)+\msh_j(q^5u)+\msh_j(q^3u)+\msh_j(qu)&\; \text{ if }\; a_{ij}=-3.
\end{cases}
\end{equation*}
We note that these are exactly the formulas from \cite{Chari-braid}*{(3.4)}, up to a missing subscript $j$ in \cite{Chari-braid} in the case where $a_{ij}=-1$. More precisely, the action of $\Bg$ constructed therein is realized on the space 
\begin{equation*}
\mathcal{A}^n:=\Hom_{Alg}(U_q^0(L\g)^+,\C(q))\cong \big\{(\mu_i(u))_{i\in \mbI}\in \C(q)[\![u]\!]: \mu_i(0)=0 \quad \forall \; i\in \mbI\big\},
\end{equation*}
where $U_q^0(L\g)^+$ is the subalgebra of $U_q^0(L\g)$ generated by $\{\msh_{i,r}\}_{i\in \mbI,r>0}$. The relation between the two actions will be discussed in more detail in the next section below.  
\end{remark}
\begin{remark}
\label{rmk:q-Hecke}
Similarly to Section \ref{ssec:Hecke}, the  action of $\Bg$ on $\UqLg[0]$ from Corollary \ref{cor: braid group via GTL} gives rise to an action of the Hecke algebra $\mathscr{H}_z(\mfg)$.  To see this, define a $\C(q)$-algebra automorphism $\msz$ of $\UqLg[0]$ by $\msz(K_i) = K_i$ and $\msz( \msh_{i,r}) = q_i^{-r} \msh_{i,r}$. Next, similarly to Proposition \ref{P:Hecke}, define operators $T_i = \msz^{d_i} \circ \mbraid_i^\Phi$ on $\UqLg[0]$, for each $i\in\mbI$.  Explicitly, using the formulas from Proposition \ref{P:GTL} and Corollary \ref{cor: braid group via GTL}, these algebra automorphisms are determined by:
$$
T_j( \msh_{i,r}) = q_j^{-r} \msh_{i,r} - [a_{ij}]_{q_i^r} \msh_{j,r}.
$$
For any non-zero integer $r\in \Z$, consider the finite-dimensional $\C(q)$-vector space 
$$
\mathbb{U}_r = \mathrm{span}_{\C(q)}\{ \msh_{i,r} : i \in \mbI\} = \mathrm{span}_{\C(q)}\{ H_{i,r} : i \in \mbI\}.
$$
Then the operators $\{T_i\}_{i \in \mbI}$ define an action of $\mathscr{H}_z(\mfg)$ on $\mathbb{U}_r$, where $z$ acts by $\msz$.  This can be proven by explicit calculations similar to the proof of Proposition \ref{P:Hecke}.  
\end{remark}

\subsection{Dual action on $\ell$--weights}
 \label{ssec:wts-dual2}
 By Corollary \ref{cor: braid group via GTL}, there is an action of $\Bg$ on $\UqLg[0]$ defined by the operators $\{\mbraid_i^\Phi\}_{i \in \mbI}$. For $\upsigma \in \Bg$, let us denote the corresponding operator on $\UqLg[0]$ by $\upsigma^\Phi$.  Analogously to Ssection \ref{ssec:wts-dual}, we dualize to obtain an action of $\Bg$ on $\mathrm{Hom}_{Alg}(\UqLg[0], \C(q))$, via the rule
\begin{equation} \label{Bg-dual2}
    \upsigma(\gamma)(y)=\gamma\big(\upvartheta(\upsigma)^\Phi \cdot y\big)
\end{equation}
for any $\upsigma \in \Bg, y \in \UqLg[0]$, and $\gamma \in \mathrm{Hom}_{Alg}(\UqLg[0], \C(q))$. In other words, this action is determined by the requirement that $\braid_i \in \Bg$ acts by the transpose/adjoint of $\mbraid_i^\Phi$, so that $\braid_i(\gamma) = (\mbraid_i^\Phi)^\ast (\gamma) = \gamma \circ \mbraid_i^\Phi$.

Each homomorphism $\gamma \in \mathrm{Hom}_{Alg}(\UqLg[0], \C(q))$ may be encoded by a tuple $\underline{\qweight} = (\qweight_i^\pm(z))_{i \in\mbI}$, where $\qweight_i^\pm(z) = \gamma( \varphi_i^\pm(z))$. This correspondence allows us to identify $\mathrm{Hom}_{Alg}(\UqLg[0], \C(q))$ with the following space of tuples of formal series:
$$
\left\{ \big( \qweight_i^\pm(z) \big)_{i \in \mbI} \in \C(q)[\![z^{\mp 1}]\!]^\mbI \ :\  \forall i\in \mbI, \ \qweight_{i,0}^+ \qweight_{i,0}^- = 1 \text{ where } \qweight_i^\pm(z) = \sum_{r \geq 0} \qweight_{i, \pm r}^\pm z^{\mp r}\right\}.
$$
The Drinfeld Hopf algebra structure (\ref{def:Delta^1}) on $\UqLg[0]$ induces a group structure on this set, which is a subgroup of the group $\big( \C(q)[\![z^{-1}]\!]^\times \times \C(q)[\![z]\!]^\times\big)^\mbI$ under componentwise multiplication. 

With this identification in mind, we have the following analogue of Corollary \ref{C:T_1-rep}, which follows by an analogous proof:
\begin{corollary}\label{C:braid-qweight}
The formula (\ref{Bg-dual2}) defines an action of the braid group $\Bg$ on $\mathrm{Hom}_{Alg}(\UqLg[0], \C(q))$ by group automorphisms. Moreover, for any element $\underline{\qweight} = ( \qweight_i^\pm(z))_{i\in\mbI}\in \mathrm{Hom}_{Alg}(\UqLg[0], \C(q))$, the components $\braid_j( \underline{\qweight})_i^\pm$ of $\braid_j(\underline{\qweight})$ are given by
$$
\braid_j ( \underline{\qweight})_i^\pm = \qweight_i^\pm(z)\prod_{\ell=0}^{|a_{ji}|-1}\qweight_j^\pm(zq^{- d_j(|a_{ji}|-2\ell)})^{(-1)^{\delta_{ij}}}.
$$
\end{corollary}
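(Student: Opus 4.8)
The plan is to follow the proof of Corollary \ref{C:T_1-rep} essentially verbatim, transporting each structural input from the Yangian setting to the quantum loop setting. First I would record the two abstract facts that make the dualization work. Because $\UqLg[0]$ carries the Drinfeld Hopf algebra structure of \eqref{def:Delta^1}, its linear dual becomes a commutative algebra under the convolution product transpose to $\Delta^1$, with unit $\veps^1$, and $\mathrm{Hom}_{Alg}(\UqLg[0],\C(q))$ is precisely the group of grouplike elements (equivalently the tuples $\underline{\qweight}$ satisfying $\qweight_{i,0}^+\qweight_{i,0}^-=1$), hence a subgroup of the units. By Corollary \ref{cor: braid group via GTL} each $\mbraid_j^\Phi$ is a $\C(q)$-Hopf algebra automorphism, so in particular it is both a coalgebra map (commuting with $\Delta^1$ and $\veps^1$) and an algebra automorphism. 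The former guarantees that the transpose operators respect the convolution product, so the dual action is by group automorphisms; the latter guarantees that $\gamma\circ\mbraid_j^\Phi$ is again an algebra homomorphism whenever $\gamma$ is, so that $\mathrm{Hom}_{Alg}(\UqLg[0],\C(q))$ is a $\Bg$-invariant subset.

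Next I would check that \eqref{Bg-dual2} genuinely defines a \emph{left} action of $\Bg$, where the role of the anti-automorphism $\upvartheta$ is exactly as explained in Remark \ref{R:Tan} for the Yangian. Since $\upsigma\mapsto\upsigma^\Phi$ is a group homomorphism $\Bg\to\mathrm{Aut}_{\C(q)}(\UqLg[0])$ by Corollary \ref{cor: braid group via GTL}, and $\upvartheta$ reverses products, the computation
\[
\bigl(\upsigma_1\upsigma_2\bigr)(\gamma)(y)=\gamma\bigl(\upvartheta(\upsigma_2)^\Phi\,\upvartheta(\upsigma_1)^\Phi\,y\bigr)=\upsigma_1\bigl(\upsigma_2(\gamma)\bigr)(y)
\]
establishes associativity; in particular the braid relations \eqref{eq: braid rels} descend automatically, so no separate verification of the defining relations is needed.

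Finally I would compute the components $\braid_j(\underline{\qweight})_i^\pm$. Unwinding \eqref{Bg-dual2} with $\upvartheta(\braid_j)=\braid_j$, the element $\braid_j(\gamma)$ is just $\gamma\circ\mbraid_j^\Phi$, and substituting the formula for $\mbraid_j^\Phi(\varphi_i^\pm(z))$ from Corollary \ref{cor: braid group via GTL} gives
\[
\braid_j(\underline{\qweight})_i^\pm=\gamma\bigl(\mbraid_j^\Phi(\varphi_i^\pm(z))\bigr)=\qweight_i^\pm(z)\prod_{\ell=0}^{|a_{ji}|-1}\qweight_j^\pm\!\bigl(zq^{-d_j(|a_{ji}|-2\ell)}\bigr)^{(-1)^{\delta_{ij}}},
\]
which is the claimed expression. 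Since the whole argument is a term-by-term translation of the proof of Corollary \ref{C:T_1-rep}, I do not expect a genuine obstacle. The one point requiring a moment's care is the legitimacy of applying $\gamma$ coefficientwise to series in $\UqLg[0][\![z^{\mp1}]\!]$: the multiplicative $z$-shifts $z\mapsto zq^{-d_j(|a_{ji}|-2\ell)}$ only rescale coefficients by scalars, so they commute with the $\C(q)$-linear map $\gamma$, and the finite product over $\ell$ lands in the completed group of series on which the group operation dual to $\Delta^1$ is componentwise multiplication; once this bookkeeping is confirmed, $\gamma(\varphi_i^\pm(z))=\qweight_i^\pm(z)$ yields the result.
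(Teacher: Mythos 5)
Your proposal is correct and follows exactly the route the paper takes: the paper's proof of Corollary \ref{C:braid-qweight} is literally "the analogous proof" to Corollary \ref{C:T_1-rep}, namely dualizing via \eqref{Bg-dual2} using that the $\mbraid_j^\Phi$ are Hopf algebra automorphisms (Corollary \ref{cor: braid group via GTL}), then evaluating $\gamma\circ\mbraid_j^\Phi$ on $\varphi_i^\pm(z)$ to get the component formula. Your spelled-out verification of the left-action property via $\upvartheta$ and the bookkeeping about applying $\gamma$ coefficientwise are exactly the implicit steps the paper leaves to the reader.
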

By restricting to the subalgebra $\UqLg[0]^+ \subset \UqLg[0]$ from Remark \ref{rmk:Chari-braid-1}, we also obtain an induced action of $\Bg$ on 
$$\mathcal{A}^n = \mathrm{Hom}_{Alg}( \UqLg[0]^+, \C(q)) \cong \big\{(\mu_i(u))_{i\in \mbI}\in \C(q)[\![u]\!]: \mu_i(0)=0 \quad \forall \; i\in \mbI\big\}.$$
More explicitly, for any $\gamma \in \mathrm{Hom}_{Alg}(\UqLg[0], \C(q))$ we consider the images 
$$\mu_i(u) := \gamma( \msh_i(u)) \in \C(q)[\![u]\!]$$ 
of the series $\msh_i(u) = \sum_{r>0} \msh_{i,r} u^r$, and encode these as a tuple $\underline{\mu} = (\mu_i(u))_{i \in \mbI} \in \mathcal{A}^n$. Recall that the action of $\Bg$ on the generating series $\msh_i(u)$ for $\UqLg[0]^+$ was described in Remark \ref{rmk:Chari-braid-1}. We immediately deduce expressions for the components $\braid_j(\underline{\mu})_i$ of the image $\braid_j( \underline{\mu}) \in \mathcal{A}^n$ under any generator $\braid_j \in \Bg$: we have $\braid_j(\underline{\mu})_i =\mu_i(u)$ if $a_{ij}=0$, $\braid_j(\underline{\mu})_j=-\mu_j(q_j^2u)$, and finally
\begin{equation*}
\braid_j(\underline{\mu})_i
=
\begin{cases}
\mu_i(u)+\mu_j(q_j u) &\; \text{ if }\; a_{ij}=-1,\\
\mu_i(u)+\mu_j(q^3u)+\mu_j(qu)  &\; \text{ if }\; a_{ij}=-2,\\
\mu_i(u)+\mu_j(q^5u)+\mu_j(q^3u)+\mu_j(qu)&\; \text{ if }\; a_{ij}=-3.
\end{cases}
\end{equation*}
This exactly recovers Chari's braid group action on $\mathcal{A}^n$ from \cite{Chari-braid}*{(3.4)}, up to a missing subscript $j$ in \cite{Chari-braid} in the case $a_{ij}=-1$. 

This discussion puts us in place to apply Chari's result \cite{Chari-braid}*{Prop.~4.1} on the $\ell$--weights of extremal vectors in representations of $\UqLg$, which provides the quantum loop analogue of Proposition \ref{P:extremal}.  We summarize this as follows:

\begin{proposition}
\label{prop: summary of dual UqLg action}
The action of $\Bg$ on $\mathcal{A}^n = \mathrm{Hom}_{Alg}(\UqLg[0]^+, \C(q))$ agrees with Chari's action \cite{Chari-braid}*{(3.4)}. 
Consequently, for any  finite-dimensional irreducible $\ell$--highest weight representation $V$ of $\UqLg$ with $\ell$--highest weight space $V_\lambda$, the following two assertions hold: 
\begin{enumerate}[label=(\alph*)]\setlength{\itemsep}{3pt}
\item\label{q-dual:a} Suppose that the action of $\msh_i(u)$ on the $\ell$--highest weight space $V_{\lambda}$ is encoded by the tuple $\underline{\mu} = (\mu_i(u))_{i \in \mbI} \in\mathcal{A}^n$.  Then for any $w\in \Wg$,  we have
$$
\msh_i(u)|_{V_{w(\lambda)}} = \braid_w(\underline{\mu})_i \cdot \mathrm{Id}_{V_{w(\lambda)}},
$$
where $\braid_w(\underline{\mu}) \in \mathcal{A}^n$ is the image of $\underline{\mu}$ under $\braid_w \in \Bg$.

\item\label{q-dual:b} Suppose that the action of the series $\varphi_i^\pm(z)$ on the $\ell$--highest weight space is encoded by the tuple $\underline{\qweight} = ( \qweight_i^\pm(z))_{i \in \mbI} \in \mathrm{Hom}_{Alg}(\UqLg[0], \C(q))$.  Then, for each $w\in \Wg$, we have
$$
\varphi_i^\pm(z) |_{V_{w(\lambda)}} = \braid_w(\underline{\qweight})_i \cdot \mathrm{Id}_{V_{w(\lambda)}},
$$
where $\braid_i( \underline{\qweight}) \in \mathrm{Hom}_{Alg}(\UqLg[0], \C(q))$ is the image of $\underline{\qweight}$ under $\braid_w \in \Bg$.
\end{enumerate}
\end{proposition}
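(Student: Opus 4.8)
The plan is to deduce both assertions from the identification of actions recorded just above the statement, transporting Chari's eigenvalue computation \cite{Chari-braid}*{Prop.~4.1} through it. The first assertion requires nothing new: the explicit formulas for $\braid_j(\underline{\mu})_i$ obtained by dualizing the action of $\mbraid_j^\Phi$ on the generators $\msh_i(u)$ (Remark \ref{rmk:Chari-braid-1} together with Corollary \ref{C:braid-qweight}) are precisely those of \cite{Chari-braid}*{(3.4)}, up to the indicated typographical discrepancy, so the $\Bg$-action on $\mathcal{A}^n$ coincides with Chari's. I would emphasize here that the dual action \eqref{Bg-dual2} is twisted by the anti-automorphism $\upvartheta$ exactly so that it is $\braid_w$, rather than $\braid_{w^{-1}}$, that computes the eigenvalues on the extremal space $V_{w(\lambda)}$, matching the normalization in the statement.

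For part \ref{q-dual:a}, the first point is that since $V$ is finite-dimensional it is integrable over $U_q(\g)\subset\UqLg$, so the operator $\braid_w^V$ carries the one-dimensional $\ell$--highest weight space $V_\lambda$ isomorphically onto $V_{w(\lambda)}$, which is therefore one-dimensional and on which each $\msh_i(u)$ acts by a scalar. Granting this, the identity $\msh_i(u)|_{V_{w(\lambda)}}=\braid_w(\underline{\mu})_i\cdot\mathrm{Id}_{V_{w(\lambda)}}$ is exactly the content of \cite{Chari-braid}*{Prop.~4.1}, now phrased in terms of our action via the identification above; this is the quantum loop analogue of Proposition \ref{P:extremal}, and I would simply invoke it rather than reprove it. (Note that at this stage in the paper one works entirely with $\mbraid_i^\Phi$ and Chari's theorem; the identification $\mbraid_i=\mbraid_i^\Phi$ is established only later, so one cannot here run the projection argument of Proposition \ref{P:extremal} using $\Pi\circ\braid_i$.)

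For part \ref{q-dual:b} one must upgrade the positive-mode data seen by $\mathcal{A}^n=\mathrm{Hom}_{Alg}(\UqLg[0]^+,\C(q))$ to the full series $\varphi_i^\pm(z)$. The torus part is immediate: $K_i$ acts on $V_{w(\lambda)}$ by $q_i^{\langle w(\lambda),\alpha_i^\vee\rangle}$, governed by the ordinary Weyl-group action $w(\lambda)$, and this agrees with the grouplike part of $\braid_w(\underline{\qweight})$. Combined with the eigenvalues of the positive modes $H_{i,s}=[s]_{q_i}\msh_{i,s}$ ($s>0$) supplied by part \ref{q-dual:a}, this determines $\varphi_i^+(z)|_{V_{w(\lambda)}}$ and matches it with $\braid_w(\underline{\qweight})_i^+$. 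To recover the negative modes, and hence $\varphi_i^-(z)$, I would use that on the finite-dimensional module $V$ the operators $\varphi_i^+(z)$ and $\varphi_i^-(z)$ act on the one-dimensional space $V_{w(\lambda)}$ as the Laurent expansions at $z=\infty$ and $z=0$ of a single rational function (Theorem \ref{thm:CP Drinfeld polys}); since the substitutions $z\mapsto zq^{-d_j(|a_{ji}|-2\ell)}$ in Corollary \ref{C:braid-qweight} are applied uniformly to both signs, the pair $\big(\braid_w(\underline{\qweight})_i^+,\braid_w(\underline{\qweight})_i^-\big)$ is likewise the pair of expansions of a common rational function. Agreement of the $+$ components then forces agreement of the $-$ components, yielding $\varphi_i^\pm(z)|_{V_{w(\lambda)}}=\braid_w(\underline{\qweight})_i^\pm\cdot\mathrm{Id}_{V_{w(\lambda)}}$.

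The main obstacle is precisely this negative-mode completion in part \ref{q-dual:b}: the space $\mathcal{A}^n$ on which the comparison with Chari's action is most transparent records only the positive modes, so the full $\ell$--weight statement is not a formal restriction of Chari's result and genuinely requires the rationality input of Theorem \ref{thm:CP Drinfeld polys} to bridge the two halves. Everything else reduces to correctly matching conventions (the $\upvartheta$-twist and the one-dimensionality of extremal spaces) and citing \cite{Chari-braid}*{Prop.~4.1}.
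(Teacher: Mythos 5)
Your treatment of the first assertion and of part (a) coincides with the paper's: both identify the dual $\Bg$-action on $\mathcal{A}^n$ with Chari's action via the explicit formulas for $\braid_j(\underline{\mu})_i$, and then quote \cite{Chari-braid}*{Prop.~4.1}; your observation that at this stage one must work with $\mbraid_i^\Phi$ (and cannot run the projection argument of Proposition \ref{P:extremal}, since $\mbraid_i=\mbraid_i^\Phi$ is only proven afterwards) is also consistent with the paper's logic. For part (b) you genuinely diverge: the paper observes that Chari's induction applies verbatim to the negative modes $\msh_{i,r}$ with $r<0$, and then exponentiates using $\braid_j(K_i)=K_iK_j^{-a_{ji}}$; you instead recover only the $\varphi_i^+$-component this way (torus part plus positive modes, which is fine and matches the paper's exponentiation step) and attempt to get the $\varphi_i^-$-component for free from rationality.

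That last step has a gap as written. The rationality input you invoke, Theorem \ref{thm:CP Drinfeld polys}, describes only the $\ell$--highest weight space $V_\lambda$; it says nothing about the extremal spaces $V_{w(\lambda)}$. But the claim that $\varphi_i^+(z)|_{V_{w(\lambda)}}$ and $\varphi_i^-(z)|_{V_{w(\lambda)}}$ are the two expansions of a single rational function is exactly the negative-mode information you are trying to establish: part (a) and the torus computation give no control whatsoever over $\varphi_i^-(z)|_{V_{w(\lambda)}}$, so without an independent input the argument cannot close. The step is repairable in two ways. First, the fact that \emph{every} $\ell$--weight of a finite-dimensional $\UqLg$-module (not just the highest one) is rational, with $\varphi_i^\pm$ acting by the expansions at $z=\infty$ and $z=0$ of a common rational function, is a theorem of Frenkel and Reshetikhin \cite{frenkel-reshetikhin-qchar}; granting it, your bootstrap is sound, since the formulas of Corollary \ref{C:braid-qweight} manifestly send pairs of expansions of a common rational function to pairs of the same kind, and two rational functions with the same expansion at $z=\infty$ are equal. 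Second, and more economically, one can follow the paper: Chari's proof of \cite{Chari-braid}*{Prop.~4.1} works equally well for the elements $\msh_{i,r}$ with $r<0$, which supplies the negative modes directly and avoids importing any external rationality theorem.
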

\begin{proof}
We have already seen above that the action of $\Bg$ on $\mathcal{A}^n$ agrees with Chari's.  Therefore, Part \ref{q-dual:a} of the proposition follows immediately from \cite{Chari-braid}*{\S4.1}.  For Part \ref{q-dual:b}, we first observe that Chari's proof applies equally well to the elements $\msh_{i,r}$ for $r < 0$.  By taking exponentials and using the fact that $\braid_j(K_i) = K_i K_j^{-a_{ji}}$, we deduce that Part \ref{q-dual:b} holds.
\end{proof}

\subsection{Completing the proof of Theorem \ref{thm:main-q-thm}}
\label{ssec:completing the proof}

Recall that in Definition \ref{T:mbraid2} we introduced modified braid group operators $\mbraid_i$ on $\UqLg[0]$, defined via Lusztig's braid group operators $\braid_i$ on $\UqLg$. Recall also that $\Bg$ acts on any irreducible finite-dimensional $\ell$--highest weight representation $V$ of $\UqLg$, via operators $\braid_i^V$.  

\begin{lemma}
\label{lemma: Lusztig-braid-extremal}
Let $V$ be as above, with $\ell$--highest weight space $V_\lambda$.  Then for any $i\in \mbI$ and any $a\in \UqLg[0]$, the eigenvalue of $a$ acting on $V_{s_i(\lambda)}$ is the same as the eigenvalue of $\mbraid_i(a)$ on $V_\lambda$.
\end{lemma}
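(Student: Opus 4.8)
The plan is to mirror the Yangian computation inside the proof of Proposition~\ref{P:extremal}, specialized to the single reflection $w = s_i$, and to transport each ingredient to the quantum loop setting. Write $v_\lambda$ for the $\ell$--highest weight vector, spanning the one-dimensional $\ell$--highest weight space $V_\lambda$. Since $\braid_i^V$ restricts to a linear isomorphism $V_\lambda \iso V_{s_i(\lambda)}$, the vector $w := \braid_i^V(v_\lambda)$ is nonzero and spans $V_{s_i(\lambda)}$, which is therefore also one-dimensional. As $\UqLg[0]$ is commutative and these weight spaces are one-dimensional, every $a \in \UqLg[0]$ acts by a scalar on each of them; I denote by $\mu$ the eigenvalue of $a$ on $V_{s_i(\lambda)}$, so that $a \cdot w = \mu\, w$.

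The heart of the argument is to apply $\braid_i^V$ to the relation $a \cdot w = \mu\, w$ and invoke the compatibility \eqref{eq: braid quantum loop compatible}, namely $\braid_i^V(a \cdot x) = \braid_i(a) \cdot \braid_i^V(x)$. This gives
\[
\mu\, \braid_i^V(w) \ = \ \braid_i^V(a \cdot w) \ = \ \braid_i(a) \cdot \braid_i^V(w).
\]
Now $\braid_i^V(w) = (\braid_i^V)^2(v_\lambda)$ lies in $V_{s_i^2(\lambda)} = V_\lambda$, and since $\braid_i^V$ is invertible and $V_\lambda$ is one-dimensional, it is a \emph{nonzero} scalar multiple of $v_\lambda$. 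Cancelling this scalar reduces the identity to
\[
\mu\, v_\lambda \ = \ \braid_i(a) \cdot v_\lambda,
\]
so the problem becomes identifying how $\braid_i(a)$ acts on the $\ell$--highest weight vector.

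To finish, I would note that $a$ has weight $0$ and that Lusztig's operator $\braid_i$ maps the weight-$\beta$ component of $\UqLg$ to its weight-$s_i(\beta)$ component, whence $\braid_i(a) \in (\UqLg)_0$. Using Beck's triangular decomposition $\UqLg = \UqLg[-] \otimes \UqLg[0] \otimes \UqLg[+]$, the same weight count as in the Yangian case shows that the weight-zero part of $N^-_q \UqLg + \UqLg N^+_q$ lies in the left ideal $\UqLg N^+_q$; since $v_\lambda$ is annihilated by every $X_{j,s}^+$, this part kills $v_\lambda$. Therefore $\braid_i(a) \cdot v_\lambda = \Pi(\braid_i(a)) \cdot v_\lambda = \mbraid_i(a) \cdot v_\lambda$, the last equality being the definition of $\mbraid_i$ from Definition~\ref{T:mbraid2}. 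Combining with the previous display yields $\mu\, v_\lambda = \mbraid_i(a) \cdot v_\lambda$, which is precisely the assertion that the eigenvalue of $a$ on $V_{s_i(\lambda)}$ equals that of $\mbraid_i(a)$ on $V_\lambda$.

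I expect the only genuinely delicate point to be the bookkeeping in the final paragraph: confirming that $\braid_i$ preserves the root-lattice grading (so $\braid_i(a)$ remains weight zero) and that the weight-zero portion of $\ker \Pi$ is annihilated by the $\ell$--highest weight vector. Both are quantum loop analogues of facts used freely in the Yangian case, but they rest on Beck's PBW and triangular decomposition rather than the Yangian PBW theorem, so I would cite that decomposition explicitly rather than silently reusing the Yangian statement. Everything else — the one-dimensionality of the extremal weight spaces and the scalar action of $(\braid_i^V)^2$ — follows formally from $\braid_i^V$ being an invertible map permuting weight spaces, so no explicit evaluation of Lusztig's operators is required.
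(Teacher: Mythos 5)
Your proof is correct and takes essentially the same route as the paper's: apply $\braid_i^V$ together with the compatibility \eqref{eq: braid quantum loop compatible}, observe that $(\braid_i^V)^2(v_\lambda)$ is again an $\ell$--highest weight vector in $V_\lambda$, and use that the weight-zero part of $\ker\Pi$ annihilates such a vector, so that $\braid_i(a)$ acts on it as $\mbraid_i(a)$. The only differences are cosmetic: you cancel the nonzero scalar in $(\braid_i^V)^2(v_\lambda)=c\,v_\lambda$ where the paper works directly with the vector $v'=(\braid_i^V)^2(v)$, and you spell out the weight-grading and triangular-decomposition bookkeeping that the paper leaves implicit by citing the argument of Proposition \ref{P:extremal}.
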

\begin{proof}
This follows from the same argument as Proposition \ref{P:extremal}:

 Choose an $\ell$--highest weight vector $v \in V_\lambda$.  Then  $\braid_i^V(v) \in V_{s_i(\lambda)}$, while $v': = (\tau_i^V)^2(v)\in V_{s_i^2(\lambda)} = V_\lambda$ is another $\ell$--highest weight vector. By the compatibility condition  (\ref{eq: braid quantum loop compatible}), we have:
$$
\braid_i^V\big( a \cdot \braid_i^V(v)) = \braid_i(a) \cdot (\braid_i^V)^2(v) = \braid_i(a) \cdot v' = \mbraid_i(a) \cdot v'.$$
The left-hand side reflects the eigenvalue of $a$ on $V_{s_i(\lambda)}$, and the right-hand side the eigenvalue of $\mbraid_i(a)$ on $V_\lambda$, proving the claim.
\end{proof}

On the other hand, in the previous two sections we used the Gautam--Toledano Laredo homomorphism to obtain an action of $\Bg$ on $\UqLg[0]$ by operators $\mbraid_i^\Phi$ (Proposition \ref{P:GTL} and Corollary \ref{cor: braid group via GTL}), and dualized this to an action of $\Bg$ on $\mathrm{Hom}_{Alg}(\UqLg[0], \C(q))$. We have seen that this dual action extends an action of $\Bg$ on $\mathcal{A}^n = \mathrm{Hom}_{Alg}( \UqLg[0]^+, \C(q))$ studied by Chari, and thereby allows us to express the action of $\UqLg[0]$ on extremal $\ell$--weight spaces $V_{w(\lambda)}$ (Proposition \ref{prop: summary of dual UqLg action}).  

In particular, this shows that the operators $\mbraid_i^\Phi$ satisfy the same property from the previous lemma: for any finite-dimensional irreducible representation $V$ with $\ell$--highest weight space $V_\lambda$, and for any $a \in \UqLg[0]$, the eigenvalue of $a$ acting on $V_{s_i(\lambda)}$ is equal to the eigenvalue of $\mbraid_i^\Phi(a)$ on $V_\lambda$.  Indeed, this is an immediate consequence of Proposition \ref{prop: summary of dual UqLg action} and the definition of (Chari's) dual action on $\mathrm{Hom}_{Alg}(\UqLg[0], \C(q))$.  

\begin{proof}[Proof of Theorem \ref{thm:main-q-thm}]
The fact that the $\mbraid_i$ define an action of the braid group was explained just after Definition \ref{T:mbraid2}, as was the equality $\mbraid_i^{-1} = \Pi^{op} \circ \braid_i^{-1} \big|_{\UqLg[0]}$.  We note that the analogue of equation (\ref{eq: braid estimate}) follows for example from the Saito's description \cite{Saito} of $\braid_i^{\pm 1}$ as conjugation by certain explicit elements of a completion of $U_{q_i}(\mathfrak{sl}_2)$. Next we claim that, for each $i\in \mbI$, the operators $\mbraid_i$ and $\mbraid_i^\Phi$ on $\UqLg[0]$ coincide:
\begin{equation}
\label{eq: claim final proof}
\mbraid_i = \mbraid_i^\Phi \quad \forall \quad i \in \mbI.
\end{equation}
Assuming this claim for the moment, we can complete the proof of Theorem \ref{thm:main-q-thm}.   Part \ref{main-q:a} of the theorem  follows from Corollary \ref{cor: braid group via GTL}, as does the fact that the action respects the $\C(q)$--Hopf structure. Part \ref{main-q:b} follows from Proposition \ref{prop: summary of dual UqLg action}, and Part \ref{main-q:c} is precisely Proposition \ref{P:GTL} and Corollary \ref{cor: braid group via GTL}.

Finally, to prove the claim (\ref{eq: claim final proof}), we assume towards a contradiction that there exists an element $a\in \UqLg[0]$ such that $\mbraid_i(a) \neq \mbraid_i^\Phi(a)$.  Then by Lemma \ref{lem: technical} below, there exists a representation $V$ such that the difference $\mbraid_i(a) - \mbraid_i^\Phi(a)$ acts \emph{non-trivially} on its $\ell$--highest weight space $V_\lambda$.   But by Lemma \ref{lemma: Lusztig-braid-extremal} and the discussion afterwards, the action of $\mbraid_i(a)- \mbraid_i^\Phi(a)$ on $V_\lambda$ is given by the same scalar as $a-a = 0$ acting on $V_{s_i(\lambda)}$, a contradiction. It follows that $\mbraid_i = \mbraid_i^\Phi$ on $\UqLg[0]$, completing the proof.
\end{proof}

\begin{lemma}
\label{lem: technical}
    For any non-zero element $a \in U_q^0(L\g)$, there exists an irreducible finite-dimensional $\ell$--highest weight representation $V$ of $\UqLg$ such that $a$ acts non-trivially on its $\ell$--highest weight space $V_\lambda$.
\end{lemma}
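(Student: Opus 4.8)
The plan is to reduce the statement to a separation-of-characters property and then verify that property by an explicit computation with Drinfeld polynomials. First I would observe that, for an irreducible $\ell$--highest weight module $V$, the $\ell$--highest weight space $V_\lambda$ is one-dimensional and is preserved by $\UqLg[0]$, since the generators of $\UqLg[0]$ commute with every $K_j$ and hence stabilize each $U_q(\g)$--weight space. Consequently every $y \in \UqLg[0]$ acts on $V_\lambda$ by a scalar $\gamma_V(y)$, where $\gamma_V \in \Hom_{Alg}(\UqLg[0],\C(q))$ is the algebra homomorphism encoding the $\ell$--highest weight, i.e.\ $\gamma_V(\varphi_i^\pm(z)) = \qweight_i^\pm(z)$. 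Thus the condition ``$a$ acts non-trivially on $V_\lambda$'' is literally ``$\gamma_V(a)\neq 0$'', and the lemma is equivalent to the claim that the homomorphisms $\gamma_V$, as $V$ ranges over finite-dimensional irreducibles, separate points of $\UqLg[0]$, that is $\bigcap_V \ker\gamma_V = 0$. By Theorem~\ref{thm:CP Drinfeld polys} these $\gamma_V$ are exactly the homomorphisms $\gamma_{\underline{\mathrm{P}}}$ attached to $\mbI$--tuples $\underline{\mathrm{P}}=(P_i)$ of monic polynomials with $P_i(0)\neq 0$.

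Next I would record the values of $\gamma_{\underline{\mathrm{P}}}$ on the PBW generators. Writing $P_i(z)=\prod_k (z-c_{i,k})$, a short computation — taking logarithms in the defining formula $\varphi_i^\pm(z)=K_i^{\pm1}\exp(\pm(q_i-q_i^{-1})\sum_{s>0}H_{i,\pm s}z^{\mp s})$ and comparing with the Chari--Pressley expression $q_i^{-\deg P_i}P_i(q_i^2 z)/P_i(z)$ — yields $\gamma_{\underline{\mathrm{P}}}(K_i)=q_i^{\deg P_i}$ and $\gamma_{\underline{\mathrm{P}}}(\msh_{i,r})=\lambda_{i,r}\,p_r(\underline{c}_i)$ for every $r\neq 0$, where $p_r(\underline{c}_i)=\sum_k c_{i,k}^{\,r}$ is a power sum of the roots and $\lambda_{i,r}=q_i^{-r}/r\in\C(q)^\times$ is a nonzero scalar depending only on $i,r$. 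By Beck's PBW theorem, $\UqLg[0]=\C(q)[K_i^{\pm1}]\otimes_{\C(q)}\C(q)[\msh_{i,r}:r\neq 0]$ is a (Laurent) polynomial algebra, so any nonzero $a$ involves only finitely many generators, say $K_i^{\pm1}$ and $\msh_{i,r}$ for $i$ in a finite set $S$ and $0<|r|\le N$ (note that split polynomials with roots in $\C(q)^\times$ are admissible Drinfeld polynomials, so the power sums $p_{-r}$ are well defined).

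To prove separation I would fix such an $a\neq 0$, expand $a=\sum_{\underline m}K^{\underline m}\,b_{\underline m}$ with $\underline m\in\Z^S$ and at least one $b_{\underline m}$ a nonzero polynomial in the $\msh_{i,r}$, and seek split $\underline{\mathrm{P}}$ making $\gamma_{\underline{\mathrm{P}}}(a)\neq 0$. There are two independent knobs, which I would turn in turn. Turning the roots: for a fixed degree vector $\underline n=(\deg P_i)_{i\in S}$ with all $n_i\ge 2N$, the map sending the roots of $P_i$ to the $2N$ power sums $(p_{\pm1},\dots,p_{\pm N})$ is dominant, since its Jacobian is a generalized Vandermonde matrix built from $2N$ distinct monomials in the roots and hence has full rank $2N$ for generic nonzero roots; thus the power sums sweep out a Zariski-dense subset of $\C(q)^{2N}$. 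Turning the degrees: after substituting $\gamma_{\underline{\mathrm{P}}}(K_i)=q_i^{n_i}$, the coefficient $\underline n\mapsto\prod_i q_i^{\,n_i m_i}$ of each surviving $b_{\underline m}$ is a character of the monoid $\Z_{\ge 2N}^S$, and distinct $\underline m$ give distinct characters because $q$ is transcendental and the $d_i$ are nonzero; by Dedekind's linear independence of characters one may choose $\underline n$ (all $n_i\ge 2N$) for which the resulting specialization of $a$ is a nonzero polynomial in the $\msh_{i,r}$. Feeding this $\underline n$ into the dominance of the power-sum map then produces roots with $\gamma_{\underline{\mathrm{P}}}(a)\neq 0$, establishing separation and hence the lemma.

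I expect the main obstacle to be exactly the decoupling performed in the last paragraph: the Drinfeld polynomials simultaneously govern the torus part (via $\gamma(K_i)=q_i^{\deg P_i}$) and the loop part (via power sums of the roots), and $\deg P_i$ enters both, so $\gamma(K_i)$ and $\gamma(\msh_{i,r})$ cannot be varied fully independently. Managing this coupling — first forcing the degrees large enough that the power sums become unconstrained, and then invoking character independence in the degree variable to rule out accidental cancellation of the $K$--monomials — is the delicate point. By contrast, the genericity of the power-sum map is a routine Vandermonde argument, and the reduction to separation is purely formal.
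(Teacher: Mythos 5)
Your proposal is correct and takes essentially the same route as the paper: reduce to the statement that the characters attached to tuples of Drinfeld polynomials separate points of $\UqLg[0]$, observe that $\msh_{i,r}$ acts on the $\ell$--highest weight space by a nonzero constant (independent of $V$) times the $r$-th power sum of the roots of $P_i$, and conclude via the algebraic independence of the power sums $p_{\pm 1},\dots,p_{\pm N}$ once the degrees $\deg P_i$ are large enough. The only differences are local: the paper proves that independence using elementary symmetric Laurent polynomials (via $e_\ell^- = e_{n-\ell}^+/e_n^+$) rather than your Vandermonde--Jacobian dominance argument, and it leaves implicit the decoupling of the $K_i$-eigenvalues $q_i^{\deg P_i}$ from the root data, a point you treat more carefully via linear independence of characters in the degree variables.
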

\begin{proof}
Let $V$ be an irreducible finite-dimensional $\ell$--highest weight representation of $\UqLg$, with Drinfeld polynomials $P_i(z)$, \textit{cf}.~Theorem \ref{eq: braid quantum loop compatible}. For any $i\in \mbI$ and nonzero $r \in \Z$, the action of the element $H_{i,r} \in \UqLg$ on the $\ell$-highest weight space $V_\lambda$ is given (up to rescaling by a constant independent of $V$) by the power sum symmetric (Laurent) polynomial $p_r = \sum_k a_{i,k}^r$ in the roots $\{a_{i,1},\ldots, a_{i, \deg( P_i)}\}$ of $P_i(z)$, see \cite{Chari-braid}*{Thm.~2.4} or \cite{GTL1}*{Prop.~4.4(1)}.  Therefore, the lemma is a consequence of the following result about rings of symmetric Laurent polynomials.
\end{proof}

\begin{lemma}
    Let $k$ be a field of characteristic 0, and consider the ring $A = k[x_1^{\pm 1}, \ldots, x_n^{\pm 1}]^{S_n}$ of symmetric Laurent polynomials in $n$ variables. For each integer $r \geq 1$, consider its elements:
    $$p_r^+ = x_1^r + \ldots + x_n^r, \quad p_r^- = x_1^{-r} + \ldots + x_{n}^{-r}$$
    If $n \geq r+s$, then the following elements are algebraically independent:
    $$
    p_1^+,\ldots, p_r^+, p_1^-,\ldots, p_s^- 
    $$
\end{lemma}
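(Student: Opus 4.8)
The plan is to prove algebraic independence via the Jacobian criterion, which is available since $\mathrm{char}\,k = 0$. Writing $K = k(x_1,\dots,x_n)$, the elements $p_1^+,\dots,p_r^+,p_1^-,\dots,p_s^-$ all lie in $K$, and they are algebraically independent over $k$ if and only if their differentials are linearly independent in the $n$-dimensional $K$-vector space $\Omega_{K/k}$ with basis $dx_1,\dots,dx_n$; equivalently, the $(r+s)\times n$ Jacobian matrix $J = (\partial f_i/\partial x_j)$ has rank $r+s$. So it suffices to exhibit one nonvanishing $(r+s)\times(r+s)$ minor of $J$.

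First I would record the partials: $\partial p_a^+/\partial x_j = a\,x_j^{a-1}$ for $1\le a\le r$, and $\partial p_b^-/\partial x_j = -b\,x_j^{-b-1}$ for $1\le b\le s$. Since $n\ge r+s$ by hypothesis, I select the square submatrix on the columns $j = 1,\dots,r+s$. Pulling the scalar $a$ out of each positive row and $-b$ out of each negative row produces the nonzero constant $(-1)^s\,r!\,s!$ and leaves the matrix $M = (x_j^{e_i})_{1\le i,j\le r+s}$ whose row exponents $e_i$ range over the set $E = \{0,1,\dots,r-1\}\cup\{-2,-3,\dots,-(s+1)\}$. These $r+s$ exponents are pairwise distinct (the first block is $\ge 0$, the second is $\le -2$), so $M$ is a generalized Vandermonde matrix.

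The remaining point is that $\det M \neq 0$ in $K$. Expanding $\det M = \sum_{\sigma}\mathrm{sgn}(\sigma)\prod_i x_{\sigma(i)}^{e_i}$, distinctness of the exponents $e_i$ makes the assignment $\sigma\mapsto\prod_i x_{\sigma(i)}^{e_i}$ injective on monomials, so the Laurent monomials occurring are pairwise distinct, no cancellation happens, and $\det M$ is a nonzero sum of monomials with coefficients $\pm 1$. (Equivalently, multiplying column $j$ by $x_j^{s+1}$ turns $M$ into a matrix $(x_j^{f_i})$ with distinct nonnegative integer exponents, whose determinant is a nonzero multiple of $\prod_{i<j}(x_j - x_i)$.) Hence the chosen minor is nonzero, $J$ has rank $r+s$, and the elements are algebraically independent over $k$.

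There is no serious obstacle: the only genuinely computational input is the non-vanishing of this generalized Vandermonde determinant, which the distinct-monomial argument settles immediately. The one conceptual point to state carefully is the Jacobian criterion for elements of the purely transcendental extension $K/k$ in the Laurent (rather than polynomial) setting, but this is entirely standard in characteristic zero and requires no modification, since the formal derivations $\partial/\partial x_j$ extend to $K$ and act on $x_j^{-b}$ exactly as written above.
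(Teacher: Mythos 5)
Your proof is correct, and it takes a genuinely different route from the paper's. The paper stays inside symmetric function theory: since $\mathrm{char}\,k=0$, Newton's identities give $k[p_1^+,\ldots,p_r^+]=k[e_1^+,\ldots,e_r^+]$ and $k[p_1^-,\ldots,p_s^-]=k[e_1^-,\ldots,e_s^-]$, where $e_\ell^\pm$ denotes the $\ell$-th elementary symmetric polynomial in the $x_i^{\pm 1}$; it then uses the identity $e_\ell^- = e_{n-\ell}^+/e_n^+$ to reduce the claim to the algebraic independence of $e_1^+,\ldots,e_r^+,\,e_{n-s}^+/e_n^+,\ldots,e_{n-1}^+/e_n^+$, which holds because $e_1^+,\ldots,e_n^+$ are algebraically independent and $r+s\le n$ keeps the count consistent. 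You instead verify the Jacobian criterion directly, and your two nontrivial steps both check out: the exponent bookkeeping is right (the row exponents $\{0,\ldots,r-1\}$ and $\{-2,\ldots,-(s+1)\}$ are pairwise distinct), and your no-cancellation argument for the generalized Vandermonde determinant $\det\bigl(x_j^{e_i}\bigr)$ is valid, since distinctness of the $e_i$ forces distinct permutations to produce distinct Laurent monomials in the Leibniz expansion (your alternative reduction to a Schur polynomial times the usual Vandermonde works too). The trade-off: your argument is more computational but self-contained modulo the standard characteristic-zero Jacobian criterion (which is where char $0$ enters for you, rather than through Newton's identities as in the paper), while the paper's argument is shorter and more structural, exploiting the duality $e_\ell^- = e_{n-\ell}^+/e_n^+$ between the two families, at the cost of leaving its final independence check as an assertion. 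Both proofs use $n\ge r+s$ in the same essential way — you to extract $r+s$ columns, the paper to have enough independent $e_\ell^+$'s to go around.
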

\begin{proof}
    Consider the subrings $A^+ = k[p_1^+,\ldots,p_r^+]$ and $A^- = k[p_1^-,\ldots,p_s^-]$.  By standard results about symmetric polynomials, these are both polynomial rings with alternative  generators
    $$
    A^+ = k[e_1^+,\ldots, e_r^+], \qquad A^- = k[e_1^-,\ldots,e_s^-],
    $$
    where $e_\ell^+$ (resp.~$e_\ell^-$) denotes the $\ell$-th elementary symmetric polynomial in the variables $x_1,\ldots,x_n$ (resp.~the variables $x_1^{-1},\ldots,x_n^{-1}$).  

    Now, note that $e_\ell^- = e_{n-\ell}^+ / e_n^+$. Thus $A^- $ is generated by $e_{n-1}^+/e_n^+,\ldots, e_{n-s}^+/ e_n^+$. So long as $r+s \leq n$, it is easily seen that the elements 
    $$e_1^+,\ldots, e_r^+, e_{n-s}^+/e_n^+,\ldots, e_{n-1}^+/e_n^+$$
    are algebraically independent in $A$. It follows that $p_1^+,\ldots, p_r^+, p_1^-, \ldots, p_s^-$ must also be algebraically independent, which completes the proof.
\end{proof}

%
%




\bibliography{Yangians}

\end{document}